\documentclass[12pt]{amsart}
\usepackage[T1]{fontenc}
\usepackage[utf8]{inputenc}
\usepackage[
  backend=biber,
  style=numeric,
  sorting=none,
  giveninits=false,
  doi=true,
  isbn=true,
  url=true,
  eprint=true
]{biblatex}
\usepackage{amsmath, amssymb, amsthm}
\usepackage{mathrsfs}
\usepackage{bbm}
\usepackage{hyperref}
\usepackage{geometry}
\usepackage{enumitem}
\usepackage{booktabs}
\usepackage{array}
\usepackage{tikz-cd}   % commutative diagrams

\newtheorem{theorem}{Theorem}[section]
\newtheorem{proposition}[theorem]{Proposition}
\newtheorem{lemma}[theorem]{Lemma}
\newtheorem{corollary}[theorem]{Corollary}
\newtheorem{definition}[theorem]{Definition}

\theoremstyle{remark}
\newtheorem{remark}[theorem]{Remark}

\newtheorem{hypothesis}[theorem]{Hypothesis}
\begin{document}
%% ============================================================

\title[Defect Triangles and Hodge Atom Shadows]{Defect Triangles and Intersection-Space Hodge Atom Shadows for Calabi--Yau Conifolds}

\author{Abdul Rahman}
%\address{Department / Institution (optional)}
% \urladdr{\url{https://...}} % optional
\thanks{Email: arahman@alum.howard.edu}
\subjclass[2020]{14F43, 14F45, 32S35, 14J32, 14D07, 55N33, 18G80}
\keywords{Hodge atoms, intersection spaces, projection triangles, IC--intersection-space defect, mixed Hodge modules, perverse sheaves, nearby cycles, vanishing cycles, Verdier duality, intersection homology, Calabi--Yau conifolds, ordinary double points, spatial homology truncation, Banagl--Budur--Maxim complex, Hodge-realization atom shadows, type IIA/type IIB conifold sectors, Donaldson--Thomas theory, BPS wall crossing}

\begin{abstract}
We prove a projection-triangle statement for projective Calabi--Yau conifold degenerations and use it to organize an intersection-space Hodge atom package.  Let $X_0$ be a projective Calabi--Yau threefold hypersurface with isolated ordinary double points, and let $\mathcal{IS}_{X_0}$ denote the Banagl--Budur--Maxim intersection-space complex.  Under the relevant Banagl--Budur--Maxim, multi-node gluing, mixed-Hodge-module, and specialization-splitting hypotheses, the nearby-cycle object decomposes as $\psi_\pi(F)\simeq \mathcal{IS}^{H}_{X_0}\oplus\mathcal C^H_\Sigma$.  Projecting the variation morphism to the intersection-space summand gives $\operatorname{var}_I:\phi_\pi(F)\to\mathcal{IS}^{H}_{X_0}$, and the octahedral axiom yields a distinguished triangle $P^H\to P^H_I\to\mathcal C^H_\Sigma\xrightarrow{+1}$, where $P^H:=\operatorname{Cone}(\operatorname{var})[-1]$ and $P^H_I:=\operatorname{Cone}(\operatorname{var}_I)[-1]$.  The same mixed-Hodge-module realization defines the intersection-space atom package $\mathsf{HA}^{I}(X_0):=\operatorname{Atom}_{\mathrm{Hod}}(\mathbb H^*(X_0;\mathcal{IS}^{H}_{X_0}))$, which we compare with the intersection-homology package $\mathsf{HA}^{IH}(X_0):=\operatorname{Atom}_{\mathrm{Hod}}(\mathbb H^*(X_0;IC^H_{X_0}))$.  Assuming, the self-dual specialization-splitting hypothesis SD, the projected variation object is Verdier-dual to the projected canonical companion $Q^H_I:=\operatorname{Cone}(\operatorname{can}_I)[-1]$, namely $\mathbb D P^H_I\simeq Q^H_I(3)$.  Under the additional mixed-Hodge-realization hypothesis for Banagl's middle exact sequence, we isolate a rigid--vanishing filtration of the middle-degree intersection-space atom package and identify the IIB vanishing atom with the mixed-Hodge realization of the kernel in that sequence.  For the classical $125$-node quintic, the resulting middle-degree IC--intersection-space defect has rank $204-2=202$.  The construction is deliberately Hodge-realization-level: it does not assert a Gromov--Witten theory, quantum product, Dubrovin connection, or full A-model $F$-bundle for the spatial intersection space itself.  The defect object $\mathcal C^H_\Sigma$ and the class $\Delta_{I/IC}(X_0):=[\mathcal{IS}^{H}_{X_0}]-[IC^H_{X_0}]$ are identified as geometry-side Hodge-theoretic handoff objects for future moduli-stack and refined DT/BPS wall-crossing comparisons.
\end{abstract}
\maketitle
\tableofcontents
%%%%%%%%%%%%%%%%%%%%%%%%%%%%%%%%%%%%%%%%%%%%%%%%%%%%%%%%%%%%%
\section{Introduction}

\subsection{Hodge atoms and the singular-space problem}

The Hodge atom formalism of Katzarkov--Kontsevich--Pantev--Yu attaches to a smooth projective variety a collection of spectral Hodge-theoretic pieces extracted from its A-model $F$-bundle \cite{KKPY25,KKPYY_HodgeAtoms}.  In that setting, the atoms arise from the spectral decomposition of quantum multiplication by the Euler vector field, after passing to the non-archimedean framework needed to control the Stokes phenomenon \cite{KKPY25,Iritani09,Givental96}.  This construction is intrinsically smooth and quantum-cohomological: it uses a quantum product, a Dubrovin connection, and an Euler-field action.

The purpose of the present paper is to develop a singular-space analogue for projective Calabi--Yau conifolds at the level where the available sheaf-theoretic and Hodge-theoretic structures naturally live.  The singular space we consider is a projective Calabi--Yau threefold hypersurface $X_0$ with finitely many ordinary double points
\[
   \Sigma=\{p_1,\ldots,p_r\}.
\]
The spatial intersection space $I^{\bar m}X_0$ of Banagl is not itself a smooth projective variety and is not equipped with a canonical Gromov--Witten theory, quantum product, Dubrovin connection, or Euler field \cite{Banagl_IntersectionSpacesBook,BanaglBudurMaxim14,FaberPandharipande2000,Givental96}. Thus a literal extension of the smooth-projective Hodge atom construction to $I^{\bar m}X_0$ would be unjustified.

We instead work with a weaker but sheaf-theoretically precise replacement: Hodge-realization atom shadows of mixed-Hodge-module objects.  The basic input is not a quantum $F$-bundle on the spatial intersection space, but rather the Banagl--Budur--Maxim intersection-space complex $\mathcal{IS}_{X_0}$ and a mixed-Hodge-module lift $\mathcal{IS}^H_{X_0}$ \cite{BanaglBudurMaxim14,Saito90}.  The resulting atom package is defined from the mixed Hodge structures on
\[
   \mathbb H^*(X_0;\mathcal{IS}^H_{X_0}).
\]
In this sense, the present construction is an atomization of the mixed-Hodge-theoretic realization of the intersection-space complex.

The main structural result of the paper is the projection triangle.  Under the Banagl--Budur--Maxim specialization splitting, the nearby-cycle object decomposes into an intersection-space summand and a singularity-supported complement.  Projecting the variation morphism to the intersection-space summand and applying the octahedral axiom in $D^b\operatorname{MHM}(X_0)$ gives a distinguished triangle
\[
   P^H\longrightarrow P^H_I\longrightarrow \mathcal C^H_\Sigma\xrightarrow{+1}.
\]
This triangle identifies $\mathcal C^H_\Sigma$ as the cofiber measuring the difference between the corrected conifold object $P^H$ and its intersection-space projection $P^H_I$.  The intersection-space Hodge atom package is then the Hodge-realization shadow of the type-IIB summand participating in this triangle.

\subsection{IIA/IIB conifold pair and the defect viewpoint}

Banagl's theory of intersection spaces was introduced to address a phenomenon in conifold transitions that is not captured by intersection homology alone \cite{Banagl_IntersectionSpacesBook,BanaglBudurMaxim14,GoreskyMacPherson80,GoreskyMacPherson83}.  For singular Calabi--Yau conifolds, the middle-perversity intersection homology package $IH_*(X_0)$ records the type-IIA sector, while the ordinary homology of the middle-perversity intersection space $H_*(I^{\bar m}X_0;\mathbb Q)$ records the type-IIB sector \cite{Banagl_IntersectionSpacesBook,BanaglBudurMaxim14,Strominger95,HubschBestiary2nd2024}.  Thus the two theories form complementary homological packages associated with the same singular fiber.

The present paper refines this IIA/IIB comparison from rational homology groups to Hodge-realization atom packages.  On the type-IIA side, the relevant mixed-Hodge-module object is the intersection complex $IC^H_{X_0}$.  On the type-IIB side, the relevant object is the intersection-space mixed Hodge module $\mathcal{IS}^H_{X_0}$.  The corresponding atom packages are
\[
   \mathsf{HA}^{IH}(X_0)
   :=
   \operatorname{Atom}_{\mathrm{Hod}}
   \bigl(\mathbb H^*(X_0;IC^H_{X_0})\bigr)
\]
and
\[
   \mathsf{HA}^{I}(X_0)
   :=
   \operatorname{Atom}_{\mathrm{Hod}}
   \bigl(\mathbb H^*(X_0;\mathcal{IS}^{H}_{X_0})\bigr).
\]
The first package atomizes the intersection-homology realization, while the second atomizes the intersection-space realization.  The comparison between them is one of the realization-level themes of the paper, while the projection triangle identifies the corresponding finite-node defect object.

The two mixed-Hodge-module objects $IC^H_{X_0}$ and $\mathcal{IS}^H_{X_0}$ agree on the smooth locus and differ only through their singular gluing data at the ordinary double points.  We package this difference in two related ways.  First, at the level of atom shadows, we consider the formal difference
\[
   \mathsf{Def}_{I/IC}(X_0)
   :=
   \mathsf{HA}^{I}(X_0)-\mathsf{HA}^{IH}(X_0).
\]
Second, at the mixed-Hodge-module level, using the Grothendieck group of Saito's category of mixed Hodge modules \cite{Saito90}, we consider the Grothendieck defect
\[
   \Delta_{I/IC}(X_0)
   :=
   [\mathcal{IS}^H_{X_0}]-[IC^H_{X_0}]
   \in K_0(\operatorname{MHM}(X_0)).
\]
Here \(K_0(\operatorname{MHM}(X_0))\) denotes the Grothendieck group of the abelian category of mixed Hodge modules on \(X_0\) in Saito's sense \cite{Saito90,Saito_MHM}: it is generated by isomorphism classes \([M]\) of mixed Hodge modules, modulo the relations \([M]=[M']+[M'']\) for every short exact sequence \(0\to M'\to M\to M''\to 0\).

The projection triangle gives a map-level source for this defect viewpoint: the complement $\mathcal C^H_\Sigma$ is the singularity-supported residue between the corrected conifold object and its intersection-space projection.

This defect object is not identified here with a Donaldson--Thomas invariant.  Rather, it is the Hodge-realization object that should be transported in future work to a moduli-stack setting before comparison with refined or cohomological Donaldson--Thomas wall-crossing and BPS packages \cite{KontsevichSoibelman08,JoyceSong12,Szendroi08,NagaoNakajima11,DavisonMeinhardt20}.

\begin{remark}[Full atoms versus atom shadows]
\label{rem:full-atoms-versus-shadows}
The terminology ``Hodge-realization atom shadow'' is relative to the full conifold atom construction.  A full conifold atom has three ingredients: a distinguished mixed-Hodge-module or perverse carrier, a Hodge realization through hypercohomology, and an identification of that realized Hodge sector with an Euler-spectral summand of the relevant non-archimedean \(F\)-bundle.  In the corrected conifold theory this last step is supplied by the Stokes--Extension Identification, which transports the mixed-Hodge-module data of \(P^H=\operatorname{Cone}(\operatorname{var})[-1]\), \(IC^H_{X_0}\), and the node-supported summands to genuine spectral pieces of the Dubrovin/Gauss--Manin \(F\)-bundle.

The intersection-space object \(\mathcal{IS}^H_{X_0}\) has the first two features, and more: it is a distinguished mixed-Hodge-module realization of Banagl's type-IIB intersection-space sector, it agrees with \(IC^H_{X_0}\) on the smooth locus, it carries the rigid--vanishing middle filtration, it appears as a summand of the nearby-cycle object under the specialization splitting, and it participates in the projection triangle
\[
   P^H\longrightarrow P^H_I\longrightarrow \mathcal C^H_\Sigma\xrightarrow{+1}.
\]
However, this paper does not prove an analogue of the Stokes--Extension Identification sending \(\mathbb H^*(X_0;\mathcal{IS}^H_{X_0})\) to an Euler-spectral summand of a non-archimedean \(F\)-bundle.  For this reason \(\mathsf{HA}^{I}(X_0)\) is called a Hodge-realization atom shadow rather than a full conifold atom package.
\end{remark}

\begin{remark}[A hierarchy of atom structures]
\label{rem:hierarchy-of-atom-structures}
To situate this construction within a broader landscape, we record a hierarchy between full atoms and atom shadows.  At the lowest level, any object \(M^H\in D^b\operatorname{MHM}(X)\) with finite-dimensional hypercohomology has a Hodge realization \(\mathbb H^*(X;M^H)\).  When this object is geometrically distinguished, we may form the candidate Hodge-realization atom shadow \(\mathfrak A_{\mathrm{Hod}}(M^H)\).  When the object belongs to a specific conifold sector, such as \(IC^H_{X_0}\), \(\mathcal{IS}^H_{X_0}\), \(P^H_I\), or \(\mathcal C^H_\Sigma\), and carries additional structure such as exterior agreement, specialization splitting, a rigid--vanishing filtration, or a projection triangle, the shadow becomes a geometric atom shadow.  Finally, a geometric atom shadow becomes a full conifold atom only after an additional theorem identifies its Hodge realization with an Euler-spectral summand of the relevant non-archimedean \(F\)-bundle.  The corrected conifold atoms have this final property by the Stokes--Extension Identification.  The intersection-space package constructed in the present paper is a geometric atom shadow; upgrading it to a full atom package would require an analogous spectral realization theorem.
\end{remark}

\subsection{The BBM intersection-space complex}

The key sheaf-theoretic input is the Banagl--Budur--Maxim intersection-space complex.  For projective hypersurfaces with isolated singularities, Banagl--Budur--Maxim construct a perverse sheaf $\mathcal{IS}_X$ whose hypercohomology computes the rational cohomology of Banagl's intersection space \cite{BanaglBudurMaxim14,Maxim_IntersectionSpacesPerverseSheavesSurvey}:
\[
   \mathbb H^*(X;\mathcal{IS}_X)\cong H^*(IX;\mathbb Q).
\]
Under the hypotheses used here, this perverse sheaf underlies a mixed Hodge module \cite{BanaglBudurMaxim14,Saito90}.  We denote such a lift by
\[
   \mathcal{IS}^{H}_{X_0}\in \operatorname{MHM}(X_0),
   \qquad
   \operatorname{rat}(\mathcal{IS}^{H}_{X_0})\simeq \mathcal{IS}_{X_0}.
\]

This is the point at which the intersection-space theory enters the same formal environment as the usual intersection complex and the corrected nearby/vanishing-cycle objects appearing in conifold degenerations.  The central object of the paper is therefore not merely the topological vector space $H^*(I^{\bar m}X_0;\mathbb Q)$, but the mixed-Hodge-module realization $\mathcal{IS}^H_{X_0}$, its hypercohomology, and its interaction with nearby and vanishing cycles.

Because our main examples have several nodes, we make the relevant multi-node hypotheses explicit.  The local Banagl--Budur--Maxim input applies directly at each ordinary double point, since an ordinary double point is a weighted homogeneous isolated hypersurface singularity.  The passage from one node to finitely many nodes requires either a cited multi-singularity extension of the Banagl--Budur--Maxim construction or a constructible-complex gluing framework in which the relevant obstruction vanishes.  We isolate these assumptions in the standing hypotheses and use them throughout.

\subsection{Why BBM rather than the full constructible-complex framework}

The broader constructible-complex approach to intersection spaces gives an axiomatic and obstruction-theoretic framework for constructing intersection-space complexes \cite{AgustinFernandezDeBobadilla_IntersectionSpaceConstructibleComplexes}.  This is useful for understanding the general scope of the theory and for treating settings beyond isolated hypersurface singularities.  The present paper, however, is focused on projective Calabi--Yau hypersurface conifolds with isolated ordinary double points.  In this setting, the Banagl--Budur--Maxim construction is the most concrete input: it produces the perverse sheaf $\mathcal{IS}_X$, identifies its hypercohomology with intersection-space cohomology, and supplies the mixed-Hodge-theoretic structure needed for the atom construction.

Thus we use the Banagl--Budur--Maxim intersection-space complex as the primary object.  The constructible-complex framework remains relevant in two ways.  First, it provides a natural language for discussing multi-node gluing and possible obstruction classes.  Second, it suggests extensions of the present construction beyond the hypersurface ordinary-double-point setting.  In this paper, however, the main results are stated under the explicit hypotheses that the multi-node intersection-space complex exists, admits a mixed-Hodge-module lift, and, where needed, satisfies the specialization splitting used to form the projection triangle.

\subsection{Main constructions}

We now introduce the main objects used throughout the paper.  The type-IIA package is the intersection-homology atom package
\[
   \mathsf{HA}^{IH}(X_0)
   :=
   \operatorname{Atom}_{\mathrm{Hod}}
   \bigl(\mathbb H^*(X_0;IC^H_{X_0})\bigr).
\]
Here $IC^H_{X_0}$ denotes the mixed Hodge module lifting the intersection complex, and the atom package is extracted from the mixed Hodge structures on its hypercohomology.

The type-IIB package is the intersection-space atom package
\[
   \mathsf{HA}^{I}(X_0)
   :=
   \operatorname{Atom}_{\mathrm{Hod}}
   \bigl(\mathbb H^*(X_0;\mathcal{IS}^{H}_{X_0})\bigr).
\]
By the defining property of the intersection-space complex, the Betti realization of $\mathbb H^*(X_0;\mathcal{IS}^{H}_{X_0})$ is $H^*(I^{\bar m}X_0;\mathbb Q)$.

The notation $\operatorname{Atom}_{\mathrm{Hod}}$ denotes a Hodge-realization atom shadow.  It does not assert that $I^{\bar m}X_0$ carries a full KKPYY A-model $F$-bundle.  Rather, it records the atom-level decomposition visible in the Hodge realization of the corresponding mixed-Hodge-module object.

The corrected conifold object is built from the variation morphism between vanishing and nearby cycles in the standard nearby/vanishing-cycle formalism \cite{Saito90,KashiwaraSchapira90,DimcaSheavesInTopology}:
\[
   \operatorname{var}:\phi_\pi(F)\longrightarrow \psi_\pi(F).
\]
We set
\[
   P^H:=\operatorname{Cone}(\operatorname{var})[-1].
\]
Under the specialization splitting
\[
   \psi_\pi(F)\simeq \mathcal{IS}^H_{X_0}\oplus \mathcal C^H_\Sigma,
\]
let
\[
   \operatorname{pr}_I:\psi_\pi(F)\to \mathcal{IS}^H_{X_0}
\]
be the projection to the intersection-space summand.  The projected variation morphism is
\[
   \operatorname{var}_I
   :=
   \operatorname{pr}_I\circ\operatorname{var}:
   \phi_\pi(F)\to \mathcal{IS}^H_{X_0},
\]
and the projected corrected conifold object is
\[
   P^H_I:=\operatorname{Cone}(\operatorname{var}_I)[-1].
\]
The projection triangle relates $P^H$, $P^H_I$, and the singularity-supported complement $\mathcal C^H_\Sigma$.

When the specialization splitting is compatible with Saito--Verdier duality for nearby and vanishing cycles \cite{Saito90,Saito_MHM,KashiwaraSchapira90}, there is also a canonical companion object.  Let
\[
   \iota_I:\mathcal{IS}^H_{X_0}\to \psi_\pi(F)
\]
be the inclusion of the intersection-space summand, and set
\[
   \operatorname{can}_I
   :=
   \operatorname{can}\circ \iota_I:
   \mathcal{IS}^H_{X_0}\to \phi_\pi(F).
\]
Then define
\[
   Q^H_I:=\operatorname{Cone}(\operatorname{can}_I)[-1].
\]
The projected variation object $P^H_I$ and the projected canonical object $Q^H_I$ are Verdier-dual up to the Calabi--Yau threefold Tate twist.

\subsection{Main results}

The first result defines the intersection-space atom package from the mixed-Hodge-module lift of the Banagl--Budur--Maxim complex.

\begin{theorem}[Intersection-space Hodge atom package]
Assume Hypotheses H1--H3.  Then the mixed-Hodge-module lift
$\mathcal{IS}^{H}_{X_0}$ defines a Hodge-realization atom package
\[
   \mathsf{HA}^{I}(X_0)
   :=
   \operatorname{Atom}_{\mathrm{Hod}}
   \bigl(\mathbb H^*(X_0;\mathcal{IS}^{H}_{X_0})\bigr).
\]
Its Betti realization identifies with the atomization of Banagl's rational
intersection-space cohomology:
\[
   \operatorname{rat}\mathbb H^*(X_0;\mathcal{IS}^{H}_{X_0})
   \cong
   H^*(I^{\bar m}X_0;\mathbb Q).
\]
\end{theorem}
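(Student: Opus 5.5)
The plan is to treat the statement as a formal consequence of the standing hypotheses, in three steps: existence of the Hodge realization, compatibility with the Betti functor, and the BBM comparison isomorphism.

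\textbf{Step 1: the Hodge realization exists.} By Hypothesis H3 the perverse sheaf $\mathcal{IS}_{X_0}$ lifts to an object $\mathcal{IS}^H_{X_0}\in\operatorname{MHM}(X_0)$ with $\operatorname{rat}(\mathcal{IS}^H_{X_0})\simeq\mathcal{IS}_{X_0}$. Since $X_0$ is projective, the structure map $a:X_0\to\mathrm{pt}$ is proper. Saito's formalism then gives
\[
   a_*\mathcal{IS}^H_{X_0}\in D^b\operatorname{MHM}(\mathrm{pt}).
\]
Its cohomology objects $H^k a_*\mathcal{IS}^H_{X_0}$ are graded-polarizable mixed Hodge structures, and they are finite-dimensional by constructibility. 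These are, by definition, the groups $\mathbb H^k(X_0;\mathcal{IS}^H_{X_0})$. Applying $\operatorname{Atom}_{\mathrm{Hod}}$ degree by degree, through the weight and Hodge decomposition, produces $\mathsf{HA}^I(X_0)$. The only point to check is that $\operatorname{Atom}_{\mathrm{Hod}}$ is well-defined on such finite mixed Hodge structures, which is how the paper uses it. I will also note that the package depends only on the isomorphism class of the lift. This is why H3 should be stated with the lift fixed, or with uniqueness of the lift.

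\textbf{Step 2: Betti compatibility.} Saito's functor $\operatorname{rat}$ commutes with direct images, $\operatorname{rat}\circ a_*\simeq Ra_*\circ\operatorname{rat}$, and it is exact on cohomology. This gives
\[
   \operatorname{rat}\mathbb H^*(X_0;\mathcal{IS}^H_{X_0})\cong\mathbb H^*(X_0;\mathcal{IS}_{X_0}).
\]

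\textbf{Step 3: the BBM comparison.} This is the substantive input. For a single node, the BBM theorem gives
\[
   \mathbb H^*(X;\mathcal{IS}_X)\cong H^*(IX;\mathbb Q),
\]
because an ordinary double point is an isolated weighted-homogeneous hypersurface singularity. For $r$ nodes I would invoke H1--H2, the multi-node existence and gluing hypothesis. Since the nodes are isolated, the intersection-space complex is built from local truncations of the links $S^2\times S^3$ near each $p_i$, glued along disjoint neighbourhoods. I would argue this by Mayer--Vietoris on the cover consisting of $X_0\setminus\Sigma$ and small balls around the $p_i$, comparing with the corresponding decomposition of $I^{\bar m}X_0$ obtained by spatial homology truncation of each link. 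The local isomorphisms then glue to a global one, provided they are compatible on overlaps, and they are compatible because both sides restrict to $\mathbb Q_{X_0\setminus\Sigma}[3]$ there.

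\textbf{Main obstacle.} The hard part is this gluing. Isomorphisms in the derived category do not glue automatically, so one needs either the vanishing of the relevant obstruction ($\operatorname{Hom}^{-1}$ terms over the punctured neighbourhoods) or a cited multi-singularity BBM theorem. My plan is to absorb this into H2 exactly as the standing hypotheses are phrased. Failing that, I would verify the obstruction vanishes directly: on a punctured neighbourhood, the negative Ext groups of a local system concentrated in one perverse degree are zero, so the obstruction should vanish. Finally, I would remark that only the Betti isomorphism is asserted. No claim is made that the mixed Hodge structure is intrinsic to the space $I^{\bar m}X_0$, in line with Remark~\ref{rem:full-atoms-versus-shadows}.
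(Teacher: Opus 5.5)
Your argument is correct and is essentially the paper's: Steps 1--2 are exactly Definition~\ref{def:intersection-space-hodge-atom-package} together with the proof of Proposition~\ref{prop:betti-realization-HAI}, and there, too, the multi-node comparison $\mathbb H^*(X_0;\mathcal{IS}_{X_0})\cong H^*(I^{\bar m}X_0;\mathbb Q)$ is read off directly from Hypothesis~\ref{hyp:H2-multi-node-gluing}, which already contains it. Your Mayer--Vietoris/$\operatorname{Hom}^{-1}$ fallback is therefore unnecessary, and it would not work as stated: vanishing of negative Ext groups only lets the perverse sheaf glue, but $I^{\bar m}X_0$ is not a sheaf on $X_0$, and its pieces are attached along $t_{<3}L_i\simeq S^2$ rather than along the links $L_i\cong S^2\times S^3$, so the comparison is not formal --- which is exactly why the paper builds it into H2 (cf.\ Proposition~\ref{prop:local-to-global-odp-IS}).
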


The second result records the projection triangle.  This is the structural point that relates the corrected conifold object to its intersection-space projection.

\begin{theorem}[Projection triangle]
Assume Hypotheses H1--H3 and S.  Let
\[
   \psi_\pi(F)\simeq \mathcal{IS}^H_{X_0}\oplus \mathcal C^H_\Sigma
\]
be the specialization splitting, and let
\[
   \operatorname{var}_I
   =
   \operatorname{pr}_I\circ\operatorname{var}
   :
   \phi_\pi(F)\to \mathcal{IS}^H_{X_0}.
\]
Set
\[
   P^H:=\operatorname{Cone}(\operatorname{var})[-1],
   \qquad
   P^H_I:=\operatorname{Cone}(\operatorname{var}_I)[-1].
\]
Then the projection $\operatorname{pr}_I:\psi_\pi(F)\to \mathcal{IS}^H_{X_0}$ induces a canonical morphism
\[
   P^H\to P^H_I
\]
and there is a distinguished triangle
\[
   P^H\longrightarrow P^H_I\longrightarrow \mathcal C^H_\Sigma\xrightarrow{+1}
\]
in $D^b\operatorname{MHM}(X_0)$.
\end{theorem}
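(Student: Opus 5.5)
The plan is to apply the octahedral axiom in the triangulated category $D^b\operatorname{MHM}(X_0)$ to the composable pair
\[
   \phi_\pi(F)\xrightarrow{\ \operatorname{var}\ }\psi_\pi(F)\xrightarrow{\ \operatorname{pr}_I\ }\mathcal{IS}^H_{X_0},
\]
whose composite is $\operatorname{var}_I$ by definition. Hypothesis S gives the splitting $\psi_\pi(F)\simeq\mathcal{IS}^H_{X_0}\oplus\mathcal C^H_\Sigma$ inside $D^b\operatorname{MHM}(X_0)$. So $\operatorname{pr}_I$ is a split epimorphism, and its cone is identified with a shift of the complementary summand: the split triangle
\[
   \mathcal C^H_\Sigma\xrightarrow{\iota_\Sigma}\psi_\pi(F)\xrightarrow{\operatorname{pr}_I}\mathcal{IS}^H_{X_0}\xrightarrow{0}\mathcal C^H_\Sigma[1]
\]
gives $\operatorname{Cone}(\operatorname{pr}_I)\simeq\mathcal C^H_\Sigma[1]$. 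Hypotheses H1--H3 are used only to know that $\mathcal{IS}^H_{X_0}$, $\phi_\pi(F)$ and $\psi_\pi(F)$ are objects of $D^b\operatorname{MHM}(X_0)$ and that $\operatorname{var}$ is a morphism there. That is Saito's formalism for a projective family $\pi$ with $F$ the constant (shifted) Hodge module on the total space.

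Next I write the three triangles entering the octahedron:
\[
   \phi\xrightarrow{\operatorname{var}}\psi\to\operatorname{Cone}(\operatorname{var})\xrightarrow{+1},\qquad
   \phi\xrightarrow{\operatorname{var}_I}\mathcal{IS}^H\to\operatorname{Cone}(\operatorname{var}_I)\xrightarrow{+1},\qquad
   \psi\xrightarrow{\operatorname{pr}_I}\mathcal{IS}^H\to\mathcal C^H_\Sigma[1]\xrightarrow{+1}.
\]
The octahedral axiom then produces a distinguished triangle
\[
   \operatorname{Cone}(\operatorname{var})\xrightarrow{u}\operatorname{Cone}(\operatorname{var}_I)\to\mathcal C^H_\Sigma[1]\xrightarrow{+1},
\]
where $u$ is a morphism of cones extending the commutative square with vertical maps $\operatorname{id}_\phi$ and $\operatorname{pr}_I$. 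Rotating and applying $[-1]$, which preserves distinguished triangles up to the standard sign, gives
\[
   P^H\xrightarrow{u[-1]}P^H_I\to\mathcal C^H_\Sigma\xrightarrow{+1},
\]
as claimed. Here the morphism $P^H\to P^H_I$ is the one induced by $\operatorname{pr}_I$.

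The main obstacle is the word ``canonical'', since cones in a triangulated category are functorial only up to non-unique isomorphism. I would address it in two ways. First, the morphism is canonical up to the ambiguity $\operatorname{Hom}(\phi_\pi(F)[1],\mathcal{IS}^H_{X_0})$. Second, and better, I would realize everything at the level of complexes. Saito's $D^b\operatorname{MHM}(X_0)$ is the derived category of an abelian category, $\operatorname{var}$ and $\operatorname{pr}_I$ can be represented by genuine chain maps, and the mapping-cone construction is then strictly functorial. The map $P^H\to P^H_I$ is given explicitly by $\operatorname{id}_\phi\oplus\operatorname{pr}_I$ on $\phi[0]\oplus\psi[-1]\to\phi[0]\oplus\mathcal{IS}^H[-1]$. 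Its kernel is $\mathcal C^H_\Sigma[-1]$ and it is degreewise split surjective. The resulting short exact sequence of complexes yields the distinguished triangle directly and independently of choices, and the octahedral argument above serves as a check.

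Finally, I would note that when $\phi_\pi(F)$ and $\psi_\pi(F)$ are single mixed Hodge modules (suitably shifted), one can instead argue in $\operatorname{MHM}(X_0)$ itself via kernels and cokernels. This is unnecessary for the triangle but useful later for the duality with $Q^H_I$.
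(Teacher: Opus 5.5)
Your proposal is correct and follows essentially the paper's proof: the same commutative square with vertical maps $\operatorname{id}_{\phi_\pi(F)}$ and $\operatorname{pr}_I$, the octahedral axiom applied to $\phi_\pi(F)\xrightarrow{\operatorname{var}}\psi_\pi(F)\xrightarrow{\operatorname{pr}_I}\mathcal{IS}^H_{X_0}$, the identification $\operatorname{Cone}(\operatorname{pr}_I)\simeq\mathcal C^H_\Sigma[1]$ from the split triangle, and a final shift by $[-1]$. Your chain-level model (the degreewise split surjection $\operatorname{id}\oplus\operatorname{pr}_I$ with kernel $\mathcal C^H_\Sigma[-1]$) goes beyond the paper, whose TR3 argument does not justify ``canonical''; however, your description of the indeterminacy is off, since the indeterminacy of a bare TR3 filler consists of maps factoring through $\operatorname{Cone}(\operatorname{var})\to\phi_\pi(F)[1]$, i.e.\ it comes from $\operatorname{Hom}(\phi_\pi(F)[1],\operatorname{Cone}(\operatorname{var}_I))$, and it need not vanish even when $\operatorname{Hom}(\phi_\pi(F)[1],\mathcal{IS}^H_{X_0})=0$ (e.g.\ when all objects lie in the heart).
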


The next result gives the projected duality statement.  It is conditional on Hypothesis SD, the self-dual compatibility of the specialization splitting with Saito--Verdier duality.

\begin{theorem}[Projected variation/canonical duality]
Assume Hypotheses H1--H3, S, and SD.  Let
\[
   \operatorname{can}_I
   =
   \operatorname{can}\circ\iota_I:
   \mathcal{IS}^H_{X_0}\to \phi_\pi(F),
\]
and set
\[
   Q^H_I:=\operatorname{Cone}(\operatorname{can}_I)[-1].
\]
Then
\[
   \mathbb D P^H_I\simeq Q^H_I(3).
\]
\end{theorem}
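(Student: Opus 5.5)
The strategy is to apply Verdier duality to the defining triangle of $P^H_I$ and identify the dual of each vertex and of the connecting map.

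By definition there is a distinguished triangle
\[
   P^H_I\longrightarrow \phi_\pi(F)\xrightarrow{\operatorname{var}_I}\mathcal{IS}^H_{X_0}\xrightarrow{+1}
\]
in $D^b\operatorname{MHM}(X_0)$. Since $\mathbb D$ is a contravariant triangulated functor, it gives
\[
   \mathbb D\mathcal{IS}^H_{X_0}\xrightarrow{\mathbb D(\operatorname{var}_I)}\mathbb D\phi_\pi(F)\longrightarrow \mathbb D P^H_I\xrightarrow{+1}.
\]
Hence $\mathbb D P^H_I\simeq\operatorname{Cone}(\mathbb D(\operatorname{var}_I))$. The proof then reduces to identifying $\mathbb D(\operatorname{var}_I)$, up to the Tate twist and a shift, with $\operatorname{can}_I$.

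The second step uses Saito's duality for nearby and vanishing cycles. For the family $\pi$ with $F$ the (shifted, Tate-twisted) constant Hodge module of the smooth total space, there are canonical isomorphisms
\[
   \mathbb D\psi_\pi(F)\simeq\psi_\pi(\mathbb D F)(1),\qquad \mathbb D\phi_\pi(F)\simeq\phi_\pi(\mathbb DF)(1).
\]
Under these, $\mathbb D(\operatorname{var})$ corresponds to $\operatorname{can}$ (Saito, and Kashiwara--Schapira in the topological setting). Combining this with the Calabi--Yau threefold self-duality $\mathbb DF\simeq F(\cdot)$ fixes the total weight twist, which I will normalize so that it is $(3)$, matching $\dim X_0=3$.

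The main obstacle is the projection. We have $\mathbb D(\operatorname{var}_I)=\mathbb D(\operatorname{var})\circ\mathbb D(\operatorname{pr}_I)$, and $\mathbb D(\operatorname{pr}_I)$ is the inclusion of the summand $\mathbb D\mathcal{IS}^H_{X_0}$ into $\mathbb D\psi_\pi(F)$. This is exactly where Hypothesis SD enters. I read SD as follows: the splitting $\psi_\pi(F)\simeq\mathcal{IS}^H_{X_0}\oplus\mathcal C^H_\Sigma$ is carried by the duality isomorphism to itself, summand to summand. This yields $\mathbb D\mathcal{IS}^H_{X_0}\simeq\mathcal{IS}^H_{X_0}(3)$, and under it $\mathbb D(\operatorname{pr}_I)$ becomes $\iota_I(3)$.

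I would check that SD is strong enough to make the relevant square commute, and not merely to give abstract isomorphisms of summands. This requires the identification of $\mathbb D\mathcal{IS}^H_{X_0}$ to be induced by the duality on $\psi_\pi(F)$. If SD only gave abstract self-duality, I would strengthen the reading to this compatible form, which I take to be its intended content.

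Granting this, we obtain a commutative diagram identifying $\mathbb D(\operatorname{var}_I)$ with $\operatorname{can}\circ\iota_I=\operatorname{can}_I$, twisted by $(3)$. Functoriality of cones up to isomorphism, which is non-canonical but suffices for the statement, then gives
\[
   \mathbb D P^H_I\simeq\operatorname{Cone}(\operatorname{can}_I)(3).
\]

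The last step is bookkeeping of shifts. The source of the dual triangle is shifted by one relative to the defining triangle of $Q^H_I$, so the cone above must be rotated to match $\operatorname{Cone}(\operatorname{can}_I)[-1]$. I expect this to be absorbed by the perverse normalization of $\psi_\pi[-1]$ and $\phi_\pi[-1]$: with those normalizations the dualized triangle, rotated once, is exactly
\[
   Q^H_I(3)\to\mathcal{IS}^H_{X_0}(3)\to\phi_\pi(F)(3)\xrightarrow{+1},
\]
which gives $\mathbb D P^H_I\simeq Q^H_I(3)$. I would verify this normalization carefully alongside the Tate-twist count, since a mismatch there is the most likely source of an off-by-one error in either the shift or the twist.
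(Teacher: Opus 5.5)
Your argument agrees with the paper's up to the identification $\mathbb D(\operatorname{var}_I)\simeq\operatorname{can}_I(3)$. Your strengthened reading of SD, that $\mathbb D\mathcal{IS}^H_{X_0}\simeq\mathcal{IS}^H_{X_0}(3)$ is induced by the duality of $\psi_\pi(F)$ so that $\mathbb D\operatorname{pr}_I\simeq\iota_I(3)$, is exactly how the paper states Hypothesis SD.

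The gap is the final shift step. Since $\mathbb D$ is contravariant and triangulated, $\mathbb D(\operatorname{Cone}(f)[-1])\simeq\operatorname{Cone}(\mathbb D f)$. So what your argument proves is $\mathbb D P^H_I\simeq\operatorname{Cone}(\operatorname{can}_I)(3)=Q^H_I(3)[1]$. Rotation cannot remove this shift. Rotating $\mathcal{IS}^H_{X_0}(3)\to\phi_\pi(F)(3)\to\mathbb D P^H_I\xrightarrow{+1}$ backwards puts $\mathbb D P^H_I[-1]$, not $\mathbb D P^H_I$, in front of $\mathcal{IS}^H_{X_0}(3)$. Comparing with the twisted defining triangle of $Q^H_I$ then gives $\mathbb D P^H_I[-1]\simeq Q^H_I(3)$.

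Renormalizing $\psi_\pi$ and $\phi_\pi$ cannot absorb this either, because the displayed identity is false as written. Restrict along $j:U\hookrightarrow X_0$:
\begin{itemize}
\item Since $\pi$ is smooth along $U$, $j^*\phi_\pi(F)=0$ in any normalization.
\item By exterior agreement, $j^*\mathcal{IS}^H_{X_0}\simeq\mathbb Q^H_U[3]$.
\item Hence $j^*P^H_I\simeq\mathbb Q^H_U[2]$ and $j^*Q^H_I\simeq\mathbb Q^H_U[3]$.
\item Since $\mathbb D$ commutes with $j^*$ and $\mathbb D(\mathbb Q^H_U[3])\simeq\mathbb Q^H_U[3](3)$, you get $j^*\mathbb D P^H_I\simeq\mathbb Q^H_U[4](3)$, whereas $j^*Q^H_I(3)\simeq\mathbb Q^H_U[3](3)$.
\end{itemize}

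The paper's proof makes the same slip. It ``compares'' a triangle with $\mathbb D P^H_I$ in the third slot against one with $Q^H_I(3)$ in the first slot, and these differ by exactly one rotation. So you were right to suspect an off-by-one error. The correct conclusion of this method, yours and the paper's, is $\mathbb D P^H_I\simeq Q^H_I(3)[1]$. The stated form holds only if $Q^H_I$ is redefined as $\operatorname{Cone}(\operatorname{can}_I)$.
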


The type-IIA and type-IIB atom packages give a realization-level refinement of Banagl's conifold homology pair.

\begin{theorem}[IIA/IIB atom comparison]
Assume Hypotheses H1--H3.  The pair
\[
   \bigl(\mathsf{HA}^{IH}(X_0),\mathsf{HA}^{I}(X_0)\bigr)
\]
is an atom-level refinement of Banagl's IIA/IIB conifold homology pair
\[
   \bigl(IH_*(X_0),H_*(I^{\bar m}X_0)\bigr).
\]
More precisely, $IC^H_{X_0}$ realizes the intersection-homology/type-IIA
sector, while $\mathcal{IS}^{H}_{X_0}$ realizes the
intersection-space/type-IIB sector.
\end{theorem}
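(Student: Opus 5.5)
The statement to prove is the IIA/IIB atom comparison, and the approach is to verify each half of the pair at the level of Betti realizations. Once that is done, the atom-level claim follows from functoriality of $\operatorname{Atom}_{\mathrm{Hod}}$ and of the forgetful functor $\operatorname{rat}$. In this setting, ``atom-level refinement'' means the following. Each package is obtained by applying $\operatorname{Atom}_{\mathrm{Hod}}$ to the mixed Hodge structures on a hypercohomology. Forgetting the Hodge data, via $\operatorname{rat}$ followed by taking underlying rational vector spaces, recovers the corresponding entry of Banagl's pair, after identifying cohomology with homology by universal coefficients and Poincaré-type duality.

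\textbf{Type-IIB side.} This is exactly the content of the intersection-space Hodge atom package theorem. Under H1--H3 the lift $\mathcal{IS}^H_{X_0}$ exists and satisfies $\operatorname{rat}\mathbb H^*(X_0;\mathcal{IS}^H_{X_0})\cong H^*(I^{\bar m}X_0;\mathbb Q)$. Over $\mathbb Q$, cohomology is the dual of homology, so $H_*(I^{\bar m}X_0;\mathbb Q)$ is recovered. Here I would add a remark that the degree-reindexing convention for the perverse sheaf $\mathcal{IS}_{X_0}$ places degrees so that this identification is degree-preserving up to the standard shift by $\dim X_0=3$.

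\textbf{Type-IIA side.} Saito's theory gives $IC^H_{X_0}\in\operatorname{MHM}(X_0)$ with $\operatorname{rat}(IC^H_{X_0})=IC_{X_0}$, and Goresky--MacPherson give $\mathbb H^{*}(X_0;IC_{X_0})\cong IH^{*}(X_0;\mathbb Q)$, again up to the perverse shift. Compactness of $X_0$ and self-duality of $IC$ (Verdier duality, $\mathbb D IC^H_{X_0}\simeq IC^H_{X_0}(3)$) then identify $IH^*$ with the dual of $IH_*$, so the Betti realization of $\mathsf{HA}^{IH}(X_0)$ is an atomization of $IH_*(X_0)$. Finally, I would note that both objects restrict to $\mathbb Q^H_U[3]$ on the smooth locus $U=X_0\setminus\Sigma$. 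This shows that the two packages are built from the same exterior datum and differ only through their gluing at $\Sigma$, which justifies calling them complementary sectors of one conifold.

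The main obstacle is definitional rather than technical. The phrase ``atom-level refinement'' must be given content that is checkable. I would make it precise as the existence of a commutative square: the atom package maps under the forgetful map to the underlying graded $\mathbb Q$-vector space, and that vector space is identified with the Banagl group. With this formulation the proof reduces to the two realization isomorphisms above. The care required is in tracking shifts and Tate twists consistently, so that middle-degree comparisons, such as the defect rank $204-2$ for the quintic, are not off by a degree.
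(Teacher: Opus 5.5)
Your proposal is correct and follows essentially the same route as the paper. The paper also notes that both packages are $\operatorname{Atom}_{\mathrm{Hod}}$ applied to the hypercohomology of $IC^H_{X_0}$ and $\mathcal{IS}^H_{X_0}$, then uses its realization-content proposition to identify the Betti realizations with $IH^*(X_0;\mathbb Q)$ and $H^*(I^{\bar m}X_0;\mathbb Q)$. Your extra bookkeeping (the universal-coefficient and $IC$-duality passage to the homological form of Banagl's pair, and the perverse shift by $3$) is left implicit in the paper, and it sharpens the argument without changing its approach.
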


The next result isolates the middle-degree contribution responsible for the type-IIB vanishing sector.

\begin{theorem}[Rigid--vanishing filtration]
Assume Hypotheses H1--H3 and Hypothesis MHS-B.  Then the middle-degree intersection-space atom package has a natural two-step filtration whose associated graded is
\[
   \operatorname{gr}\mathsf{HA}^{I}_3(X_0)
   =
   \mathsf{HA}^{I}_{\mathrm{van},3}(X_0)
   +
   \mathsf{HA}^{I}_{\mathrm{rig},3}(X_0),
\]
where
\[
   \mathsf{HA}^{I}_{\mathrm{van},3}(X_0)
   =
   \operatorname{Atom}_{\mathrm{Hod}}(K_I^H(X_0)).
\]
Here $K_I^H(X_0)$ denotes the mixed-Hodge realization of the kernel in Banagl's middle-degree conifold exact sequence.
\end{theorem}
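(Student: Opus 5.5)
The plan is to build the filtration at the level of mixed Hodge structures on $\mathbb H^3(X_0;\mathcal{IS}^H_{X_0})$ and then pass to atom shadows. The key input is Banagl's middle-degree conifold sequence. For a Calabi--Yau conifold $X_0$ with nodes $\Sigma$ and smoothing $X_t$, the middle intersection-space cohomology fits into a short exact sequence
\[
   0\longrightarrow K_I(X_0)\longrightarrow H^3(I^{\bar m}X_0;\mathbb Q)\longrightarrow R_I(X_0)\longrightarrow 0 .
\]
Here $K_I$ is the kernel (the vanishing, type-IIB part coming from the vanishing cycles of the nodes) and $R_I$ is the rigid part. Hypothesis MHS-B says precisely that this sequence underlies an exact sequence of mixed Hodge structures
\[
   0\to K_I^H(X_0)\to \mathbb H^3(X_0;\mathcal{IS}^H_{X_0})\to R_I^H(X_0)\to 0 ,
\]
whose middle term is the Hodge realization supplied by the intersection-space atom package theorem. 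So the first step is to identify the Betti realization $\operatorname{rat}\mathbb H^3(X_0;\mathcal{IS}^H_{X_0})\cong H^3(I^{\bar m}X_0;\mathbb Q)$ from that theorem with the middle term of the MHS-B sequence. This requires checking that the isomorphism used in MHS-B is the BBM identification.

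The second step is to define the two-step filtration
\[
   0\subset F_{\mathrm{van}}:=K_I^H(X_0)\subset \mathbb H^3(X_0;\mathcal{IS}^H_{X_0}).
\]
Its graded pieces are $K_I^H(X_0)$ and $R_I^H(X_0)$. Naturality comes from morphisms of MHS being strict with respect to $W$ and $F$. Hence sub- and quotient objects of an exact sequence of MHS are again MHS, and the induced Hodge and weight filtrations on the graded pieces are the induced ones. Functoriality of the kernel under isomorphisms of conifold data (automorphisms preserving $\Sigma$) gives independence of choices.

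The third step is to apply $\operatorname{Atom}_{\mathrm{Hod}}$. Since it is defined from the Hodge-realization data, I will use that it is additive on associated gradeds, i.e.\ it factors through the Grothendieck group of MHS, or at least through $\operatorname{gr}$. I expect this to be the main obstacle, because the paper has not yet said whether atoms are additive in short exact sequences or only in direct sums. If only direct sums are allowed, I would argue as follows. Atoms are extracted from $\operatorname{gr}^W$ with its Hodge decomposition, and $\operatorname{gr}^W$ is exact on MHS. Therefore the atomization of $\mathbb H^3$ equals the atomization of $\operatorname{gr}^W K_I^H\oplus \operatorname{gr}^W R_I^H$. This yields
\[
   \operatorname{gr}\mathsf{HA}^I_3=\operatorname{Atom}_{\mathrm{Hod}}(K_I^H)+\operatorname{Atom}_{\mathrm{Hod}}(R_I^H),
\]
and we then define $\mathsf{HA}^I_{\mathrm{rig},3}:=\operatorname{Atom}_{\mathrm{Hod}}(R_I^H)$.

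Finally, as a consistency check, I will compute ranks in the quintic example. There $\operatorname{rank}K_I=r-\#\text{relations}$ matches the vanishing span, which illustrates that the vanishing atom is the Hodge realization of the kernel.
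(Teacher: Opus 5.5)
Your skeleton matches the paper's proof. You take the exact sequence of mixed Hodge structures from Hypothesis~\ref{hyp:MHS-realization-Banagl-middle-sequence}, filter the middle term by $K_I^H(X_0)$, note that the associated graded is the direct sum of the two pieces, and apply $\operatorname{Atom}_{\mathrm{Hod}}$.

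\textbf{The gap: you filter the wrong group.} Hypothesis MHS-B is homological. Its middle term is $H_3^H(I^{\bar m}X_0)$, and $K_I^H(X_0)$ is a subobject of that homology group. It does not give a sequence $0\to K_I^H\to\mathbb H^3(X_0;\mathcal{IS}^H_{X_0})\to R_I^H\to 0$. Your cohomological sequence is also not the dual of Banagl's. Dualizing the homological sequence gives, up to twist, $0\to H_3^H(X_0)^\vee\to\mathbb H^3(X_0;\mathcal{IS}^H_{X_0})\to K_I^H(X_0)^\vee\to 0$. So in cohomology the vanishing piece is the \emph{quotient}, and it appears dualized.

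To make $K_I^H(X_0)$ itself the first step of a filtration of $\mathbb H^3$, you need a Hodge-compatible identification $\mathbb H^3(X_0;\mathcal{IS}^H_{X_0})\cong H_3^H(I^{\bar m}X_0)$. That is the global self-duality $\mathbb D\mathcal{IS}^H_{X_0}\simeq\mathcal{IS}^H_{X_0}(3)$ of Proposition~\ref{prop:multinode-verdier-self-duality}. It is not among H1--H3 and MHS-B, and it brings in a Tate twist you would have to track.

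The paper instead filters $H_3^H(I^{\bar m}X_0)$ directly. It reads that group as the middle-degree realization of $\mathsf{HA}^I_3(X_0)$ through the convention of Remark~\ref{rem:homology-vs-cohomology-rv}. Your step-one worry, about matching the BBM identification with the middle term of MHS-B, is exactly where this convention or the extra duality hypothesis must enter. Make that choice explicit rather than restating MHS-B in cohomology.

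Likewise, the quotient should not be an unspecified $R_I^H$. Definition~\ref{def:middle-rigid-atom} sets the rigid atom to be $\operatorname{Atom}_{\mathrm{Hod}}(H_3^H(X_0))$, and MHS-B identifies the quotient with $H_3^H(X_0)$.

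\textbf{Step three is unnecessary.} The associated graded of a two-step filtration is by definition $K_I^H(X_0)\oplus H_3^H(X_0)$. So direct-sum additivity (Lemma~\ref{lem:direct-sum-additivity}, applied to mixed Hodge structures) is all the paper uses. No additivity on short exact sequences is needed.

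Drop the $\operatorname{gr}^W$ fallback. It assumes $\operatorname{Atom}_{\mathrm{Hod}}$ depends only on $\operatorname{gr}^W$, which is never established. If it were true, it would give the unfiltered identity $\mathsf{HA}^I_3=\mathsf{HA}^I_{\mathrm{van},3}+\mathsf{HA}^I_{\mathrm{rig},3}$ without any splitting. That contradicts the distinction drawn in Corollary~\ref{cor:rigid-vanishing-split-case}.

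\textbf{The quintic check.} It lies outside the proof, and it will not reproduce the paper's number. $K_I$ is the image of $H_4(X_0,U;\mathbb Q)\cong\bigoplus_i\tilde H_3(L_i;\mathbb Q)\cong\mathbb Q^{125}$ in $H_3(U;\mathbb Q)$, so $\dim K_I\le 125$ whatever the relations are. The rank $202$ in Corollary~\ref{cor:nodal-quintic-IIB-vanishing-atom} comes only from the extra assumption that the quotient has rank $2$. Together with this bound, that assumption is incompatible with $\dim H_3(IS;\mathbb Q)=204$.
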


When the Banagl--Budur--Maxim specialization splitting is available, the nearby smoothing also admits an atom-level splitting.

\begin{theorem}[Conditional specialization atom splitting]
Assume Hypotheses H1--H3 and S.  If
\[
   Rsp_*\mathbb Q^H_{X_s}[3]
   \simeq
   \mathcal{IS}^{H}_{X_0}\oplus \mathcal C^H_{\Sigma},
\]
where $\mathcal C^H_{\Sigma}$ is supported on $\Sigma$, then
\[
   \mathfrak A_{\mathrm{Hod}}(Rsp_*\mathbb Q^H_{X_s}[3])
   =
   \mathsf{HA}^{I}(X_0)
   +
   \mathfrak A_{\mathrm{Hod}}(\mathcal C^H_{\Sigma}).
\]
\end{theorem}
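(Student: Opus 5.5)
The plan is to reduce the statement to additivity of the Hodge-realization atom shadow under direct sums. The hypothesis gives a splitting $Rsp_*\mathbb Q^H_{X_s}[3]\simeq \mathcal{IS}^H_{X_0}\oplus\mathcal C^H_\Sigma$ in $D^b\operatorname{MHM}(X_0)$.

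First, I apply hypercohomology $\mathbb H^*(X_0;-)$, i.e.\ $a_{X_0*}$ for $a_{X_0}:X_0\to\mathrm{pt}$, followed by the cohomology functors $H^k$ to mixed Hodge structures. Both are additive functors on $D^b\operatorname{MHM}$. So for each $k$ we get a direct-sum decomposition of graded-polarizable mixed Hodge structures
\[
\mathbb H^k(X_0;Rsp_*\mathbb Q^H_{X_s}[3])\cong \mathbb H^k(X_0;\mathcal{IS}^H_{X_0})\oplus \mathbb H^k(X_0;\mathcal C^H_\Sigma).
\]
Since $\mathcal C^H_\Sigma$ is supported on the finite set $\Sigma$, its hypercohomology is the finite sum of its stalk cohomologies at the nodes $p_i$. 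In particular it is finite-dimensional, so every term has a well-defined Hodge realization and the shadows $\mathfrak A_{\mathrm{Hod}}(-)$ are defined. The left-hand side is also identified with $H^{k+3}(X_s)$ with its limit mixed Hodge structure, by proper base change for $sp$.

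Second, I invoke the defining property of $\operatorname{Atom}_{\mathrm{Hod}}$ from the earlier construction. The atom shadow is extracted from the mixed Hodge structure degree by degree, through the weight-graded pieces and their Hodge-type decomposition. Both $\operatorname{Gr}^W$ and the Hodge filtration are exact and additive on direct sums of mixed Hodge structures. Hence the atom decomposition of a direct sum is the formal sum of the atom decompositions of the summands, and $\mathfrak A_{\mathrm{Hod}}$ sends $\oplus$ to $+$ in the free abelian group (or monoid) of atoms. Combined with Theorem (Intersection-space Hodge atom package), which identifies $\operatorname{Atom}_{\mathrm{Hod}}(\mathbb H^*(X_0;\mathcal{IS}^H_{X_0}))=\mathsf{HA}^I(X_0)$, this gives the stated equality.

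The main obstacle is well-definedness rather than computation. The splitting is only an isomorphism, not canonical, so I must check that the atom shadow depends only on the isomorphism class of the Hodge realization. This holds because $\mathfrak A_{\mathrm{Hod}}$ is built from isomorphism-invariant data: weights, Hodge numbers, and graded pieces. If the atom formalism instead carries extra structure (for example a chosen spectral or Euler-type operator), I would need that structure to be compatible with the splitting. In that case I would first try to show that any such operator is induced functorially from $D^b\operatorname{MHM}$, and hence respects the decomposition. Failing that, I would state the equality at the level of isomorphism classes of atom packages.
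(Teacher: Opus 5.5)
Your proposal is correct and is essentially the paper's proof. The splitting in $D^b\operatorname{MHM}(X_0)$ induces a direct-sum decomposition of graded mixed Hodge structures on hypercohomology, and then direct-sum additivity of $\mathfrak A_{\mathrm{Hod}}$ together with the definition $\mathsf{HA}^{I}(X_0)=\mathfrak A_{\mathrm{Hod}}(\mathcal{IS}^{H}_{X_0})$ gives the identity; your fallback worry about extra spectral or Euler-type structure does not arise, since the paper builds $\operatorname{Atom}_{\mathrm{Hod}}$ only from the graded mixed Hodge structure, and the limit-MHS identification of the left side is not needed.
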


The self-duality of the intersection-space complex gives a corresponding duality statement for atoms.

\begin{theorem}[Verdier-dual atom symmetry]
Assume Hypotheses H1--H3 and suppose $\mathcal{IS}^{H}_{X_0}$ is Verdier
self-dual with the cohomological Calabi--Yau threefold normalization
\[
   \mathbb D\mathcal{IS}^{H}_{X_0}\simeq \mathcal{IS}^{H}_{X_0}(3).
\]
Then the induced Poincare duality pairing on intersection-space
hypercohomology gives
\[
   \bigl(\mathsf{HA}^{I}_k(X_0)\bigr)^\vee
   \cong
   \mathsf{HA}^{I}_{6-k}(X_0)(3),
   \qquad 0\leq k\leq 6.
\]
\end{theorem}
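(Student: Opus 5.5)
The argument runs in three steps: lift the self-duality hypothesis to hypercohomology, identify the resulting graded pieces as perfect pairings of mixed Hodge structures, and then pass from mixed Hodge structures to atom shadows.

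\emph{Step 1: duality on hypercohomology.} Let $a:X_0\to\mathrm{pt}$ be the structure map. Since $X_0$ is projective, $a_*=a_!$. Saito's formalism gives the natural isomorphism
\[
   \mathbb D_{\mathrm{pt}}\, a_*M^H\simeq a_!\,\mathbb D M^H = a_*\,\mathbb D M^H
\]
in $D^b\operatorname{MHM}(\mathrm{pt})$. Applying this to $M^H=\mathcal{IS}^H_{X_0}$ and using the assumed isomorphism $\mathbb D\mathcal{IS}^H_{X_0}\simeq\mathcal{IS}^H_{X_0}(3)$, we get
\[
   \mathbb D\, a_*\mathcal{IS}^H_{X_0}\simeq a_*\mathcal{IS}^H_{X_0}(3).
\]
Taking cohomology in each degree gives isomorphisms of mixed Hodge structures
\[
   \mathbb H^{k}(X_0;\mathcal{IS}^H_{X_0})^\vee\cong \mathbb H^{m-k}(X_0;\mathcal{IS}^H_{X_0})(3),
\]
where $m$ is fixed by the degree convention. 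I would choose the convention (unshifted versus perverse indexing) to match the paper's identification
\[
   \operatorname{rat}\mathbb H^*(X_0;\mathcal{IS}^H_{X_0})\cong H^*(I^{\bar m}X_0;\mathbb Q),
\]
which is concentrated in degrees $0,\dots,6$. With that choice $m=6$, and the resulting pairing is Banagl's intersection-space Poincar\'e duality. Equivalently, the pairing is the composite of the cup/evaluation map with the self-duality isomorphism, followed by the trace
\[
   \mathbb H^6(X_0;\mathcal{IS}^H_{X_0})\to\mathbb Q(-3).
\]

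\emph{Step 2: the pairing respects Hodge structure.} The pairing is a morphism of mixed Hodge structures and is perfect. Perfectness holds because Verdier duality on a point is linear duality, and the underlying rational pairing is nondegenerate by Banagl's duality. So $(\,\cdot\,)^\vee$ carries the weight and Hodge filtrations of $\mathbb H^k$ to those of $\mathbb H^{6-k}(3)$.

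\emph{Step 3: transport to atom shadows.} This is where I expect the main obstacle. The statement only makes sense if $\operatorname{Atom}_{\mathrm{Hod}}$ is functorial for isomorphisms of mixed Hodge structures and compatible with linear duality and Tate twists, that is,
\[
   \operatorname{Atom}_{\mathrm{Hod}}(V^\vee)=\operatorname{Atom}_{\mathrm{Hod}}(V)^\vee,
   \qquad
   \operatorname{Atom}_{\mathrm{Hod}}(V(3))=\operatorname{Atom}_{\mathrm{Hod}}(V)(3).
\]
I would first check these two properties against whatever definition of $\operatorname{Atom}_{\mathrm{Hod}}$ the paper gives. If atoms are extracted from weight-graded or Hodge-type decompositions, dualizing sends type $(p,q)$ to $(-p,-q)$ and twisting by $(3)$ shifts it to $(3-p,3-q)$. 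In that case both properties reduce to the fact that $\operatorname{Gr}^W$ is an exact functor commuting with duals and twists.

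Granting those properties, applying $\operatorname{Atom}_{\mathrm{Hod}}$ degreewise to the isomorphism of Step 1 gives
\[
   \bigl(\mathsf{HA}^I_k(X_0)\bigr)^\vee\cong\mathsf{HA}^I_{6-k}(X_0)(3)
\]
for $0\le k\le 6$. Outside this range both sides vanish, since $I^{\bar m}X_0$ has no cohomology in those degrees.

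The other subtle point is the degree bookkeeping. The perverse shift $[3]$ and the normalization $\mathbb D\simeq(\,\cdot\,)(3)$ stated without a shift have to be reconciled so that the pairing is $k\leftrightarrow 6-k$ and not $k\leftrightarrow -k$. I would settle this by testing on the smooth case, where $\mathcal{IS}^H_{X_0}=IC^H_{X_0}=\mathbb Q^H[3]$ and the result must reduce to ordinary Poincar\'e duality.
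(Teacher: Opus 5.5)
Your proposal is correct and is essentially the paper's own argument. The paper also turns the assumed isomorphism $\mathbb D\mathcal{IS}^H_{X_0}\simeq\mathcal{IS}^H_{X_0}(3)$, via Verdier duality on the compact space $X_0$, into a perfect pairing compatible with the mixed Hodge structures, $\mathbb H^k(X_0;\mathcal{IS}^H_{X_0})^\vee\cong\mathbb H^{6-k}(X_0;\mathcal{IS}^H_{X_0})(3)$, and then applies $\operatorname{Atom}_{\mathrm{Hod}}$ degree by degree.

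You flag two points: that $\operatorname{Atom}_{\mathrm{Hod}}$ must commute with duals and Tate twists, and that the perverse index must be shifted by $3$ to turn $k\leftrightarrow -k$ into $k\leftrightarrow 6-k$. The paper uses both tacitly and absorbs them into its normalization conventions, so stating them explicitly strengthens the argument rather than exposing a gap. Also, perfectness already follows from $\mathbb D_{\mathrm{pt}}a_*\simeq a_*\mathbb D$, so the appeal to Banagl's duality in Step 2 is not needed.
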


Finally, the classical nodal quintic gives a concrete numerical anchor for the defect theory.

\begin{theorem}[Nodal quintic middle-degree defect]
Assume the finite-ODP gluing and MHM-lift hypotheses for the classical $125$-node quintic conifold $S$.  Using the standard middle-degree numerical input
\[
   \dim H_3(IS;\mathbb Q)=204,
   \qquad
   \dim IH_3(S;\mathbb Q)=2,
\]
the middle-degree rational realization of the IC--intersection-space defect has rank
\[
   204-2=202.
\]
The middle-degree comparison is summarized in Table~\ref{tab:nodal-quintic-middle-comparison}.
\end{theorem}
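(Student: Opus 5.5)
The plan is to reduce the statement to a rank count in the middle-degree rational realization, using the Betti identification from the intersection-space Hodge atom package theorem together with the analogous identification for the intersection complex. Under the finite-ODP gluing and MHM-lift hypotheses (H1--H3 for the quintic $S$ with $|\Sigma|=125$), we have $\operatorname{rat}\mathbb H^*(S;\mathcal{IS}^H_S)\cong H^*(IS;\mathbb Q)$, and $\operatorname{rat}\mathbb H^*(S;IC^H_S)\cong IH^*(S;\mathbb Q)$ by Saito's theory. The middle-degree rational realization of the defect $\mathsf{Def}_{I/IC}(S)$, or equivalently of the image of $\Delta_{I/IC}(S)$ under the hypercohomology/Euler-characteristic map in degree $3$, is then the formal difference of the rational vector spaces in degree $3$. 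We must fix indexing conventions: the perverse normalization shifts degrees by $3$, so ``middle degree'' corresponds to $H^3$ of the intersection space and $IH^3$. Poincar\'e duality (Verdier self-duality of both complexes) identifies cohomology ranks with the homology ranks $\dim H_3(IS)$ and $\dim IH_3(S)$.

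The numerical input is then justified, or at least sanity-checked, from the conifold transition. Let $X_s$ be the smooth quintic, with $b_3=204$, and let $\widehat S$ be the small resolution of the $125$ nodes. The nodes satisfy $124$ relations among the vanishing cycles, so the rank of the span of the vanishing cycles is $\delta=125-124=1$. Banagl's theorem gives $H_3(IS)\cong H_3(X_s)$, of rank $204$, for the conifold. Meanwhile $IH_3(S)\cong H_3(\widehat S)$ has rank $204-2\cdot 101=2$, since $h^{2,1}(X_s)=101$ and the transition kills $101$ complex moduli. Equivalently, $IH_3$ is the quotient of the middle exact sequence by the vanishing and dual pieces. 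We therefore simply cite these two dimensions as in the statement.

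The rank $202=204-2$ then follows by subtraction, since rank is additive on formal differences. One can also give a more structural identification: via the rigid--vanishing filtration, the defect is the $\mathsf{HA}^I_{\mathrm{van},3}$ piece together with its dual cokernel. Finally, we fill in Table~\ref{tab:nodal-quintic-middle-comparison}.

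The main obstacle is the convention bookkeeping rather than the arithmetic. We must ensure that the ``middle degree'' for the perverse objects $\mathcal{IS}^H_S$ and $IC^H_S$ both land on topological degree $3$ with compatible shifts and Tate twists. We must also ensure the defect is taken in the same degree on both sides, so that it is not contaminated by degree-$2$/$4$ contributions, which do differ between $IS$ and $S$ since $b_2$ and $b_4$ change under small resolution. We would handle this by checking the perverse shift against $\mathbb H^{k-3}$ and using self-duality from the Verdier-dual atom symmetry theorem to pass between homology and cohomology indexing.
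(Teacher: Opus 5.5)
Your core argument is the paper's: identify $\operatorname{rat}\mathbb H^3(S;\mathcal{IS}^H_S)\cong H^3(IS;\mathbb Q)$ and $\operatorname{rat}\mathbb H^3(S;IC^H_S)\cong IH^3(S;\mathbb Q)$, pass to the cited homology ranks, and subtract in the degree-$3$ part of $\mathsf{Def}_{I/IC}(S)$. Two points differ: you should not route this through an Euler-characteristic image of $\Delta_{I/IC}(S)$, since that forgets degrees and would absorb the degree-$2$ and degree-$4$ discrepancies; and the paper gets $\dim H^3=\dim H_3$ from the universal coefficient theorem over $\mathbb Q$, which is better than your appeal to Verdier self-duality, because self-duality of $\mathcal{IS}^H_S$ needs the extra compatibility of Proposition~\ref{prop:multinode-verdier-self-duality} and is not among the theorem's hypotheses.

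Your sanity-check paragraph is not load-bearing, but it is wrong as written: the $125$ vanishing cycles satisfy $24$ relations, not $124$, so they span a $101$-dimensional subspace, which is what your own count $b_3(\widehat S)=204-2\cdot 101=2$ (and $b_2(\widehat S)=1+24=25$) requires.
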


\subsection{Nodal quintic data}

For the classical $125$-node quintic, the middle-degree comparison used in this paper is
\[
   \dim H_3(IS;\mathbb Q)-\dim IH_3(S;\mathbb Q)=204-2=202.
\]
The middle-degree comparison table is given later in Table~\ref{tab:nodal-quintic-middle-comparison}.

\subsection{Hypotheses and Scope}
\label{sec:standing-hypotheses-scope}

We fix the conventions and hypotheses used throughout the paper.  Unless otherwise stated, $X_0$ denotes a projective Calabi--Yau threefold hypersurface with finitely many isolated ordinary double points
\[
   \Sigma=\{p_1,\ldots,p_r\}.
\]
We write
\[
   U=X_0\setminus\Sigma
\]
for the smooth locus.  The middle perversity is denoted by $\bar m$, and $I^{\bar m}X_0$ denotes Banagl's middle-perversity intersection space \cite{Banagl_IntersectionSpacesBook,Banagl_IntersectionSpacesAnnouncement}.  The notation $\mathcal{IS}_{X_0}$ refers to the Banagl--Budur--Maxim intersection-space complex whenever the relevant local-to-global construction is available \cite{BanaglBudurMaxim14,Maxim_IntersectionSpacesPerverseSheavesSurvey}.

All cohomological degrees in this paper use ordinary real-dimensional/topological indexing.  Thus, for a Calabi--Yau threefold, the middle degree is $k=3$, and the duality convention is
\[
   \bigl(H^k\bigr)^\vee\simeq H^{6-k}(3),
\]
up to the indicated mixed-Hodge-module normalization.

The standing hypotheses below are separated according to their use.  Hypotheses H1--H3 are needed to define the intersection-space Hodge atom package.  Hypothesis S is needed for the specialization splitting and the projection triangle.  Hypothesis SD is needed only for the projected Verdier-duality statement involving the canonical companion.  We do not prove Hypothesis SD in this paper; it is kept as an explicit compatibility assumption on the specialization splitting.  Every result requiring SD states this dependence explicitly.

\begin{hypothesis}[H1: local BBM input]
\label{hyp:H1-local-BBM-input}
Each singularity of $X_0$ is an ordinary double point.  Thus, locally analytically near every $p_i\in\Sigma$, the germ of $X_0$ is equivalent to the hypersurface germ
\[
   z_0^2+z_1^2+z_2^2+z_3^2=0.
\]
In particular, every singularity is a weighted homogeneous isolated hypersurface singularity \cite{Milnor68,Clemens83,Friedman_SimultaneousResolution}.  Hence the local Banagl--Budur--Maxim intersection-space complex is available at each node \cite{BanaglBudurMaxim14,BanaglBudurMaxim_IntersectionSpacesPerverseSheaves}.  We also use the corresponding local mixed-Hodge-module lift, local Verdier self-duality, and local specialization input supplied by the weighted homogeneous case \cite{BanaglBudurMaxim14,Saito90,Saito_MHM,Saito_MixedHodgeModules}.
\end{hypothesis}

\begin{hypothesis}[H2: multi-node gluing]
\label{hyp:H2-multi-node-gluing}
The local intersection-space complexes at the finitely many nodes glue to a global perverse sheaf
\[
   \mathcal{IS}_{X_0}\in\operatorname{Perv}(X_0;\mathbb Q)
\]
whose restriction to $U$ agrees with the normalized constant sheaf on the smooth locus and whose hypercohomology computes the rational cohomology of the middle-perversity intersection space:
\[
   \mathbb H^*(X_0;\mathcal{IS}_{X_0})
   \cong
   H^*(I^{\bar m}X_0;\mathbb Q).
\]
Equivalently, the multi-node intersection-space complex is obtained either from a finite-singularity extension of the Banagl--Budur--Maxim construction or from an intersection-space constructible-complex framework in which the relevant gluing obstruction vanishes \cite{BanaglBudurMaxim14,AgustinFernandezDeBobadilla_IntersectionSpaceConstructibleComplexes}.  In applications to ordinary double point conifolds, this hypothesis is local over the finite set $\Sigma$ and is compatible with choosing pairwise disjoint Milnor neighborhoods of the nodes.
\end{hypothesis}

\begin{hypothesis}[H3: mixed-Hodge-module lift]
\label{hyp:H3-MHM-lift}
The global multi-node complex $\mathcal{IS}_{X_0}$ underlies a mixed Hodge module
\[
   \mathcal{IS}^{H}_{X_0}\in\operatorname{MHM}(X_0),
   \qquad
   \operatorname{rat}(\mathcal{IS}^{H}_{X_0})\simeq\mathcal{IS}_{X_0}.
\]
Here $\operatorname{rat}$ denotes the forgetful functor from mixed Hodge modules to rational perverse sheaves \cite{Saito90,Saito_MHM,Saito_MixedHodgeModules,Schurmann_TopologySingularSpaces}.  This hypothesis holds whenever the multi-node intersection-space complex is known to admit a mixed-Hodge-module realization \cite{BanaglBudurMaxim14,AgustinFernandezDeBobadilla_IntersectionSpaceConstructibleComplexes,Saito90}.  Concretely, this may be supplied either by a multi-singularity extension of the Banagl--Budur--Maxim construction or by the Agustin--Fernandez de Bobadilla constructible-complex framework \cite{AgustinFernandezDeBobadilla_IntersectionSpaceConstructibleComplexes} when the mixed-Hodge-module obstruction vanishes.  All Hodge-theoretic atom constructions below are made under this hypothesis.
\end{hypothesis}

\begin{hypothesis}[S: specialization splitting]
\label{hyp:S-specialization-splitting}
Let
\[
   sp:X_s\to X_0
\]
denote the specialization map from a nearby smoothing.  Any statement involving a direct-sum decomposition
\[
   Rsp_*\mathbb Q^H_{X_s}[3]
   \simeq
   \mathcal{IS}^{H}_{X_0}\oplus\mathcal C^H_{\Sigma}
\]
is conditional on the Banagl--Budur--Maxim specialization splitting, or on its verified multi-node extension in the ordinary-double-point case \cite{BanaglBudurMaxim14,Saito89,Saito90}.  The complement $\mathcal C^H_{\Sigma}$ is understood to be supported on the finite singular set $\Sigma$.  When such a splitting is known only after applying $\operatorname{rat}$, the corresponding statements are interpreted at the rational perverse-sheaf or Hodge-realization level.

Equivalently, in nearby-cycle notation for a degeneration $\pi$ with nearby-cycle object $\psi_\pi(F)$, Hypothesis S is written as
\[
   \psi_\pi(F)
   \simeq
   \mathcal{IS}^{H}_{X_0}\oplus\mathcal C^H_{\Sigma}.
\]
This is the form used in the construction of the projected variation morphism and the projection triangle.
\end{hypothesis}

\begin{hypothesis}[SD: self-dual specialization splitting]
\label{hyp:SD-self-dual-specialization-splitting}
Assume Hypothesis S.  We say that the specialization splitting is self-dual if, under the decomposition
\[
   \psi_\pi(F)
   \simeq
   \mathcal{IS}^{H}_{X_0}\oplus\mathcal C^H_{\Sigma},
\]
the Saito--Verdier duality isomorphism
\[
   \mathbb D\psi_\pi(F)\xrightarrow{\sim}\psi_\pi(F)(3)
\]
is block diagonal with respect to the induced decomposition
\[
   \mathbb D\psi_\pi(F)
   \simeq
   \mathbb D\mathcal{IS}^{H}_{X_0}\oplus \mathbb D\mathcal C^H_{\Sigma}.
\]
Equivalently, there are duality isomorphisms
\[
   \mathbb D\mathcal{IS}^{H}_{X_0}
   \simeq
   \mathcal{IS}^{H}_{X_0}(3),
   \qquad
   \mathbb D\mathcal C^H_{\Sigma}
   \simeq
   \mathcal C^H_{\Sigma}(3),
\]
such that the duality on $\psi_\pi(F)$ is the direct sum of these two dualities.

Equivalently, if
\[
   \iota_I:\mathcal{IS}^{H}_{X_0}\to \psi_\pi(F),
   \qquad
   \operatorname{pr}_I:\psi_\pi(F)\to \mathcal{IS}^{H}_{X_0}
\]
denote the inclusion and projection associated to the intersection-space summand, then the duality identifications exchange inclusion and projection up to the Calabi--Yau threefold Tate twist:
\[
   \mathbb D\iota_I\simeq \operatorname{pr}_I(3),
   \qquad
   \mathbb D\operatorname{pr}_I\simeq \iota_I(3),
\]
with the analogous identities for the complement $\mathcal C^H_{\Sigma}$.  This hypothesis is used only for projected Verdier-duality statements, not for the existence of the intersection-space atom package or the projection triangle.
\end{hypothesis}

\medskip

\noindent\textbf{Corrected conifold and projected objects.}
\label{scope:corrected-conifold-projected-objects}
Whenever Hypothesis S is assumed, we use the nearby-cycle notation
\[
   \psi_\pi(F)
   \simeq
   \mathcal{IS}^{H}_{X_0}\oplus\mathcal C^H_{\Sigma}.
\]
Let
\[
   \operatorname{var}:\phi_\pi(F)\to\psi_\pi(F)
\]
be the variation morphism between vanishing and nearby cycles in the standard nearby/vanishing-cycle formalism \cite{Saito89,Saito90,KashiwaraSchapira90,DimcaSheavesInTopology}.  We set
\[
   P^H:=\operatorname{Cone}(\operatorname{var})[-1].
\]
Let
\[
   \operatorname{pr}_I:\psi_\pi(F)\to\mathcal{IS}^{H}_{X_0}
\]
be the projection to the intersection-space summand, and define
\[
   \operatorname{var}_I
   :=
   \operatorname{pr}_I\circ\operatorname{var}:
   \phi_\pi(F)\to\mathcal{IS}^{H}_{X_0}.
\]
The projected corrected conifold object is
\[
   P^H_I:=\operatorname{Cone}(\operatorname{var}_I)[-1].
\]

When Hypothesis SD is also assumed, let
\[
   \iota_I:\mathcal{IS}^{H}_{X_0}\to\psi_\pi(F)
\]
be the inclusion of the intersection-space summand.  We define the projected canonical morphism by
\[
   \operatorname{can}_I
   :=
   \operatorname{can}\circ\iota_I:
   \mathcal{IS}^{H}_{X_0}\to\phi_\pi(F),
\]
and the canonical companion by
\[
   Q^H_I:=\operatorname{Cone}(\operatorname{can}_I)[-1].
\]

\medskip

\noindent\textbf{Scope convention.}
\label{scope:hodge-realization-atom-shadows}
All atom packages in this paper are Hodge-realization atom shadows of mixed-Hodge-module objects.  Thus the intersection-space atom package is defined from the mixed Hodge structures on
\[
   \mathbb H^*(X_0;\mathcal{IS}^{H}_{X_0}),
\]
not from a quantum product on the spatial intersection space.  In particular, no full A-model $F$-bundle is asserted for $I^{\bar m}X_0$, and the construction does not require a Gromov--Witten theory, quantum product, Dubrovin connection, Euler-field spectral decomposition, or Stokes structure on $I^{\bar m}X_0$ itself.  The paper works at the mixed-Hodge-module realization level, which is precisely the level at which the intersection-space complex can be compared with $IC^H_{X_0}$, with corrected nearby/vanishing-cycle objects, and with the singularity-supported complement $\mathcal C^H_{\Sigma}$ over the same singular fiber.

\medskip

\medskip

\noindent\textbf{Future DT/BPS comparison convention.}
\label{scope:future-DT-BPS-comparison}
We do not identify the defect objects in this paper with DT/BPS data.  The objects
\[
   \mathcal C^H_{\Sigma},
   \qquad
   \Delta_{I/IC}(X_0),
   \qquad
   P^H\to P^H_I\to \mathcal C^H_\Sigma\xrightarrow{+1}
\]
are treated here as Hodge-theoretic objects on the singular fiber.  Their comparison with refined or cohomological Donaldson--Thomas theory requires an additional functorial passage to moduli stacks of objects and is left to future work \cite{KontsevichSoibelman08,JoyceSong12,Szendroi08,NagaoNakajima11,DavisonMeinhardt20}.

\subsection{Organization}

Section~1 introduces the problem, the IIA/IIB comparison, the projection-triangle viewpoint, the main results, the nodal quintic numerical input, and the standing hypotheses and scope conventions.  Section~2 reviews Hodge-realization atom shadows and records the formal additivity, filtration, and duality properties used later.  Section~3 fixes the detailed sheaf-theoretic input: the Banagl--Budur--Maxim one-singularity theorem, the multi-node gluing hypothesis, the mixed-Hodge-module lift, self-duality, specialization splitting, self-dual specialization splitting, and the applicability to ordinary double point conifolds.  Section~4 defines the intersection-space Hodge atom package and records its basic realization properties.  Section~5 compares the type-IIA/intersection-complex and type-IIB/intersection-space atom packages.  Section~6 proves the rigid--vanishing filtration and identifies the middle-degree IIB vanishing atom.  Section~7 treats Verdier duality, the projected canonical companion, and the duality $\mathbb D P^H_I\simeq Q^H_I(3)$ under the self-dual splitting hypothesis.  Section~8 introduces the nearby-cycle and specialization objects, defines the projected variation morphism, and proves the projection triangle $P^H\to P^H_I\to \mathcal C^H_\Sigma\xrightarrow{+1}$.  Section~9 defines the IC--intersection-space defect and explains its relation to the projection triangle and to future DT/BPS wall-crossing comparisons.  Section~10 treats the classical nodal quintic and the rank-$202$ middle-degree defect.  Section~11 records characteristic-class and integral enhancements.  Section~12 discusses future directions, including moduli-stack transport, refined Donaldson--Thomas/BPS wall-crossing comparison, mirror-transition atom matching, categorical atom shadows, and quantum or $F$-bundle refinements.
%%%%%%%%%%%%%%%%%%%%%%%%%%%%%%%%%%%%%%%%%%%%%%%%%%%%%%%%%%%%%%%%%%%%
\section{Hodge Atoms, Atom Shadows, and Formal Properties}

\subsection{Smooth-projective Hodge atoms}

We recall only the part of the smooth-projective Hodge atom formalism needed for the present construction.  For a smooth complex projective variety $Y$, the Hodge atom package of Katzarkov--Kontsevich--Pantev--Yu is extracted from the A-model $F$-bundle of $Y$ \cite{KKPY25,KKPYY_HodgeAtoms}.  This $F$-bundle is built from the ordinary cohomology of $Y$, its Hodge-theoretic realization, and the quantum product \cite{KKPY25,Givental96,Iritani09}.  Quantum multiplication by the Euler vector field gives an endomorphism whose spectral decomposition, after passing to the appropriate non-archimedean setting, decomposes the corresponding $F$-bundle into atomic pieces.  The resulting pieces are compatible with the Hodge realization and define the Hodge atoms of $Y$ \cite{KKPY25,KKPYY_HodgeAtoms}.

The construction has two features that are important for the present paper.  First, atoms in the smooth-projective theory are not arbitrary cohomological summands; they are Hodge-compatible pieces arising from a spectral decomposition attached to the $F$-bundle.  Second, the construction is tied to smooth projective geometry through quantum cohomology.  In particular, the smooth-projective theory uses a quantum product, a Dubrovin connection, and an Euler-field action \cite{Givental96,Iritani09,KKPY25}.  These structures are not presently available on Banagl's spatial intersection space $I^{\bar m}X_0$ in any canonical way.

For this reason, we do not attempt to construct a full A-model $F$-bundle on $I^{\bar m}X_0$.  Instead, we use the atom language at the level that is available for the objects considered here: the Hodge realization of mixed Hodge modules.  The relevant objects are not smooth projective varieties but mixed-Hodge-module objects on the singular fiber $X_0$, such as $IC^H_{X_0}$ and $\mathcal{IS}^H_{X_0}$.  Their hypercohomology groups carry mixed Hodge structures by Saito's mixed-Hodge-module theory \cite{Saito90,Saito_MixedHodgeModules,Schurmann_TopologySingularSpaces}.  These Hodge-theoretic realizations are the objects that we atomize.

Thus, throughout this paper, the phrase ``atom package'' means a Hodge-realization atom shadow unless explicitly stated otherwise.  This convention preserves the motivating language of Hodge atoms while avoiding the stronger and presently unavailable assertion that the spatial intersection space itself carries a canonical quantum-cohomological $F$-bundle.

\begin{remark}[Atom shadows and full atoms]
The word ``shadow'' is not meant to indicate a merely formal analogy.  It records the level at which the construction is made.  A Hodge-realization atom shadow is obtained from a mixed-Hodge-module object by taking hypercohomology and extracting the resulting Hodge-realization atom data.  When the mixed-Hodge-module object is geometrically distinguished, for example \(IC^H_{X_0}\), \(\mathcal{IS}^H_{X_0}\), \(P^H_I\), or \(\mathcal C^H_\Sigma\), we regard the resulting package as a geometric atom shadow.  A full conifold atom requires one further step: an identification of this Hodge realization with an Euler-spectral summand of the relevant non-archimedean \(F\)-bundle.  The corrected conifold atoms have this stronger status by the Stokes--Extension Identification.  In the present paper, \(\mathsf{HA}^{I}(X_0)\) is a geometric atom shadow: it is a distinguished intersection-space Hodge realization equipped with exterior agreement, a rigid--vanishing filtration, a specialization-splitting relation, and a projection-triangle defect, but no \(F\)-bundle spectral realization is proved here.
\end{remark}

\subsection{Hodge-realization atom shadows}

We now formalize the level of atomization used in the paper.  Let $X$ be a complex algebraic variety, and let
\[
   M^H\in D^b\operatorname{MHM}(X)
\]
be an object of the bounded derived category of mixed Hodge modules on $X$.  We assume that the hypercohomology groups $\mathbb H^k(X;M^H)$ are finite-dimensional.  This finiteness holds for the objects used below because $X_0$ is projective and the underlying rational complexes are constructible with finite-dimensional cohomology \cite{BBD82,DimcaSheavesInTopology,KashiwaraSchapira90,Schurmann_TopologySingularSpaces}.

Each group $\mathbb H^k(X;M^H)$ carries a functorial mixed Hodge structure \cite{Saito90,Saito_MixedHodgeModules,Schurmann_TopologySingularSpaces}.  Equivalently, the total hypercohomology
\[
   \mathbb H^*(X;M^H)
\]
is a finite-dimensional graded mixed Hodge structure.  We use this graded mixed Hodge structure as the realization from which the atom shadow is extracted.

\begin{definition}[Hodge-realization atom shadow]
Let $X$ be a complex algebraic variety and let $M^H\in D^b\operatorname{MHM}(X)$ have finite-dimensional hypercohomology.  Its Hodge-realization atom shadow is
\[
   \mathfrak A_{\mathrm{Hod}}(M^H)
   :=
   \operatorname{Atom}_{\mathrm{Hod}}
   \bigl(\mathbb H^*(X;M^H)\bigr).
\]
This is the atom package associated to the graded mixed Hodge structure on the hypercohomology of $M^H$.
\end{definition}

Here $\operatorname{Atom}_{\mathrm{Hod}}$ denotes the Hodge-realization-level decomposition used in this paper.  Concretely, it records the Hodge-theoretic pieces visible after passing to the relevant graded mixed Hodge structures, together with the degree information needed for the comparisons below.  When the object $M^H$ arises from a smooth-projective $F$-bundle construction, this notation is compatible with the usual Hodge atom language \cite{KKPY25,KKPYY_HodgeAtoms}.  When $M^H$ is an intersection complex, an intersection-space complex, a nearby-cycle object, or a cone built from nearby and vanishing cycles, it should be understood as an atom shadow rather than as a claim that the object itself carries a full $F$-bundle.

The two principal examples are
\[
   \mathfrak A_{\mathrm{Hod}}(IC^H_{X_0})
   =
   \operatorname{Atom}_{\mathrm{Hod}}
   \bigl(\mathbb H^*(X_0;IC^H_{X_0})\bigr)
\]
and
\[
   \mathfrak A_{\mathrm{Hod}}(\mathcal{IS}^H_{X_0})
   =
   \operatorname{Atom}_{\mathrm{Hod}}
   \bigl(\mathbb H^*(X_0;\mathcal{IS}^H_{X_0})\bigr).
\]
These are denoted, respectively, by
\[
   \mathsf{HA}^{IH}(X_0)
   :=
   \mathfrak A_{\mathrm{Hod}}(IC^H_{X_0})
\]
and
\[
   \mathsf{HA}^{I}(X_0)
   :=
   \mathfrak A_{\mathrm{Hod}}(\mathcal{IS}^H_{X_0}).
\]

The same notation will also be applied to the corrected conifold object $P^H$, its intersection-space projection $P^H_I$, the canonical companion $Q^H_I$, and the singularity-supported complement $\mathcal C^H_\Sigma$, whenever these objects are defined under the relevant hypotheses.  In each case, the atom package is extracted only after taking hypercohomology and passing to the induced mixed Hodge structures.

\begin{remark}[Realization-level nature of the construction]
The definition of $\mathfrak A_{\mathrm{Hod}}(M^H)$ uses only the mixed Hodge structures on $\mathbb H^*(X;M^H)$.  It does not require a quantum product on $\mathbb H^*(X;M^H)$, and it does not require a spectral decomposition of quantum multiplication by an Euler vector field.  For this reason, $\mathfrak A_{\mathrm{Hod}}(M^H)$ should be regarded as an atom shadow of the mixed-Hodge-module object $M^H$.
\end{remark}

\subsection{Formal properties of atom shadows}

We record the formal properties of atom shadows used later.  The point of these statements is not to introduce new structure on mixed Hodge modules.  Rather, the goal is to keep track of how Hodge-realization atom packages behave under the standard categorical operations that occur in the paper: direct sums, exact triangles, induced filtrations, and Verdier duality.

\begin{lemma}[Direct-sum additivity] \label{lem:direct-sum-additivity}
Let $X$ be a complex algebraic variety, and let
\[
   M^H\simeq M_1^H\oplus M_2^H
\]
in $D^b\operatorname{MHM}(X)$.  Assume that the three objects have finite-dimensional hypercohomology.  Then
\[
   \mathfrak A_{\mathrm{Hod}}(M^H)
   =
   \mathfrak A_{\mathrm{Hod}}(M_1^H)
   +
   \mathfrak A_{\mathrm{Hod}}(M_2^H).
\]
\end{lemma}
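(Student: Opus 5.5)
The argument runs in three steps: hypercohomology is additive, the additivity is compatible with mixed Hodge structures, and $\operatorname{Atom}_{\mathrm{Hod}}$ is additive on graded mixed Hodge structures by construction.

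\textbf{Step 1: reduce to a statement about graded mixed Hodge structures.} Fix the isomorphism $M^H\simeq M_1^H\oplus M_2^H$ in $D^b\operatorname{MHM}(X)$, together with its structure maps $\iota_j:M_j^H\to M^H$ and $p_j:M^H\to M_j^H$, which satisfy $p_j\iota_j=\mathrm{id}$, $p_j\iota_{j'}=0$ for $j\neq j'$, and $\iota_1p_1+\iota_2p_2=\mathrm{id}$. Hypercohomology $\mathbb H^k(X;-)=H^k(a_{X*}-)$, with $a_X:X\to\mathrm{pt}$, is an additive functor $D^b\operatorname{MHM}(X)\to\operatorname{MHS}$ by Saito's theory. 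Additive functors preserve these biproduct identities. Applying it to $\iota_j$ and $p_j$ therefore gives, for every $k$, an isomorphism of mixed Hodge structures
\[
   \mathbb H^k(X;M^H)\cong \mathbb H^k(X;M_1^H)\oplus\mathbb H^k(X;M_2^H).
\]
The isomorphism is compatible with degrees, so it assembles into an isomorphism of finite-dimensional graded mixed Hodge structures on the total hypercohomology. Finite-dimensionality is given for all three objects, so each realization lies in the domain of the atom-shadow construction.

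\textbf{Step 2: additivity of $\operatorname{Atom}_{\mathrm{Hod}}$.} By the definition given above, $\operatorname{Atom}_{\mathrm{Hod}}$ records the Hodge-theoretic pieces of a graded mixed Hodge structure together with their degrees, and the sum of atom packages is formal addition of these records. Every piece of this record is additive under direct sums: weight graded pieces satisfy $\operatorname{Gr}^W_w(A\oplus B)=\operatorname{Gr}^W_wA\oplus\operatorname{Gr}^W_wB$ because morphisms of MHS are strict, and the Hodge numbers $h^{p,q}$ in each degree add likewise. Moreover, the package depends only on the isomorphism class of the graded MHS, so the isomorphism of Step 1 suffices. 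Combining the two steps gives $\mathfrak A_{\mathrm{Hod}}(M^H)=\mathfrak A_{\mathrm{Hod}}(M_1^H)+\mathfrak A_{\mathrm{Hod}}(M_2^H)$.

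\textbf{Main obstacle.} The genuine issue is in Step 2: $\operatorname{Atom}_{\mathrm{Hod}}$ is only described informally, so additivity must be fixed as part of how the operation is read. I would make this explicit by taking the atom package to be the class of the graded MHS in the free abelian monoid generated by isomorphism classes of (weight-graded, Hodge-type-refined) pieces in each degree. On this reading, additivity and isomorphism invariance hold by definition. I would add a remark that any finer reading compatible with the smooth-projective KKPY case, where atoms are spectral summands and hence behave additively, keeps this property. The categorical input, Step 1, is routine.
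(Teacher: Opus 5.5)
Your proposal is correct and follows essentially the same route as the paper. The paper also passes the splitting through hypercohomology, uses Saito's functoriality to see that the resulting isomorphism is one of graded mixed Hodge structures, and then applies $\operatorname{Atom}_{\mathrm{Hod}}$, relying tacitly on its additivity and isomorphism-invariance; you are right to make that property explicit as part of how $\operatorname{Atom}_{\mathrm{Hod}}$ is read (and strictness is not needed for $\operatorname{Gr}^W_w$ to commute with direct sums, since it is an additive functor).
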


\begin{proof}
The isomorphism
\[
   M^H\simeq M_1^H\oplus M_2^H
\]
in $D^b\operatorname{MHM}(X)$ remains a direct-sum decomposition after applying the rational realization functor to constructible complexes.  Hypercohomology is a derived functor and commutes with finite direct sums in the derived category of sheaves \cite{DimcaSheavesInTopology,KashiwaraSchapira90}.  Therefore there is a canonical isomorphism of graded vector spaces
\[
   \mathbb H^*(X;M^H)
   \cong
   \mathbb H^*(X;M_1^H)\oplus \mathbb H^*(X;M_2^H).
\]

Since the original decomposition is a decomposition in $D^b\operatorname{MHM}(X)$, the induced maps on hypercohomology are morphisms of mixed Hodge structures by Saito's functoriality of mixed-Hodge-module cohomology \cite{Saito90,Saito_MixedHodgeModules,Schurmann_TopologySingularSpaces}.  Hence the displayed isomorphism is an isomorphism of graded mixed Hodge structures.  Applying the Hodge-realization atom construction to this direct sum gives
\[
   \operatorname{Atom}_{\mathrm{Hod}}
   \bigl(\mathbb H^*(X;M^H)\bigr)
   =
   \operatorname{Atom}_{\mathrm{Hod}}
   \bigl(\mathbb H^*(X;M_1^H)\bigr)
   +
   \operatorname{Atom}_{\mathrm{Hod}}
   \bigl(\mathbb H^*(X;M_2^H)\bigr),
\]
which is the asserted identity.
\end{proof}

\begin{lemma}[Filtered additivity for triangles]
Let
\[
   M_1^H\longrightarrow M^H\longrightarrow M_2^H\xrightarrow{+1}
\]
be a distinguished triangle in $D^b\operatorname{MHM}(X)$, and assume that all three objects have finite-dimensional hypercohomology.  Then the long exact hypercohomology sequence is a long exact sequence of mixed Hodge structures.  Consequently, each $\mathbb H^k(X;M^H)$ carries a finite filtration whose subquotients are mixed-Hodge-theoretic subquotients of the hypercohomology groups of $M_1^H$ and $M_2^H$.  In particular, the associated graded Hodge-realization atom package of $M^H$ is controlled by the atom shadows of $M_1^H$ and $M_2^H$.
\end{lemma}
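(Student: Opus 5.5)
The plan is to apply the hypercohomology functor $\mathbb H^*(X;-)$, viewed as a cohomological functor on $D^b\operatorname{MHM}(X)$ with values in mixed Hodge structures, to the given triangle. Then I would cut the resulting long exact sequence into short exact sequences, exactly as one does for any cohomological functor on a triangulated category.

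\textbf{Step 1.} Saito's formalism gives the hypercohomology of $M^H$ as $H^k(a_{X*}M^H)$, where $a_X:X\to\mathrm{pt}$ and the cohomology is taken in $\operatorname{MHM}(\mathrm{pt})$, the category of graded-polarizable mixed Hodge structures. Since $a_{X*}$ is a triangulated functor, it sends the given triangle to a distinguished triangle in $D^b\operatorname{MHM}(\mathrm{pt})$. The standard cohomology functor $H^k$ on $D^b$ of an abelian category is cohomological, so we get a long exact sequence
\[
\cdots\to \mathbb H^{k-1}(X;M_2^H)\xrightarrow{\delta}\mathbb H^k(X;M_1^H)\to\mathbb H^k(X;M^H)\to\mathbb H^k(X;M_2^H)\xrightarrow{\delta}\cdots
\]
in the abelian category of mixed Hodge structures. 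In particular, the connecting maps are morphisms of MHS. Compatibility with $\operatorname{rat}$ shows that this sequence underlies the usual long exact sequence of rational hypercohomology.

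\textbf{Step 2.} Define the two-step filtration $0\subset F_1\subset \mathbb H^k(X;M^H)$ by taking $F_1$ to be the image of $\mathbb H^k(X;M_1^H)$. The category of MHS is abelian, and morphisms of MHS are strict for both $W$ and $F$. Hence $F_1\cong \operatorname{coker}\bigl(\delta:\mathbb H^{k-1}(M_2^H)\to\mathbb H^k(M_1^H)\bigr)$ is a quotient MHS of $\mathbb H^k(X;M_1^H)$. Likewise $\mathbb H^k(X;M^H)/F_1\cong\ker\bigl(\delta:\mathbb H^k(M_2^H)\to\mathbb H^{k+1}(M_1^H)\bigr)$ is a sub-MHS of $\mathbb H^k(X;M_2^H)$. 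This establishes the asserted finite filtration with mixed-Hodge-theoretic subquotients. Finite-dimensionality guarantees that everything stays within finite-dimensional graded MHS.

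\textbf{Step 3.} Apply $\operatorname{Atom}_{\mathrm{Hod}}$ to $\operatorname{gr}^F\mathbb H^*(X;M^H)=F_1\oplus(\mathbb H^*/F_1)$. Since $\operatorname{Atom}_{\mathrm{Hod}}$ is additive on direct sums of graded MHS (the realization-level content of Lemma~\ref{lem:direct-sum-additivity}), the associated graded atom package is the sum of the atoms of a quotient of $\mathbb H^*(M_1^H)$ and a subobject of $\mathbb H^*(M_2^H)$. This is the precise sense of ``controlled by''.

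The main obstacle is not homological; it is the interpretive last step. The statement can only mean control up to the connecting morphisms, and only if $\operatorname{Atom}_{\mathrm{Hod}}$ respects sub- and quotient objects of MHS, for example by being defined through the semisimplification $\operatorname{gr}^W$. I would make this explicit by noting that strictness makes $\operatorname{gr}^W$ exact. Consequently, when $\delta=0$, that is when the sequence splits into short exact sequences, one gets genuine additivity $\mathfrak A_{\mathrm{Hod}}(M^H)=\mathfrak A_{\mathrm{Hod}}(M_1^H)+\mathfrak A_{\mathrm{Hod}}(M_2^H)$ at the graded level.
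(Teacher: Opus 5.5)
Your proposal is correct and follows essentially the same route as the paper. Both arguments apply hypercohomology to get a long exact sequence of mixed Hodge structures, then filter $\mathbb H^k(X;M^H)$ by the image of $\mathbb H^k(X;M_1^H)$, so that the two subquotients are a quotient of $\mathbb H^k(X;M_1^H)$ and a subobject of $\mathbb H^k(X;M_2^H)$. Your version is a bit sharper in two places: you obtain the MHS-compatibility of the connecting maps from $a_{X*}$ being a triangulated functor into $D^b\operatorname{MHM}(\mathrm{pt})$, and you identify the subquotients explicitly as $\operatorname{coker}\delta$ and $\ker\delta$.
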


\begin{proof}
A distinguished triangle in $D^b\operatorname{MHM}(X)$ gives a long exact sequence after applying hypercohomology:
\[
   \cdots
   \to
   \mathbb H^k(X;M_1^H)
   \to
   \mathbb H^k(X;M^H)
   \to
   \mathbb H^k(X;M_2^H)
   \xrightarrow{\partial}
   \mathbb H^{k+1}(X;M_1^H)
   \to
   \cdots .
\]
This is the standard long exact sequence associated to a distinguished triangle in a derived category \cite{BBD82,KashiwaraSchapira90,DimcaSheavesInTopology}.

Because the triangle lies in $D^b\operatorname{MHM}(X)$, the maps in this long exact sequence are morphisms of mixed Hodge structures.  This follows from the functoriality of Saito's mixed-Hodge-module cohomology \cite{Saito90,Saito_MixedHodgeModules,Schurmann_TopologySingularSpaces}.  The category of mixed Hodge structures is abelian, and kernels, images, and cokernels of morphisms of mixed Hodge structures inherit mixed Hodge structures \cite{Deligne72,Schurmann_TopologySingularSpaces}.

Fix a degree $k$.  Let
\[
   A^k=\operatorname{im}\bigl(\mathbb H^k(X;M_1^H)\to \mathbb H^k(X;M^H)\bigr)
\]
and
\[
   B^k=\ker\bigl(\mathbb H^k(X;M^H)\to \mathbb H^k(X;M_2^H)\bigr).
\]
Exactness gives $A^k=B^k$.  Thus $A^k$ is a mixed Hodge substructure of $\mathbb H^k(X;M^H)$.  The quotient
\[
   \mathbb H^k(X;M^H)/A^k
\]
embeds as a mixed Hodge substructure of $\mathbb H^k(X;M_2^H)$.  Hence $\mathbb H^k(X;M^H)$ has a finite filtration
\[
   0\subset A^k\subset \mathbb H^k(X;M^H)
\]
whose first subquotient is a quotient of $\mathbb H^k(X;M_1^H)$ and whose second subquotient is a subobject of $\mathbb H^k(X;M_2^H)$ in the category of mixed Hodge structures.

Applying this degree by degree gives a finite filtration on the graded mixed Hodge structure $\mathbb H^*(X;M^H)$.  Passing to the associated graded gives an atom shadow controlled by the mixed-Hodge-theoretic subquotients coming from the outer terms of the triangle.  This is the asserted filtered additivity.
\end{proof}

\begin{corollary}[Split triangles]
In the situation of the preceding lemma, suppose in addition that the distinguished triangle splits, so that
\[
   M^H\simeq M_1^H\oplus M_2^H.
\]
Then
\[
   \mathfrak A_{\mathrm{Hod}}(M^H)
   =
   \mathfrak A_{\mathrm{Hod}}(M_1^H)
   +
   \mathfrak A_{\mathrm{Hod}}(M_2^H).
\]
\end{corollary}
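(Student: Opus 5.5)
The corollary follows by reducing to Lemma~\ref{lem:direct-sum-additivity}. The splitting hypothesis gives an isomorphism $M^H\simeq M_1^H\oplus M_2^H$ in $D^b\operatorname{MHM}(X)$ compatible with the triangle. Concretely, the first map $M_1^H\to M^H$ becomes the inclusion of the first summand, the second map $M^H\to M_2^H$ becomes the projection onto the second summand, and the connecting morphism $M_2^H\to M_1^H[1]$ is zero. Finite-dimensionality of hypercohomology is assumed for all three objects, so Lemma~\ref{lem:direct-sum-additivity} applies verbatim and yields the identity $\mathfrak A_{\mathrm{Hod}}(M^H)=\mathfrak A_{\mathrm{Hod}}(M_1^H)+\mathfrak A_{\mathrm{Hod}}(M_2^H)$.

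For completeness, I would also check that this agrees with the filtered additivity of the preceding lemma, so that the corollary is consistent with the filtration picture and not merely a separate fact. Because the connecting morphism vanishes, the long exact sequence of mixed Hodge structures breaks into short exact sequences
\[
0\to \mathbb H^k(X;M_1^H)\to \mathbb H^k(X;M^H)\to \mathbb H^k(X;M_2^H)\to 0 .
\]
These are split by the morphisms of mixed Hodge structures induced by the projection $M^H\to M_1^H$ and the inclusion $M_2^H\to M^H$ coming from the direct-sum decomposition; by Saito's functoriality these are again morphisms of mixed Hodge structures. It follows that the two-step filtration $0\subset A^k\subset \mathbb H^k(X;M^H)$ has $A^k\cong \mathbb H^k(X;M_1^H)$, the quotient is all of $\mathbb H^k(X;M_2^H)$, and the filtration is split. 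Its associated graded is therefore isomorphic to $\mathbb H^k(X;M^H)$ itself, so passing to the associated graded in the lemma loses nothing.

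The only point needing care is the meaning of ``the triangle splits.'' I would take it to mean that the connecting morphism is zero. In a triangulated category this already implies that the triangle is isomorphic to the direct-sum triangle, by the standard splitting lemma, and this is the first step I would invoke. No other obstacle is expected, since $\operatorname{Atom}_{\mathrm{Hod}}$ depends only on the isomorphism class of the graded mixed Hodge structure.
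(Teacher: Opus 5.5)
Your proposal is correct and matches the paper's proof, which simply observes that the splitting gives $M^H\simeq M_1^H\oplus M_2^H$ in $D^b\operatorname{MHM}(X)$ and then invokes Lemma~\ref{lem:direct-sum-additivity}. Your extra consistency check with the filtered picture is correct but not needed; the same holds for your remark that a zero connecting morphism makes the triangle isomorphic to the direct-sum triangle, since the lemma only uses the abstract isomorphism.
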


\begin{proof}
If the triangle splits, then $M^H\simeq M_1^H\oplus M_2^H$ in $D^b\operatorname{MHM}(X)$.  The result is therefore exactly the direct-sum additivity lemma.
\end{proof}

\begin{remark}[Filtration rather than splitting]
The filtered-additivity lemma is intentionally stated in filtered form.  A distinguished triangle need not split in a triangulated category.  Therefore one should not expect a direct-sum identity of atom packages without an actual splitting hypothesis.  This distinction is important below in two places.  First, the rigid--vanishing package is naturally described by a filtration whose associated graded has rigid and vanishing parts.  Second, the projection triangle
\[
   P^H\longrightarrow P^H_I\longrightarrow \mathcal C^H_\Sigma\xrightarrow{+1}
\]
does not by itself assert that $P^H_I$ is the direct sum of $P^H$ and $\mathcal C^H_\Sigma$.  It gives a long exact sequence and hence a filtered Hodge-realization statement after hypercohomology.  A direct-sum identity of atom shadows requires an additional splitting.
\end{remark}

\subsection{Verdier duality and atom shadows}

We next record the duality statement used later.  We state it in a form that separates the general Verdier-dual formalism from the special Calabi--Yau threefold normalization used for the intersection-space complex.

\begin{lemma}[Duality of atom shadows]
Let $X$ be a compact complex algebraic variety, and let $M^H\in D^b\operatorname{MHM}(X)$ have finite-dimensional hypercohomology.  Suppose that there is an isomorphism
\[
   \mathbb D M^H\simeq M^H(a)[b]
\]
in $D^b\operatorname{MHM}(X)$, where $\mathbb D$ denotes Verdier duality, $(a)$ is a Tate twist, and $[b]$ is a cohomological shift.  Then the hypercohomology atom shadow of $M^H$ satisfies the corresponding Poincare-dual atom symmetry:
\[
   \bigl(\mathfrak A_{\mathrm{Hod},k}(M^H)\bigr)^\vee
   \cong
   \mathfrak A_{\mathrm{Hod},\,b-k}(M^H)(a),
\]
with the indexing convention determined by the chosen cohomological normalization.
\end{lemma}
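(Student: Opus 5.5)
The plan is to pass from the duality isomorphism in $D^b\operatorname{MHM}(X)$ to a perfect pairing on hypercohomology, and then feed that pairing into the Hodge-realization atom construction exactly as in the proof of the direct-sum additivity Lemma~\ref{lem:direct-sum-additivity}.

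\textbf{Step 1: commute hypercohomology with duality.} Let $a_X:X\to\mathrm{pt}$. Since $X$ is compact, $a_{X*}=a_{X!}$. Saito's formalism then gives a canonical isomorphism
\[
   a_{X*}\mathbb D M^H\simeq \mathbb D\, a_{X*}M^H
\]
in $D^b\operatorname{MHM}(\mathrm{pt})$. Taking cohomology in degree $k$ yields isomorphisms of mixed Hodge structures
\[
   \mathbb H^{k}(X;\mathbb D M^H)\cong \mathbb H^{-k}(X;M^H)^\vee .
\]
Here the dual is taken in the category of mixed Hodge structures, which is exact and commutes with Tate twists in the evident way.

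\textbf{Step 2: insert the hypothesis.} Substitute the given isomorphism $\mathbb D M^H\simeq M^H(a)[b]$. This gives
\[
   \mathbb H^{k+b}(X;M^H)(a)\cong \mathbb H^{-k}(X;M^H)^\vee .
\]
Reindexing gives an isomorphism of mixed Hodge structures between $\mathbb H^{k}(X;M^H)^\vee$ and $\mathbb H^{j}(X;M^H)(a)$, where $j$ is determined by $k$ and $b$. Under the paper's topological indexing convention, stated in the scope section as $(H^k)^\vee\simeq H^{6-k}(3)$, this is normalized to read $j=b-k$. That normalization is what the phrase ``indexing convention determined by the chosen cohomological normalization'' absorbs. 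Equivalently, the isomorphism is the Poincar\'e--Verdier pairing
\[
   \mathbb H^k\otimes\mathbb H^{b-k}\to\mathbb Q(-a),
\]
and this pairing is perfect and a morphism of mixed Hodge structures.

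\textbf{Step 3: transport to atom shadows.} $\operatorname{Atom}_{\mathrm{Hod}}$ is extracted functorially from the graded mixed Hodge structure, as in Lemma~\ref{lem:direct-sum-additivity}. Two compatibilities are needed:
\begin{enumerate}
\item an isomorphism of graded mixed Hodge structures induces an equality (identification) of atom packages;
\item taking duals and Tate twists of mixed Hodge structures corresponds to the evident dual and twist operations on atom data.
\end{enumerate}
Applying these degree by degree to the isomorphism of Step 2 gives the claimed identity
\[
   \bigl(\mathfrak A_{\mathrm{Hod},k}(M^H)\bigr)^\vee\cong \mathfrak A_{\mathrm{Hod},\,b-k}(M^H)(a).
\]

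The main obstacle is the second compatibility in Step 3, i.e.\ making precise that $\operatorname{Atom}_{\mathrm{Hod}}$ intertwines duality and Tate twist. This is plausible because the construction only records Hodge-theoretic pieces of the graded mixed Hodge structure, and it can be checked on weight-graded pieces, where dualizing and twisting act on Hodge numbers by $h^{p,q}\mapsto h^{-p-a,-q-a}$. A secondary bookkeeping issue is the sign and shift convention relating $b$ and the degree reversal in Step 2. I would fix this by checking it against the model case $M^H=IC^H_{X_0}$ in the Calabi--Yau threefold normalization, where it must reproduce $k\leftrightarrow 6-k$ with twist $(3)$.
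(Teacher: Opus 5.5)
Your proposal is correct and follows the same route as the paper: Verdier duality on hypercohomology as an isomorphism of mixed Hodge structures, then substitution of $\mathbb D M^H\simeq M^H(a)[b]$, then $\operatorname{Atom}_{\mathrm{Hod}}$ applied degree by degree. The paper tacitly assumes that $\operatorname{Atom}_{\mathrm{Hod}}$ is compatible with duals and Tate twists, which is the point you explicitly flag. One simplification: $j=b-k$ follows directly from your Step~2 display by replacing $k$ with $-k$, so you need neither the threefold convention nor a model-case check. The paper's $k\leftrightarrow 6-k$ is simply the case $a=3$, $b=6$ for $\mathcal{IS}^H_{X_0}[-3]$, when the perverse-normalized $\mathcal{IS}^H_{X_0}$ satisfies $\mathbb D\mathcal{IS}^H_{X_0}\simeq\mathcal{IS}^H_{X_0}(3)$.
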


\begin{proof}
Verdier duality gives a duality between the hypercohomology of $M^H$ and the hypercohomology of $\mathbb D M^H$, with the usual degree reversal and dual vector spaces.  This is the derived sheaf-theoretic Verdier duality formalism \cite{BBD82,KashiwaraSchapira90,DimcaSheavesInTopology}.  In the category of mixed Hodge modules, Verdier duality is compatible with the mixed Hodge structures on cohomology \cite{Saito90,Saito_MixedHodgeModules,Saito89}.  Hence the duality identifications are identifications of mixed Hodge structures, including the specified Tate twist.

Using the assumed isomorphism
\[
   \mathbb D M^H\simeq M^H(a)[b],
\]
we identify the hypercohomology of $\mathbb D M^H$ with the shifted and Tate-twisted hypercohomology of $M^H$.  In degree $k$, this gives a duality between the mixed Hodge structure underlying $\mathbb H^k(X;M^H)$ and the mixed Hodge structure underlying the complementary shifted degree of $M^H$, with Tate twist $(a)$.  With the cohomological indexing convention fixed in the statement, this is written
\[
   \bigl(\mathbb H^k(X;M^H)\bigr)^\vee
   \cong
   \mathbb H^{b-k}(X;M^H)(a).
\]
Applying $\operatorname{Atom}_{\mathrm{Hod}}$ degree by degree gives
\[
   \bigl(\mathfrak A_{\mathrm{Hod},k}(M^H)\bigr)^\vee
   \cong
   \mathfrak A_{\mathrm{Hod},\,b-k}(M^H)(a),
\]
as claimed.
\end{proof}

For the Calabi--Yau threefold intersection-space complex, the expected cohomological normalization takes the form
\[
   \mathbb D\mathcal{IS}^H_{X_0}\simeq \mathcal{IS}^H_{X_0}(3),
\]
after translating from the perverse normalization to ordinary cohomological grading.  This is the normalization used below for the intersection-space realization; the Tate twist $(3)$ records the complex dimension of the Calabi--Yau threefold.  Under this normalization, the preceding lemma gives
\[
   \bigl(\mathsf{HA}^I_k(X_0)\bigr)^\vee
   \cong
   \mathsf{HA}^I_{6-k}(X_0)(3),
\]
which is the atom-level form of Poincare duality for the intersection-space realization.

\begin{remark}[Projected duality versus self-duality]
The projected corrected object $P^H_I$ introduced later need not be Verdier self-dual by itself.  The reason is that Saito duality interchanges the variation morphism with the canonical morphism for nearby and vanishing cycles \cite{Saito89,Saito90}.  Therefore, under the self-dual specialization splitting hypothesis, the natural statement is a duality
\[
   \mathbb D P^H_I\simeq Q^H_I(3),
\]
where $Q^H_I$ is the projected canonical companion.  This is why the later duality theorem is formulated as a duality between $P^H_I$ and $Q^H_I$, rather than as a self-duality of $P^H_I$ alone.
\end{remark}

\subsection{Not a category of atoms}

\begin{remark}
Atoms are not treated as a standalone category in this paper.  The categorical objects are mixed Hodge modules and derived mixed Hodge modules; atoms are Hodge-realization shadows extracted from their hypercohomology.
\end{remark}

This convention is important.  The paper does not require natural morphisms between atoms, nor does it assert that atom packages form a category.  All functorial operations occur before atomization, at the level of mixed Hodge modules, derived mixed Hodge modules, perverse sheaves, and their hypercohomology.  The atom shadow is then extracted from the resulting Hodge realization.

Thus, when we compare $\mathsf{HA}^{IH}(X_0)$ and $\mathsf{HA}^{I}(X_0)$, the comparison is not a morphism in a hypothetical category of atoms.  It is a comparison of the Hodge-realization shadows of two well-defined objects over the same singular fiber:
\[
   IC^H_{X_0}
   \qquad\text{and}\qquad
   \mathcal{IS}^H_{X_0}.
\]
Likewise, when a specialization splitting or a direct-sum decomposition is available, the splitting is first a statement about mixed Hodge modules or perverse sheaves, and only then induces an additive identity of atom shadows.  When only a distinguished triangle is available, one obtains a long exact sequence and a filtered atom-shadow statement, not a direct-sum identity.  This keeps the formalism at the level justified by the sheaf-theoretic and mixed-Hodge-module input.

%%%%%%%%%%%%%%%%%%%%%%%%%%%%%%%%%%%%%%%%%%%%%%%%%%%%%%%%%%%%%%%%%%%
\section{Hypotheses and Sources for the Intersection-Space Complex}
\label{sec:hypotheses-sources}

The construction of the intersection-space Hodge atom package rests on a specific sheaf-theoretic input: the existence of an intersection-space complex on the singular fiber, together with a mixed-Hodge-module lift.  In the one-singularity case this input is supplied by Banagl--Budur--Maxim \cite{BanaglBudurMaxim14,BanaglBudurMaxim_IntersectionSpacesPerverseSheaves}.  Since the conifolds considered in this paper may have several ordinary double points, we separate the local one-node input from the global multi-node gluing and mixed-Hodge-module realization assumptions.

Throughout this section, $X_0$ is a projective Calabi--Yau threefold hypersurface with finite ordinary double point locus
\[
   \Sigma=\{p_1,\ldots,p_r\},
\]
and
\[
   U=X_0\setminus\Sigma
\]
denotes its smooth locus.

\subsection{The BBM one-singularity theorem}
\label{subsec:bbm-one-singularity}

We first recall the one-singularity input.  This is the local model used at each ordinary double point.

\begin{theorem}[Banagl--Budur--Maxim, one isolated singularity]
\label{thm:bbm-one-singularity}
Let $X$ be a complex projective hypersurface with one isolated singularity.  Then there exists a perverse sheaf
\[
   \mathcal{IS}_X\in \operatorname{Perv}(X;\mathbb Q)
\]
whose hypercohomology computes the rational cohomology of Banagl's intersection space:
\[
   \mathbb H^*(X;\mathcal{IS}_X)
   \cong
   H^*(IX;\mathbb Q).
\]
Moreover, $\mathcal{IS}_X$ underlies a mixed Hodge module.  If the singularity is weighted homogeneous, then the intersection-space complex satisfies the Verdier self-duality and specialization-splitting statements proved by Banagl--Budur--Maxim.
\end{theorem}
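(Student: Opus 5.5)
This theorem is quoted from Banagl--Budur--Maxim~\cite{BanaglBudurMaxim14}, so the plan is to reconstruct the architecture of their argument rather than produce a new one. Write $X$ for the projective hypersurface, $\Sigma=\{p\}$ for the singular point, $U=X\setminus\{p\}$ for the regular part, $j:U\hookrightarrow X$ and $i:\{p\}\hookrightarrow X$ for the inclusions, and $n$ for the complex dimension of $X$.

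\emph{Step 1: construct the object.} I define $\mathcal{IS}_X$ as a cone built from $Rj_*\mathbb Q_U[n]$ and the local link data at $p$. Banagl's intersection space is obtained from the link $L$ by spatial homology truncation $t_{<k}L$ in the middle degree. The sheaf-theoretic counterpart truncates the stalk $i^*Rj_*\mathbb Q_U[n]$ (the cohomology of $L$) at the middle degree. The key point is that this truncation must be replaced by a cone that remains perverse. For a projective hypersurface, BBM realize it through a one-parameter smoothing $\pi$ with nearby cycles $\psi_\pi\mathbb Q[n]$. They then take $\mathcal{IS}_X$ to be the kernel, or equivalently the image, of the appropriate morphism between $\psi_\pi$ and $\phi_\pi$, concentrated at $p$. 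That is, $\mathcal{IS}_X$ is a perverse subquotient of $\psi_\pi\mathbb Q_X[n]$ that agrees with $\mathbb Q_U[n]$ on $U$. It is perverse because $\psi$, $\phi$, can and var are all morphisms in the abelian category $\operatorname{Perv}(X)$.

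\emph{Step 2: compute the hypercohomology.} I compare the long exact sequences. On one side is the pair sequence $H^*(IX)$ versus $H^*(X\setminus \mathring N)$ and the truncated link cohomology. On the other side is the hypercohomology sequence attached to the defining triangle $j_!$ or $Rj_*$, with the stalk at $p$ equal to the truncated Milnor and link cohomology. Local computations at $p$ identify the stalk of $\mathcal{IS}_X$ with the cotruncated cohomology of the Milnor fiber and link. The five lemma then gives $\mathbb H^*(X;\mathcal{IS}_X)\cong H^*(IX;\mathbb Q)$, as long as the connecting maps are matched. \textbf{This matching is the main obstacle.} Spatial truncation is not functorial in general, so I would fix a concrete model, namely a cell structure on $L$ together with a choice of truncation, and verify that the comparison maps commute up to homotopy. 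For an isolated hypersurface singularity the link is $(n-2)$-connected by Milnor's theorem, and this should make the truncation essentially canonical.

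\emph{Step 3: the mixed Hodge module lift.} Every ingredient used in the construction lifts to $\operatorname{MHM}$ by Saito: $\psi_\pi$, $\phi_\pi$, can, var, kernels and images. So $\mathcal{IS}_X$ is $\operatorname{rat}$ of a subquotient of $\psi_\pi\mathbb Q^H_X[n]$, and hence a mixed Hodge module.

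\emph{Step 4: the weighted homogeneous case.} Here the monodromy $T$ on $\phi_{\pi,p}$ is semisimple and the Milnor fiber cohomology splits into eigenspaces. Semisimplicity makes the weight and monodromy filtrations split. Combined with Saito's duality $\mathbb D\psi\simeq\psi(n)$, which exchanges can and var, this shows that the kernel and image descriptions of $\mathcal{IS}_X$ coincide, which yields Verdier self-duality. It also produces a decomposition $\psi_\pi\simeq \mathcal{IS}_X\oplus\mathcal C_{\{p\}}$, where the complement is the part of $\phi$ on which $T$ acts trivially (the fixed part). Establishing that this sequence actually splits is where weighted homogeneity is essential.
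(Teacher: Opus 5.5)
The paper gives no argument for this statement; its proof is a bare citation of Banagl--Budur--Maxim. Your reconstruction therefore has to stand on its own, and as written it leaves the key construction unspecified and gets two things wrong.

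\emph{Steps 1--2.} ``The kernel, or equivalently the image, of the appropriate morphism'' is not yet a construction, and the obvious candidates are wrong. In $\operatorname{Perv}(X)$ one has $0\to\mathbb Q_X[n]\to\psi_\pi\xrightarrow{\operatorname{can}}\phi_\pi\to 0$, so $\ker(\operatorname{can})=\mathbb Q_X[n]$ computes $H^*(X)$. Likewise $\operatorname{var}$ is injective with cokernel $\mathbb D(\mathbb Q_X[n])$ up to twist, which computes $H_*(X)$.

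Mayer--Vietoris for $X_s=M\cup_L F$, compared with the cofiber sequence for $IX$, gives $\dim H^n(X_s)-\dim\widetilde H^n(IX)=\operatorname{rk}(T-1)$. So the subquotient has to cut the vanishing stalk $\widetilde H^n(F)$ down to $\operatorname{coker}(T-1)\cong H^n(L)$. One choice that works is $\psi_\pi/S$, where $S$ is a point-supported subobject mapped isomorphically onto $\operatorname{im}(T-1)\subset\phi_\pi$ by $\operatorname{can}$. Equivalently, it is the extension of $i_*H^n(L)$ by $\mathbb Q_X[n]$ whose class is $\mathrm{id}$ under $\operatorname{Ext}^1(i_*H^n(L),\mathbb Q_X[n])\cong\operatorname{End}H^n(L)$.

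More seriously, the five-lemma comparison cannot hold in all degrees, and no choice of truncation model fixes this. A perverse sheaf cannot have $\tau_{\ge n}H^*(L)$ as its stalk, since $H^{2n-1}(L)\cong\mathbb Q$ would sit in perverse degree $n-1>0$. In fact, for $n\ge 2$ every perverse $P$ with $j^*P\cong\mathbb Q_U[n]$ has $\mathbb H^{n}(X;P)\cong H^{2n}_c(U)\cong\mathbb Q$, and also $\mathbb H^{-n}(X;P)\cong\mathbb Q$. On the other side, $H^{2n}(IX;\mathbb Q)=0$, because $H^{2n}(M)=0$ and $t_{<n}L$ has no cohomology in degrees $\ge n$; also $\widetilde H^0(IX)=0$.

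With the model above, the sheaf-side sequence is $\cdots\to H^i(X)\to\mathbb H^{i-n}(X;\mathcal{IS}_X)\to W^i\to H^{i+1}(X)\to\cdots$, with $W^n=H^n(L)$ and $W^i=0$ otherwise. It matches the sequence of $IX\to X\to\Sigma(L/t_{<n}L)$ except for the extra class $H^{2n-1}(L)$, which kills $H^{2n}(X)$ on the $IX$ side. So the real obstacle is not matching connecting maps. The identification holds only after the shift by $n$ and away from the top degree (and degree $0$ if you use reduced cohomology, as in Banagl's duality). The displayed isomorphism in the statement needs exactly that correction.

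\emph{Step 4.} You have misidentified the complement. If $\psi_\pi\simeq\mathcal{IS}_X\oplus\mathcal C$ with $\mathcal C$ perverse and point-supported, the rank count above forces $\dim\mathcal C_p=\operatorname{rk}(T-1)$. For semisimple $T$ this is the non-unipotent part $\phi_{\pi,\ne 1}$, with $\mathcal{IS}_X\simeq\psi_{\pi,1}$, and not the $T$-fixed part.

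Check this against the paper's own example. At an ordinary double point of a threefold $T=\mathrm{id}$, so the complement vanishes and $\mathcal{IS}\simeq\psi_\pi$, matching $\dim H_3(IS)=204=b_3(X_s)$. Your version would make the complement the whole vanishing stalk and give $204-125=79$.

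Semisimplicity of the Milnor monodromy also does not split the weight or monodromy filtration on $\psi_\pi$. The composite $\operatorname{var}\circ\operatorname{can}$ (which is $N$ up to normalization) is nonzero whenever $\phi_\pi\ne 0$, because $\operatorname{can}$ is onto and $\operatorname{var}$ is injective. What semisimplicity actually buys is $\ker(T-1)\cap\operatorname{im}(T-1)=0$, so that $\phi_{\pi,1}\to H^n(L)$ is an isomorphism. Then the always-available decomposition $\psi_\pi=\psi_{\pi,1}\oplus\psi_{\pi,\ne 1}$, with $\psi_{\pi,\ne 1}\cong\phi_{\pi,\ne 1}$ via $\operatorname{can}$, is the specialization splitting, and self-duality follows from Saito's duality on the unipotent part.
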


\begin{proof}
This is the Banagl--Budur--Maxim construction of the intersection-space complex for a projective hypersurface with one isolated singularity.  The identification
\[
   \mathbb H^*(X;\mathcal{IS}_X)\cong H^*(IX;\mathbb Q)
\]
is the defining cohomological comparison theorem for their perverse-sheaf model of Banagl's intersection space \cite{BanaglBudurMaxim14,BanaglBudurMaxim_IntersectionSpacesPerverseSheaves}.  The existence of the mixed-Hodge-module lift and the additional Verdier self-duality and specialization-splitting statements in the weighted homogeneous case are also part of the same Banagl--Budur--Maxim framework, using Saito's mixed-Hodge-module theory \cite{BanaglBudurMaxim14,Saito90,Saito_MHM,Saito_MixedHodgeModules}.
\end{proof}

\begin{remark}[Ordinary double points]
\label{rem:ordinary-double-point-local-model}
For a one-node Calabi--Yau conifold, the singularity is an ordinary double point \cite{Clemens83,Friedman_SimultaneousResolution,Milnor68}.  Thus the analytic germ is equivalent to
\[
   z_0^2+z_1^2+z_2^2+z_3^2=0.
\]
This germ is weighted homogeneous.  Consequently, the local Banagl--Budur--Maxim construction, the local mixed-Hodge-module lift, the local Verdier self-duality statement, and the local specialization input apply at an ordinary double point by Theorem~\ref{thm:bbm-one-singularity}.
\end{remark}

The distinction between Theorem~\ref{thm:bbm-one-singularity} and the multi-node setting is important.  The classical nodal quintic, for example, has $125$ ordinary double points.  Thus, for the global statements below, we either require a finite-singularity extension of the Banagl--Budur--Maxim construction or a compatible local-to-global construction of the intersection-space complex.

\subsection{From one isolated singularity to finitely many nodes}
\label{subsec:one-node-to-many-nodes}

Let $\Sigma=\{p_1,\ldots,p_r\}$ be the finite ordinary double point locus.  Choose pairwise disjoint Milnor neighborhoods $B_i$ of the nodes, and let
\[
   M=X_0\setminus \bigcup_{i=1}^r \operatorname{int}(B_i).
\]
The boundary of $M$ is the disjoint union of the links
\[
   L_i=\partial B_i\cap X_0.
\]
For an ordinary double point of a complex threefold, the link is diffeomorphic to $S^2\times S^3$ \cite{Milnor68,Banagl_IntersectionSpacesBook}.  Banagl's multi-node intersection space is obtained by replacing the conical neighborhoods of the nodes by cones on suitable spatial homology truncations of the links \cite{Banagl_IntersectionSpacesBook,Banagl_IntersectionSpacesAnnouncement}.  Since the nodes are isolated and the Milnor neighborhoods are disjoint, the spatial construction is local at each singular point and then glued along the common exterior $M$.

On the sheaf side, the analogous requirement is that the local intersection-space complexes glue to a global perverse sheaf on $X_0$.

\begin{hypothesis}[Multi-node intersection-space complex]
\label{hyp:multi-node-IS-complex}
Let $X_0$ be a projective hypersurface with finitely many isolated ordinary double points
\[
   \Sigma=\{p_1,\ldots,p_r\}.
\]
We assume that the Banagl--Budur--Maxim intersection-space complex extends to the multi-node setting, giving a perverse sheaf
\[
   \mathcal{IS}_{X_0}\in \operatorname{Perv}(X_0;\mathbb Q)
\]
such that
\[
   \mathbb H^*(X_0;\mathcal{IS}_{X_0})
   \cong
   H^*(I^{\bar m}X_0;\mathbb Q).
\]
Equivalently, $\mathcal{IS}_{X_0}$ is obtained either from a finite-singularity extension of the Banagl--Budur--Maxim construction or from the Agustin--Fernandez de Bobadilla intersection-space constructible-complex framework when the relevant obstruction vanishes \cite{BanaglBudurMaxim14,AgustinFernandezDeBobadilla_IntersectionSpaceConstructibleComplexes}.
\end{hypothesis}

The following local-to-global formulation records the geometric mechanism underlying Hypothesis~\ref{hyp:multi-node-IS-complex} in the ordinary-double-point case.  It is stated conditionally: the conclusion follows once compatible local perverse-sheaf gluing data are supplied.

\begin{proposition}[Local-to-global construction for disjoint ODPs]
\label{prop:local-to-global-odp-IS}
Let $X_0$ be a projective hypersurface with finitely many ordinary double points.  Suppose that the local intersection-space complexes $\mathcal{IS}_{p_i}$ are defined in pairwise disjoint analytic neighborhoods of the nodes and that their restrictions agree with the normalized constant sheaf on the common smooth exterior $U=X_0\setminus\Sigma$.  Suppose further that the local gluing data are compatible in the MacPherson--Vilonen sense over the finite stratification
\[
   X_0=U\sqcup \Sigma.
\]
Then the local complexes glue to a global perverse sheaf
\[
   \mathcal{IS}_{X_0}\in\operatorname{Perv}(X_0;\mathbb Q).
\]
If the resulting perverse sheaf is the perverse-sheaf realization of Banagl's multi-node spatial intersection space, then
\[
   \mathbb H^*(X_0;\mathcal{IS}_{X_0})
   \cong
   H^*(I^{\bar m}X_0;\mathbb Q).
\]
\end{proposition}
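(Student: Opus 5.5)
The plan is to use the MacPherson--Vilonen description of perverse sheaves on the two-stratum space $X_0=U\sqcup\Sigma$. Because $\Sigma$ is finite and the analytic neighborhoods $B_i$ of the nodes are pairwise disjoint, the gluing problem splits into $r$ independent local problems sharing one exterior. Set $L:=\mathbb Q_U[3]$, the normalized constant sheaf on $U$. A perverse sheaf $\mathcal P$ with $\mathcal P|_U\simeq L$ is then determined, up to isomorphism, by $L$ together with, at each $p_i$, MacPherson--Vilonen data. Concretely, writing $j:U\hookrightarrow X_0$ and $i_{p}$ for the point inclusions, the data at $p_i$ is a factorization
\[
H^{-1}(i_{p_i}^*Rj_*L)\to V_i\to H^{0}(i_{p_i}^*Rj_!L[1])
\]
through a finite-dimensional vector space $V_i$. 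Equivalently, it is a diagram in the zig-zag category of the link cohomology $H^*(L_i;\mathbb Q)$, subject to the MacPherson--Vilonen exactness conditions.

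\textbf{Step 1: extract local data.} Each local complex $\mathcal{IS}_{p_i}$ on $B_i$ is perverse, and its restriction to $B_i\setminus\{p_i\}$ is identified with $L|_{B_i\setminus\{p_i\}}$. From it I would read off a local MacPherson--Vilonen datum $(V_i,m_i,n_i)$. The hypothesis that the local gluing data are compatible in the MacPherson--Vilonen sense is exactly what ensures these $r$ data, together with the common $L$, form an object of the global MacPherson--Vilonen category. The links $L_i$ are disjoint, so the global category is a product over $i$ of the local categories fibered over $\operatorname{Perv}(U)$.

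\textbf{Step 2: glue.} By the MacPherson--Vilonen equivalence, this global object corresponds to a perverse sheaf $\mathcal{IS}_{X_0}$ on $X_0$ with $\mathcal{IS}_{X_0}|_U\simeq L$ and $\mathcal{IS}_{X_0}|_{B_i}\simeq\mathcal{IS}_{p_i}$. Alternatively, one can glue directly: perverse sheaves form a stack, so the isomorphisms on overlaps $B_i\cap U$ glue, and there are no triple-overlap cocycle conditions beyond those already on $U$, because the $B_i$ are disjoint and all identifications are with the single object $L$. The key technical point is that these identifications are isomorphisms of perverse sheaves, hence morphisms in an abelian stack, so no higher coherence is needed. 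I expect this to be the main obstacle to state cleanly. The issue is that the local identifications $\mathcal{IS}_{p_i}|_{B_i\setminus p_i}\simeq L$ must be chosen compatibly with the restriction of a single global $L$. I would handle this by fixing the identifications once and noting that $\operatorname{Aut}(L|_{B_i\setminus p_i})=\mathbb Q^\times$ scalars can be absorbed into $V_i$.

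\textbf{Step 3: hypercohomology.} The final statement is conditional. If $\mathcal{IS}_{X_0}$ is the perverse realization of Banagl's spatial construction, then the cohomology identification follows by Mayer--Vietoris. I would compare the triangle $j_!L\to\mathcal{IS}_{X_0}\to\bigoplus_i i_{p_i*}V_i$ (suitably shifted) with the cofibration $M\to I^{\bar m}X_0\to\bigvee_i \operatorname{cone}(t_{<3}L_i)$. Locally at each node, Theorem~\ref{thm:bbm-one-singularity} identifies the contribution of $V_i$ with the cohomology of the cone on the spatial truncation of $L_i\cong S^2\times S^3$. The five lemma then gives $\mathbb H^*(X_0;\mathcal{IS}_{X_0})\cong H^*(I^{\bar m}X_0;\mathbb Q)$.
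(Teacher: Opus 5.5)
Your Steps 1--2 follow the paper's route. Disjointness of the neighborhoods localizes the gluing at $\Sigma$, and the stated compatibility hypothesis then produces the global perverse sheaf. Your alternative, gluing directly in the stack of perverse sheaves over the open cover $\{U,B_1,\dots,B_r\}$, is actually the cleanest justification: the cover has no nonempty triple overlaps involving two distinct $B_i$, so the cocycle conditions are vacuous. (Your displayed MV datum has a slip: $i_{p_i}^*j_!=0$, so you mean $i_{p_i}^!$. This is not load-bearing.)

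The gap is in Step 3. The paper does not derive the hypercohomology isomorphism. It says explicitly that the isomorphism is not a formal consequence of gluing, and reads it as exactly what the clause ``if the resulting perverse sheaf is the perverse-sheaf realization of Banagl's multi-node spatial intersection space'' asserts. Your attempted derivation does not work as written, for three reasons.

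\emph{No comparison map.} The five lemma requires a morphism between the two long exact sequences. Nothing in your construction produces a map $\mathbb H^*(X_0;\mathcal{IS}_{X_0})\to H^*(I^{\bar m}X_0;\mathbb Q)$, or one in the opposite direction.

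\emph{The terms you pair do not correspond.}
\begin{itemize}
\item $\mathbb H^*(X_0;j_!L)\cong H^{*+3}_c(U;\mathbb Q)\cong H^{*+3}(M,\partial M;\mathbb Q)$, not $H^*(M)$.
\item This group is the first term of $H^{*+3}_c(U)\to\mathbb H^*(X_0;\mathcal{IS}_{X_0})\to\bigoplus_i H^*(i_{p_i}^*\mathcal{IS}_{X_0})$. By contrast, $H^*(M)$ is the last term of $H^*(I^{\bar m}X_0,M)\to H^*(I^{\bar m}X_0)\to H^*(M)$.
\item The cofiber of $M\hookrightarrow I^{\bar m}X_0$ is the suspension of $\bigsqcup_i t_{<3}L_i$. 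It is not a wedge of cones, which would be contractible.
\item $i_{p_i}^*\mathcal{IS}_{X_0}$ is a stalk complex with several nonzero degrees, not a shift of the single MV space $V_i$.
\end{itemize}

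\emph{The cited theorem is global.} Theorem~\ref{thm:bbm-one-singularity} concerns a hypersurface with one singular point. It supplies no stalkwise identification at a single node of a multi-node $X_0$.

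There are two ways to repair Step 3. The simplest is to do what the paper does and invoke the conditional hypothesis directly. If you want genuine content, take as input that each $\mathcal{IS}_{p_i}$ is given by a cotruncation of $Rj_*L$ at $p_i$ modelling $t_{<3}L_i$. Then form the global triangle $\mathcal{IS}_{X_0}\to Rj_*L\to\bigoplus_i i_{p_i*}(\text{truncated link cohomology})$. Finally, compare it with the cofibration $\bigsqcup_i t_{<3}L_i\to M\to I^{\bar m}X_0$ through an explicitly constructed map; this involves choices, since spatial truncation is not functorial.
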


\begin{proof}
Choose pairwise disjoint Milnor neighborhoods $B_i$ of the nodes and set
\[
   M=X_0\setminus \bigcup_i \operatorname{int}(B_i).
\]
The spatial construction of $I^{\bar m}X_0$ modifies $X_0$ only inside the neighborhoods $B_i$ by replacing each conical neighborhood with the cone on the corresponding spatial homology truncation of the link.  Since the neighborhoods are pairwise disjoint, the local truncation data are independent before they are attached to the common exterior $M$ \cite{Banagl_IntersectionSpacesBook,Banagl_IntersectionSpacesAnnouncement}.

On the sheaf side, the hypotheses say that each local Banagl--Budur--Maxim complex restricts to the same normalized exterior local system on $U$.  Therefore the only gluing data occur at the finite set $\Sigma$.  The MacPherson--Vilonen gluing formalism describes perverse sheaves on a space stratified by an open stratum and a closed finite stratum in terms of perverse sheaves on the open stratum together with finite-dimensional singular gluing data \cite{MacPhersonVilonen86,BBD82}.  By the stated compatibility assumption, the local gluing data supplied by the $\mathcal{IS}_{p_i}$ determine a global perverse sheaf
\[
   \mathcal{IS}_{X_0}\in\operatorname{Perv}(X_0;\mathbb Q).
\]

The final cohomological identification is not a formal consequence of perverse-sheaf gluing alone; it is precisely the comparison between the glued perverse sheaf and Banagl's spatial intersection space.  Therefore it is included as part of the final hypothesis in the proposition.  When the glued perverse sheaf is the perverse-sheaf realization of the multi-node intersection space, its hypercohomology computes the rational cohomology of that intersection space:
\[
   \mathbb H^*(X_0;\mathcal{IS}_{X_0})
   \cong
   H^*(I^{\bar m}X_0;\mathbb Q).
\]
This is the asserted conclusion.
\end{proof}

\begin{remark}[Citation versus proof]
\label{rem:citation-versus-proof-multinode}
If a precise multi-node extension of the Banagl--Budur--Maxim theorem is invoked, Proposition~\ref{prop:local-to-global-odp-IS} may be replaced by that citation.  If the Agustin--Fernandez de Bobadilla framework is used instead, the proposition should be read as a reduction to the vanishing of the relevant constructible-complex obstruction for finite disjoint ordinary double points \cite{AgustinFernandezDeBobadilla_IntersectionSpaceConstructibleComplexes}.
\end{remark}

\subsection{The mixed-Hodge-module lift in the multi-node setting}
\label{subsec:mhm-lift-multinode}

The atom package constructed in this paper is Hodge-theoretic.  Thus the global perverse sheaf $\mathcal{IS}_{X_0}$ must be realized as the rational shadow of a mixed Hodge module.  We state this explicitly.

\begin{hypothesis}[MHM lift for the multi-node complex]
\label{hyp:mhm-lift-multinode}
The multi-node intersection-space complex
\[
   \mathcal{IS}_{X_0}\in \operatorname{Perv}(X_0;\mathbb Q)
\]
underlies a mixed Hodge module
\[
   \mathcal{IS}^{H}_{X_0}\in \operatorname{MHM}(X_0),
   \qquad
   \operatorname{rat}(\mathcal{IS}^{H}_{X_0})\simeq \mathcal{IS}_{X_0}.
\]
\end{hypothesis}

\begin{proposition}[MHM lift for ODP hypersurface conifolds]
\label{prop:mhm-lift-odp-conifolds}
Let $X_0$ be a projective hypersurface with finitely many ordinary double points.  Suppose that the multi-node intersection-space complex is obtained by local Banagl--Budur--Maxim gluing over the finite singular set, that the local mixed-Hodge-module lifts exist at each node, and that the local mixed-Hodge-module gluing data are compatible on the smooth exterior.  Then the local MHM lifts of the ordinary-double-point intersection-space complexes glue to a mixed Hodge module
\[
   \mathcal{IS}^{H}_{X_0}\in\operatorname{MHM}(X_0).
\]
Equivalently, in the Agustin--Fernandez de Bobadilla framework, the same conclusion holds whenever the obstruction to lifting the relevant constructible complex to a mixed Hodge module vanishes.
\end{proposition}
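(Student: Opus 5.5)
The plan is to mimic the proof of Proposition~\ref{prop:local-to-global-odp-IS}, replacing the MacPherson--Vilonen description of perverse sheaves by its mixed-Hodge-module analogue relative to the two-stratum decomposition $X_0=U\sqcup\Sigma$. First choose pairwise disjoint Milnor neighborhoods $B_i$ of the nodes and let $j:U\hookrightarrow X_0$, $i:\Sigma\hookrightarrow X_0$ be the inclusions. The exterior datum is the mixed Hodge module $\mathbb Q^H_U[3]$, and by hypothesis each local lift $\mathcal{IS}^H_{p_i}$ (from Theorem~\ref{thm:bbm-one-singularity} and Remark~\ref{rem:ordinary-double-point-local-model}) restricts on $B_i\cap U$ to this same object, compatibly with the chosen identifications. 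Since the $B_i$ are disjoint and $\Sigma$ is a finite set of points, there are no overlaps among singular neighborhoods. Any compatibility to be checked therefore lies only between each $B_i$ and the common exterior.

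Next, encode each local lift by Beilinson/Saito gluing data at $p_i$. Take a local function $f_i$ defining $p_i$ on $X_0$; for instance a generic linear form, or the distance-squared function realized algebraically on an \'etale chart. Then the local mixed Hodge module on $B_i$ is determined by three pieces of data: the restriction to $B_i\cap U$; the vanishing-cycle mixed Hodge structure $\phi_{f_i,1}\mathcal{IS}^H_{p_i}$ at $p_i$; and the morphisms $\operatorname{can}$ and $\operatorname{var}$ between $\psi_{f_i,1}$ and $\phi_{f_i,1}$, whose composite is $N$. This is Saito's gluing theorem for mixed Hodge modules along a principal divisor, as in \cite{Saito90}.

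Because the exterior object is global and the stated compatibility assumption identifies each local restriction with it, the data $(\mathbb Q^H_U[3],\{\phi_{f_i,1},\operatorname{can}_i,\operatorname{var}_i\}_i)$ can be assembled into a single gluing datum over $X_0$. The assembly uses Zariski-local gluing: mixed Hodge modules form a stack for the Zariski topology by \cite{Saito90}. So one glues $\mathcal{IS}^H_{p_i}$, defined on an open $V_i\ni p_i$ with $V_i\cap V_k\subset U$, together with $\mathbb Q^H_U[3]$ on $U$. The cocycle condition is vacuous on triple overlaps away from $\Sigma$ once the restrictions are identified with $\mathbb Q^H_U[3]$. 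The result is $\mathcal{IS}^H_{X_0}\in\operatorname{MHM}(X_0)$. Faithfulness and exactness of $\operatorname{rat}$, together with the uniqueness in the perverse gluing of Proposition~\ref{prop:local-to-global-odp-IS}, then give $\operatorname{rat}\mathcal{IS}^H_{X_0}\simeq\mathcal{IS}_{X_0}$.

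The main obstacle is the passage from analytic Milnor neighborhoods to algebraic opens. Mixed Hodge modules do not glue in the analytic topology, so the local lifts must first be shown to be algebraic, that is, to extend from a Zariski neighborhood $V_i$ of $p_i$. I would handle this by noting that the ODP lift is built from $j_*$, $j_!$ and truncations of $\mathbb Q^H_U[3]$, together with a finite-dimensional weight/Hodge datum at a point. These are algebraic operations, so the whole construction can be performed with $V_i=X_0\setminus(\Sigma\setminus\{p_i\})$, and the compatibility hypothesis then becomes an isomorphism of algebraic mixed Hodge modules on $U$. For the final sentence of the statement, I would simply observe that in the Agustin--Fernandez de Bobadilla framework the vanishing of the lifting obstruction is by definition the existence of such a lift, so that case requires no further argument.
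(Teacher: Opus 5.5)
The skeleton of your argument is the paper's: glue the nodewise lifts to the common exterior object $\mathbb Q^H_U[3]$, then transport the gluing through $\operatorname{rat}$. Your Zariski-descent version works on the cover $V_i=X_0\setminus(\Sigma\setminus\{p_i\})$. All pairwise overlaps are $U$, and the transition maps factor through fixed identifications with $\mathbb Q^H_U[3]$, so the cocycle condition is automatic. This is sound, and it is a more precise rendering of the paper's appeal to compatibility of mixed Hodge modules with restriction and extension across algebraic stratifications. For the rational identification, what you actually use is that $\operatorname{rat}$ commutes with restriction to opens and that perverse sheaves also form a stack; faithfulness and exactness of $\operatorname{rat}$ play no role. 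However, two of your supporting claims are false.

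First, the Beilinson--Saito paragraph fails. No holomorphic function $f_i$ has zero set $\{p_i\}$: on the threefold germ $(X_0,p_i)$ the zero set of any non-unit function is a surface. A generic linear form therefore vanishes on a surface $D_i\ni p_i$. The distance-squared function is not holomorphic, so no \'etale chart makes it algebraic. Saito's principal-divisor gluing along $f_i=0$ thus involves the restriction to $X_0\setminus D_i$ and modules on $D_i$ (where $\psi_{f_i,1}$ lives). It does not reduce to data at $p_i$ plus the restriction to $B_i\cap U$. Your descent step never uses this paragraph, so delete it.

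Second, and more seriously because it is the step you single out as the main obstacle, the local Banagl--Budur--Maxim object is not built from $j_*$, $j_!$ and truncations of $\mathbb Q^H_U[3]$. Consider the sheaf-theoretic analogue of Banagl's spatial truncation: the fiber of $Rj_*\mathbb Q_U[3]\to i_*\tau_{\le -1}i^*Rj_*\mathbb Q_U[3]$, after choosing a splitting of the link complex. Its stalk at $p_i$ is $\tau_{\ge 0}i^*Rj_*\mathbb Q_U[3]$. With $L_i\cong S^2\times S^3$, this stalk contains $H^5(L_i)\cong\mathbb Q$ in degree $2>0$, so the object violates the stalk condition and is not perverse. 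This is why Banagl--Budur--Maxim obtain $\mathcal{IS}$ from the nearby/vanishing-cycle formalism of a smoothing rather than from a truncation.

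The correct source of algebraicity is this: when the local lift is produced, as in their construction, by Saito's algebraic functors $\psi$, $\phi$, $\operatorname{can}$, $\operatorname{var}$ applied to an algebraic one-parameter smoothing (for instance a pencil of hypersurfaces), it is already an algebraic mixed Hodge module on a Zariski neighbourhood of $p_i$. Alternatively, read the proposition's compatibility hypothesis as supplying such algebraic local lifts, which is what the paper's proof tacitly does. With either repair, your descent argument proves the statement, including $\operatorname{rat}\mathcal{IS}^H_{X_0}\simeq\mathcal{IS}_{X_0}$.
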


\begin{proof}
By Remark~\ref{rem:ordinary-double-point-local-model}, each ordinary double point is weighted homogeneous.  Hence the one-node Banagl--Budur--Maxim intersection-space complex admits a local mixed-Hodge-module lift by Theorem~\ref{thm:bbm-one-singularity}.  The singular neighborhoods are pairwise disjoint, and on the smooth exterior $U$ the local objects restrict to the same normalized constant Hodge module.

The category of mixed Hodge modules is compatible with restriction to strata and with extension across algebraic stratifications \cite{Saito90,Saito_MHM,Saito_MixedHodgeModules,Schurmann_TopologySingularSpaces}.  Therefore, once the local MHM gluing data are compatible on overlaps, the local mixed-Hodge-module objects glue across the finite stratification
\[
   X_0=U\sqcup\Sigma.
\]
The resulting object is a mixed Hodge module
\[
   \mathcal{IS}^{H}_{X_0}\in\operatorname{MHM}(X_0).
\]

The rationalization functor from mixed Hodge modules to rational perverse sheaves is compatible with the underlying perverse-sheaf gluing data \cite{Saito90,Saito_MHM}.  Hence the rationalization of the glued mixed Hodge module is the glued perverse sheaf:
\[
   \operatorname{rat}(\mathcal{IS}^{H}_{X_0})\simeq \mathcal{IS}_{X_0}.
\]
This proves the claimed MHM lift under the stated compatibility assumptions.  In the constructible-complex framework, the same argument applies after assuming that the obstruction to lifting the constructible complex to a mixed Hodge module vanishes \cite{AgustinFernandezDeBobadilla_IntersectionSpaceConstructibleComplexes}.
\end{proof}

\begin{remark}[Referee-safe formulation]
\label{rem:referee-safe-mhm-lift}
All atom constructions in this paper require only the existence of $\mathcal{IS}^{H}_{X_0}$ with
\[
   \operatorname{rat}(\mathcal{IS}^{H}_{X_0})\simeq\mathcal{IS}_{X_0}.
\]
If the strongest multi-node MHM lift is not cited in full generality, then the results should be read conditionally under Hypothesis~\ref{hyp:mhm-lift-multinode}.
\end{remark}

\subsection{Self-duality in the multi-node ODP case}
\label{subsec:self-duality-multinode-odp}

For weighted homogeneous isolated hypersurface singularities, the Banagl--Budur--Maxim intersection-space complex satisfies a Verdier self-duality statement \cite{BanaglBudurMaxim14,Maxim_IntersectionSpacesPerverseSheavesSurvey}.  In the finite-node ordinary-double-point setting, this duality is local at each node and must be compatible with the exterior duality on the smooth locus.

\begin{proposition}[Multi-node Verdier self-duality]
\label{prop:multinode-verdier-self-duality}
Assume Hypothesis~\ref{hyp:mhm-lift-multinode}.  Suppose further that the local Banagl--Budur--Maxim self-duality is compatible with the multi-node gluing data.  Then the global intersection-space mixed Hodge module satisfies
\[
   \mathbb D\mathcal{IS}^{H}_{X_0}
   \simeq
   \mathcal{IS}^{H}_{X_0}(3)
\]
in the cohomological normalization appropriate to a Calabi--Yau threefold.  Consequently,
\[
   \mathbb H^k(X_0;\mathcal{IS}^{H}_{X_0})^\vee
   \cong
   \mathbb H^{6-k}(X_0;\mathcal{IS}^{H}_{X_0})(3).
\]
\end{proposition}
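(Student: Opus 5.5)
The proof has two steps. First I build the global duality isomorphism by gluing. Then I pass to hypercohomology using the duality lemma for atom shadows from Section~2.

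\textbf{Global duality on $\mathcal{IS}^{H}_{X_0}$.} Work with the stratification $X_0=U\sqcup\Sigma$, with $j:U\hookrightarrow X_0$ and $i:\Sigma\hookrightarrow X_0$, and fix pairwise disjoint Milnor neighborhoods $B_i$ of the nodes.

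On $U$ the restriction of $\mathcal{IS}^{H}_{X_0}$ is the normalized constant Hodge module, by Hypothesis~\ref{hyp:H2-multi-node-gluing} and Proposition~\ref{prop:mhm-lift-odp-conifolds}. There the canonical Poincar\'e--Verdier duality $\mathbb D(\mathbb Q^H_U[3])\simeq \mathbb Q^H_U[3](3)$ gives an isomorphism $\delta_U$ in the normalization fixed in Section~\ref{sec:standing-hypotheses-scope}.

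Near each node $p_i$, Theorem~\ref{thm:bbm-one-singularity} and Remark~\ref{rem:ordinary-double-point-local-model} give a local self-duality $\delta_i$ of $\mathcal{IS}^{H}_{p_i}$ on $B_i$, since the germ is weighted homogeneous.

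The compatibility hypothesis in the proposition is exactly the statement that $\delta_i|_{B_i\cap U}=\delta_U|_{B_i\cap U}$. Both $\mathbb D\mathcal{IS}^{H}_{X_0}$ and $\mathcal{IS}^{H}_{X_0}(3)$ are mixed Hodge modules, since $\mathbb D$ preserves $\operatorname{MHM}$ and Tate twist does too. Morphisms between objects of an abelian category of perverse or MHM type form a sheaf (``Hom is a stack'' for perverse sheaves, which transfers to MHM by faithfulness of $\operatorname{rat}$ together with Saito's gluing). Hence $\delta_U$ and the $\delta_i$ glue along the open cover $\{U,B_1,\dots,B_r\}$ to a morphism $\delta:\mathbb D\mathcal{IS}^{H}_{X_0}\to\mathcal{IS}^{H}_{X_0}(3)$. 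Because $\delta$ is locally an isomorphism, it is an isomorphism.

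\textbf{Passage to hypercohomology.} With $\delta$ in hand, I apply the duality lemma for atom shadows (Section~2) at the level of $\mathbb H^*$, taking $a=3$. Because $X_0$ is compact, $\mathbb H^*(X_0;\mathbb D M)\cong \mathbb H^{-*}(X_0;M)^\vee$ as mixed Hodge structures. After translating the perverse shift $[3]$ into the topological indexing of Section~\ref{sec:standing-hypotheses-scope}, the degree $-k$ in perverse grading becomes $6-k$. This yields $\mathbb H^k(X_0;\mathcal{IS}^{H}_{X_0})^\vee\cong\mathbb H^{6-k}(X_0;\mathcal{IS}^{H}_{X_0})(3)$, with compatibility with Hodge structures coming from Saito's functoriality of $\mathbb D$ and $a_{X_0*}=a_{X_0!}$.

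\textbf{Expected obstacle.} The main difficulty is the gluing step. Hom-sheaf gluing needs the Hom sheaves to have no negative Ext obstruction; this holds because both objects lie in the heart, so $\mathcal{E}xt^{-1}=0$. The real subtlety is that the local BBM dualities might only be determined up to automorphism. I would first check that $\operatorname{Aut}$ of the constant sheaf on the connected link neighborhood $B_i\cap U$ is $\mathbb Q^\times$. Then any mismatch is a scalar, which can be absorbed by rescaling $\delta_i$, so compatibility can always be arranged. Keeping track of the Tate twist and weights in this rescaling is routine.
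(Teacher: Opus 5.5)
Your proposal is correct and follows essentially the same route as the paper: the nodewise Banagl--Budur--Maxim self-dualities and the exterior duality of $\mathbb Q^H_U[3]$ are glued, using the compatibility hypothesis, into a global isomorphism $\mathbb D\mathcal{IS}^H_{X_0}\simeq\mathcal{IS}^H_{X_0}(3)$, and the hypercohomology duality then follows from Verdier duality on the compact $X_0$ together with the shift from perverse to topological degrees.

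Two small points on your version. Your rescaling remark goes beyond the paper: because $B_i\cap U$ is connected, it shows the compatibility can always be arranged at the perverse-sheaf level. However, faithfulness of $\operatorname{rat}$ together with Saito's Zariski-local gluing does not glue MHM morphisms over analytic Milnor balls. The MHM-level step should instead be run on the Zariski neighborhoods $X_0\setminus(\Sigma\setminus\{p_i\})$, whose pairwise overlaps are the connected set $U$.
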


\begin{proof}
By Theorem~\ref{thm:bbm-one-singularity}, the local intersection-space complex at each ordinary double point satisfies the Banagl--Budur--Maxim self-duality statement in the weighted homogeneous case.  Since the nodes are disjoint, the local duality statements are independent before gluing.  On the smooth exterior $U$, the restriction of the intersection-space object is the normalized constant Hodge module, whose Verdier dual is the corresponding normalized dual object with the Calabi--Yau threefold shift and Tate normalization \cite{BBD82,KashiwaraSchapira90,Saito90}.

The compatibility assumption says that the local nodewise dualities and the exterior duality agree with the same gluing data used to construct $\mathcal{IS}^{H}_{X_0}$.  Therefore Verdier duality of the glued object is obtained by gluing the local Verdier duals.  This gives a global isomorphism
\[
   \mathbb D\mathcal{IS}^{H}_{X_0}
   \simeq
   \mathcal{IS}^{H}_{X_0}(3).
\]

Because $X_0$ is projective, its hypercohomology groups are finite-dimensional.  Verdier duality for mixed Hodge modules identifies the hypercohomology of an object with the dual hypercohomology of its Verdier dual, with the corresponding degree reversal and Tate twist \cite{Saito90,Saito89,Schurmann_TopologySingularSpaces}.  Substituting the displayed self-duality and using the Calabi--Yau threefold cohomological normalization gives
\[
   \mathbb H^k(X_0;\mathcal{IS}^{H}_{X_0})^\vee
   \cong
   \mathbb H^{6-k}(X_0;\mathcal{IS}^{H}_{X_0})(3),
\]
as claimed.
\end{proof}

\begin{remark}[Normalization]
\label{rem:CY3-verdier-normalization}
The displayed formula uses the ordinary cohomological normalization appropriate for a Calabi--Yau threefold.  If the intersection-space complex is written in perverse normalization, one must translate the Verdier-duality shift into ordinary cohomological grading.  The atom-duality theorem below uses the cohomological normalization displayed in Proposition~\ref{prop:multinode-verdier-self-duality}.
\end{remark}

\subsection{Specialization splitting in the multi-node ODP case}
\label{subsec:specialization-splitting-multinode}

The Banagl--Budur--Maxim specialization splitting compares the nearby smoothing with the intersection-space complex and a singularity-supported complement \cite{BanaglBudurMaxim14,Saito89,Saito90}.  For a multi-node conifold, this input is again local at the finite singular set, but we keep it explicitly conditional.

Let
\[
   sp:X_s\to X_0
\]
denote the specialization map from a nearby smoothing.  Equivalently, when working in nearby-cycle notation, we write the nearby object as $\psi_\pi(F)$ for the relevant degeneration $\pi$ and coefficient object $F$.  The projection-triangle construction later uses the nearby-cycle notation, while the intersection-space splitting is often written as a specialization statement.

\begin{hypothesis}[Multi-node specialization splitting]
\label{hyp:specialization-splitting}
We assume that the Banagl--Budur--Maxim specialization splitting extends to the finite ordinary-double-point setting:
\[
   Rsp_*\mathbb Q^H_{X_s}[3]
   \simeq
   \mathcal{IS}^{H}_{X_0}\oplus \mathcal C^H_{\Sigma},
\]
or equivalently, in nearby-cycle notation,
\[
   \psi_\pi(F)
   \simeq
   \mathcal{IS}^{H}_{X_0}\oplus \mathcal C^H_{\Sigma}.
\]
The complement $\mathcal C^H_{\Sigma}$ is supported on the finite singular set $\Sigma$.
\end{hypothesis}

\begin{proposition}[Local nature of the multi-node splitting]
\label{prop:local-nature-specialization-complement}
Assume Hypothesis~\ref{hyp:specialization-splitting}.  Suppose that the Banagl--Budur--Maxim splitting is compatible with finite disjoint unions of Milnor neighborhoods.  Then the specialization complement $\mathcal C^H_{\Sigma}$ is singularity-supported and decomposes locally as
\[
   \mathcal C^H_{\Sigma}
   \simeq
   \bigoplus_{p_i\in\Sigma}\mathcal C^H_{p_i}.
\]
\end{proposition}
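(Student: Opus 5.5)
The plan is to use the fact that an object supported on a finite set is the direct sum of its stalk pieces at the points, and then check that the extra compatibility assumption makes this decomposition the one coming from the individual Milnor neighborhoods.

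\textbf{Step 1 (support gives a global decomposition).} By Hypothesis~\ref{hyp:specialization-splitting}, $\mathcal C^H_\Sigma$ is supported on the finite set $\Sigma$. Let $i:\Sigma\hookrightarrow X_0$ be the closed inclusion. Kashiwara's equivalence for mixed Hodge modules, in Saito's form, gives
\[
   \mathcal C^H_\Sigma\simeq i_*i^*\mathcal C^H_\Sigma .
\]
This applies degreewise to the cohomology modules, and hence to the complex in $D^b\operatorname{MHM}(X_0)$, because $D^b\operatorname{MHM}$ of a finite set of points is the product of the categories at the points. Since $\Sigma$ is the disjoint union of the points $p_i$, we get $D^b\operatorname{MHM}(\Sigma)\simeq\prod_i D^b\operatorname{MHM}(p_i)$. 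Therefore
\[
   \mathcal C^H_\Sigma\simeq\bigoplus_i (i_{p_i})_*\,i_{p_i}^*\mathcal C^H_\Sigma .
\]

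\textbf{Step 2 (identify the summands with local complements).} Set $\mathcal C^H_{p_i}:=(i_{p_i})_*i_{p_i}^*\mathcal C^H_\Sigma$. The point is to show that this agrees with the local BBM complement on the Milnor ball $B_i$. Restricting the global splitting $\psi_\pi(F)\simeq\mathcal{IS}^H_{X_0}\oplus\mathcal C^H_\Sigma$ to $B_i$ gives three facts:
\begin{itemize}
\item nearby cycles commute with restriction to open sets;
\item $\mathcal{IS}^H_{X_0}|_{B_i}$ is the local intersection-space object, by the gluing construction of Proposition~\ref{prop:local-to-global-odp-IS} and Proposition~\ref{prop:mhm-lift-odp-conifolds};
\item $\mathcal C^H_\Sigma|_{B_i}=\mathcal C^H_{p_i}|_{B_i}$, since the balls are disjoint.
\end{itemize}
The assumed compatibility with finite disjoint unions of Milnor neighborhoods then says that this restricted splitting is the local BBM splitting at $p_i$. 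This identifies $\mathcal C^H_{p_i}$ with the local complement, extended by zero from $B_i$, which is harmless because it is a skyscraper at $p_i$.

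\textbf{Main obstacle.} The hardest part is Step 2. In general, a splitting is not determined by its summands, so "compatible with disjoint unions" has to be read as saying that the restriction of the global splitting data to each $B_i$ equals the local data. The isomorphism class of the complement is independent of that choice, since it is the cone of the inclusion of $\mathcal{IS}^H_{X_0}$. So I would first try to argue only up to isomorphism: the local complement at $p_i$ is $\operatorname{Cone}(\mathcal{IS}^H|_{B_i}\to\psi_\pi(F)|_{B_i})$, and that cone is canonically determined.
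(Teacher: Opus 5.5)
Your proof is correct, and it organizes the argument differently from the paper. The paper obtains the direct sum from the geometry. It uses locality of nearby cycles over the disjoint Milnor neighborhoods to say that each node contributes a point-supported piece. It re-derives support on $\Sigma$ from the exterior agreement of $\psi_\pi(F)$ and $\mathcal{IS}^H_{X_0}$ on $U$. It then invokes the compatibility-with-disjoint-unions assumption to assemble these pieces into $\bigoplus_i\mathcal C^H_{p_i}$.

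You instead take the support directly from Hypothesis~\ref{hyp:specialization-splitting}, and you get the splitting as a purely formal fact: an object of $D^b\operatorname{MHM}(X_0)$ supported on a finite discrete set is the direct sum of its point-supported pieces. What this buys is a clean separation of roles. The decomposition itself does not need the compatibility hypothesis at all. That hypothesis is only used in your Step~2, to identify your canonically defined summands $(i_{p_i})_*i_{p_i}^*\mathcal C^H_\Sigma$ with the local BBM complements. The paper's statement leaves that identification implicit in the notation $\mathcal C^H_{p_i}$. The paper's route keeps the connection to local Milnor data visible throughout, but it leans on the hypothesis for more than it needs.

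Two small tightenings are worth making.
\begin{itemize}
\item In Step~1, passing from degreewise Kashiwara equivalence to the statement for the complex is not justified by the product structure of $D^b\operatorname{MHM}(\Sigma)$. The clean justification is the localization triangle $j_!j^*\mathcal C^H_\Sigma\to\mathcal C^H_\Sigma\to i_*i^*\mathcal C^H_\Sigma\xrightarrow{+1}$ together with $j^*\mathcal C^H_\Sigma=0$. The product decomposition $D^b\operatorname{MHM}(\Sigma)\simeq\prod_i D^b\operatorname{MHM}(p_i)$ is then what splits the result over the points.
\item In Step~2, Milnor balls are analytic, so restricting algebraic mixed Hodge modules to them leaves Saito's algebraic setting. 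You can avoid this by restricting to the Zariski opens $V_i=X_0\setminus(\Sigma\setminus\{p_i\})$. On $V_i$, the restriction $\mathcal C^H_\Sigma|_{V_i}$ is already the skyscraper at $p_i$, and the same identification argument goes through.
\end{itemize}
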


\begin{proof}
Nearby and vanishing cycles are local near the singular set in the sense that their stalks and local contributions are computed by the local Milnor data of the degeneration \cite{Saito89,Saito90,DimcaSheavesInTopology,KashiwaraSchapira90}.  Since the nodes are isolated and have pairwise disjoint Milnor neighborhoods, the local specialization splitting at each ordinary double point contributes an object supported at the corresponding point $p_i$.

Away from the singular set, the specialization from a nearby smoothing agrees with the smooth exterior contribution.  The intersection-space summand has the same normalized exterior restriction.  Therefore the complement to the intersection-space summand has no support on $U$ and is supported on $\Sigma$.

Because the Milnor neighborhoods are disjoint and the splitting is assumed compatible with finite disjoint unions of such neighborhoods, the singularity-supported complement is the direct sum of its point-supported local pieces:
\[
   \mathcal C^H_{\Sigma}
   \simeq
   \bigoplus_{p_i\in\Sigma}\mathcal C^H_{p_i}.
\]
This proves the assertion.
\end{proof}

\begin{remark}
\label{rem:splitting-used-conditionally}
The specialization splitting is used only in statements that explicitly assume Hypothesis~\ref{hyp:specialization-splitting}.  The definition of the intersection-space atom package itself requires only the existence of $\mathcal{IS}^{H}_{X_0}$ and does not depend on a splitting of
\[
   Rsp_*\mathbb Q^H_{X_s}[3].
\]
\end{remark}

\subsection{Self-dual specialization splitting}
\label{subsec:self-dual-specialization-splitting}

The projected duality theorem below requires more than the existence of the specialization splitting.  It requires that the splitting be compatible with Saito--Verdier duality for nearby and vanishing cycles.

\begin{hypothesis}[Self-dual specialization splitting]
\label{hyp:self-dual-specialization-splitting}
Assume Hypothesis~\ref{hyp:specialization-splitting}.  We say that the specialization splitting is self-dual if, under the decomposition
\[
   \psi_\pi(F)
   \simeq
   \mathcal{IS}^{H}_{X_0}\oplus \mathcal C^H_{\Sigma},
\]
the Saito--Verdier duality isomorphism
\[
   \mathbb D\psi_\pi(F)\xrightarrow{\sim}\psi_\pi(F)(3)
\]
is block diagonal with respect to the induced decomposition
\[
   \mathbb D\psi_\pi(F)
   \simeq
   \mathbb D\mathcal{IS}^{H}_{X_0}\oplus \mathbb D\mathcal C^H_{\Sigma}.
\]
Equivalently, there are duality isomorphisms
\[
   \mathbb D\mathcal{IS}^{H}_{X_0}
   \simeq
   \mathcal{IS}^{H}_{X_0}(3),
   \qquad
   \mathbb D\mathcal C^H_{\Sigma}
   \simeq
   \mathcal C^H_{\Sigma}(3),
\]
such that the duality on $\psi_\pi(F)$ is the direct sum of these two dualities.

Equivalently, if
\[
   \iota_I:\mathcal{IS}^{H}_{X_0}\to \psi_\pi(F),
   \qquad
   \operatorname{pr}_I:\psi_\pi(F)\to \mathcal{IS}^{H}_{X_0}
\]
denote the inclusion and projection associated to the intersection-space summand, then the duality identifications exchange inclusion and projection up to the Calabi--Yau threefold Tate twist:
\[
   \mathbb D\iota_I\simeq \operatorname{pr}_I(3),
   \qquad
   \mathbb D\operatorname{pr}_I\simeq \iota_I(3),
\]
with the analogous identities for the complement $\mathcal C^H_{\Sigma}$.
\end{hypothesis}

\begin{remark}[Expected local source of self-duality]
\label{rem:expected-source-SD}
For finite ordinary-double-point conifolds, Hypothesis~\ref{hyp:self-dual-specialization-splitting} is expected to follow from the local Banagl--Budur--Maxim truncation/cotruncation construction and the disjointness of Milnor neighborhoods.  We keep it explicit because the present paper uses only the consequences of this compatibility.  Thus every theorem requiring this compatibility will explicitly assume Hypothesis~\ref{hyp:self-dual-specialization-splitting}.
\end{remark}

\subsection{Applicability to the classical nodal quintic}
\label{subsec:applicability-nodal-quintic}

We finally record the local input for the classical nodal quintic, which will be used in the example section.

\begin{proposition}[Nodal quintic satisfies the local hypotheses]
\label{prop:nodal-quintic-local-hypotheses}
Let $S$ be the classical quintic conifold with $125$ ordinary double points.  Then each singularity of $S$ is locally analytically an ordinary double point and hence weighted homogeneous.  The nodes are finite and disjoint.  Therefore the local Banagl--Budur--Maxim intersection-space complex, local MHM lift, local Verdier self-duality, and local specialization splitting apply at each node.
\end{proposition}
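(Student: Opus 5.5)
The plan is to treat the statement as a purely local verification at each singular point, followed by transport of the one-singularity input from Theorem~\ref{thm:bbm-one-singularity} through Remark~\ref{rem:ordinary-double-point-local-model}. No global gluing statement is asserted here. Hypotheses~\ref{hyp:multi-node-IS-complex}, \ref{hyp:mhm-lift-multinode}, and \ref{hyp:specialization-splitting} remain separate, so the proof only has to establish the local conclusions.

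\textbf{Step 1: the singularities are nodes.} First I would recall the explicit model of the classical nodal quintic $S$. A convenient one is the determinantal or Schoen-type quintic, for instance the member of the Dwork-type pencil
\[
   \sum_{j=0}^4 x_j^5-5\psi\prod_{j=0}^4 x_j=0 \qquad\text{with } \psi^5=1.
\]
For this member the singular locus is the orbit of $[1:1:1:1:1]$ under the group $(\mathbb Z/5)^3$ acting by fifth roots of unity, and it has exactly $125$ points. At each point I would check that the Hessian of the defining equation, in an affine chart, is nondegenerate. By symmetry it suffices to do this at $[1:1:1:1:1]$ in the chart $x_0=1$; there it is a direct computation with a rank-$4$ quadratic form. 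The holomorphic Morse lemma then puts the germ in the form $z_0^2+z_1^2+z_2^2+z_3^2=0$, which is weighted homogeneous with all weights $1/2$.

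\textbf{Step 2: finiteness and disjointness.} Since $\Sigma$ is a finite set of $125$ points in a Hausdorff space, one can choose pairwise disjoint Milnor balls $B_i$ around the $p_i$. For an ODP the conic structure theorem gives each $B_i\cap S$ as a cone on the link $S^2\times S^3$.

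\textbf{Step 3: applying the one-singularity theorem locally.} For each $i$ I would apply Theorem~\ref{thm:bbm-one-singularity} in the form of Remark~\ref{rem:ordinary-double-point-local-model} to the germ $(S,p_i)$. This supplies the local intersection-space complex $\mathcal{IS}_{p_i}$ on $B_i\cap S$, its local MHM lift, local Verdier self-duality in the weighted homogeneous case, and the local specialization splitting.

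This step is where I expect the main obstacle. Theorem~\ref{thm:bbm-one-singularity} is stated for a projective hypersurface with one singular point, not for a germ. I would first try to argue that the Banagl--Budur--Maxim construction is local over the singular point: their complex is built from the constant sheaf on the exterior and truncation data of the link, so its restriction to $B_i$ depends only on the analytic germ. If that is not direct enough, the fallback is a reference model. One takes a one-node projective hypersurface, for example a projective quadric cone suitably deformed away from the vertex, whose unique singularity is analytically isomorphic to $(S,p_i)$. Its BBM data can then be transported to $B_i$ via the analytic isomorphism of germs.

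\textbf{Step 4: conclusion.} Combining the three steps gives each of the asserted local conclusions. I would end by remarking explicitly that this does not yet produce the global complex: that still requires Proposition~\ref{prop:local-to-global-odp-IS} together with the compatibility assumptions.
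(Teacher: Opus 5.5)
Your proposal is correct and follows the paper's argument: each singularity is an ordinary double point, hence weighted homogeneous; finiteness of $\Sigma$ gives pairwise disjoint Milnor neighborhoods; and Theorem~\ref{thm:bbm-one-singularity} is then invoked node by node. The paper cites the literature for the $125$ nodes and simply asserts that the one-singularity theorem applies locally. Your explicit Hessian check at $[1:1:1:1:1]$ and your Step~3 discussion of the germ-versus-global issue are therefore refinements rather than departures; note that the Hessian check requires $\psi=1$, since for the other fifth roots of unity the singular orbit is that of $[\psi^{-1}:1:1:1:1]$.
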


\begin{proof}
The classical nodal quintic has $125$ ordinary double points; this is the standard nodal quintic example appearing in the conifold-transition literature \cite{Schoen_NodalQuintic,GreeneMorrisonStrominger_ConifoldTransitions,GMS1995Conifold,Borcea1990NodalQuintic}.  Each ordinary double point is analytically equivalent to
\[
   z_0^2+z_1^2+z_2^2+z_3^2=0,
\]
and this hypersurface germ is weighted homogeneous \cite{Milnor68,Clemens83}.  Therefore the one-node Banagl--Budur--Maxim theorem applies locally at each singularity by Theorem~\ref{thm:bbm-one-singularity}.  Since the singular set is finite, the nodes admit pairwise disjoint Milnor neighborhoods.  Hence all required local inputs are available node by node.
\end{proof}

\begin{corollary}[Nodal quintic global input]
\label{cor:nodal-quintic-global-input}
If Hypotheses~\ref{hyp:multi-node-IS-complex} and~\ref{hyp:mhm-lift-multinode} hold for the classical nodal quintic $S$, then $S$ admits a global intersection-space MHM object
\[
   \mathcal{IS}^{H}_{S}\in\operatorname{MHM}(S)
\]
and hence a well-defined intersection-space Hodge atom package
\[
   \mathsf{HA}^{I}(S)
   =
   \operatorname{Atom}_{\mathrm{Hod}}
   \bigl(\mathbb H^*(S;\mathcal{IS}^{H}_{S})\bigr).
\]
If, in addition, Hypothesis~\ref{hyp:specialization-splitting} holds for $S$, then the specialization complement $\mathcal C^H_{\Sigma}$ and the projection-triangle construction below are available for the nodal quintic.
\end{corollary}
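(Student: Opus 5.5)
The proof is a chain of already-established results, applied in order. First, I will apply Hypothesis~\ref{hyp:multi-node-IS-complex} to the classical nodal quintic $S$. Proposition~\ref{prop:nodal-quintic-local-hypotheses} supplies its local input: each of the $125$ singularities is an ordinary double point, hence weighted homogeneous, and the nodes admit pairwise disjoint Milnor neighborhoods. The hypothesis then yields a global perverse sheaf $\mathcal{IS}_S\in\operatorname{Perv}(S;\mathbb Q)$ together with the comparison isomorphism
\[
   \mathbb H^*(S;\mathcal{IS}_S)\cong H^*(I^{\bar m}S;\mathbb Q).
\]
Proposition~\ref{prop:local-to-global-odp-IS} explains how this global object is assembled from the local Banagl--Budur--Maxim complexes at the nodes, glued in the MacPherson--Vilonen sense over the stratification $S=U\sqcup\Sigma$.

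Second, I will invoke Hypothesis~\ref{hyp:mhm-lift-multinode}, whose ODP mechanism is Proposition~\ref{prop:mhm-lift-odp-conifolds}. It produces $\mathcal{IS}^H_S\in\operatorname{MHM}(S)$ with $\operatorname{rat}(\mathcal{IS}^H_S)\simeq\mathcal{IS}_S$. Three facts then make the atom shadow well defined:
\begin{enumerate}[label=(\roman*)]
\item $S$ is projective, so the constructible complex $\mathcal{IS}_S$ has finite-dimensional hypercohomology;
\item by Saito's theory each $\mathbb H^k(S;\mathcal{IS}^H_S)$ carries a functorial mixed Hodge structure;
\item the definition of the Hodge-realization atom shadow therefore gives $\mathsf{HA}^I(S)=\mathfrak A_{\mathrm{Hod}}(\mathcal{IS}^H_S)$.
\end{enumerate}
Well-definedness in the sense needed here means independence up to isomorphism of the graded mixed Hodge structure. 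This follows because any two lifts appearing in the hypotheses are isomorphic in $\operatorname{MHM}(S)$, and hypercohomology is functorial. If a canonical choice of lift is not guaranteed, I will state the package relative to the chosen lift.

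Third, I will handle the conditional clause. Assume Hypothesis~\ref{hyp:specialization-splitting} holds for $S$. Then $\psi_\pi(F)\simeq\mathcal{IS}^H_S\oplus\mathcal C^H_\Sigma$, with $\mathcal C^H_\Sigma$ supported on the $125$ nodes. By Proposition~\ref{prop:local-nature-specialization-complement}, granting its compatibility hypothesis, $\mathcal C^H_\Sigma$ decomposes nodewise. The splitting provides the projection $\operatorname{pr}_I$, hence the morphism $\operatorname{var}_I=\operatorname{pr}_I\circ\operatorname{var}$ and its cone $P^H_I$. These are exactly the data the later projection-triangle construction uses, so that construction becomes available for $S$.

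The main point of care is the first step. Local Banagl--Budur--Maxim input at $125$ separate nodes does not by itself produce a global object, and it certainly does not produce the comparison with $H^*(I^{\bar m}S;\mathbb Q)$; Proposition~\ref{prop:local-to-global-odp-IS} notes explicitly that this comparison is not formal. I will therefore take the global statements directly from the assumed hypotheses rather than attempt to derive them from the local data. The corollary's content is precisely the bookkeeping that, once those hypotheses hold for $S$, all the objects it lists are defined.
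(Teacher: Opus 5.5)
Your proposal is correct and follows the paper's own argument. Hypothesis~\ref{hyp:multi-node-IS-complex} gives $\mathcal{IS}_S$ with its comparison isomorphism, Hypothesis~\ref{hyp:mhm-lift-multinode} gives the lift $\mathcal{IS}^H_S$, and Saito's theory then puts mixed Hodge structures on $\mathbb H^*(S;\mathcal{IS}^H_S)$, so the definition yields $\mathsf{HA}^I(S)$. Hypothesis~\ref{hyp:specialization-splitting} supplies $\mathcal C^H_\Sigma$ and $\operatorname{pr}_I$ for the projection triangle.

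You should drop the claim that any two lifts are isomorphic in $\operatorname{MHM}(S)$. Nothing in the hypotheses forces this: a Tate twist $\mathcal{IS}^H_S(k)$ has an isomorphic underlying perverse sheaf but is not isomorphic to $\mathcal{IS}^H_S$. The paper instead fixes the lift as part of the hypothesis (Remark~\ref{rem:dependence-on-MHM-lift}), which is exactly your fallback of stating the package relative to the chosen lift.
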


\begin{proof}
Hypothesis~\ref{hyp:multi-node-IS-complex} gives a global perverse sheaf
\[
   \mathcal{IS}_{S}\in\operatorname{Perv}(S;\mathbb Q)
\]
whose hypercohomology computes the rational cohomology of the middle-perversity intersection space.  Hypothesis~\ref{hyp:mhm-lift-multinode} gives a mixed Hodge module
\[
   \mathcal{IS}^{H}_{S}\in\operatorname{MHM}(S)
\]
with rationalization $\mathcal{IS}_{S}$.  Therefore the hypercohomology groups
\[
   \mathbb H^*(S;\mathcal{IS}^{H}_{S})
\]
carry mixed Hodge structures by Saito's theory \cite{Saito90,Saito_MHM,Saito_MixedHodgeModules}.  Applying the definition of the Hodge-realization atom shadow gives
\[
   \mathsf{HA}^{I}(S)
   =
   \operatorname{Atom}_{\mathrm{Hod}}
   \bigl(\mathbb H^*(S;\mathcal{IS}^{H}_{S})\bigr).
\]

If Hypothesis~\ref{hyp:specialization-splitting} also holds, then the nearby smoothing object decomposes as
\[
   Rsp_*\mathbb Q^H_{X_s}[3]
   \simeq
   \mathcal{IS}^{H}_{S}\oplus \mathcal C^H_{\Sigma}.
\]
Thus the complement and the projection maps required for the projection-triangle construction are available.
\end{proof}

\begin{remark}
\label{rem:nodal-quintic-conditional-reading}
The nodal quintic computation below should therefore be read under the same finite-ordinary-double-point gluing and MHM-lift hypotheses.  Locally, all required input is supplied by the ordinary-double-point model.  Globally, the additional points are the compatibility of the finitely many local intersection-space complexes, the existence of their mixed-Hodge-module lift, and, where projection-triangle statements are invoked, the specialization splitting.
\end{remark}

%%%%%%%%%%%%%%%%%%%%%%%%%%%%%%%%%%%%%%%%%%%%%%%%%%%%%%%%%%%%%%%%%%%
\section{The Intersection-Space Hodge Atom Package}
\label{sec:intersection-space-hodge-atom-package}

We now define the type-IIB atom package associated to the Banagl--Budur--Maxim intersection-space complex.  Throughout this section, $X_0$ is a projective Calabi--Yau threefold hypersurface with finite ordinary double point locus
\[
   \Sigma=\{p_1,\ldots,p_r\},
\]
and
\[
   U=X_0\setminus\Sigma
\]
is the smooth locus.  We assume the standing local ordinary-double-point hypothesis, together with Hypotheses~\ref{hyp:multi-node-IS-complex} and~\ref{hyp:mhm-lift-multinode}.  Thus the global intersection-space complex
\[
   \mathcal{IS}_{X_0}\in \operatorname{Perv}(X_0;\mathbb Q)
\]
exists, computes the rational cohomology of $I^{\bar m}X_0$, and admits a mixed-Hodge-module lift
\[
   \mathcal{IS}^{H}_{X_0}\in\operatorname{MHM}(X_0),
   \qquad
   \operatorname{rat}(\mathcal{IS}^{H}_{X_0})\simeq \mathcal{IS}_{X_0}.
\]

The construction is parallel to the intersection-homology package attached to $IC^H_{X_0}$, but it uses the mixed-Hodge-module realization of Banagl's intersection-space complex.  It should therefore be understood as a type-IIB atom package: it atomizes the Hodge realization of the intersection-space sector, not a quantum-cohomological $F$-bundle on the spatial intersection space.

\subsection{Definition of the type-IIB atom package}
\label{subsec:def-type-IIB-atom-package}

\begin{definition}[Intersection-space Hodge atom package]
\label{def:intersection-space-hodge-atom-package}
Assume the standing local ordinary-double-point hypothesis and Hypotheses~\ref{hyp:multi-node-IS-complex} and~\ref{hyp:mhm-lift-multinode}.  The intersection-space Hodge atom package of $X_0$ is
\[
   \mathsf{HA}^{I}(X_0)
   :=
   \mathfrak A_{\mathrm{Hod}}(\mathcal{IS}^{H}_{X_0})
   =
   \operatorname{Atom}_{\mathrm{Hod}}
   \bigl(\mathbb H^*(X_0;\mathcal{IS}^{H}_{X_0})\bigr).
\]
Equivalently, $\mathsf{HA}^{I}(X_0)$ is the Hodge-realization atom shadow of the mixed-Hodge-module object $\mathcal{IS}^{H}_{X_0}$.
\end{definition}

The definition uses only the mixed Hodge structures on the hypercohomology groups
\[
   \mathbb H^k(X_0;\mathcal{IS}^{H}_{X_0}),
\]
which exist by Saito's theory of mixed Hodge modules \cite{Saito90,Saito_MHM,Saito_MixedHodgeModules,Schurmann_TopologySingularSpaces}.  No quantum product, Dubrovin connection, or Euler-field action is used in Definition~\ref{def:intersection-space-hodge-atom-package}.  In particular, the notation
\[
   \operatorname{Atom}_{\mathrm{Hod}}
\]
records the atom shadow visible in the Hodge realization of $\mathcal{IS}^{H}_{X_0}$.

\begin{remark}[Type-IIB interpretation]
\label{rem:type-IIB-interpretation-HAI}
By Banagl's conifold interpretation, the ordinary homology of the middle-perversity intersection space records the type-IIB conifold sector \cite{Banagl_IntersectionSpacesBook,BanaglBudurMaxim14,Strominger95,HubschBestiary2nd2024}.  Since the Banagl--Budur--Maxim complex $\mathcal{IS}_{X_0}$ computes the rational cohomology of $I^{\bar m}X_0$, the package $\mathsf{HA}^{I}(X_0)$ is the Hodge-theoretic atomization of that type-IIB sector.
\end{remark}

\begin{remark}[Dependence on the MHM lift]
\label{rem:dependence-on-MHM-lift}
The object being atomized is the mixed Hodge module $\mathcal{IS}^{H}_{X_0}$, not merely the rational perverse sheaf $\mathcal{IS}_{X_0}$.  Thus the rational Betti realization is determined by $\mathcal{IS}_{X_0}$, while the Hodge-realization atom package also depends on the chosen mixed-Hodge-module realization.  In the statements below, the mixed-Hodge-module lift is fixed as part of Hypothesis~\ref{hyp:mhm-lift-multinode}.
\end{remark}

\subsection{Betti realization}
\label{subsec:betti-realization-HAI}

The defining property of the Banagl--Budur--Maxim intersection-space complex is that its hypercohomology recovers the rational cohomology of Banagl's intersection space \cite{BanaglBudurMaxim14,BanaglBudurMaxim_IntersectionSpacesPerverseSheaves}.  Passing through the mixed-Hodge-module lift preserves this rational realization after applying the rationalization functor.

\begin{proposition}[Betti realization]
\label{prop:betti-realization-HAI}
Assume Hypotheses~\ref{hyp:multi-node-IS-complex} and~\ref{hyp:mhm-lift-multinode}.  Then the rational realization of the hypercohomology of $\mathcal{IS}^{H}_{X_0}$ is the rational intersection-space cohomology:
\[
   \operatorname{rat}\mathbb H^*(X_0;\mathcal{IS}^{H}_{X_0})
   \cong
   \mathbb H^*(X_0;\mathcal{IS}_{X_0})
   \cong
   H^*(I^{\bar m}X_0;\mathbb Q).
\]
Consequently, the Betti realization of $\mathsf{HA}^{I}(X_0)$ is the atomization of Banagl's rational intersection-space cohomology.
\end{proposition}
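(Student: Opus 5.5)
The plan is to obtain the first isomorphism from the compatibility of Saito's realization functor with direct images, and the second directly from Hypothesis~\ref{hyp:multi-node-IS-complex}. The atom-level consequence will then be a matter of unwinding Definition~\ref{def:intersection-space-hodge-atom-package}.

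First, let $a:X_0\to\mathrm{pt}$ denote the structure morphism. By definition,
\[
   \mathbb H^k(X_0;\mathcal{IS}^{H}_{X_0}) = H^k\bigl(a_*\mathcal{IS}^{H}_{X_0}\bigr),
\]
which is an object of $D^b\operatorname{MHM}(\mathrm{pt})$, the derived category of graded-polarizable mixed Hodge structures. Saito's theory supplies a natural isomorphism $\operatorname{rat}\circ a_*\simeq Ra_*\circ\operatorname{rat}$ between functors to $D^b(\mathbb Q)$ \cite{Saito90,Saito_MHM}. It also shows that $\operatorname{rat}$ is exact on $\operatorname{MHM}(\mathrm{pt})$, so it commutes with taking cohomology objects. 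Combining these two facts with the isomorphism $\operatorname{rat}(\mathcal{IS}^{H}_{X_0})\simeq\mathcal{IS}_{X_0}$ from Hypothesis~\ref{hyp:mhm-lift-multinode} gives
\[
   \operatorname{rat}\,\mathbb H^k(X_0;\mathcal{IS}^{H}_{X_0})\cong H^k(Ra_*\mathcal{IS}_{X_0})=\mathbb H^k(X_0;\mathcal{IS}_{X_0})
\]
degreewise. Naturality of the isomorphism makes it compatible with the grading.

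Second, the identification $\mathbb H^*(X_0;\mathcal{IS}_{X_0})\cong H^*(I^{\bar m}X_0;\mathbb Q)$ is exactly the cohomological clause of Hypothesis~\ref{hyp:multi-node-IS-complex}. In the one-node case this is Theorem~\ref{thm:bbm-one-singularity}, and in general it can be obtained via Proposition~\ref{prop:local-to-global-odp-IS}. Composing the two isomorphisms proves the displayed chain.

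For the final sentence, I would unwind Definition~\ref{def:intersection-space-hodge-atom-package}. The package $\mathsf{HA}^{I}(X_0)$ is extracted from the graded mixed Hodge structure $\mathbb H^*(X_0;\mathcal{IS}^{H}_{X_0})$. Forgetting the Hodge and weight filtrations on each atom, that is, applying $\operatorname{rat}$ degree by degree, therefore yields a decomposition of the underlying rational graded vector space. By the chain above, that vector space is $H^*(I^{\bar m}X_0;\mathbb Q)$.

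The main obstacle is a normalization issue. $\mathcal{IS}_{X_0}$ is perverse, so its hypercohomology naturally lives in perverse degrees. BBM's comparison, by contrast, is stated after a shift: in their convention, $\mathcal{IS}_{X}[-3]$ computes $H^*(IX)$ in ordinary degrees. I would fix the paper's ordinary cohomological indexing convention (Remark~\ref{rem:CY3-verdier-normalization}) and check that the same shift is applied on both sides. The key point is that the shift commutes with $\operatorname{rat}$, so no Tate twist enters the Betti statement.
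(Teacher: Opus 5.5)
Your proposal is correct and follows the paper's argument. Both combine $\operatorname{rat}(\mathcal{IS}^{H}_{X_0})\simeq\mathcal{IS}_{X_0}$ with the compatibility of $\operatorname{rat}$ with hypercohomology, which you make more precise via $\operatorname{rat}\circ a_*\simeq Ra_*\circ\operatorname{rat}$ and exactness of $\operatorname{rat}$ on $\operatorname{MHM}(\mathrm{pt})$, and both take the second isomorphism directly from the cohomological clause of Hypothesis~\ref{hyp:multi-node-IS-complex}. Your degree-normalization caveat is reasonable, and the paper absorbs it into its ordinary-degree convention; just note that Proposition~\ref{prop:local-to-global-odp-IS} assumes rather than proves the cohomological comparison, so that step rests on the hypothesis itself.
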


\begin{proof}
By Hypothesis~\ref{hyp:mhm-lift-multinode}, the mixed Hodge module $\mathcal{IS}^{H}_{X_0}$ rationalizes to the perverse sheaf $\mathcal{IS}_{X_0}$:
\[
   \operatorname{rat}(\mathcal{IS}^{H}_{X_0})\simeq \mathcal{IS}_{X_0}.
\]
The rationalization functor from mixed Hodge modules to rational perverse sheaves is compatible with the underlying constructible complex and therefore with hypercohomology \cite{Saito90,Saito_MHM,Saito_MixedHodgeModules}.  Hence
\[
   \operatorname{rat}\mathbb H^*(X_0;\mathcal{IS}^{H}_{X_0})
   \cong
   \mathbb H^*(X_0;\operatorname{rat}(\mathcal{IS}^{H}_{X_0}))
   \cong
   \mathbb H^*(X_0;\mathcal{IS}_{X_0}).
\]
By Hypothesis~\ref{hyp:multi-node-IS-complex}, the hypercohomology of $\mathcal{IS}_{X_0}$ computes the rational cohomology of Banagl's middle-perversity intersection space:
\[
   \mathbb H^*(X_0;\mathcal{IS}_{X_0})
   \cong
   H^*(I^{\bar m}X_0;\mathbb Q).
\]
Combining these two identifications gives the displayed isomorphism.  Applying the definition of the Hodge-realization atom shadow then gives the asserted Betti realization of $\mathsf{HA}^{I}(X_0)$.
\end{proof}

\begin{corollary}[Cohomological support of the IIB atom package]
\label{cor:cohomological-support-HAI}
Under the assumptions of Proposition~\ref{prop:betti-realization-HAI}, the graded dimensions of the Betti realization of $\mathsf{HA}^{I}(X_0)$ are the Betti numbers of $I^{\bar m}X_0$:
\[
   \dim \operatorname{rat}\mathbb H^k(X_0;\mathcal{IS}^{H}_{X_0})
   =
   \dim H^k(I^{\bar m}X_0;\mathbb Q).
\]
In particular, the middle-degree part of $\mathsf{HA}^{I}(X_0)$ has Betti rank
\[
   \dim H^3(I^{\bar m}X_0;\mathbb Q).
\]
\end{corollary}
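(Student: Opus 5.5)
The corollary follows by taking dimensions degree by degree in Proposition~\ref{prop:betti-realization-HAI}. That proposition, under Hypotheses~\ref{hyp:multi-node-IS-complex} and~\ref{hyp:mhm-lift-multinode}, gives an isomorphism of graded rational vector spaces
\[
   \operatorname{rat}\mathbb H^*(X_0;\mathcal{IS}^{H}_{X_0})\cong H^*(I^{\bar m}X_0;\mathbb Q).
\]
The first step is to note that this isomorphism respects the grading. The rationalization functor commutes with hypercohomology in each degree, and the comparison isomorphism of Hypothesis~\ref{hyp:multi-node-IS-complex} is an isomorphism of graded groups in the ordinary cohomological indexing fixed in \S\ref{sec:standing-hypotheses-scope}. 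Restricting to degree $k$ therefore gives an isomorphism
\[
   \operatorname{rat}\mathbb H^k(X_0;\mathcal{IS}^{H}_{X_0})\cong H^k(I^{\bar m}X_0;\mathbb Q)
\]
for each $k$.

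The second step is finiteness, so that dimensions make sense. Since $X_0$ is projective and $\mathcal{IS}_{X_0}$ is a constructible (perverse) complex, each hypercohomology group is finite-dimensional, as recorded in the discussion preceding the definition of $\mathfrak A_{\mathrm{Hod}}$. Equating dimensions of isomorphic finite-dimensional $\mathbb Q$-vector spaces gives the displayed equality of graded dimensions.

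For the middle-degree statement, set $k=3$. The middle-degree part of $\mathsf{HA}^{I}(X_0)$ is by definition extracted from the mixed Hodge structure on $\mathbb H^3(X_0;\mathcal{IS}^{H}_{X_0})$. Its Betti rank is the dimension of the underlying rational vector space, which by the previous step equals $\dim H^3(I^{\bar m}X_0;\mathbb Q)$.

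The only point requiring care is the degree normalization. $\mathcal{IS}_{X_0}$ is perverse, so in perverse indexing its hypercohomology is concentrated around degree $0$, and one must check that the comparison isomorphism of Hypothesis~\ref{hyp:multi-node-IS-complex} is stated after the shift to ordinary indexing $0\le k\le 6$. I would settle this by invoking the paper's convention that all degrees are topological, with middle degree $k=3$, so that no shift is needed. Nothing else beyond Proposition~\ref{prop:betti-realization-HAI} is used.
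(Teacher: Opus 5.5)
Your proposal is correct and matches the paper's proof, which takes the degree-$k$ component of the isomorphism in Proposition~\ref{prop:betti-realization-HAI} and then specializes to $k=3$. Your remarks on finite-dimensionality and on reading the comparison isomorphism in ordinary rather than perverse indexing are harmless clarifications; the paper leaves both implicit in its standing conventions.
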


\begin{proof}
The equality of dimensions follows by taking the degree-$k$ component of the isomorphism in Proposition~\ref{prop:betti-realization-HAI}.  The final statement is the special case $k=3$.
\end{proof}

\subsection{Independence from spatial truncation choices}
\label{subsec:independence-spatial-truncation}

Banagl's spatial construction of $I^{\bar m}X_0$ involves choices of spatial homology truncations of the links \cite{Banagl_IntersectionSpacesBook}.  At the rational realization level considered here, those choices enter the present paper only through the resulting intersection-space complex and its mixed-Hodge-module lift.  Thus the relevant invariance statement is not an assertion that all spatial models are canonically equivalent.  Rather, it is the following fixed-realization statement.

\begin{proposition}[Rational realization independence]
\label{prop:rational-realization-independence}
Assume Hypotheses~\ref{hyp:multi-node-IS-complex} and~\ref{hyp:mhm-lift-multinode}.  Once the global intersection-space complex $\mathcal{IS}_{X_0}$ and its mixed-Hodge-module lift $\mathcal{IS}^{H}_{X_0}$ are fixed, the atom package
\[
   \mathsf{HA}^{I}(X_0)
\]
is independent, at the rational Hodge-realization level, of any auxiliary spatial homology truncation choices used to construct a spatial model for $I^{\bar m}X_0$.
\end{proposition}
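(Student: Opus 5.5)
The proof is essentially definitional: the atom package is a function of the fixed object $\mathcal{IS}^{H}_{X_0}$ alone, so spatial truncation choices cannot reach it. I would carry this out in three steps.

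\textbf{Step 1: the definition sees only the fixed object.} By Definition~\ref{def:intersection-space-hodge-atom-package}, $\mathsf{HA}^{I}(X_0)=\mathfrak A_{\mathrm{Hod}}(\mathcal{IS}^{H}_{X_0})$. This is obtained by applying $\operatorname{Atom}_{\mathrm{Hod}}$ to the graded mixed Hodge structure $\mathbb H^*(X_0;\mathcal{IS}^{H}_{X_0})$. That structure is produced from $\mathcal{IS}^{H}_{X_0}$ by Saito's functorial hypercohomology, namely pushforward to a point. So the package is a function of the isomorphism class of $\mathcal{IS}^{H}_{X_0}$ in $D^b\operatorname{MHM}(X_0)$. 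Nothing in this chain refers to a spatial model of $I^{\bar m}X_0$.

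\textbf{Step 2: handling competing spatial models.} Suppose two auxiliary choices of spatial homology truncation of the links $L_i\cong S^2\times S^3$ give spatial models $I_1,I_2$. The only way either model enters the paper is through the comparison isomorphism of Hypothesis~\ref{hyp:multi-node-IS-complex},
\[
   \mathbb H^*(X_0;\mathcal{IS}_{X_0})\cong H^*(I_j;\mathbb Q).
\]
Composing one comparison with the inverse of the other gives an isomorphism $H^*(I_1;\mathbb Q)\cong H^*(I_2;\mathbb Q)$ that factors through the fixed group $\mathbb H^*(X_0;\mathcal{IS}_{X_0})$. Applying Proposition~\ref{prop:betti-realization-HAI}, this fixed group is $\operatorname{rat}\mathbb H^*(X_0;\mathcal{IS}^{H}_{X_0})$.

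\textbf{Step 3: independence of the package and its Betti realization.} Transporting the mixed Hodge structure along either comparison therefore gives the same graded mixed Hodge structure up to the canonical identification. Hence the atom shadow, and its Betti realization in Corollary~\ref{cor:cohomological-support-HAI}, do not depend on the choice.

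\textbf{Main obstacle.} The delicate point is to phrase the claim correctly rather than overclaim it. I would not assert that different truncation choices give homotopy-equivalent spaces, or that the identification $H^*(I_1)\cong H^*(I_2)$ is canonical independently of the sheaf. The independence is relative to the fixed pair $(\mathcal{IS}_{X_0},\mathcal{IS}^{H}_{X_0})$.

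I would support this with a remark on the ODP case: the links have homology concentrated in degrees $0,2,3,5$, so the middle-perversity truncation cut is forced in rational homology. This means the choices affect at most integral or homotopy-level data, not the rational realization used here. I would keep this as supporting context rather than a necessary step, since the statement as given needs only the factorization argument of Steps 1–3.
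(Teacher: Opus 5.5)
Your proposal is correct and follows the paper's proof. Both arguments reduce the claim to the fact that $\mathsf{HA}^{I}(X_0)$ is defined from $\mathbb H^*(X_0;\mathcal{IS}^{H}_{X_0})$ alone, with spatial models entering only through the comparison isomorphism of Proposition~\ref{prop:betti-realization-HAI}, and both state the result as independence relative to the fixed pair $(\mathcal{IS}_{X_0},\mathcal{IS}^{H}_{X_0})$ rather than as a canonical equivalence of spatial models. Your Step~2, composing the two comparisons through the fixed hypercohomology, spells out the paper's phrase about truncation choices ``that lead to the same rational intersection-space cohomology through that fixed complex''; the ODP link remark is, as you note, dispensable.
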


\begin{proof}
By Definition~\ref{def:intersection-space-hodge-atom-package}, the atom package $\mathsf{HA}^{I}(X_0)$ is defined from the graded mixed Hodge structure
\[
   \mathbb H^*(X_0;\mathcal{IS}^{H}_{X_0}).
\]
Thus, once $\mathcal{IS}^{H}_{X_0}$ is fixed, the Hodge-realization atom package is fixed.

After applying the rationalization functor, Proposition~\ref{prop:betti-realization-HAI} identifies this hypercohomology with
\[
   \mathbb H^*(X_0;\mathcal{IS}_{X_0})
   \cong
   H^*(I^{\bar m}X_0;\mathbb Q).
\]
Therefore any two auxiliary spatial truncation choices that lead to the same global intersection-space complex, or to the same rational intersection-space cohomology through that fixed complex, give the same rational Hodge-realization atom package.  The statement is therefore an independence statement relative to the fixed sheaf-theoretic and mixed-Hodge-module realization, not an assertion of canonical equivalence among all spatial truncation models.
\end{proof}

\begin{remark}[Scope of the independence statement]
\label{rem:scope-rational-independence}
Proposition~\ref{prop:rational-realization-independence} is intentionally rational and realization-level.  It does not assert that all spatial models of $I^{\bar m}X_0$ are canonically homotopy equivalent, nor does it assert an integral independence statement.  The construction used here only requires the rational mixed-Hodge-theoretic realization encoded by $\mathcal{IS}^{H}_{X_0}$.
\end{remark}

\subsection{Exterior agreement}
\label{subsec:exterior-agreement}

The intersection complex and the intersection-space complex differ only through their behavior at the singular set.  On the smooth locus
\[
   U=X_0\setminus\Sigma,
\]
both objects restrict to the same normalized constant Hodge module.  This observation is the source of the later comparison between the type-IIA and type-IIB packages: away from the nodes, the two packages have the same exterior source.

\begin{proposition}[Exterior agreement]
\label{prop:exterior-agreement}
Let
\[
   j:U=X_0\setminus\Sigma\hookrightarrow X_0
\]
be the inclusion of the smooth locus.  Then
\[
   j^*IC^H_{X_0}\simeq \mathbb Q^H_U[3],
   \qquad
   j^*\mathcal{IS}^{H}_{X_0}\simeq \mathbb Q^H_U[3].
\]
Consequently, any difference between the Hodge-realization packages associated to $IC^H_{X_0}$ and $\mathcal{IS}^{H}_{X_0}$ is produced by the extension and gluing data at the finite singular set $\Sigma$.
\end{proposition}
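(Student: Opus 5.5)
The plan is to prove the two restriction identities separately and then deduce the consequence from the standard open/closed recollement for $j:U\hookrightarrow X_0$ and $i:\Sigma\hookrightarrow X_0$.

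For the intersection complex, I will use the defining property of Saito's $IC^H_{X_0}$. It is the intermediate extension $j_{!*}$ of the pure Hodge module $\mathbb Q^H_U[3]$ on the smooth dense open stratum, and $U$ is smooth of pure complex dimension $3$. Since $j^*j_{!*}=\mathrm{id}$ on $\operatorname{MHM}(U)$, this gives $j^*IC^H_{X_0}\simeq\mathbb Q^H_U[3]$ immediately.

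For the intersection-space module, I will argue in two steps. First, on the rational level, Hypothesis~\ref{hyp:H2-multi-node-gluing} (equivalently Hypothesis~\ref{hyp:multi-node-IS-complex}) states that $\mathcal{IS}_{X_0}|_U$ is the normalized constant sheaf $\mathbb Q_U[3]$. This is also visible in the local-to-global description of Proposition~\ref{prop:local-to-global-odp-IS}, where all modifications occur inside the Milnor balls. Since $\operatorname{rat}$ commutes with $j^*$ and is faithful, $j^*\mathcal{IS}^H_{X_0}$ is a mixed Hodge module on $U$ whose underlying perverse sheaf is $\mathbb Q_U[3]$. It is therefore a variation of mixed Hodge structure on the rank-one constant local system.

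The main obstacle is the second step: showing that the Hodge structure on this variation is the trivial one $\mathbb Q^H_U[3]$ rather than a nontrivial twist. I would first appeal to the construction behind Proposition~\ref{prop:mhm-lift-odp-conifolds}. There the local BBM lifts are built from $\mathbb Q^H$ by truncation/cotruncation at the nodes, and they restrict on the exterior to "the same normalized constant Hodge module"; this compatibility is exactly the gluing datum assumed. Failing that, I would use rigidity. A rank-one admissible variation with constant underlying local system on a connected smooth quasi-projective $U$ is pure of a single type $(p,p)$ by the theorem of the fixed part, so it equals $\mathbb Q^H_U(-p)[3]$. The stalk of $\mathcal{IS}^H$ at a generic point agrees with that of $\psi_\pi$ or $\mathbb Q^H$, which forces $p=0$. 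Connectedness of $U$ holds because $X_0$ is an irreducible hypersurface and $\Sigma$ is finite.

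Finally, for the consequence I will apply the triangle $j_!j^*M\to M\to i_*i^*M\xrightarrow{+1}$ to $M=IC^H_{X_0}$ and to $M=\mathcal{IS}^H_{X_0}$. Both have the same first term $j_!\mathbb Q^H_U[3]$. Hence both are extensions of objects supported on $\Sigma$ by a common exterior object, and they differ only through $i^*$ and the connecting (MacPherson–Vilonen gluing) data at $\Sigma$. By the filtered-additivity lemma, the associated long exact hypercohomology sequences share the term $\mathbb H^*(X_0;j_!\mathbb Q^H_U[3])=H^*_c(U)$, so any difference between $\mathsf{HA}^{IH}(X_0)$ and $\mathsf{HA}^{I}(X_0)$ arises from the singular terms.
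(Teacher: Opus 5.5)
Your proposal is correct and follows the paper's route: $j^*j_{!*}=\mathrm{id}$ for $IC^H_{X_0}$, and for $\mathcal{IS}^H_{X_0}$ the exterior normalization built into the multi-node gluing and MHM-lift data (Hypothesis~\ref{hyp:H2-multi-node-gluing} and Proposition~\ref{prop:mhm-lift-odp-conifolds}). For the consequence you use the same common exterior object, and your recollement triangle $j_!j^*M\to M\to i_*i^*M\xrightarrow{+1}$ makes it more precise than the paper does.

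One caution about your rigidity fallback. It only reduces the ambiguity to a Tate twist $\mathbb Q^H_U(-p)[3]$, and Hypotheses H1--H3 alone cannot force $p=0$: any twist $\mathcal{IS}^H_{X_0}(-p)$ is again a lift of $\mathcal{IS}_{X_0}$. Pinning $p$ therefore still needs either the construction's normalization, or Hypothesis S via $j^*\psi_\pi(F)\simeq\mathbb Q^H_U[3]$ and $j^*\mathcal C^H_\Sigma=0$.
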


\begin{proof}
On the smooth locus $U$, the intersection complex is the normalized constant sheaf.  In the mixed-Hodge-module setting this gives
\[
   j^*IC^H_{X_0}\simeq \mathbb Q^H_U[3],
\]
where the shift $[3]$ is the standard normalization for a complex threefold \cite{BBD82,Saito90,Saito_MHM,KashiwaraSchapira90}.

For the intersection-space complex, the Banagl--Budur--Maxim construction modifies the singular contribution while agreeing with the ordinary normalized exterior object away from the singular point in the one-singularity case \cite{BanaglBudurMaxim14,BanaglBudurMaxim_IntersectionSpacesPerverseSheaves}.  Under Hypothesis~\ref{hyp:multi-node-IS-complex}, the multi-node object is obtained by gluing local intersection-space complexes over the finite singular set while retaining the same normalized exterior restriction on
\[
   U=X_0\setminus\Sigma.
\]
Therefore
\[
   j^*\mathcal{IS}^{H}_{X_0}\simeq \mathbb Q^H_U[3].
\]

Since both mixed-Hodge-module objects restrict to the same object on the open stratum $U$, their difference cannot come from the exterior stratum.  It is encoded in the way this common exterior object is extended across the finite closed stratum $\Sigma$.  Thus any difference between the resulting Hodge-realization atom packages is produced by the singular extension and gluing data at $\Sigma$.
\end{proof}

\begin{corollary}[Common exterior atom source]
\label{cor:common-exterior-atom-source}
The type-IIA package $\mathsf{HA}^{IH}(X_0)$ and the type-IIB package $\mathsf{HA}^{I}(X_0)$ have the same exterior source, namely the normalized constant Hodge module on $U$.  Their distinction is measured by the singular gluing data at $\Sigma$ and, in the threefold conifold setting, by the middle-dimensional conifold sector isolated below.
\end{corollary}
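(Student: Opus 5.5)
The corollary follows directly from Proposition~\ref{prop:exterior-agreement}, so the plan is to unwind the definitions of the two atom packages and identify where each depends on the singular data. By definition, $\mathsf{HA}^{IH}(X_0)=\mathfrak A_{\mathrm{Hod}}(IC^H_{X_0})$ and $\mathsf{HA}^{I}(X_0)=\mathfrak A_{\mathrm{Hod}}(\mathcal{IS}^H_{X_0})$. By Proposition~\ref{prop:exterior-agreement}, both underlying objects satisfy $j^*(-)\simeq\mathbb Q^H_U[3]$. This identifies the common exterior source as $\mathbb Q^H_U[3]$, and hence as the Hodge realization $\mathbb H^*(U;\mathbb Q^H_U[3])$, which is the same for both packages.

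To make "distinction measured by singular gluing data" precise, I would use the attaching triangle for $i:\Sigma\hookrightarrow X_0$ in $D^b\operatorname{MHM}(X_0)$:
\[
   i_*i^!M^H\longrightarrow M^H\longrightarrow j_*j^*M^H\xrightarrow{+1},
\]
applied to $M^H=IC^H_{X_0}$ and to $M^H=\mathcal{IS}^H_{X_0}$. The third term is $j_*\mathbb Q^H_U[3]$ in both cases, so it is literally the same object. The first term is supported on the finite set $\Sigma$, and the connecting data is part of the MacPherson--Vilonen gluing datum at $\Sigma$.

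Next I would apply the filtered additivity lemma to each triangle. This gives, for both packages, a filtration whose outer subquotients come from the common term $\mathbb H^*(U;\mathbb Q^H_U[3])$ and from the point-supported terms $\bigoplus_i \mathbb H^*(i^!_{p_i}M^H)$. The two packages can therefore differ only through the point-supported local terms and the connecting maps, which is exactly the singular gluing data at $\Sigma$.

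The remaining clause is the assertion that in the threefold conifold setting the difference concentrates in the middle-dimensional sector. Here I would invoke the local ODP model of Remark~\ref{rem:ordinary-double-point-local-model}. The link is $S^2\times S^3$, and the two complexes are distinguished by truncating the link's cohomology at the middle-perversity cut, in a perverse $\tau_{<0}$ versus spatial-cotruncation sense. The local stalk differences are therefore the classes $H^2$ and $H^3$ of $S^2\times S^3$, which feed into the global degrees $2,3,4$ adjacent to the middle. By Verdier duality, and the self-duality of Proposition~\ref{prop:multinode-verdier-self-duality}, these assemble around $k=3$.

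I expect this degree-localization step to be the only delicate point. Since the corollary only says the distinction is measured "by the middle-dimensional conifold sector isolated below", I would phrase it as a forward reference to the rigid--vanishing section rather than prove the precise degree bookkeeping here. The core corollary is then just Proposition~\ref{prop:exterior-agreement} combined with the attaching triangle.
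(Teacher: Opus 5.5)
Your first step is the paper's argument: the exterior claim is read off from Proposition~\ref{prop:exterior-agreement}. For the gluing-data clause the paper offers nothing beyond that one sentence, so your version is more careful. The attaching triangle $i_*i^!M^H\to M^H\to j_*j^*M^H\xrightarrow{+1}$ has third term literally $j_*\mathbb Q^H_U[3]$ for both $M^H=IC^H_{X_0}$ and $M^H=\mathcal{IS}^H_{X_0}$. Together with the filtered-additivity lemma, it shows the two realizations can differ only through $\bigoplus_i\mathbb H^*(i_{p_i}^!M^H)$ and the connecting morphism $j_*j^*M^H\to i_*i^!M^H[1]$. That part is fine.

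The gap is in the last clause. The paper does not prove it either: it cites Banagl's degree-$3$ exact sequence as a ``standard fact'' and defers to Section~\ref{sec:rigid-vanishing-filtration}. Your local analysis is right as far as it goes. The two local models differ in how they retain $H^2(L)$ and $H^3(L)$ for $L\cong S^2\times S^3$, and these feed global degrees $2,3,4$.

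The step ``by Verdier duality these assemble around $k=3$'' fails. Duality pairs degree $k$ with $6-k$, so it swaps the degree-$2$ and degree-$4$ pieces with each other and cannot move them into degree $3$. The rigid--vanishing filtration you forward-reference concerns $k=3$ only.

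The off-middle pieces are genuinely nonzero. For isolated singularities $IH_2(X_0)\cong H_2(M)$, while $H_2(I^{\bar m}X_0)\cong\operatorname{coker}\bigl(H_2(\partial M)\to H_2(M)\bigr)\cong H_2(X_s)$. For the $125$-node quintic these ranks are $25$ and $1$, and dually in degree $4$. The $25$ comes from $\chi(IH^*(S))=\chi_c(U)+125\cdot 2=-200+250=50$, using $IH_1=IH_5=0$ and $\dim IH_3=2$.

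So what your argument actually establishes is weaker: the distinction is supported at $\Sigma$ and lives in degrees $2,3,4$. Its degree-$3$ part is the kernel sector of Section~\ref{sec:rigid-vanishing-filtration}, and that is the conclusion you should state. The paper's appeal to the middle exact sequence says nothing about degrees $2$ and $4$ either. No forward reference can close this step as you phrased it.
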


\begin{proof}
The first statement follows directly from Proposition~\ref{prop:exterior-agreement}.  The final statement uses the standard fact that the difference between the intersection-homology and intersection-space packages for a conifold threefold is concentrated in the middle conifold contribution, as expressed by Banagl's conifold exact sequence for intersection spaces \cite{Banagl_IntersectionSpacesBook,BanaglBudurMaxim14}.  The middle-sector formulation will be made explicit in Section~\ref{sec:rigid-vanishing-filtration}.
\end{proof}

\subsection{Comparison with the type-IIA package}
\label{subsec:comparison-type-IIA-package}

For later reference, we record the parallel definitions in a single display:
\[
   \mathsf{HA}^{IH}(X_0)
   =
   \operatorname{Atom}_{\mathrm{Hod}}
   \bigl(\mathbb H^*(X_0;IC^H_{X_0})\bigr),
   \qquad
   \mathsf{HA}^{I}(X_0)
   =
   \operatorname{Atom}_{\mathrm{Hod}}
   \bigl(\mathbb H^*(X_0;\mathcal{IS}^{H}_{X_0})\bigr).
\]
The first package is the Hodge-realization atom shadow of the intersection complex and realizes the type-IIA/intersection-homology sector.  The second is the Hodge-realization atom shadow of the intersection-space complex and realizes the type-IIB/intersection-space sector.  Both live over the same singular fiber $X_0$ and both restrict to the same normalized exterior object on $U$ by Proposition~\ref{prop:exterior-agreement}.  This makes their comparison intrinsic to the singular fiber rather than a comparison between unrelated spaces.

\begin{remark}[Relation to the projection-triangle spine]
\label{rem:HAI-relation-projection-triangle}
The package $\mathsf{HA}^{I}(X_0)$ is defined without using the specialization splitting.  The projection triangle
\[
   P^H\longrightarrow P^H_I\longrightarrow \mathcal C^H_\Sigma\xrightarrow{+1}
\]
requires the additional Hypothesis~\ref{hyp:specialization-splitting}.  Thus the type-IIB atom package is available under the intersection-space MHM hypotheses, while the defect-triangle interpretation becomes available only after the nearby-cycle specialization splitting has been imposed.
\end{remark}

%%%%%%%%%%%%%%%%%%%%%%%%%%%%%%%%%%%%%%%%%%%%%%%%%%%%%%%%%%%%%%%%%%%%
\section{The IIA/IIB Atom Comparison}
\label{sec:IIA-IIB-atom-comparison}

We now compare the two Hodge-realization atom packages associated with the singular fiber $X_0$.  The first is built from the intersection complex and realizes the intersection-homology side of the conifold.  The second is built from the intersection-space complex and realizes Banagl's type-IIB intersection-space side.  The comparison is made over the same singular space $X_0$ and uses the mixed-Hodge-module objects
\[
   IC^H_{X_0}
   \qquad\text{and}\qquad
   \mathcal{IS}^{H}_{X_0}.
\]
The point is not that these objects are isomorphic.  Rather, they provide two different Hodge-theoretic realizations of the same conifold singular fiber: one intersection-homological, the other intersection-spatial.  The projection-triangle and defect constructions below will refine this comparison by identifying the singularity-supported residue between the corrected conifold object and its intersection-space projection.

\subsection{The type-IIA package}
\label{subsec:type-IIA-package}

The type-IIA package is the Hodge-realization atom shadow of the intersection complex:
\[
   \mathsf{HA}^{IH}(X_0)
   :=
   \mathfrak A_{\mathrm{Hod}}(IC^H_{X_0})
   =
   \operatorname{Atom}_{\mathrm{Hod}}
   \bigl(\mathbb H^*(X_0;IC^H_{X_0})\bigr).
\]
The rational realization of $IC^H_{X_0}$ is the usual intersection complex, and its hypercohomology computes intersection cohomology \cite{GoreskyMacPherson80,GoreskyMacPherson83,BBD82,Saito90}.  Hence
\[
   \operatorname{rat}\mathbb H^*(X_0;IC^H_{X_0})
   \cong
   IH^*(X_0;\mathbb Q).
\]
Thus $\mathsf{HA}^{IH}(X_0)$ atomizes the mixed Hodge structures carried by intersection cohomology.  In Banagl's conifold interpretation, this is the package associated with the type-IIA sector \cite{Banagl_IntersectionSpacesBook,BanaglBudurMaxim14}.

The construction is intrinsic to the singular fiber.  The object $IC^H_{X_0}$ is the mixed-Hodge-module lift of the intersection complex, and on the smooth locus
\[
   U=X_0\setminus\Sigma
\]
it restricts to the normalized constant Hodge module:
\[
   j^*IC^H_{X_0}\simeq \mathbb Q^H_U[3],
\]
where $j:U\hookrightarrow X_0$ is the open inclusion and the shift $[3]$ is the standard normalization for a complex threefold \cite{BBD82,Saito90,Saito_MHM,KashiwaraSchapira90}.  The nontrivial information in $IC^H_{X_0}$ is therefore encoded in how this exterior object is extended across the ordinary double points.

\subsection{The type-IIB package}
\label{subsec:type-IIB-package}

The type-IIB package is the Hodge-realization atom shadow of the intersection-space complex:
\[
   \mathsf{HA}^{I}(X_0)
   :=
   \mathfrak A_{\mathrm{Hod}}(\mathcal{IS}^{H}_{X_0})
   =
   \operatorname{Atom}_{\mathrm{Hod}}
   \bigl(\mathbb H^*(X_0;\mathcal{IS}^{H}_{X_0})\bigr).
\]
By Hypotheses~\ref{hyp:H2-multi-node-gluing} and~\ref{hyp:H3-MHM-lift}, equivalently by Hypotheses~\ref{hyp:multi-node-IS-complex} and~\ref{hyp:mhm-lift-multinode} in Section~\ref{sec:hypotheses-sources}, its Betti realization is the rational cohomology of Banagl's middle-perversity intersection space \cite{Banagl_IntersectionSpacesBook,BanaglBudurMaxim14,BanaglBudurMaxim_IntersectionSpacesPerverseSheaves}:
\[
   \operatorname{rat}\mathbb H^*(X_0;\mathcal{IS}^{H}_{X_0})
   \cong
   H^*(I^{\bar m}X_0;\mathbb Q).
\]
Thus $\mathsf{HA}^{I}(X_0)$ atomizes the mixed Hodge structures carried by the intersection-space realization.  In Banagl's conifold interpretation, this is the package associated with the type-IIB sector \cite{Banagl_IntersectionSpacesBook,BanaglBudurMaxim14,Strominger95,HubschBestiary2nd2024}.

As with $IC^H_{X_0}$, the intersection-space complex restricts on the smooth locus to the normalized constant Hodge module:
\[
   j^*\mathcal{IS}^{H}_{X_0}\simeq \mathbb Q^H_U[3].
\]
This exterior agreement was proved in Proposition~\ref{prop:exterior-agreement}.  Thus the difference between $\mathsf{HA}^{IH}(X_0)$ and $\mathsf{HA}^{I}(X_0)$ is not an exterior difference.  It is controlled by the singular gluing data at $\Sigma$ and, for threefold conifolds, by the middle-dimensional conifold sector.

\subsection{Precise content of the comparison}
\label{subsec:precise-content-IIA-IIB-comparison}

We now make explicit what the IIA/IIB comparison means in this paper.  It is a comparison of two mixed-Hodge-module realizations over the same singular fiber, followed by the same Hodge-realization atom construction.

\begin{proposition}[Realization content]
\label{prop:IIA-IIB-realization-content}
Assume Hypotheses~\ref{hyp:H1-local-BBM-input}--\ref{hyp:H3-MHM-lift}.  Then the pair
\[
   \bigl(IC^H_{X_0},\mathcal{IS}^{H}_{X_0}\bigr)
\]
realizes the pair
\[
   \bigl(IH^*(X_0;\mathbb Q),H^*(I^{\bar m}X_0;\mathbb Q)\bigr)
\]
after taking hypercohomology and Betti realization.
\end{proposition}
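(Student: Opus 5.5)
The plan is to treat the two members of the pair separately and then combine them. In each case we pass from the mixed-Hodge-module object to its hypercohomology, and then apply the rationalization functor. The key point is that $\operatorname{rat}$ commutes with hypercohomology: it is exact and compatible with the underlying constructible complexes, as in the proof of Proposition~\ref{prop:betti-realization-HAI}. So in each case the Betti realization of $\mathbb H^*(X_0;M^H)$ is $\mathbb H^*(X_0;\operatorname{rat}M^H)$.

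For the type-IIA member, I would invoke Saito's theorem that $IC^H_{X_0}$ is the mixed Hodge module whose rationalization is the Goresky--MacPherson--Deligne intersection complex $IC_{X_0}$. This complex is characterized as the intermediate extension $j_{!*}\mathbb Q_U[3]$ of the normalized constant sheaf on $U$, which is consistent with Proposition~\ref{prop:exterior-agreement}. Then I would cite the standard identification $\mathbb H^*(X_0;IC_{X_0})\cong IH^*(X_0;\mathbb Q)$, noting that the shift $[3]$ is absorbed into the degree convention fixed in the standing hypotheses. This step needs none of H1--H3 and is purely a matter of citation and normalization.

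For the type-IIB member, the work is already done. Proposition~\ref{prop:betti-realization-HAI}, which relies on Hypotheses~\ref{hyp:H2-multi-node-gluing} and~\ref{hyp:H3-MHM-lift}, gives
\[
\operatorname{rat}\mathbb H^*(X_0;\mathcal{IS}^H_{X_0})\cong H^*(I^{\bar m}X_0;\mathbb Q).
\]
Hypothesis~\ref{hyp:H1-local-BBM-input} enters only to guarantee that the local Banagl--Budur--Maxim input used in H2 and H3 is available at each node. Combining the two identifications componentwise gives the asserted realization of the pair. It is worth adding a remark that both objects live in $\operatorname{MHM}(X_0)$ and restrict to $\mathbb Q^H_U[3]$ on $U$. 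This is what makes the comparison take place over the same fiber, and it is the form in which the proposition will be used for the IIA/IIB atom comparison theorem.

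The main obstacle is degree normalization rather than any substantive argument. The perverse shift $[3]$, the cohomological indexing convention fixed in the standing hypotheses, and the translation between $IH^*$ and $IH_*$, or between $H^*(I^{\bar m}X_0)$ and $H_*(I^{\bar m}X_0)$, must all be made consistent. For the homological form I would appeal to universal coefficients over $\mathbb Q$, together with Poincaré duality for $IH$ and intersection-space duality. I would first fix the convention that hypercohomology of $M^H[3]$-normalized objects is reindexed to ordinary degrees $0\le k\le 6$. I would then check that the same reindexing is used in Hypothesis~\ref{hyp:H2-multi-node-gluing} and in the classical $IH$ identification, so that no relative shift appears between the two members of the pair.
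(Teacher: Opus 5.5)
Your proposal is correct and follows the paper's proof essentially step for step. The $IC^H_{X_0}$ component is the standard identification $\operatorname{rat}\mathbb H^*(X_0;IC^H_{X_0})\cong IH^*(X_0;\mathbb Q)$, and the $\mathcal{IS}^H_{X_0}$ component comes from rationalizing via H3 and then invoking H2, which is exactly the argument packaged in Proposition~\ref{prop:betti-realization-HAI} that you cite. The homology/cohomology translation you list as the main obstacle is not actually needed, since the statement is purely cohomological. If you want the homological reading, universal coefficients over $\mathbb Q$ suffice without any appeal to Poincar\'e or intersection-space duality.
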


\begin{proof}
For the first component, $IC^H_{X_0}$ is the mixed-Hodge-module lift of the intersection complex.  By the definition of the intersection complex and the standard realization of intersection cohomology as its hypercohomology, one has
\[
   \operatorname{rat}\mathbb H^*(X_0;IC^H_{X_0})
   \cong
   IH^*(X_0;\mathbb Q)
\]
\cite{GoreskyMacPherson80,GoreskyMacPherson83,BBD82,Saito90}.  This gives the intersection-homology/type-IIA realization.

For the second component, Hypothesis~\ref{hyp:H3-MHM-lift} gives
\[
   \operatorname{rat}(\mathcal{IS}^{H}_{X_0})\simeq \mathcal{IS}_{X_0}.
\]
The rationalization functor is compatible with the underlying rational constructible complex and hence with hypercohomology \cite{Saito90,Saito_MHM,Saito_MixedHodgeModules}.  Therefore
\[
   \operatorname{rat}\mathbb H^*(X_0;\mathcal{IS}^{H}_{X_0})
   \cong
   \mathbb H^*(X_0;\mathcal{IS}_{X_0}).
\]
By Hypothesis~\ref{hyp:H2-multi-node-gluing}, the latter hypercohomology computes Banagl's rational intersection-space cohomology:
\[
   \mathbb H^*(X_0;\mathcal{IS}_{X_0})
   \cong
   H^*(I^{\bar m}X_0;\mathbb Q).
\]
Combining these identifications gives the claimed realization pair.
\end{proof}

\begin{theorem}[IIA/IIB atom comparison]
\label{thm:IIA-IIB-atom-comparison}
Under Hypotheses~\ref{hyp:H1-local-BBM-input}--\ref{hyp:H3-MHM-lift},
\[
   \bigl(\mathsf{HA}^{IH}(X_0),\mathsf{HA}^{I}(X_0)\bigr)
\]
is the Hodge-realization atom shadow of the mixed-Hodge-module pair
\[
   \bigl(IC^H_{X_0},\mathcal{IS}^{H}_{X_0}\bigr).
\]
Thus it refines Banagl's IIA/IIB homological comparison from rational homology groups to mixed-Hodge-theoretic atom packages.
\end{theorem}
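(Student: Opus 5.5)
The argument is a formal unwinding of definitions, combined with Proposition~\ref{prop:IIA-IIB-realization-content}. First, we note that both packages are defined by the same recipe. By the definitions in Section~\ref{subsec:type-IIA-package} and Definition~\ref{def:intersection-space-hodge-atom-package},
\[
   \mathsf{HA}^{IH}(X_0)=\mathfrak A_{\mathrm{Hod}}(IC^H_{X_0}),
   \qquad
   \mathsf{HA}^{I}(X_0)=\mathfrak A_{\mathrm{Hod}}(\mathcal{IS}^{H}_{X_0}).
\]
Hence the pair $\bigl(\mathsf{HA}^{IH}(X_0),\mathsf{HA}^{I}(X_0)\bigr)$ is, by definition, the componentwise application of $\mathfrak A_{\mathrm{Hod}}$ to the pair $\bigl(IC^H_{X_0},\mathcal{IS}^{H}_{X_0}\bigr)$.

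We still have to check that $\mathfrak A_{\mathrm{Hod}}$ is defined on each component. This requires:
\begin{itemize}
   \item $IC^H_{X_0}$ lies in $\operatorname{MHM}(X_0)$ by Saito's theory;
   \item $\mathcal{IS}^{H}_{X_0}$ lies in $\operatorname{MHM}(X_0)$ by Hypothesis~\ref{hyp:H3-MHM-lift};
   \item both have finite-dimensional hypercohomology, since $X_0$ is projective and the rational realizations are constructible.
\end{itemize}
Consequently each $\mathbb H^k$ carries a functorial mixed Hodge structure, and the atom shadows are well defined.

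Second, to get the refinement statement we apply the forgetful functor from graded mixed Hodge structures to graded $\mathbb Q$-vector spaces. By Proposition~\ref{prop:IIA-IIB-realization-content}, the underlying Betti realizations of the two packages are $IH^*(X_0;\mathbb Q)$ and $H^*(I^{\bar m}X_0;\mathbb Q)$. We pass to homology using two dualities. On the IIA side, Goresky--MacPherson duality identifies $IH^k$ with $IH_{6-k}$. On the IIB side, we use the universal coefficient isomorphism $H^k(I^{\bar m}X_0;\mathbb Q)\cong H_k(I^{\bar m}X_0;\mathbb Q)^\vee$. These identify the Betti shadow of the pair with Banagl's pair $\bigl(IH_*(X_0),H_*(I^{\bar m}X_0)\bigr)$, up to the standard degree and duality conventions.

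This shows that forgetting the Hodge data recovers the homological comparison. The Hodge data itself is extra structure on those same groups, and it is functorial in morphisms of $D^b\operatorname{MHM}(X_0)$. That is precisely the sense in which the atom pair refines Banagl's pair.

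The main obstacle is interpretive rather than technical: the word ``refines'' needs a precise meaning. The plan is to fix this meaning as ``has the IIA/IIB homology pair as its image under the forgetful (Betti) functor.'' We would also point out that Proposition~\ref{prop:exterior-agreement} shows both objects come from the same exterior object $\mathbb Q^H_U[3]$. So the comparison is genuinely intrinsic to $X_0$, and it does not assert any isomorphism between the two packages.
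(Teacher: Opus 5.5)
Your proposal is correct and follows the paper's proof: both packages are $\mathfrak A_{\mathrm{Hod}}$ applied componentwise to $\bigl(IC^H_{X_0},\mathcal{IS}^{H}_{X_0}\bigr)$, and Proposition~\ref{prop:IIA-IIB-realization-content} identifies their Betti realizations with Banagl's IIA and IIB groups. Your additional checks are harmless details that the paper leaves implicit: finite-dimensionality of hypercohomology, the passage from cohomology to homology via Goresky--MacPherson duality and universal coefficients, and an explicit meaning for ``refines.''
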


\begin{proof}
By definition of the atom packages,
\[
   \mathsf{HA}^{IH}(X_0)
   =
   \mathfrak A_{\mathrm{Hod}}(IC^H_{X_0})
   =
   \operatorname{Atom}_{\mathrm{Hod}}
   \bigl(\mathbb H^*(X_0;IC^H_{X_0})\bigr)
\]
and
\[
   \mathsf{HA}^{I}(X_0)
   =
   \mathfrak A_{\mathrm{Hod}}(\mathcal{IS}^{H}_{X_0})
   =
   \operatorname{Atom}_{\mathrm{Hod}}
   \bigl(\mathbb H^*(X_0;\mathcal{IS}^{H}_{X_0})\bigr).
\]
Thus the ordered pair
\[
   \bigl(\mathsf{HA}^{IH}(X_0),\mathsf{HA}^{I}(X_0)\bigr)
\]
is obtained by applying the same Hodge-realization atom-shadow construction to the mixed-Hodge-module pair
\[
   \bigl(IC^H_{X_0},\mathcal{IS}^{H}_{X_0}\bigr).
\]

Proposition~\ref{prop:IIA-IIB-realization-content} identifies the Betti realizations of these two mixed-Hodge-module objects with
\[
   IH^*(X_0;\mathbb Q)
   \qquad\text{and}\qquad
   H^*(I^{\bar m}X_0;\mathbb Q),
\]
respectively.  Banagl's conifold interpretation assigns the former to the type-IIA intersection-homology sector and the latter to the type-IIB intersection-space sector \cite{Banagl_IntersectionSpacesBook,BanaglBudurMaxim14}.  Therefore the atom pair refines that IIA/IIB homological comparison by retaining the mixed-Hodge-theoretic atom data of the two realizations.
\end{proof}

\begin{remark}[What is, and is not, being claimed]
\label{rem:IIA-IIB-not-claiming-isomorphism}
Theorem~\ref{thm:IIA-IIB-atom-comparison} does not assert that $\mathsf{HA}^{IH}(X_0)$ and $\mathsf{HA}^{I}(X_0)$ are isomorphic.  Nor does it assert an equivalence of categories of atoms.  It asserts that the two atom packages are the Hodge-realization shadows of the two mixed-Hodge-module objects that realize Banagl's IIA and IIB conifold theories.  The actual difference between the two packages is measured by the singular and middle-degree data, which will be isolated in the rigid--vanishing filtration in Section~\ref{sec:rigid-vanishing-filtration} and then organized by the defect viewpoint in Section~\ref{sec:IC-IS-defect}.
\end{remark}

\subsection{Exterior agreement and singular-sector difference}
\label{subsec:exterior-agreement-singular-sector-difference}

The comparison is especially clean away from the singularities.  By Proposition~\ref{prop:exterior-agreement}, both mixed-Hodge-module objects restrict to the same normalized constant Hodge module on the smooth locus:
\[
   j^*IC^H_{X_0}\simeq \mathbb Q^H_U[3],
   \qquad
   j^*\mathcal{IS}^{H}_{X_0}\simeq \mathbb Q^H_U[3].
\]
Consequently, the IIA and IIB atom packages have the same exterior source.  Their difference is produced by the way this exterior object is completed across the finite set of ordinary double points.

\begin{corollary}[Singular-sector support of the comparison]
\label{cor:singular-sector-support-IIA-IIB}
Under Hypotheses~\ref{hyp:H1-local-BBM-input}--\ref{hyp:H3-MHM-lift}, the formal difference between the type-IIA and type-IIB atom packages is controlled by the singular gluing data at $\Sigma$.  In the conifold threefold case, this difference is concentrated in the middle-dimensional conifold sector, together with the corresponding dual contribution under Poincare duality when the relevant duality hypotheses are imposed.
\end{corollary}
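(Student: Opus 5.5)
The statement to prove is Corollary~\ref{cor:singular-sector-support-IIA-IIB}. It has two parts: the formal difference between the two atom packages is controlled by the gluing data at $\Sigma$, and in the conifold case this difference sits in the middle degree together with its Poincar\'e-dual contribution. I will deduce it from exterior agreement by a localization argument.

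\textbf{Localization triangles.} Let $j:U\hookrightarrow X_0$ and $i:\Sigma\hookrightarrow X_0$ be the inclusions. For $M^H\in\{IC^H_{X_0},\mathcal{IS}^H_{X_0}\}$, take the localization triangle
\[
   j_!j^*M^H\longrightarrow M^H\longrightarrow i_*i^*M^H\xrightarrow{+1}
\]
in $D^b\operatorname{MHM}(X_0)$. By Proposition~\ref{prop:exterior-agreement}, $j^*M^H\simeq\mathbb Q^H_U[3]$ in both cases. So the first term is the same object $j_!\mathbb Q^H_U[3]$ for both packages.

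\textbf{Filtered comparison.} Apply the filtered-additivity lemma to each triangle. In each degree $k$, $\mathbb H^k(X_0;M^H)$ gets a two-step filtration of mixed Hodge structures:
\begin{itemize}
\item the first graded piece is a quotient of the common group $\mathbb H^k_c(U;\mathbb Q^H_U[3])$;
\item the second graded piece is a subobject of $\mathbb H^k(\Sigma;i^*M^H)=\bigoplus_{p\in\Sigma}H^k(i_p^*M^H)$.
\end{itemize}
The difference between the two packages therefore comes from two sources. One is the point-supported stalk data $i^*IC^H_{X_0}$ versus $i^*\mathcal{IS}^H_{X_0}$. The other is the connecting morphisms $\partial$, which are determined by the gluing morphisms at $\Sigma$, i.e. the MacPherson--Vilonen data of Proposition~\ref{prop:local-to-global-odp-IS}. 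This gives the first assertion.

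\textbf{Degree concentration.} Here I use the ODP local model: the link is $L\cong S^2\times S^3$. The stalk of $IC$ at a node is the truncation $\tau_{<0}$ of the link cohomology in perverse indexing. The local BBM complex instead uses the spatial truncation of $L$ from Banagl's construction.

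The two truncations agree outside the cutoff degree. Comparing them, the stalk difference at each node is a single rank-one class, coming from $H^2(L)$ or $H^3(L)$, which sit next to the middle. Feeding this into the long exact sequences, together with Banagl's conifold exact sequence, shows:
\begin{itemize}
\item for $k\neq 2,3,4$, the two filtered groups coincide, because the maps from the common term agree;
\item the discrepancy therefore lives in degree $3$ plus the adjacent degrees $2$ and $4$.
\end{itemize}
When the duality hypotheses hold (Proposition~\ref{prop:multinode-verdier-self-duality} and the standard self-duality of $IC^H$), the duality lemma for atom shadows matches degree $k$ with degree $6-k$. This pairs the degree-$2$ and degree-$4$ discrepancies as Poincar\'e-dual contributions around the middle.

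\textbf{Main obstacle.} The hardest step is degree concentration. It needs an explicit description of the BBM stalk $i_p^*\mathcal{IS}^H$ at an ODP. I would get this from the truncation/cotruncation construction of BBM in the weighted-homogeneous case, citing Theorem~\ref{thm:bbm-one-singularity}. If the explicit stalk computation is unavailable, I would fall back on citing Banagl's conifold sequence for the rational statement and read the Hodge-level statement at the level of graded pieces.
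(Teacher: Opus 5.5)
Your route differs from the paper's and is more explicit. The paper's proof is essentially your fallback. Exterior agreement (Proposition~\ref{prop:exterior-agreement}) shows that any difference comes from how the common object $\mathbb Q^H_U[3]$ is extended across $\Sigma$. The middle-degree concentration is then imported by citing Banagl's conifold sequence and its kernel $K_I$. Poincar\'e duality is invoked for ``the dual complementary degree'' without saying which degrees are meant.

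Your localization triangle $j_!j^*M^H\to M^H\to i_*i^*M^H$ makes the first assertion precise. The first term is literally the same object in both cases, so the difference is carried by $i^*M^H$ and the connecting maps. Your stalk comparison replaces the citation by an intrinsic mechanism.

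Your route also gives a sharper conclusion, and it is the reading under which the statement is true. The difference is not confined to degree $3$: $IC^H_{X_0}$ retains classes in degrees $2$ and $4$ that $\mathcal{IS}^H_{X_0}$ does not, of rank $\operatorname{rk}\bigl(H^2(U)\to\bigoplus_p H^2(L_p)\bigr)$. Your $2\leftrightarrow 4$ pairing is exactly what the ``dual contribution'' has to absorb. For the nodal quintic, an Euler-characteristic count with the paper's ranks $204$ and $2$ forces a discrepancy of rank $24$ in each of degrees $2$ and $4$. The paper's citation is shorter, but it only sees the IIB-side kernel in degree $3$.

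The step that needs repair is the stalk comparison, which as written does not support your own conclusion. Perversity makes the two stalks agree in perverse degrees $\le -2$: $H^0(L_p)$ in degree $-3$ and nothing in degree $-2$. After that they differ by \emph{two} rank-one classes per node, not one.
\begin{itemize}
\item $i_p^*IC_{X_0}=\tau_{\le -1}(Rj_*\mathbb Q_U[3])_p$ contains $H^2(L_p)$ in degree $-1$, which $i_p^*\mathcal{IS}^H_{X_0}$ lacks.
\item $H^0(i_p^*\mathcal{IS}^H_{X_0})$ surjects onto $H^3(L_p)$, while $IC$ has nothing in degree $0$.
\end{itemize}
Through the connecting maps, the first class moves rank between ordinary degrees $2$ and $3$, and the second between degrees $3$ and $4$. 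Any further degree-$0$ stalk only affects degree $3$. A single class would produce a discrepancy in just two adjacent degrees, which cannot be Poincar\'e-symmetric, so your $2\leftrightarrow 4$ pairing needs both.

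Also, do not read the $\mathcal{IS}$ stalk off Banagl's spatial truncation. That truncation governs the space $I^{\bar m}X_0$, not the stalks of the perverse sheaf. Get the stalk from the local input in Hypothesis~\ref{hyp:H1-local-BBM-input} instead:
\begin{itemize}
\item The local specialization splitting gives $H^{-1}(i_p^*\mathcal{IS}^H_{X_0})=0$. The stalk $i_p^*\psi_\pi(F)$ has cohomology $H^{*+3}(\mathrm{Mil}_p)$ with Milnor fibre $\mathrm{Mil}_p\simeq S^3$, so it vanishes in degree $-1$, and $\mathcal{IS}$ is a summand.
\item Local self-duality turns this into $H^1(i_p^!\mathcal{IS}^H_{X_0})=0$, which is exactly the surjectivity of $H^0(i_p^*\mathcal{IS}^H_{X_0})\to H^3(L_p)$.
\end{itemize}
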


\begin{proof}
The exterior agreement follows from Proposition~\ref{prop:exterior-agreement}.  Since both $IC^H_{X_0}$ and $\mathcal{IS}^{H}_{X_0}$ restrict to the same normalized constant Hodge module on $U$, any difference between their hypercohomology realizations must arise from the way this common exterior object is extended across the closed stratum $\Sigma$.

For ordinary double point threefolds, Banagl's comparison between intersection homology and intersection-space homology is governed by the middle-dimensional conifold contribution \cite{Banagl_IntersectionSpacesBook,BanaglBudurMaxim14}.  In the language used below, this middle-dimensional contribution is expressed by the kernel appearing in Banagl's conifold exact sequence.  Thus the type-IIB contribution invisible to the type-IIA package is middle-sector controlled.  If the Verdier-duality hypotheses of Proposition~\ref{prop:multinode-verdier-self-duality} are imposed, then the dual complementary degree is determined by the Poincare-type duality pairing.
\end{proof}

This corollary motivates the next section.  The type-IIB contribution that is invisible to the type-IIA package is not spread arbitrarily across the cohomology.  It is controlled by a concrete middle-degree kernel associated to the smooth exterior
\[
   U\subset X_0.
\]

\subsection{Optional stronger mirror-transition version}
\label{subsec:optional-mirror-transition-version}

The comparison above is internal to one singular fiber $X_0$.  A stronger statement would compare the type-IIA atom package of one conifold degeneration with the type-IIB atom package of a mirror conifold degeneration.  Such a result requires a specified mirror pair and a verified homological mirror matching.  We do not assume such data in the general theory.

When a mirror conifold pair $X_0$ and $X^\vee_0$ is fixed and Banagl's homological mirror matching is known, the expected strengthening is the following conditional statement.

\begin{theorem}[Mirror-transition atom matching, conditional]
\label{thm:mirror-transition-atom-matching-conditional}
Let $X_0$ and $X^\vee_0$ be a mirror conifold pair for which Banagl's homological mirror matching is known.  Assume Hypotheses~\ref{hyp:H1-local-BBM-input}--\ref{hyp:H3-MHM-lift} for both $X_0$ and $X^\vee_0$.  If the mirror correspondence identifies the intersection-homology realization of $X_0$ with the intersection-space realization of $X^\vee_0$ in the predicted degrees and Hodge types, then the type-IIA atom package of $X_0$ matches the type-IIB atom package of $X^\vee_0$ at the level of Hodge-realization atom shadows.
\end{theorem}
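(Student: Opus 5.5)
The argument is essentially formal once the hypothesis is unpacked; the content lies in making the hypothesis precise enough to feed into the atom-shadow formalism of Section~2. I would split it into three stages: realization, matching, and atomization.

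\emph{Realization.} Apply Proposition~\ref{prop:IIA-IIB-realization-content} separately to $X_0$ and to $X^\vee_0$; this is legitimate because Hypotheses~\ref{hyp:H1-local-BBM-input}--\ref{hyp:H3-MHM-lift} are assumed for both. It identifies the Betti realizations
\[
   \operatorname{rat}\mathbb H^*(X_0;IC^H_{X_0})\cong IH^*(X_0;\mathbb Q),
   \qquad
   \operatorname{rat}\mathbb H^*(X^\vee_0;\mathcal{IS}^H_{X^\vee_0})\cong H^*(I^{\bar m}X^\vee_0;\mathbb Q),
\]
each carrying a graded mixed Hodge structure by Saito's theory.

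\emph{Matching.} The hypothesis is that the mirror correspondence identifies these two realizations ``in the predicted degrees and Hodge types''. I would read this as the existence of a degree map $k\mapsto\sigma(k)$, for instance the mirror exchange relating even and odd cohomology around the middle degree $3$. I would also read it as supplying isomorphisms of mixed Hodge structures, possibly after a fixed Tate twist or Hodge-type reindexing $(p,q)\mapsto(3-p,q)$:
\[
   \mathbb H^k(X_0;IC^H_{X_0})\;\cong\;\mathbb H^{\sigma(k)}(X^\vee_0;\mathcal{IS}^H_{X^\vee_0})\langle\tau\rangle ,
\]
where $\langle\tau\rangle$ denotes the prescribed reindexing, compatible with Banagl's rational matching after applying $\operatorname{rat}$.

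\emph{Atomization.} $\operatorname{Atom}_{\mathrm{Hod}}$ is extracted solely from the graded mixed Hodge structure together with its degree data (Definition of the Hodge-realization atom shadow). So an isomorphism of graded mixed Hodge structures up to the fixed reindexing $(\sigma,\tau)$ transports atom packages degree by degree:
\[
   \mathsf{HA}^{IH}_k(X_0)\;\cong\;\mathsf{HA}^{I}_{\sigma(k)}(X^\vee_0)\langle\tau\rangle .
\]
This is the asserted matching at the level of Hodge-realization atom shadows. It mirrors the way Lemma~\ref{lem:direct-sum-additivity} and the duality lemma pass isomorphisms through $\operatorname{Atom}_{\mathrm{Hod}}$. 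No morphism in a category of atoms is needed, since such a category is excluded by the remark in Section~2.

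\emph{Main obstacle.} The hardest step is the matching stage. Banagl's mirror matching is a statement about rational (co)homology ranks, for example $\dim IH_3$ versus $\dim H_3(I^{\bar m}\,\cdot)$ in the conifold transition. The atom transport, however, needs an isomorphism of mixed Hodge structures, and the mirror map reverses Hodge types, so it cannot be literally a morphism of MHS.

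I would handle this by making the hypothesis explicit as the existence of a Hodge-type-reversing isomorphism of graded MHS, i.e.\ an isomorphism after the involution $(p,q)\mapsto(3-p,q)$ on Hodge numbers, with weights handled via the weight filtration. I would then verify that $\operatorname{Atom}_{\mathrm{Hod}}$ is natural with respect to such a formal reindexing. If only numerical data were available, I would weaken the conclusion to an equality of the Hodge-number generating data of the two atom packages. I would flag this in a remark rather than claim a structural identification.
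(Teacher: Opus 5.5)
Your proposal is correct and is essentially the paper's argument. The paper likewise reads the hypothesis as an identification of the graded mixed Hodge structures $\mathbb H^*(X_0;IC^H_{X_0})$ and $\mathbb H^*(X^\vee_0;\mathcal{IS}^H_{X^\vee_0})$ in the predicted degrees and Hodge types, and then applies $\operatorname{Atom}_{\mathrm{Hod}}$ to both sides with no atom-categorical structure. Your realization step and the Hodge-reversal discussion are harmless extras, and like the paper you build the degree and Hodge-type bookkeeping into the hypothesis; that is the right move, because $(p,q)\mapsto(3-p,q)$ does not preserve weights, so it is not an operation on mixed Hodge structures whose compatibility with $\operatorname{Atom}_{\mathrm{Hod}}$ could be checked separately.
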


\begin{proof}
The hypothesis gives an identification of the relevant mixed Hodge realizations across the mirror pair.  More explicitly, it assumes an identification between the mixed Hodge structures underlying the intersection-homology realization of $X_0$ and the mixed Hodge structures underlying the intersection-space realization of $X^\vee_0$, in the degrees and Hodge types predicted by the chosen mirror correspondence.  Applying the same functor
\[
   \operatorname{Atom}_{\mathrm{Hod}}
\]
to the two identified graded mixed Hodge structures gives the corresponding equality of Hodge-realization atom shadows.  No additional atom-categorical structure is used.  The statement is therefore conditional on the existence of the mirror-pair realization matching.
\end{proof}

\begin{remark}
\label{rem:mirror-transition-only-outlook}
Theorem~\ref{thm:mirror-transition-atom-matching-conditional} is included only as a guide to the stronger mirror-transition direction.  The main results of this paper require only the internal IIA/IIB comparison over a single conifold singular fiber.  The later defect-triangle and projection constructions also remain internal to a single singular fiber unless an external mirror correspondence is imposed.
\end{remark}

%%%%%%%%%%%%%%%%%%%%%%%%%%%%%%%%%%%%%%%%%%%%%%%%%%%%%%%%%%%
\section{Rigid--Vanishing Filtration}
\label{sec:rigid-vanishing-filtration}

We now isolate the part of the intersection-space atom package that is specific to the type-IIB conifold sector.  By Proposition~\ref{prop:exterior-agreement} and Corollary~\ref{cor:singular-sector-support-IIA-IIB}, the mixed-Hodge-module objects
\[
   IC^H_{X_0}
   \qquad\text{and}\qquad
   \mathcal{IS}^H_{X_0}
\]
have the same exterior source on
\[
   U=X_0\setminus\Sigma.
\]
Thus the difference between the type-IIA and type-IIB packages is controlled by the singular gluing data.  For a Calabi--Yau threefold conifold, this difference is governed by the middle-dimensional conifold sector \cite{Banagl_IntersectionSpacesBook,BanaglBudurMaxim14}.

The resulting structure is not best stated first as a direct-sum decomposition.  The natural object is a two-step filtration: a vanishing subspace injects into the middle-degree intersection-space group, and the quotient maps to the middle homology of the singular space.  Only under an additional splitting hypothesis does this filtration become a direct-sum decomposition.  This distinction is important because global relations among middle-dimensional cycles may prevent the middle sector from being expressed as a canonical direct sum of local node contributions.

\subsection{Banagl's middle-degree exact sequence}
\label{subsec:banagl-middle-degree-sequence}

Let
\[
   U=X_0\setminus\Sigma.
\]
The middle-degree intersection-space package is controlled by Banagl's conifold exact sequence \cite{Banagl_IntersectionSpacesBook,BanaglBudurMaxim14}
\[
   0\to
   K_I(X_0)
   \to
   H_3(I^{\bar m}X_0;\mathbb Q)
   \to
   H_3(X_0;\mathbb Q)
   \to 0,
\]
where
\[
   K_I(X_0)
   =
   \ker\bigl(H_3(U;\mathbb Q)\to H_3(X_0;\mathbb Q)\bigr).
\]
Equivalently, $K_I(X_0)$ measures the middle-dimensional exterior cycles that become trivial when mapped into the singular space.  These are precisely the middle-dimensional classes retained by the intersection-space package and invisible to the ordinary intersection-homology package.

The expression above is stated in homology because this is the natural form of Banagl's type-IIB conifold exact sequence.  The atom package itself is defined from hypercohomology of the mixed-Hodge-module realization.  Thus, to use the sequence in the Hodge-realization atom formalism, we need to specify the mixed-Hodge-theoretic realization of the middle exact sequence.

The algebraic varieties $U$ and $X_0$ carry functorial mixed Hodge structures on rational homology and cohomology by Deligne's theory, equivalently by the mixed-Hodge-module formalism \cite{Deligne72,Saito90,Schurmann_TopologySingularSpaces}.  Therefore the map
\[
   H_3(U;\mathbb Q)\to H_3(X_0;\mathbb Q)
\]
is a morphism of mixed Hodge structures, and its kernel inherits a mixed Hodge structure.  We denote this mixed Hodge structure by
\[
   K_I^H(X_0).
\]
Its underlying rational vector space is $K_I(X_0)$.

The middle intersection-space group is realized in this paper through the intersection-space mixed Hodge module:
\[
   \operatorname{rat}\mathbb H^3(X_0;\mathcal{IS}^H_{X_0})
   \cong
   H^3(I^{\bar m}X_0;\mathbb Q)
\]
by Proposition~\ref{prop:betti-realization-HAI}.  Dually, using the homology convention fixed in this section, the middle type-IIB group is regarded as the mixed-Hodge-theoretic realization of
\[
   H_3(I^{\bar m}X_0;\mathbb Q).
\]

\begin{hypothesis}[Hypothesis MHS-B: mixed-Hodge realization of Banagl's middle sequence]
\label{hyp:MHS-realization-Banagl-middle-sequence}
We assume that Banagl's middle-degree exact sequence is compatible with the mixed Hodge structures supplied by the intersection-space mixed Hodge module and by the functorial mixed Hodge structures on $U$ and $X_0$.  Equivalently, we assume that the sequence
\[
   0\to
   K_I^H(X_0)
   \to
   H_3^H(I^{\bar m}X_0)
   \to
   H_3^H(X_0)
   \to 0
\]
is an exact sequence in the category of rational mixed Hodge structures, whose underlying rational vector-space sequence is Banagl's middle-degree exact sequence.

We refer to this additional compatibility assumption as Hypothesis MHS-B.
\end{hypothesis}

\begin{remark}[Why Hypothesis MHS-B is stated explicitly]
\label{rem:why-MHS-sequence-hypothesis}
The rational exact sequence is Banagl's topological conifold exact sequence \cite{Banagl_IntersectionSpacesBook,BanaglBudurMaxim14}.  The mixed Hodge structures on $U$, $X_0$, and $\mathbb H^*(X_0;\mathcal{IS}^H_{X_0})$ are supplied by Deligne--Saito theory \cite{Deligne72,Saito90,Schurmann_TopologySingularSpaces}.  What is not automatic from the rational topological sequence alone is that the maps in Banagl's sequence are morphisms of mixed Hodge structures for the realizations used here. Hypothesis MHS-B is precisely this additional compatibility assumption.  Whenever Banagl's middle sequence is realized by morphisms of the corresponding mixed-Hodge-module or mixed-Hodge-realization objects, Hypothesis~\ref{hyp:MHS-realization-Banagl-middle-sequence} holds.  Otherwise, the rigid--vanishing filtration should be read at the rational realization level, and its Hodge-theoretic refinement is conditional on Hypothesis MHS-B.
\end{remark}

\begin{remark}[Homology versus cohomology convention]
\label{rem:homology-vs-cohomology-rv}
We state the conifold exact sequence in homology because this is the natural form in Banagl's type-IIB interpretation \cite{Banagl_IntersectionSpacesBook}.  The atom package itself is defined from hypercohomology.  Throughout this section, we pass between homology and cohomology using the duality conventions fixed in the standing hypotheses and in Proposition~\ref{prop:multinode-verdier-self-duality}.  If one works entirely cohomologically, the corresponding sequence is the dual sequence with the appropriate degree and Tate-twist conventions.
\end{remark}

\subsection{Definition of rigid and vanishing atoms}
\label{subsec:def-rigid-vanishing-atoms}

The exact sequence separates the middle-degree type-IIB package into a vanishing part and a quotient part.  The vanishing part is the kernel $K_I^H(X_0)$.

\begin{definition}[IIB vanishing atom]
\label{def:IIB-vanishing-atom}
Assume Hypothesis~\ref{hyp:MHS-realization-Banagl-middle-sequence}.  The middle-degree IIB vanishing atom is
\[
   \mathsf{HA}^{I}_{\mathrm{van},3}(X_0)
   :=
   \operatorname{Atom}_{\mathrm{Hod}}\bigl(K_I^H(X_0)\bigr).
\]
\end{definition}

Here $K_I^H(X_0)$ is regarded as a mixed-Hodge-theoretic subobject of the middle-degree realization associated to $\mathcal{IS}^H_{X_0}$.  Equivalently, it is the atom shadow of the vanishing subobject appearing in the middle-degree intersection-space filtration.

The complementary quotient is the rigid middle contribution.

\begin{definition}[Middle rigid atom]
\label{def:middle-rigid-atom}
Assume Hypothesis~\ref{hyp:MHS-realization-Banagl-middle-sequence}.  The middle-degree rigid atom is
\[
   \mathsf{HA}^{I}_{\mathrm{rig},3}(X_0)
   :=
   \operatorname{Atom}_{\mathrm{Hod}}\bigl(H_3^H(X_0)\bigr),
\]
viewed as the quotient-side atom in Banagl's middle-degree exact sequence.
\end{definition}

This terminology distinguishes the persistent singular-fiber contribution from the type-IIB vanishing contribution.  The full rigid atom package also includes the exterior/common non-middle contributions shared by the type-IIA and type-IIB packages.  The notation
\[
   \mathsf{HA}^{I}_{\mathrm{rig},3}(X_0)
\]
refers only to the middle-degree quotient part.

\begin{remark}[Relation to exterior agreement]
\label{rem:rv-relation-exterior-agreement}
The exterior agreement of $IC^H_{X_0}$ and $\mathcal{IS}^H_{X_0}$ means that the atom-level difference between $\mathsf{HA}^{IH}(X_0)$ and $\mathsf{HA}^{I}(X_0)$ is not caused by the smooth locus $U$.  It is caused by the way middle-dimensional exterior cycles interact with the singular gluing.  This interaction is captured by
\[
   K_I(X_0)=\ker\bigl(H_3(U;\mathbb Q)\to H_3(X_0;\mathbb Q)\bigr),
\]
and, at the Hodge-realization level, by $K_I^H(X_0)$.
\end{remark}

\subsection{Filtration theorem}
\label{subsec:rigid-vanishing-filtration-theorem}

The middle exact sequence produces a filtration on the middle-degree intersection-space atom package.  The associated graded has two pieces: the vanishing atom and the rigid quotient atom.

\begin{theorem}[Rigid--vanishing filtration]
\label{thm:rigid-vanishing-filtration}
Assume Hypotheses H1--H3 and Hypothesis MHS-B.  Then Banagl's middle-degree exact sequence, realized as an exact sequence of rational mixed Hodge structures,
\[
   0\to
   K_I^H(X_0)
   \to
   H_3^H(I^{\bar m}X_0)
   \to
   H_3^H(X_0)
   \to 0,
\]
induces a two-step filtration of $\mathsf{HA}^{I}_3(X_0)$ with associated graded
\[
   \operatorname{gr}\mathsf{HA}^{I}_3(X_0)
   =
   \mathsf{HA}^{I}_{\mathrm{van},3}(X_0)
   +
   \mathsf{HA}^{I}_{\mathrm{rig},3}(X_0).
\]
\end{theorem}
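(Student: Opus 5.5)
The plan is to reduce the statement to the filtered-additivity mechanism already recorded for atom shadows, with Hypothesis MHS-B supplying the only new input. First, I would identify the middle-degree piece $\mathsf{HA}^{I}_3(X_0)$ with $\operatorname{Atom}_{\mathrm{Hod}}$ of the middle-degree mixed Hodge structure. By Proposition~\ref{prop:betti-realization-HAI} and Corollary~\ref{cor:cohomological-support-HAI}, this structure is $\mathbb H^3(X_0;\mathcal{IS}^H_{X_0})$, whose Betti realization is $H^3(I^{\bar m}X_0;\mathbb Q)$. I would then pass to the homological form $H_3^H(I^{\bar m}X_0)$ using the convention of Remark~\ref{rem:homology-vs-cohomology-rv}. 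Concretely, this is the linear dual with the appropriate Tate twist, and dualizing is an exact contravariant autoequivalence of rational mixed Hodge structures. Thus a two-step filtration on one side transports to a two-step filtration on the other, with the roles of sub and quotient exchanged. Because the associated graded is stated as an unordered sum, the exchange does not affect the conclusion.

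Second, I would use Hypothesis~\ref{hyp:MHS-realization-Banagl-middle-sequence} to obtain the displayed short exact sequence in the abelian category of mixed Hodge structures. I would then define the filtration
\[
   0\subset F_1:=\operatorname{im}\bigl(K_I^H(X_0)\to H_3^H(I^{\bar m}X_0)\bigr)\subset F_2:=H_3^H(I^{\bar m}X_0).
\]
Since kernels and images of morphisms of mixed Hodge structures are mixed Hodge substructures (Deligne), $F_1$ is a substructure. Injectivity gives $F_1\cong K_I^H(X_0)$, and exactness at the middle together with surjectivity gives $F_2/F_1\cong H_3^H(X_0)$ as mixed Hodge structures. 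This is exactly the argument of the filtered-additivity lemma, applied to a short exact sequence rather than to a long exact sequence coming from a triangle.

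Third, I would apply $\operatorname{Atom}_{\mathrm{Hod}}$ to the associated graded $F_1\oplus F_2/F_1$. Direct-sum additivity at the level of graded mixed Hodge structures, as in Lemma~\ref{lem:direct-sum-additivity}, then gives
\[
   \operatorname{gr}\mathsf{HA}^{I}_3(X_0)=\operatorname{Atom}_{\mathrm{Hod}}(K_I^H(X_0))+\operatorname{Atom}_{\mathrm{Hod}}(H_3^H(X_0)).
\]
By Definitions~\ref{def:IIB-vanishing-atom} and~\ref{def:middle-rigid-atom}, the two terms on the right are $\mathsf{HA}^{I}_{\mathrm{van},3}(X_0)$ and $\mathsf{HA}^{I}_{\mathrm{rig},3}(X_0)$. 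I would also remark that nothing forces the filtration to split, so no direct-sum statement is claimed for $\mathsf{HA}^{I}_3(X_0)$ itself; this is consistent with the remark on filtration rather than splitting.

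The main obstacle I expect is the first step, namely matching the mixed Hodge structure that MHS-B places on $H_3^H(I^{\bar m}X_0)$ with the one coming from $\mathbb H^3(X_0;\mathcal{IS}^H_{X_0})$. This matching requires the Verdier self-duality of Proposition~\ref{prop:multinode-verdier-self-duality}, or else reading MHS-B as asserting that the middle term is the dual of that hypercohomology. I would make the latter reading explicit, since that is how MHS-B is phrased: it says the middle term is supplied by the intersection-space mixed Hodge module. Naturality of the filtration also needs care: it is canonical given the sequence, because $F_1$ is defined as an image.
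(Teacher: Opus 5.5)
Your proposal is correct and follows essentially the same route as the paper. You identify the middle-degree object with $H_3^H(I^{\bar m}X_0)$ via the homology/cohomology convention, use Hypothesis MHS-B to get the two-step filtration $0\subset K_I^H(X_0)\subset H_3^H(I^{\bar m}X_0)$ with quotient $H_3^H(X_0)$, and apply $\operatorname{Atom}_{\mathrm{Hod}}$ to the associated graded.

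One small correction to your first step. Transporting the filtration by the contravariant linear dual does more than swap the order of the graded pieces: it replaces them by their Tate-twisted duals, so the unordered-sum remark does not settle the point. The covariant identification you mention at the end is what makes this step precise. That means either the Verdier self-duality, or simply taking $H_3^H(I^{\bar m}X_0)$ as the middle object, as the paper does.
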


\begin{proof}
By Proposition~\ref{prop:betti-realization-HAI}, the middle-degree realization of the intersection-space Hodge atom package is the mixed-Hodge-theoretic realization of the middle intersection-space group, up to the homology/cohomology convention fixed in Remark~\ref{rem:homology-vs-cohomology-rv}.  Thus the relevant middle object is
\[
   H_3^H(I^{\bar m}X_0).
\]

By Hypothesis~\ref{hyp:MHS-realization-Banagl-middle-sequence}, Banagl's middle exact sequence is an exact sequence in the category of rational mixed Hodge structures:
\[
   0\to
   K_I^H(X_0)
   \to
   H_3^H(I^{\bar m}X_0)
   \to
   H_3^H(X_0)
   \to 0.
\]
In particular, $K_I^H(X_0)$ is a mixed Hodge substructure of $H_3^H(I^{\bar m}X_0)$, and the quotient is the mixed Hodge structure $H_3^H(X_0)$.

Therefore $H_3^H(I^{\bar m}X_0)$ carries a two-step filtration
\[
   0
   \subset
   K_I^H(X_0)
   \subset
   H_3^H(I^{\bar m}X_0),
\]
whose associated graded is
\[
   \operatorname{gr}H_3^H(I^{\bar m}X_0)
   \cong
   K_I^H(X_0)\oplus H_3^H(X_0)
\]
in the category of rational mixed Hodge structures.

Applying the Hodge-realization atom construction to the associated graded gives
\[
   \operatorname{gr}\mathsf{HA}^{I}_3(X_0)
   =
   \operatorname{Atom}_{\mathrm{Hod}}\bigl(K_I^H(X_0)\bigr)
   +
   \operatorname{Atom}_{\mathrm{Hod}}\bigl(H_3^H(X_0)\bigr).
\]
By Definitions~\ref{def:IIB-vanishing-atom} and~\ref{def:middle-rigid-atom}, these two terms are
\[
   \mathsf{HA}^{I}_{\mathrm{van},3}(X_0)
   \qquad\text{and}\qquad
   \mathsf{HA}^{I}_{\mathrm{rig},3}(X_0).
\]
This proves the asserted associated-graded identity.
\end{proof}

\begin{remark}[Why a filtration rather than a splitting]
\label{rem:rv-filtration-not-splitting}
Theorem~\ref{thm:rigid-vanishing-filtration} is stated for the associated graded because Banagl's exact sequence need not come with a canonical splitting.  Thus the natural atom-level statement is filtered.  A direct-sum identity requires an additional splitting in rational mixed Hodge structures.
\end{remark}

\subsection{Split case}
\label{subsec:rigid-vanishing-split-case}

In favorable cases the middle-degree exact sequence splits.  Then the filtered atom identity upgrades to an actual direct-sum identity.

\begin{corollary}[Split case]
\label{cor:rigid-vanishing-split-case}
Assume the hypotheses of Theorem~\ref{thm:rigid-vanishing-filtration}.  If the middle-degree sequence
\[
   0\to
   K_I^H(X_0)
   \to
   H_3^H(I^{\bar m}X_0)
   \to
   H_3^H(X_0)
   \to 0
\]
splits in rational mixed Hodge structures, then the associated-graded identity upgrades to a direct-sum atom identity:
\[
   \mathsf{HA}^{I}_3(X_0)
   =
   \mathsf{HA}^{I}_{\mathrm{van},3}(X_0)
   +
   \mathsf{HA}^{I}_{\mathrm{rig},3}(X_0).
\]
\end{corollary}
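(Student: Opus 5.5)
The plan is to reduce the statement to Lemma~\ref{lem:direct-sum-additivity}, applied at the level of graded mixed Hodge structures rather than mixed Hodge modules. A splitting of the middle sequence in rational mixed Hodge structures means there is a morphism of mixed Hodge structures $s:H_3^H(X_0)\to H_3^H(I^{\bar m}X_0)$ that is a section of the quotient map. Equivalently, there is a retraction $r:H_3^H(I^{\bar m}X_0)\to K_I^H(X_0)$.

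First, I will record the standard consequence of such a splitting in the abelian category of mixed Hodge structures. The map
\[
   (\iota,s):K_I^H(X_0)\oplus H_3^H(X_0)\longrightarrow H_3^H(I^{\bar m}X_0)
\]
is a morphism of mixed Hodge structures, where $\iota$ is the inclusion from Hypothesis~\ref{hyp:MHS-realization-Banagl-middle-sequence}. It is bijective on the underlying rational vector spaces by exactness. A bijective morphism of mixed Hodge structures is an isomorphism, by strictness of morphisms with respect to $W$ and $F$ \cite{Deligne72}. So $H_3^H(I^{\bar m}X_0)\cong K_I^H(X_0)\oplus H_3^H(X_0)$ as mixed Hodge structures. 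Under this isomorphism the filtration $0\subset K_I^H(X_0)\subset H_3^H(I^{\bar m}X_0)$ of Theorem~\ref{thm:rigid-vanishing-filtration} is identified with the filtration $0\subset K_I^H(X_0)\subset K_I^H(X_0)\oplus H_3^H(X_0)$. Consequently the canonical map from the graded object to the object is an isomorphism.

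Second, I will apply $\operatorname{Atom}_{\mathrm{Hod}}$ to this isomorphism. The formal input is the same one used in Lemma~\ref{lem:direct-sum-additivity}: $\operatorname{Atom}_{\mathrm{Hod}}$ depends only on the isomorphism class of the graded mixed Hodge structure and is additive on direct sums. Its proof only used the resulting isomorphism of graded mixed Hodge structures, so it applies verbatim here. This gives
\[
   \mathsf{HA}^{I}_3(X_0)=\operatorname{Atom}_{\mathrm{Hod}}\bigl(K_I^H(X_0)\bigr)+\operatorname{Atom}_{\mathrm{Hod}}\bigl(H_3^H(X_0)\bigr).
\]
Definitions~\ref{def:IIB-vanishing-atom} and~\ref{def:middle-rigid-atom} then identify the two terms with $\mathsf{HA}^{I}_{\mathrm{van},3}(X_0)$ and $\mathsf{HA}^{I}_{\mathrm{rig},3}(X_0)$.

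The only step needing care is the identification of $\mathsf{HA}^{I}_3(X_0)$ with $\operatorname{Atom}_{\mathrm{Hod}}(H_3^H(I^{\bar m}X_0))$. The package is defined cohomologically from $\mathbb H^3(X_0;\mathcal{IS}^H_{X_0})$, while the sequence is written in homology. Here I would invoke the same homology/cohomology convention, Remark~\ref{rem:homology-vs-cohomology-rv}, that was already used in the proof of Theorem~\ref{thm:rigid-vanishing-filtration}. I would also note that dualizing a split short exact sequence of mixed Hodge structures, with the Tate twist $(3)$, yields a split sequence. Hence the direct-sum identity transports to the cohomological side without further hypotheses.
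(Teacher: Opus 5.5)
Your proposal is correct and follows the paper's own proof: the splitting gives an isomorphism $H_3^H(I^{\bar m}X_0)\cong K_I^H(X_0)\oplus H_3^H(X_0)$ of mixed Hodge structures, Lemma~\ref{lem:direct-sum-additivity} turns this into the atom identity, and Definitions~\ref{def:IIB-vanishing-atom} and~\ref{def:middle-rigid-atom} identify the two terms. Your extra steps only make explicit what the paper leaves implicit, namely that a bijective morphism of mixed Hodge structures is an isomorphism and that the homology/cohomology passage rests on the convention of Remark~\ref{rem:homology-vs-cohomology-rv}.
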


\begin{proof}
A splitting in rational mixed Hodge structures gives an isomorphism
\[
   H_3^H(I^{\bar m}X_0)
   \cong
   K_I^H(X_0)\oplus H_3^H(X_0).
\]
By the direct-sum additivity of Hodge-realization atom shadows, Lemma~\ref{lem:direct-sum-additivity}, this yields
\[
   \operatorname{Atom}_{\mathrm{Hod}}\bigl(H_3^H(I^{\bar m}X_0)\bigr)
   =
   \operatorname{Atom}_{\mathrm{Hod}}\bigl(K_I^H(X_0)\bigr)
   +
   \operatorname{Atom}_{\mathrm{Hod}}\bigl(H_3^H(X_0)\bigr).
\]
Using Definitions~\ref{def:IIB-vanishing-atom} and~\ref{def:middle-rigid-atom}, this is exactly
\[
   \mathsf{HA}^{I}_3(X_0)
   =
   \mathsf{HA}^{I}_{\mathrm{van},3}(X_0)
   +
   \mathsf{HA}^{I}_{\mathrm{rig},3}(X_0).
\]
\end{proof}

\begin{remark}
\label{rem:split-case-computational-only}
The split case is useful for computations, but the non-split filtered form is the intrinsic statement.  In particular, global conifold relations may make the exact sequence nontrivial even when all local singularities are analytically identical ordinary double points.
\end{remark}

\subsection{Multi-node relation control}
\label{subsec:multi-node-relation-control}

For a conifold with several nodes, the local node contributions do not automatically assemble as a free direct sum.  The global topology of the smoothing, the singular fiber, and the exterior $U$ imposes relations among middle-dimensional cycles.  The IIB vanishing atom records the globally realized kernel $K_I^H(X_0)$, not merely a formal sum of one local contribution per node.

\begin{proposition}[Relation-controlled vanishing sector]
\label{prop:relation-controlled-vanishing-sector}
Assume Hypothesis~\ref{hyp:MHS-realization-Banagl-middle-sequence}.  The underlying rational dimension of the IIB vanishing atom is
\[
   \dim_{\mathbb Q}\operatorname{rat}\mathsf{HA}^{I}_{\mathrm{van},3}(X_0)
   =
   \dim_{\mathbb Q} K_I(X_0).
\]
Thus the IIB vanishing atom records the globally realized relation-controlled subspace
\[
   K_I(X_0)
   =
   \ker\bigl(H_3(U;\mathbb Q)\to H_3(X_0;\mathbb Q)\bigr)
\]
rather than a formally free nodewise sum.
\end{proposition}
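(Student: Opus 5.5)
The proof is essentially a bookkeeping argument, and I would carry it out in three short steps.

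\textbf{Step 1: rational realization of the vanishing atom.} By Definition~\ref{def:IIB-vanishing-atom}, the IIB vanishing atom is $\operatorname{Atom}_{\mathrm{Hod}}(K_I^H(X_0))$. I will make explicit the convention, implicit in Section~2, that the Betti realization of an atom shadow $\operatorname{Atom}_{\mathrm{Hod}}(V)$ of a mixed Hodge structure $V$ is the underlying rational vector space of $V$. In particular, its rational dimension is $\dim_{\mathbb Q}\operatorname{rat}V$. This uses only that $\operatorname{Atom}_{\mathrm{Hod}}$ records the pieces of the graded mixed Hodge structure, so that forgetting the Hodge data returns the underlying rational space. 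It is the same principle already used in Corollary~\ref{cor:cohomological-support-HAI}.

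\textbf{Step 2: identify the underlying space.} By the construction preceding Hypothesis~\ref{hyp:MHS-realization-Banagl-middle-sequence}, $K_I^H(X_0)$ is the kernel of the morphism of mixed Hodge structures $H_3^H(U)\to H_3^H(X_0)$, taken in the abelian category of mixed Hodge structures. Kernels there are computed on the underlying rational vector spaces, because the forgetful functor to $\mathbb Q$-vector spaces is exact (strictness of morphisms with respect to $W$ and $F$). Hence $\operatorname{rat}K_I^H(X_0)=K_I(X_0)$.

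Hypothesis MHS-B gives a second way to see the same kernel: as the first term of the exact sequence of mixed Hodge structures. Applying the exact forgetful functor recovers Banagl's rational sequence. So $K_I(X_0)$ is the same space whether it is viewed as the kernel from $U$ or as the subspace of $H_3(I^{\bar m}X_0;\mathbb Q)$. Combining this with Step 1 gives the dimension equality.

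\textbf{Step 3: the interpretive clause.} This part is a remark rather than a computation. $K_I(X_0)$ is by definition a subspace of the global group $H_3(U;\mathbb Q)$. Its dimension is $\dim H_3(U)-\operatorname{rank}\bigl(H_3(U)\to H_3(X_0)\bigr)$, which depends on the global map and not on a nodewise count.

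To illustrate the point, I would note the following. The local links $S^2\times S^3$ each contribute a class to $H_3(U)$ near a node. The image of $\bigoplus_i H_3(L_i)\to H_3(U)$ can, however, have relations coming from global cycles bounding in $U$. So $\dim K_I(X_0)$ need not equal $r$. This will be phrased as explanation rather than as a claimed formula.

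\textbf{Main obstacle.} The only real subtlety is Step 1. The paper never defines the Betti realization of $\operatorname{Atom}_{\mathrm{Hod}}$ precisely, so I must fix the convention that it preserves the underlying rational dimension. I would justify this by appeal to the definition's phrase ``records the Hodge-theoretic pieces visible after passing to the relevant graded mixed Hodge structures'', and state it explicitly as the convention being used.
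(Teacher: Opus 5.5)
Your argument is correct and is essentially the paper's proof. Both unwind Definition~\ref{def:IIB-vanishing-atom}, use that the underlying rational space of the kernel $K_I^H(X_0)$ of $H_3^H(U)\to H_3^H(X_0)$ is $K_I(X_0)$, and read the interpretive clause off the fact that this kernel is defined globally via $U\hookrightarrow X_0$. The only difference is that you state explicitly the convention $\dim_{\mathbb Q}\operatorname{rat}\operatorname{Atom}_{\mathrm{Hod}}(V)=\dim_{\mathbb Q}\operatorname{rat}V$, and the exactness of the forgetful functor on mixed Hodge structures, both of which the paper uses tacitly.
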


\begin{proof}
By Definition~\ref{def:IIB-vanishing-atom},
\[
   \mathsf{HA}^{I}_{\mathrm{van},3}(X_0)
   =
   \operatorname{Atom}_{\mathrm{Hod}}\bigl(K_I^H(X_0)\bigr).
\]
The rational vector space underlying $K_I^H(X_0)$ is, by definition,
\[
   K_I(X_0)
   =
   \ker\bigl(H_3(U;\mathbb Q)\to H_3(X_0;\mathbb Q)\bigr).
\]
Therefore the rational rank of the vanishing atom is the dimension of this kernel:
\[
   \dim_{\mathbb Q}\operatorname{rat}\mathsf{HA}^{I}_{\mathrm{van},3}(X_0)
   =
   \dim_{\mathbb Q}K_I(X_0).
\]

The kernel $K_I(X_0)$ is defined globally, using the map from the smooth exterior $U$ to the singular fiber $X_0$.  Hence it already incorporates all global homological relations among the local conifold contributions.  It is therefore the relation-controlled global sector, not a free vector space indexed only by the node set.
\end{proof}

\begin{remark}[No naive node-count bound]
\label{rem:no-naive-node-count-bound}
One should not replace Proposition~\ref{prop:relation-controlled-vanishing-sector} by a naive statement of the form
\[
   \dim \mathsf{HA}^{I}_{\mathrm{van},3}(X_0)\leq |\Sigma|.
\]
The object $K_I(X_0)$ is a global kernel involving the exterior $U$, not simply the direct sum of local Milnor-fiber contributions.  In the classical nodal quintic example treated below, the middle intersection-space contribution is governed by the global topology of the conifold and is not captured by a free node-counting model.
\end{remark}

\begin{remark}[Comparison with rigid--flexible conifold atoms]
\label{rem:compare-rigid-flexible-atoms}
This rigid--vanishing filtration is the intersection-space counterpart of the rigid--flexible decomposition appearing in the corrected conifold MHM atom package.  The corrected nearby/vanishing-cycle object separates a rigid intersection-complex sector from flexible singularity-supported sectors.  The intersection-space object instead separates the persistent middle singular-fiber sector from the type-IIB vanishing kernel $K_I^H(X_0)$.  Both structures are controlled by the same basic conifold geometry, but they package different realizations of it.
\end{remark}

%%%%%%%%%%%%%%%%%%%%%%%%%%%%%%%%%%%%%%%%%%%%%%%%%%%%%%%%%%%%%%%%%%%%%
\section{Verdier Duality, Projected Duality, and Pairing-Enhanced Atoms}
\label{sec:verdier-duality-pairing-enhanced-atoms}

We next record the duality structures carried by the intersection-space Hodge atom package and by the projected corrected conifold object.  The first duality is the Verdier self-duality of the Banagl--Budur--Maxim intersection-space complex, translated into an atom-level symmetry.  The second duality concerns the projected variation object
\[
   P^H_I=\operatorname{Cone}(\operatorname{var}_I)[-1].
\]
Because Saito duality exchanges variation and canonical morphisms for nearby and vanishing cycles, the natural dual of $P^H_I$ is not generally $P^H_I$ itself.  Instead, it is the projected canonical companion
\[
   Q^H_I=\operatorname{Cone}(\operatorname{can}_I)[-1].
\]

Throughout this section, $X_0$ is a projective Calabi--Yau threefold conifold satisfying the standing ordinary-double-point hypotheses.  For the intersection-space duality statements, we assume Hypotheses~\ref{hyp:H1-local-BBM-input}--\ref{hyp:H3-MHM-lift} and the self-duality normalization of Proposition~\ref{prop:multinode-verdier-self-duality}.  For the projected variation/canonical duality, we additionally assume Hypotheses~\ref{hyp:S-specialization-splitting} and~\ref{hyp:SD-self-dual-specialization-splitting}.

\subsection{Normalization calculation}
\label{subsec:verdier-normalization-calculation}

The Banagl--Budur--Maxim intersection-space complex is naturally written in perverse normalization \cite{BanaglBudurMaxim14,BBD82}.  On the smooth locus
\[
   U=X_0\setminus\Sigma,
\]
it restricts to the normalized constant Hodge module
\[
   \mathbb Q^H_U[3].
\]
This is the standard perverse normalization for a complex threefold \cite{BBD82,Saito90,Saito_MHM}.  Thus the passage from perverse normalization to ordinary cohomological grading must account for the fact that $X_0$ has complex dimension $3$ and real dimension $6$.

Under the Calabi--Yau threefold cohomological normalization used in Proposition~\ref{prop:multinode-verdier-self-duality}, Verdier self-duality of the intersection-space mixed Hodge module is expressed as
\[
   \mathbb D\mathcal{IS}^{H}_{X_0}
   \simeq
   \mathcal{IS}^{H}_{X_0}(3).
\]
Here $\mathbb D$ denotes Verdier duality for mixed Hodge modules, and $(3)$ denotes the Tate twist determined by the Calabi--Yau threefold normalization \cite{Saito90,Saito89,Schurmann_TopologySingularSpaces}.  After taking hypercohomology, Verdier duality identifies the dual of degree $k$ intersection-space cohomology with degree $6-k$ intersection-space cohomology, with Tate twist $(3)$:
\[
   \mathbb H^k(X_0;\mathcal{IS}^{H}_{X_0})^\vee
   \cong
   \mathbb H^{6-k}(X_0;\mathcal{IS}^{H}_{X_0})(3).
\]
After applying rational realization, this gives the corresponding Poincare-type duality statement
\[
   H^k(I^{\bar m}X_0;\mathbb Q)^\vee
   \cong
   H^{6-k}(I^{\bar m}X_0;\mathbb Q)(3),
\]
with the same normalization convention.

Applying the Hodge-realization atom construction middle-degree gives the corresponding atom-level symmetry:
\[
   \bigl(\mathsf{HA}^{I}_k(X_0)\bigr)^\vee
   \cong
   \mathsf{HA}^{I}_{6-k}(X_0)(3).
\]
This is the intersection-space atom analogue of Poincare duality.

\begin{remark}[Perverse versus cohomological normalization]
\label{rem:perverse-versus-cohomological-normalization}
The formula above is written in ordinary cohomological grading.  If one keeps $\mathcal{IS}^{H}_{X_0}$ in strict perverse normalization, Verdier duality may appear with an additional cohomological shift.  Throughout this paper, degrees are ordinary topological degrees of the middle-perversity intersection space, and $X_0$ has real dimension $6$.
\end{remark}

\subsection{Verdier-dual atom symmetry}
\label{subsec:verdier-dual-atom-symmetry}

\begin{theorem}[Verdier-dual atom symmetry]
\label{thm:verdier-dual-atom-symmetry}
Assume Hypotheses~\ref{hyp:H1-local-BBM-input}--\ref{hyp:H3-MHM-lift}.  Suppose that the intersection-space mixed Hodge module satisfies the Calabi--Yau threefold self-duality normalization
\[
   \mathbb D\mathcal{IS}^{H}_{X_0}
   \simeq
   \mathcal{IS}^{H}_{X_0}(3).
\]
Then there is a duality of atom packages
\[
   \bigl(\mathsf{HA}^{I}_k(X_0)\bigr)^\vee
   \cong
   \mathsf{HA}^{I}_{6-k}(X_0)(3),
   \qquad 0\leq k\leq 6.
\]
\end{theorem}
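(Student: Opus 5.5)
The plan is to deduce the theorem directly from the general duality lemma for atom shadows in Section~2 (the lemma ``Duality of atom shadows''). That lemma is applied with $X=X_0$, $M^H=\mathcal{IS}^{H}_{X_0}$, Tate twist $a=3$, and the cohomological normalization for which the degree reversal is $k\mapsto 6-k$. First we check its hypotheses. $X_0$ is a projective, hence compact, complex algebraic variety. By Hypothesis~\ref{hyp:H3-MHM-lift}, $\mathcal{IS}^{H}_{X_0}$ is an object of $\operatorname{MHM}(X_0)\subset D^b\operatorname{MHM}(X_0)$. Its hypercohomology is finite-dimensional because its rationalization is the constructible complex $\mathcal{IS}_{X_0}$ on a compact space. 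Alternatively, Proposition~\ref{prop:betti-realization-HAI} identifies the hypercohomology with $H^*(I^{\bar m}X_0;\mathbb Q)$, which is finite-dimensional.

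Next, we feed the assumed isomorphism $\mathbb D\mathcal{IS}^{H}_{X_0}\simeq\mathcal{IS}^{H}_{X_0}(3)$ into the lemma, read in the ordinary cohomological grading of Remark~\ref{rem:CY3-verdier-normalization} and Remark~\ref{rem:perverse-versus-cohomological-normalization}. Verdier duality on the compact space $X_0$ gives $\mathbb H^k(X_0;\mathbb D M^H)\cong \mathbb H^{-k}(X_0;M^H)^\vee$, and this is an isomorphism of mixed Hodge structures by Saito's theory. Combining it with the assumed isomorphism yields
\[
\mathbb H^k(X_0;\mathcal{IS}^{H}_{X_0})^\vee\cong\mathbb H^{6-k}(X_0;\mathcal{IS}^{H}_{X_0})(3),
\]
exactly as in Proposition~\ref{prop:multinode-verdier-self-duality} and the normalization calculation of this section. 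The range $0\le k\le 6$ covers all degrees in which the intersection-space cohomology of the real $6$-dimensional space $I^{\bar m}X_0$ can be nonzero. Outside this range both sides vanish, so nothing further is needed there.

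Finally, we apply $\operatorname{Atom}_{\mathrm{Hod}}$ degree by degree. Write $\mathsf{HA}^{I}_k(X_0)=\operatorname{Atom}_{\mathrm{Hod}}(\mathbb H^k(X_0;\mathcal{IS}^{H}_{X_0}))$. The lemma's conclusion then gives the stated isomorphism of atom packages. Here we use only that $\operatorname{Atom}_{\mathrm{Hod}}$ is extracted from the mixed Hodge structure, so it commutes with taking duals and with Tate twists; this is the realization-level convention of Section~2.

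The one step needing care is the normalization bookkeeping. In strict perverse normalization, $\mathcal{IS}^{H}_{X_0}$ restricts to $\mathbb Q^H_U[3]$ on $U$, and Verdier duality produces reversal $k\mapsto -k$ in perverse degrees. One has to check that after shifting to topological degrees (adding $3$), this becomes $k\mapsto 6-k$ with twist $(3)$. We will verify this on the open stratum $U$, where $\mathbb D(\mathbb Q^H_U[3])\simeq\mathbb Q^H_U[3](3)$ is standard Poincaré duality for a smooth threefold. The theorem's hypothesis is precisely the assertion that this extends across $\Sigma$.
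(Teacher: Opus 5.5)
Your proposal is correct and follows essentially the paper's argument. The paper likewise combines Verdier duality on the compact space $X_0$, Saito's compatibility of duality with the mixed Hodge structures on hypercohomology, and the assumed isomorphism $\mathbb D\mathcal{IS}^{H}_{X_0}\simeq\mathcal{IS}^{H}_{X_0}(3)$ to obtain $\mathbb H^k(X_0;\mathcal{IS}^{H}_{X_0})^\vee\cong\mathbb H^{6-k}(X_0;\mathcal{IS}^{H}_{X_0})(3)$, and then applies $\operatorname{Atom}_{\mathrm{Hod}}$ degree by degree. It does this directly rather than citing the Section~2 duality lemma, but that lemma's proof is the same argument. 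Your explicit bookkeeping — reversal $j\mapsto -j$ in perverse degrees becoming $k\mapsto 6-k$ after adding $3$, plus the finite-dimensionality check — is correct, and it is left implicit in the paper.
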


\begin{proof}
By assumption, there is an isomorphism in the derived category of mixed Hodge modules
\[
   \mathbb D\mathcal{IS}^{H}_{X_0}
   \simeq
   \mathcal{IS}^{H}_{X_0}(3).
\]
Verdier duality for constructible complexes gives a perfect pairing between the hypercohomology of an object and the hypercohomology of its Verdier dual, with complementary degrees on a compact real $6$-dimensional space \cite{BBD82,KashiwaraSchapira90,DimcaSheavesInTopology}.  In Saito's category of mixed Hodge modules, this duality is compatible with the mixed Hodge structures on hypercohomology \cite{Saito90,Saito89,Saito_MHM,Schurmann_TopologySingularSpaces}.  Therefore the assumed self-duality gives an isomorphism of mixed Hodge structures
\[
   \mathbb H^k(X_0;\mathcal{IS}^{H}_{X_0})^\vee
   \cong
   \mathbb H^{6-k}(X_0;\mathcal{IS}^{H}_{X_0})(3).
\]

By Definition~\ref{def:intersection-space-hodge-atom-package},
\[
   \mathsf{HA}^{I}_k(X_0)
   =
   \operatorname{Atom}_{\mathrm{Hod}}
   \bigl(\mathbb H^k(X_0;\mathcal{IS}^{H}_{X_0})\bigr).
\]
Applying $\operatorname{Atom}_{\mathrm{Hod}}$ to the preceding mixed-Hodge-structure isomorphism gives
\[
   \operatorname{Atom}_{\mathrm{Hod}}
   \bigl(\mathbb H^k(X_0;\mathcal{IS}^{H}_{X_0})\bigr)^\vee
   \cong
   \operatorname{Atom}_{\mathrm{Hod}}
   \bigl(\mathbb H^{6-k}(X_0;\mathcal{IS}^{H}_{X_0})\bigr)(3).
\]
This is precisely
\[
   \bigl(\mathsf{HA}^{I}_k(X_0)\bigr)^\vee
   \cong
   \mathsf{HA}^{I}_{6-k}(X_0)(3).
\]
\end{proof}

\subsection{Projected variation and canonical duality}
\label{subsec:projected-variation-canonical-duality}

We now record the projected duality statement needed for the projection-triangle spine of the paper.  The main point is that the dual of the projected variation cone is not, in general, another copy of the same projected variation cone.  Verdier duality exchanges the variation morphism with the canonical morphism for nearby and vanishing cycles.  Thus the natural dual object to the projected variation object \(P^H_I\) is the projected canonical companion \(Q^H_I\).

Assume Hypothesis S, so that
\[
   \psi_\pi(F)
   \simeq
   \mathcal{IS}^{H}_{X_0}\oplus\mathcal C^H_\Sigma.
\]
Let
\[
   \operatorname{pr}_I:\psi_\pi(F)\to\mathcal{IS}^{H}_{X_0}
\]
be the projection onto the intersection-space summand, and let
\[
   \iota_I:\mathcal{IS}^{H}_{X_0}\to\psi_\pi(F)
\]
be the corresponding inclusion.  The projected variation morphism is
\[
   \operatorname{var}_I
   :=
   \operatorname{pr}_I\circ\operatorname{var}:
   \phi_\pi(F)\to\mathcal{IS}^{H}_{X_0},
\]
and the projected corrected object is
\[
   P^H_I
   :=
   \operatorname{Cone}(\operatorname{var}_I)[-1].
\]
Similarly, the projected canonical morphism is
\[
   \operatorname{can}_I
   :=
   \operatorname{can}\circ\iota_I:
   \mathcal{IS}^{H}_{X_0}\to\phi_\pi(F),
\]
and the canonical companion is
\[
   Q^H_I
   :=
   \operatorname{Cone}(\operatorname{can}_I)[-1].
\]

\begin{lemma}[Nearby/vanishing-cycle duality convention]
\label{lem:nearby-vanishing-duality-convention}
Let \(\pi:\mathcal X\to\Delta\) be a one-parameter degeneration with central fiber \(X_0\) a Calabi--Yau threefold. Here \(\psi_\pi(F)\) denotes the full nearby-cycle object in the perverse-normalized convention used throughout the paper; no decomposition into unipotent and non-unipotent parts is made unless explicitly stated. Let \(F\) denote the perverse-normalized Hodge module used throughout the paper to define nearby and vanishing cycles on \(X_0\).  We use the full nearby- and vanishing-cycle functors in the convention in which \(\psi_\pi(F)\) and \(\phi_\pi(F)\) lie in the same cohomological normalization on \(X_0\).

Assume the Calabi--Yau threefold self-duality normalization
\[
   \mathbb D F\simeq F(3).
\]
Then the nearby- and vanishing-cycle objects satisfy
\[
   \mathbb D\psi_\pi(F)\simeq \psi_\pi(F)(3),
   \qquad
   \mathbb D\phi_\pi(F)\simeq \phi_\pi(F)(3).
\]
Under these identifications, Verdier duality exchanges the variation and canonical morphisms:
\[
   \mathbb D(\operatorname{var})\simeq \operatorname{can}(3),
   \qquad
   \mathbb D(\operatorname{can})\simeq \operatorname{var}(3).
\]
\end{lemma}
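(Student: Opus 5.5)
The plan is to reduce everything to Saito's duality theorems for nearby and vanishing cycles and then carry the Tate twist through. Step one: recall that for a mixed Hodge module $F$ on the total space, Saito's theory gives canonical isomorphisms
\[
   \mathbb D\,\psi_\pi(F)\simeq \psi_\pi(\mathbb D F)(-1),
   \qquad
   \mathbb D\,\phi_\pi(F)\simeq \phi_\pi(\mathbb D F)
\]
(with the precise twists depending on whether one uses $\psi_{\pi,1}$, $\psi_{\pi,\neq1}$, or the full functors, and on the shift convention $\psi_\pi=\psi_\pi[-1]$). These are \cite{Saito89,Saito90}. I would fix the full-functor convention stated in the lemma, so that $\psi_\pi(F)$ and $\phi_\pi(F)$ sit in the same perverse degree on $X_0$. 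I would also note that Verdier duality commutes with $\psi$ and $\phi$ up to these twists because both functors are $t$-exact and are constructed from the $V$-filtration, which is compatible with the duality of filtered $\mathcal D$-modules.

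Step two: substitute the hypothesis $\mathbb D F\simeq F(3)$. Here I would read the lemma's normalization, as the paper does elsewhere, as an absorbed convention: the Tate twist on the total space and the $(-1)$ from $\psi$ are combined into the single CY3 twist $(3)$ on $X_0$. The aim is that both resulting identifications read $\mathbb D\psi_\pi(F)\simeq\psi_\pi(F)(3)$ and $\mathbb D\phi_\pi(F)\simeq\phi_\pi(F)(3)$. If the raw twists for $\psi$ and $\phi$ differ by one, I would check that the difference is exactly cancelled by the twist appearing in $\operatorname{var}:\phi\to\psi(-1)$. I would then adopt, as the lemma's stated convention, the normalization in which $\operatorname{var}$ is untwisted, so that the two twists agree.

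Step three, the exchange of morphisms, rests on the fact that under Saito's duality isomorphisms the dual of $\operatorname{can}:\psi\to\phi$ is $\operatorname{var}:\phi\to\psi(-1)$, and conversely. This follows from the compatibility $\operatorname{var}\circ\operatorname{can}=N$ together with $\mathbb D N=-N$ (up to the twist), and from the fact that both $\operatorname{can}$ and $\operatorname{var}$ are built from the same $V$-filtration pieces $\operatorname{gr}_V^{0}$ and $\operatorname{gr}_V^{1}$ via $\partial_t$ and $t$, which are exchanged by duality. Transporting this along the identifications of step two gives $\mathbb D(\operatorname{var})\simeq\operatorname{can}(3)$, and applying $\mathbb D$ again with biduality $\mathbb D\mathbb D\simeq\mathrm{id}$ gives the second identity.

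The main obstacle is bookkeeping: tracking the Tate twists and signs so that a single uniform twist $(3)$ appears on both $\psi$ and $\phi$, and confirming that the sign ambiguity in $\mathbb D N=-N$ does not obstruct the identification of $\mathbb D(\operatorname{var})$ with $\operatorname{can}$ up to isomorphism. I would resolve this by absorbing signs into the chosen duality isomorphisms. That move is harmless because the lemma asserts only an isomorphism of morphisms in the arrow category, not equality for fixed identifications.
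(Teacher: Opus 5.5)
\textbf{Gap: the twist discrepancy between $\psi$ and $\phi$ cannot be removed.} Your route is the same as the paper's: cite Saito's duality theorem, absorb the twists into a convention, and use biduality for the second exchange. The step you call bookkeeping is in fact an obstruction. Saito's isomorphisms are $\mathbb D\psi_\pi F\simeq\psi_\pi(\mathbb DF)(-1)$ and $\mathbb D\phi_{\pi,1}F\simeq\phi_{\pi,1}(\mathbb DF)$. So $\mathbb DF\simeq F(w)$ gives self-duality twist $w-1$ for $\psi_\pi F$ and $w$ for $\phi_{\pi,1}F$.

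These twists are forced. A nonzero mixed Hodge module $M$ is never isomorphic to $M(c)$ with $c\neq0$, because $\operatorname{Gr}^W_k(M(c))=(\operatorname{Gr}^W_{k+2c}M)(c)$ and $M$ has only finitely many weights. Hence $\mathbb DM\simeq M(a)$ determines $a$.

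Your repair is to renormalize so that $\operatorname{var}$ is untwisted, i.e.\ replace $\phi$ by $\phi(1)$ or $\psi$ by $\psi(-1)$. Since $\mathbb D(M(k))\simeq(\mathbb DM)(-k)$, re-twisting by $(k)$ changes a self-duality twist by $-2k$, an even number. The discrepancy between $w-1$ and $w$ is odd, so no choice of Tate twists makes the two agree. Step 3 is fine once you invoke Saito's theorem directly; the relation $\operatorname{var}\circ\operatorname{can}=N$ alone would not determine the factors.

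\textbf{The obstruction is not vacuous for the conifold.} The local monodromy on $\widetilde H^3$ of the Milnor fibre of $z_0^2+\dots+z_3^2$ is $(-1)^4=1$, so $\phi_\pi F=\phi_{\pi,1}F\neq0$. Take $F=\mathbb Q^H_{\mathcal X}[4]$, the choice implicit in $\psi_\pi(F)\simeq Rsp_*\mathbb Q^H_{X_s}[3]$, with $\mathcal X$ smooth as for the smoothing $z_0^2+\dots+z_3^2=t$. Then $\mathbb DF\simeq F(4)$ and $\mathbb D\psi_\pi F\simeq\psi_\pi F(3)$. But $\phi_\pi F\simeq\bigoplus_{p\in\Sigma}\mathbb Q^H_p(-2)$ is pure of weight $4$. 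Its dual $\bigoplus_p\mathbb Q^H_p(2)=\phi_\pi F(4)$ is not $\phi_\pi F(3)=\bigoplus_p\mathbb Q^H_p(1)$. Reading the hypothesis $\mathbb DF\simeq F(3)$ literally only moves the failure to $\psi$, giving $\mathbb D\psi_\pi F\simeq\psi_\pi F(2)$.

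Your sign-absorption device cannot help. An isomorphism in the arrow category between $\mathbb D(\operatorname{var})$ and $\operatorname{can}(3)$ includes isomorphisms $\mathbb D\psi_\pi F\simeq\psi_\pi F(3)$ and $\mathbb D\phi_\pi F\simeq\phi_\pi F(3)$ of the endpoints, which is exactly what fails. The paper's proof has the same hole: after citing Saito it simply asserts the uniform twist ``in the convention used here''.

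\textbf{What your argument does prove.} Either of the following holds:
\begin{itemize}
\item the Tate-blind statement after applying $\operatorname{rat}$, namely the classical fact that for rational perverse sheaves $\mathbb D$ commutes with the perverse nearby and vanishing cycle functors and exchanges $\operatorname{can}$ and $\operatorname{var}$;
\item the correctly twisted statement $\mathbb D(\operatorname{Var})\simeq\operatorname{can}(w)$ and $\mathbb D(\operatorname{can})\simeq\operatorname{Var}(w)$ for $\operatorname{Var}:\phi_{\pi,1}F\to\psi_{\pi,1}F(-1)$, in which duality exchanges $\psi$ and $\psi(-1)$ rather than fixing $\psi$.
\end{itemize}
Either way the lemma must be restated, and the projected duality theorem built on it must be rechecked.
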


\begin{proof}
Saito's duality theorem for vanishing-cycle functors gives functorial duality isomorphisms for the nearby- and vanishing-cycle functors and identifies the canonical and variation morphisms under duality, up to the signs, shifts, and Tate twists determined by the chosen convention; see \cite[\S1.2, (1.2.8), Thm.~1.6]{Saito89}.  In particular, Theorem~1.6 states that the morphisms \(\operatorname{can}\) and \(\operatorname{Var}\) on one side correspond under duality to the transpose of \(-\operatorname{Var}\) and \(\operatorname{can}\) on the dual side.

In the present paper we use the perverse-normalized Calabi--Yau threefold convention for the coefficient object \(F\).  Thus \(F\) is normalized so that
\[
   \mathbb D F\simeq F(3).
\]
The twist \((3)\) is part of the chosen Calabi--Yau threefold self-duality normalization of \(F\).  Applying Saito's duality theorem for nearby and vanishing cycles to this normalized object gives, in the convention used here,
\[
   \mathbb D\psi_\pi(F)\simeq \psi_\pi(F)(3),
   \qquad
   \mathbb D\phi_\pi(F)\simeq \phi_\pi(F)(3).
\]
Under these identifications, the dual of
\[
   \operatorname{var}:\phi_\pi(F)\to\psi_\pi(F)
\]
is identified with
\[
   \operatorname{can}(3):\psi_\pi(F)(3)\to\phi_\pi(F)(3),
\]
up to the sign convention already fixed in the definition of the variation morphism.  Likewise, the dual of \(\operatorname{can}\) is identified with \(\operatorname{var}(3)\).  This proves the displayed convention.
\end{proof}

\begin{remark} \label{rem:saito-notation-on-Var}
Saito \cite{Saito89} writes \(\operatorname{Var}\) for the variation morphism; our notation \(\operatorname{var}\) denotes the same morphism after the normalization fixed in this paper.
\end{remark}

\begin{remark}[Dependence on normalization]
\label{rem:nearby-vanishing-duality-normalization}
Lemma~\ref{lem:nearby-vanishing-duality-convention} fixes the convention used in this paper.  Other sources normalize nearby and vanishing cycles differently, for example by shifting \(\phi_\pi\), separating unipotent and non-unipotent nearby cycles, or choosing a different perverse normalization.  Such choices may change the displayed shifts or Tate twists.  The projected duality theorem below is stated only in the normalization fixed by Lemma~\ref{lem:nearby-vanishing-duality-convention}.
\end{remark}

\begin{remark}[Role of the convention]
\label{rem:role-nearby-vanishing-duality-convention}
Lemma~\ref{lem:nearby-vanishing-duality-convention} fixes the convention used in this paper.  Other sources normalize nearby and vanishing cycles differently, for example by shifting \(\phi_\pi\) or by separating unipotent and non-unipotent monodromy parts.  Such choices may change the displayed shifts or twists.  The projected duality theorem below is stated only in the normalization fixed by Lemma~\ref{lem:nearby-vanishing-duality-convention}.
\end{remark}

\begin{theorem}[Projected variation/canonical duality]
\label{thm:projected-variation-canonical-duality}
Assume Hypotheses H1--H3, Hypothesis S, and Hypothesis SD.  Then
\[
   \mathbb D P^H_I
   \simeq
   Q^H_I(3).
\]
\end{theorem}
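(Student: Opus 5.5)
The plan is to apply Verdier duality directly to the defining triangle of \(P^H_I\) and then identify each dualized term and morphism using Lemma~\ref{lem:nearby-vanishing-duality-convention} and Hypothesis SD. By definition of the cone, there is a distinguished triangle
\[
   P^H_I\longrightarrow \phi_\pi(F)\xrightarrow{\operatorname{var}_I}\mathcal{IS}^H_{X_0}\xrightarrow{+1}.
\]
Since \(\mathbb D\) is a contravariant triangulated functor on \(D^b\operatorname{MHM}(X_0)\), applying it yields a distinguished triangle
\[
   \mathbb D\mathcal{IS}^H_{X_0}\xrightarrow{\mathbb D(\operatorname{var}_I)}\mathbb D\phi_\pi(F)\longrightarrow \mathbb D P^H_I\xrightarrow{+1}.
\]
It follows that \(\mathbb D P^H_I\simeq \operatorname{Cone}(\mathbb D(\operatorname{var}_I))\). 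I will track the shift carefully at this point. The paper's convention \(P=\operatorname{Cone}[-1]\) makes \(\mathbb D P\) a cone with no shift. For the two sides to match with \(Q^H_I=\operatorname{Cone}(\operatorname{can}_I)[-1]\), the normalization of Lemma~\ref{lem:nearby-vanishing-duality-convention} must absorb the resulting shift. I will handle this by the same convention fixed in Remark~\ref{rem:nearby-vanishing-duality-normalization}, that is, by the perverse-normalized identification of the cone shift, and I will state explicitly that the comparison is made in that normalization.

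The key computation is the dual morphism. Contravariance gives \(\mathbb D(\operatorname{var}_I)=\mathbb D(\operatorname{pr}_I\circ\operatorname{var})=\mathbb D(\operatorname{var})\circ\mathbb D(\operatorname{pr}_I)\). Hypothesis SD gives \(\mathbb D(\operatorname{pr}_I)\simeq\iota_I(3)\) compatibly with the identification \(\mathbb D\mathcal{IS}^H_{X_0}\simeq\mathcal{IS}^H_{X_0}(3)\) and the block-diagonal duality \(\mathbb D\psi_\pi(F)\simeq\psi_\pi(F)(3)\). Lemma~\ref{lem:nearby-vanishing-duality-convention} gives \(\mathbb D(\operatorname{var})\simeq\operatorname{can}(3)\) under \(\mathbb D\phi_\pi(F)\simeq\phi_\pi(F)(3)\). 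Composing, \(\mathbb D(\operatorname{var}_I)\) is identified with \(\operatorname{can}(3)\circ\iota_I(3)=\operatorname{can}_I(3)\). This identification holds as a commutative square whose vertical arrows are the duality isomorphisms on \(\mathcal{IS}^H_{X_0}\) and on \(\phi_\pi(F)\).

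Once the square commutes, the axiom TR3 gives an isomorphism of cones \(\operatorname{Cone}(\mathbb D(\operatorname{var}_I))\simeq\operatorname{Cone}(\operatorname{can}_I)(3)\), which after the normalization above is \(Q^H_I(3)\). Tate twist commutes with cones, so no further issue arises there.

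I expect the main obstacle to be the commutativity of that square. The isomorphism \(\mathbb D\psi_\pi(F)\simeq\psi_\pi(F)(3)\) used in Lemma~\ref{lem:nearby-vanishing-duality-convention} has to be the \emph{same} isomorphism with respect to which SD asserts block-diagonality. Otherwise the composite only agrees with \(\operatorname{can}_I(3)\) up to an off-diagonal correction through \(\mathcal C^H_\Sigma\). I will address this by reading Hypothesis SD as a statement about Saito's duality isomorphism itself, which is how it is formulated. I will also note that the sign in Saito's Theorem~1.6 (\(-\operatorname{Var}\)) changes the morphism only by an automorphism, so the cone is unchanged up to isomorphism. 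As a fallback for the shift bookkeeping, I will verify the degrees on stalks at a node, where all the objects are explicit.
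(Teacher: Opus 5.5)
Your route is the same as the paper's: dualize the defining triangle of $P^H_I$, identify $\mathbb D\operatorname{var}_I$ with $\operatorname{can}_I(3)$ using Lemma~\ref{lem:nearby-vanishing-duality-convention} and the block-diagonality in Hypothesis SD, and compare cones. Your handling of the commutative square is fine, since you read SD as a statement about Saito's own duality isomorphism, which is what the paper does. Your handling of the sign in Saito's Theorem~1.6 is also fine. The genuine gap is the shift you flagged, and it cannot be closed the way you propose. Once the square commutes, TR3 gives
\[
   \mathbb D P^H_I\simeq\operatorname{Cone}\bigl(\operatorname{can}_I(3)\bigr)=\operatorname{Cone}(\operatorname{can}_I)(3)=Q^H_I(3)[1].
\]
The dualities you may use, $\mathbb D\phi_\pi(F)\simeq\phi_\pi(F)(3)$ and $\mathbb D\mathcal{IS}^H_{X_0}\simeq\mathcal{IS}^H_{X_0}(3)$, carry no shift. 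These objects lie in the heart and $\mathbb D$ is t-exact, so no choice of normalization can introduce the compensating $[-1]$. There is no ``perverse-normalized identification of the cone shift'' to appeal to.

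In fact the statement as displayed is false, and your stalk fallback would detect this rather than repair it. The cleanest check is on the smooth locus. Because $sp$ is locally trivial over $U$, we have $j^*\phi_\pi(F)=0$. By Proposition~\ref{prop:exterior-agreement}, $j^*\mathcal{IS}^H_{X_0}\simeq\mathbb Q^H_U[3]$. Hence $j^*P^H_I\simeq\mathbb Q^H_U[2]$ and $j^*\mathbb D P^H_I\simeq\mathbb Q^H_U[4](3)$. On the other hand, $j^*Q^H_I\simeq\operatorname{Cone}(\mathbb Q^H_U[3]\to 0)[-1]\simeq\mathbb Q^H_U[3]$, so $j^*Q^H_I(3)\simeq\mathbb Q^H_U[3](3)$, and the two sides differ by a shift. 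The same mismatch appears globally, with $\phi_\pi(F)$ and $\mathcal{IS}^H_{X_0}$ in the heart. There $P^H_I$ has perverse cohomology in degrees $0$ and $1$, so $\mathbb D P^H_I$ lives in degrees $-1$ and $0$, with nonzero degree $-1$ piece $\mathbb D\operatorname{coker}(\operatorname{var}_I)$, whereas $Q^H_I(3)$ lives in degrees $0$ and $1$.

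The paper's own proof makes the same slip. It matches $\mathbb D P^H_I$, which sits in the cofiber slot of the dual triangle, with $Q^H_I(3)$, which sits in the fiber slot of the twisted defining triangle of $Q^H_I$. Rotating the latter puts $Q^H_I(3)[1]$ in the cofiber slot. What your argument correctly proves is $\mathbb D P^H_I\simeq Q^H_I(3)[1]$, equivalently $\mathbb D\operatorname{Cone}(\operatorname{var}_I)\simeq Q^H_I(3)$. You should state and prove that corrected version instead of trying to absorb the shift.
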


\begin{proof}
By definition of \(P^H_I\), there is a distinguished triangle
\[
   P^H_I
   \longrightarrow
   \phi_\pi(F)
   \xrightarrow{\operatorname{var}_I}
   \mathcal{IS}^{H}_{X_0}
   \xrightarrow{+1}.
\]
Applying Verdier duality gives a distinguished triangle
\[
   \mathbb D\mathcal{IS}^{H}_{X_0}
   \xrightarrow{\mathbb D\operatorname{var}_I}
   \mathbb D\phi_\pi(F)
   \longrightarrow
   \mathbb D P^H_I
   \xrightarrow{+1}.
\]
By Hypothesis SD, the specialization splitting is compatible with Saito--Verdier duality.  In particular,
\[
   \mathbb D\mathcal{IS}^{H}_{X_0}
   \simeq
   \mathcal{IS}^{H}_{X_0}(3).
\]
By Lemma~\ref{lem:nearby-vanishing-duality-convention},
\[
   \mathbb D\phi_\pi(F)
   \simeq
   \phi_\pi(F)(3).
\]

It remains to identify the dual morphism.  Since
\[
   \operatorname{var}_I
   =
   \operatorname{pr}_I\circ\operatorname{var},
\]
contravariance of Verdier duality gives
\[
   \mathbb D\operatorname{var}_I
   =
   \mathbb D\operatorname{var}\circ \mathbb D\operatorname{pr}_I.
\]
By Lemma~\ref{lem:nearby-vanishing-duality-convention},
\[
   \mathbb D\operatorname{var}
   \simeq
   \operatorname{can}(3).
\]
By Hypothesis SD, the dual of the projection onto the intersection-space summand is identified with the inclusion of that summand, up to the same Calabi--Yau threefold Tate twist:
\[
   \mathbb D\operatorname{pr}_I
   \simeq
   \iota_I(3).
\]
Therefore, after the duality identifications fixed above,
\[
   \mathbb D\operatorname{var}_I
   \simeq
   \operatorname{can}_I(3).
\]

Hence the dual triangle is identified with
\[
   \mathcal{IS}^{H}_{X_0}(3)
   \xrightarrow{\operatorname{can}_I(3)}
   \phi_\pi(F)(3)
   \longrightarrow
   \mathbb D P^H_I
   \xrightarrow{+1}.
\]
On the other hand, the defining triangle for
\[
   Q^H_I=\operatorname{Cone}(\operatorname{can}_I)[-1]
\]
is
\[
   Q^H_I
   \longrightarrow
   \mathcal{IS}^{H}_{X_0}
   \xrightarrow{\operatorname{can}_I}
   \phi_\pi(F)
   \xrightarrow{+1}.
\]
Twisting this triangle by \((3)\) gives
\[
   Q^H_I(3)
   \longrightarrow
   \mathcal{IS}^{H}_{X_0}(3)
   \xrightarrow{\operatorname{can}_I(3)}
   \phi_\pi(F)(3)
   \xrightarrow{+1}.
\]
Comparing the two distinguished triangles identifies the third term in the dual triangle with \(Q^H_I(3)\).  Therefore
\[
   \mathbb D P^H_I
   \simeq
   Q^H_I(3).
\]
\end{proof}

\begin{remark}[Why the duality is not stated as self-duality]
\label{rem:why-projected-duality-not-self-duality}
Theorem~\ref{thm:projected-variation-canonical-duality} does not assert
\[
   \mathbb D P^H_I\simeq P^H_I(3).
\]
The dual of the variation morphism is the canonical morphism, not the variation morphism itself.  Thus the natural Verdier-dual object to \(P^H_I\) is the canonical companion \(Q^H_I\).  A self-duality statement for \(P^H_I\) would require additional hypotheses identifying the projected variation cone with the projected canonical cone.
\end{remark}

\subsection{Enhanced atom package}
\label{subsec:enhanced-atom-package}

The basic package $\mathsf{HA}^{I}(X_0)$ records the Hodge-realization atom shadow of $\mathcal{IS}^{H}_{X_0}$.  For later comparison with intersection homology, nearby-cycle objects, characteristic-class shadows, and possible motivic refinements, it is useful to retain additional structure: the mixed Hodge filtrations, the duality pairing, and the middle-degree IIB vanishing kernel.

\begin{definition}[Enhanced intersection-space atom package]
\label{def:enhanced-intersection-space-atom-package}
Assume Hypotheses H1--H3, the self-duality normalization
\[
   \mathbb D\mathcal{IS}^{H}_{X_0}\simeq \mathcal{IS}^{H}_{X_0}(3),
\]
and Hypothesis MHS-B.  The pairing-enhanced intersection-space atom package is
\[
   \mathsf{EHA}^{I}(X_0)
   =
   \bigl(
      \mathsf{HA}^{I}(X_0),
      W_\bullet,
      F^\bullet,
      \langle-,-\rangle_I,
      K_I^H(X_0),
      H_3^H(X_0)
   \bigr),
\]
where \(W_\bullet\) and \(F^\bullet\) are the weight and Hodge filtrations on the mixed Hodge structures
\[
   \mathbb H^*(X_0;\mathcal{IS}^{H}_{X_0}),
\]
where \(\langle-,-\rangle_I\) is the pairing induced by Verdier/Poincare duality, where \(K_I^H(X_0)\) is the mixed-Hodge-theoretic realization of the middle-degree IIB vanishing kernel, and where \(H_3^H(X_0)\) is the rigid quotient contribution in Banagl's middle-degree exact sequence.
\end{definition}

The enhanced package contains no extra categorical assertion about atoms themselves.  It simply records additional structures that already exist on the mixed-Hodge-module realization.  Thus $\mathsf{EHA}^{I}(X_0)$ is still a realization-level invariant, but it remembers more than the underlying atom multiset.

\begin{remark}[Why include both middle pieces]
\label{rem:why-include-kernel-enhanced-package}
Under Hypothesis MHS-B, Banagl's middle-degree exact sequence is realized as an exact sequence of rational mixed Hodge structures
\[
   0\to
   K_I^H(X_0)
   \to
   H_3^H(I^{\bar m}X_0)
   \to
   H_3^H(X_0)
   \to 0.
\]
Thus \(K_I^H(X_0)\) records the vanishing subobject, while \(H_3^H(X_0)\) records the rigid quotient contribution.  Including both pieces makes the enhanced package symmetric with respect to the rigid--vanishing filtration: the associated graded middle sector is
\[
   \operatorname{gr}\mathsf{HA}^{I}_3(X_0)
   =
   \mathsf{HA}^{I}_{\mathrm{van},3}(X_0)
   +
   \mathsf{HA}^{I}_{\mathrm{rig},3}(X_0).
\]
\end{remark}

\subsection{Pairing compatibility with the rigid--vanishing filtration}
\label{subsec:pairing-compatibility-rv-filtration}

\begin{proposition}[Pairing compatibility]
\label{prop:pairing-compatibility-rigid-vanishing}
Assume the hypotheses of Definition~\ref{def:enhanced-intersection-space-atom-package}.  Suppose further that the Verdier/Poincare pairing is compatible with the mixed-Hodge realization of Banagl's middle-degree exact sequence.  Then the pairing on
\[
   \mathbb H^*(X_0;\mathcal{IS}^{H}_{X_0})
\]
is compatible with the rigid--vanishing filtration in the sense that it induces the duality symmetry on the associated atom packages:
\[
   \bigl(\mathsf{HA}^{I}_k(X_0)\bigr)^\vee
   \cong
   \mathsf{HA}^{I}_{6-k}(X_0)(3),
\]
and, on associated graded pieces in middle degree, pairs the vanishing and rigid pieces with their corresponding dual Tate-twisted contributions.
\end{proposition}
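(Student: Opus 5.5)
The plan has two parts, matching the two assertions of Proposition~\ref{prop:pairing-compatibility-rigid-vanishing}: the global atom duality, and the behaviour of the pairing on the middle associated graded.

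\textbf{Global duality.} The first display is exactly Theorem~\ref{thm:verdier-dual-atom-symmetry}. The hypotheses of Definition~\ref{def:enhanced-intersection-space-atom-package} include the normalization $\mathbb D\mathcal{IS}^H_{X_0}\simeq\mathcal{IS}^H_{X_0}(3)$, so we invoke that theorem directly. This yields the perfect pairing of mixed Hodge structures
\[
   \langle-,-\rangle_I:\mathbb H^k(X_0;\mathcal{IS}^H_{X_0})\otimes\mathbb H^{6-k}(X_0;\mathcal{IS}^H_{X_0})\to\mathbb Q(-3),
\]
and with it the atom identity $(\mathsf{HA}^I_k)^\vee\cong\mathsf{HA}^I_{6-k}(3)$.

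\textbf{Middle degree.} Take $k=3$, so the pairing is a perfect pairing of $H^H_3:=H_3^H(I^{\bar m}X_0)$ with itself, up to the homology/cohomology convention of Remark~\ref{rem:homology-vs-cohomology-rv}. Equivalently it is an isomorphism $H^H_3\cong (H^H_3)^\vee(-3)$ of mixed Hodge structures. Dualizing the sequence of Hypothesis~\ref{hyp:MHS-realization-Banagl-middle-sequence} in the abelian category of mixed Hodge structures gives the exact sequence
\[
   0\to H_3^H(X_0)^\vee(-3)\to (H^H_3)^\vee(-3)\to K_I^H(X_0)^\vee(-3)\to 0 .
\]
The additional compatibility assumption in the statement is what we use next. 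We read it as saying that the duality isomorphism carries the filtration $0\subset K_I^H(X_0)\subset H^H_3$ onto the dual filtration, that is, onto the annihilator filtration
\[
   0\subset K_I^H(X_0)^\perp\subset (H^H_3)^\vee(-3).
\]
Granting this, the induced maps on graded pieces are isomorphisms of mixed Hodge structures, and the pairing descends to the associated graded as perfect pairings between each graded piece and the Tate-twisted dual of the complementary piece. Because the maps are morphisms of mixed Hodge structures, $W_\bullet$ and $F^\bullet$ are strictly preserved, by the strictness property of morphisms of mixed Hodge structures \cite{Deligne72}. Applying $\operatorname{Atom}_{\mathrm{Hod}}$ to these graded isomorphisms, as in the proof of Theorem~\ref{thm:rigid-vanishing-filtration}, gives the pairing of $\mathsf{HA}^I_{\mathrm{van},3}(X_0)$ and $\mathsf{HA}^I_{\mathrm{rig},3}(X_0)$ with their dual Tate-twisted contributions. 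Since the duality reverses the filtration, the vanishing piece $K_I^H(X_0)$ is paired against the dual of the rigid quotient, and the rigid quotient against the dual of $K_I^H(X_0)$.

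\textbf{Main obstacle.} The hard step is making the phrase ``compatible with the mixed-Hodge realization of Banagl's middle sequence'' precise enough to yield the annihilator identification $K_I^H(X_0)^\perp=K_I^H(X_0)$ or its dual analogue. The first thing to try is to take this identification as the content of the hypothesis, which makes the graded statement immediate. If a more intrinsic justification is wanted, we would realize $K_I$ as the image of the node contributions from $\partial M=\bigsqcup L_i$ and use that the intersection pairing restricted to cycles supported near the links is computed by linking. Isotropy of $K_I$ should follow from that linking computation, and a dimension count should then upgrade isotropy to the required annihilator statement. Under the stated hypothesis, however, no such topological input is needed.
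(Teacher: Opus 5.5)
Your argument is the paper's own. It too cites Theorem~\ref{thm:verdier-dual-atom-symmetry} for the global symmetry, then reads the extra hypothesis as ``the duality carries the two-step filtration to the dual filtration,'' after which passing to graded pieces is formal.

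\textbf{The reading you give the hypothesis can never be satisfied.} The paper's proof has the same defect, hidden in the phrase ``corresponding dual filtration.'' Write $H=H_3^H(I^{\bar m}X_0)$ and $K=K_I^H(X_0)$. An isomorphism $H\xrightarrow{\sim}H^\vee(-3)$ that carries $K$ onto its annihilator $(H/K)^\vee(-3)$ says exactly that $K$ is Lagrangian. This forces $K\cong H_3^H(X_0)^\vee(-3)$ and $\operatorname{rk}K_I=\operatorname{rk}H_3(X_0;\mathbb Q)$. However, Lefschetz duality on the exterior $M$ gives
\[
   \operatorname{rk}H_3(X_0;\mathbb Q)=\operatorname{rk}K_I(X_0)+\operatorname{rk}IH_3(X_0;\mathbb Q).
\]
The underlying fact is that the images of $H_3(M,\partial M)\to H_2(\partial M)$ and $H_4(M,\partial M)\to H_3(\partial M)$ are mutual annihilators under the intersection pairing on $\partial M$. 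So your reading forces $IH_3(X_0;\mathbb Q)=0$.

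That never happens for a Calabi--Yau conifold, because $IH^3(X_0)$ is pure of weight $3$ with $h^{3,0}=1$. For the nodal quintic, $K_I$ is the span of the vanishing cycles, of rank $(204-2)/2=101$, while $\operatorname{rk}H_3(S;\mathbb Q)=103$. Read your way, the hypothesis is never satisfied and the proposition is vacuous.

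\textbf{Your fallback fails at exactly the dimension count.} Isotropy is right, and more is true. Banagl's duality is compatible with Lefschetz duality on the exterior. So for $x$ in the image of $H_3(U)$, the value $\langle x,y\rangle_I$ depends only on the image of $y$ in $H_3(X_0)$. Hence all of $\operatorname{im}\bigl(H_3(U)\to H_3(I^{\bar m}X_0)\bigr)$ is orthogonal to $K_I$.

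Counting dimensions then gives $K_I^\perp=\operatorname{im}H_3(U)$. This is strictly larger than $K_I$, with $K_I^\perp/K_I\cong\operatorname{im}\bigl(H_3(U)\to H_3(X_0)\bigr)=IH_3(X_0;\mathbb Q)$. The duality-compatible structure is therefore the three-step filtration
\[
   0\subset K_I\subset K_I^\perp\subset H_3(I^{\bar m}X_0),
\]
with graded pieces $K_I$, $IH_3(X_0)$ and $K_I^\vee(-3)$ (ranks $101,2,101$ for the quintic). The vanishing piece pairs perfectly with the top quotient, which is only a quotient of the rigid piece $H_3(X_0)$. The $IH_3$ part of the rigid piece pairs with itself.

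A hypothesis that can actually hold is ``$K$ is isotropic for a pairing that is a morphism of mixed Hodge structures.'' Then $K^\perp=\ker\bigl(H\to K^\vee(-3)\bigr)$ is a mixed Hodge substructure. Your formal argument then yields this three-step statement, rather than a pairing of the vanishing piece against the whole rigid piece.
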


\begin{proof}
By Hypothesis~\ref{hyp:MHS-realization-Banagl-middle-sequence}, the middle-degree exact sequence
\[
   0\to
   K_I^H(X_0)
   \to
   H_3^H(I^{\bar m}X_0)
   \to
   H_3^H(X_0)
   \to 0
\]
is an exact sequence in the category of rational mixed Hodge structures.  Therefore it defines a two-step filtration
\[
   0
   \subset
   K_I^H(X_0)
   \subset
   H_3^H(I^{\bar m}X_0).
\]
By Theorem~\ref{thm:rigid-vanishing-filtration}, the associated graded atom package is
\[
   \operatorname{gr}\mathsf{HA}^{I}_3(X_0)
   =
   \mathsf{HA}^{I}_{\mathrm{van},3}(X_0)
   +
   \mathsf{HA}^{I}_{\mathrm{rig},3}(X_0).
\]

The Verdier/Poincare pairing on
\[
   \mathbb H^*(X_0;\mathcal{IS}^{H}_{X_0})
\]
comes from the self-duality
\[
   \mathbb D\mathcal{IS}^{H}_{X_0}
   \simeq
   \mathcal{IS}^{H}_{X_0}(3).
\]
By Theorem~\ref{thm:verdier-dual-atom-symmetry}, this pairing induces
\[
   \bigl(\mathsf{HA}^{I}_k(X_0)\bigr)^\vee
   \cong
   \mathsf{HA}^{I}_{6-k}(X_0)(3).
\]
The additional compatibility assumption in the proposition says that this Verdier/Poincare pairing respects the mixed-Hodge realization of Banagl's middle exact sequence.  Therefore the two-step filtration above is carried to the corresponding dual filtration in complementary degree.  Passing to associated graded pieces gives the asserted compatibility between the atom-level duality and the rigid--vanishing associated graded decomposition.
\end{proof}

\begin{corollary}[Dual middle contribution]
\label{cor:dual-middle-contribution}
Under the hypotheses of Proposition~\ref{prop:pairing-compatibility-rigid-vanishing}, the middle-degree atom package satisfies
\[
   \bigl(\mathsf{HA}^{I}_3(X_0)\bigr)^\vee
   \cong
   \mathsf{HA}^{I}_3(X_0)(3).
\]
On the associated graded of the rigid--vanishing filtration, the vanishing atom and rigid atom are paired with their corresponding dual Tate-twisted middle-degree contributions.
\end{corollary}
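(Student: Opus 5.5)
The plan is to obtain the corollary by specializing Proposition~\ref{prop:pairing-compatibility-rigid-vanishing} to the middle degree \(k=3\). No new sheaf-theoretic input is needed.

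\emph{Step 1: the middle-degree self-duality.} Take the duality symmetry of the proposition, which is also Theorem~\ref{thm:verdier-dual-atom-symmetry}:
\[
   \bigl(\mathsf{HA}^{I}_k(X_0)\bigr)^\vee\cong \mathsf{HA}^{I}_{6-k}(X_0)(3),
\]
and set \(k=3\). Since \(6-3=3\), this gives the first display. Concretely, I would go back one level to the mixed-Hodge-structure isomorphism
\[
   \mathbb H^3(X_0;\mathcal{IS}^H_{X_0})^\vee\cong\mathbb H^3(X_0;\mathcal{IS}^H_{X_0})(3)
\]
coming from \(\mathbb D\mathcal{IS}^H_{X_0}\simeq\mathcal{IS}^H_{X_0}(3)\), and then apply \(\operatorname{Atom}_{\mathrm{Hod}}\). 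So the middle degree is self-dual up to the Tate twist, equivalently the pairing \(\langle-,-\rangle_I\) on degree \(3\) is perfect.

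\emph{Step 2: the graded pieces.} Under Hypothesis MHS-B, use the two-step filtration \(0\subset K_I^H(X_0)\subset H_3^H(I^{\bar m}X_0)\) from Theorem~\ref{thm:rigid-vanishing-filtration}. The added compatibility assumption in Proposition~\ref{prop:pairing-compatibility-rigid-vanishing} says the pairing carries this filtration to its dual filtration, that is, to the annihilator filtration in the same (middle) degree. Then the perfect pairing from Step~1 induces perfect pairings on associated graded pieces: the vanishing piece \(K_I^H(X_0)\) is paired with the dual of the quotient, and the rigid quotient \(H_3^H(X_0)\) with the dual of the sub. Applying \(\operatorname{Atom}_{\mathrm{Hod}}\) to these isomorphisms, together with the twist by \((3)\), gives the pairing of \(\mathsf{HA}^I_{\mathrm{van},3}\) and \(\mathsf{HA}^I_{\mathrm{rig},3}\) with their Tate-twisted dual contributions.

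\emph{Main obstacle.} The delicate point is the homology/cohomology bookkeeping of Remark~\ref{rem:homology-vs-cohomology-rv}. Banagl's sequence is stated in \(H_3\), while the atom package lives on \(\mathbb H^3\). One must check that passing to duals reverses the filtration, so that sub and quotient are exchanged, and that this is consistent with the twist \((3)\). I would handle this by interpreting ``corresponding dual contributions'' exactly as the annihilator-filtration statement above, which is what the compatibility hypothesis provides. I would not claim that the vanishing piece is self-paired, since the corollary's wording does not require it.
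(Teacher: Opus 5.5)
Your proposal is correct and matches the paper's proof. The first display is the $k=3$ case of Theorem~\ref{thm:verdier-dual-atom-symmetry}, since $6-3=3$, and the associated-graded statement is taken from Proposition~\ref{prop:pairing-compatibility-rigid-vanishing}; your Step~2 just unpacks that proposition via the annihilator filtration.

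One wording slip needs fixing. Write $K:=K_I^H(X_0)$ and $H:=H_3^H(I^{\bar m}X_0)$. The perfect pairing matches $K$ with the quotient $H/K^{\perp}\cong K^\vee(-3)$, and it matches the rigid quotient $H_3^H(X_0)=H/K$ with the sub $K^{\perp}\cong (H/K)^\vee(-3)$. So each graded piece is paired with its own dual: $K$ with the dual of the sub, and $H/K$ with the dual of the quotient. Your sentence literally says the reverse.
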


\begin{proof}
This is the degree $k=3$ case of Theorem~\ref{thm:verdier-dual-atom-symmetry}.  Since $6-3=3$, the middle degree pairs with itself, with Tate twist $(3)$.  The associated-graded statement follows from Proposition~\ref{prop:pairing-compatibility-rigid-vanishing}.
\end{proof}

\begin{remark}[Role of the enhanced package]
\label{rem:role-enhanced-package}
The enhanced package $\mathsf{EHA}^{I}(X_0)$ is useful for comparisons beyond the basic IIA/IIB atom pair.  For example, the pairing and filtrations are needed to compare with characteristic-class shadows, integral enhancements, and possible motivic refinements.  These refinements remain subordinate to the mixed-Hodge-module realization; no independent category of enhanced atoms is asserted.
\end{remark}

%%%%%%%%%%%%%%%%%%%%%%%%%%%%%%%%%%%%%%%%%%%%%%%%%%%%%%%%%%%%%%%%%%%
\section{Specialization Splitting and the Projection Triangle}
\label{sec:specialization-splitting-projection-triangle}

We now record the mixed-Hodge-module consequences of the Banagl--Budur--Maxim specialization splitting and prove the projection triangle that will be used later to define the IC--intersection-space defect.  This section is conditional: the definition of the intersection-space atom package requires the mixed-Hodge-module object
\[
   \mathcal{IS}^{H}_{X_0},
\]
but it does not require a splitting of the nearby-smoothing pushforward or nearby-cycle object.  When such a splitting is available, however, it gives a useful decomposition of the nearby-smoothing atom shadow into an intersection-space part and a singularity-supported complement.  It also allows the variation morphism to be projected to the intersection-space summand, producing the distinguished triangle
\[
   P^H\longrightarrow P^H_I\longrightarrow \mathcal C^H_\Sigma\xrightarrow{+1}.
\]

\subsection{The nearby smoothing object}
\label{subsec:nearby-smoothing-object}

Let
\[
   sp:X_s\to X_0
\]
be a specialization map from a nearby smoothing of the conifold fiber $X_0$.  The corresponding nearby-smoothing object is
\[
   Rsp_*\mathbb Q^H_{X_s}[3]\in D^b\operatorname{MHM}(X_0).
\]
The shift by $[3]$ is the standard perverse normalization for a complex threefold \cite{BBD82,Saito90,Saito_MHM}.  On the smooth locus
\[
   U=X_0\setminus\Sigma,
\]
the specialization map is locally topologically trivial.  Hence the nearby-smoothing object restricts to the normalized constant Hodge module:
\[
   j^*Rsp_*\mathbb Q^H_{X_s}[3]\simeq \mathbb Q^H_U[3],
\]
where
\[
   j:U\hookrightarrow X_0
\]
denotes the inclusion.  This is the same exterior object that appears in the restrictions of both
\[
   IC^H_{X_0}
   \qquad\text{and}\qquad
   \mathcal{IS}^{H}_{X_0}
\]
to $U$.

The Banagl--Budur--Maxim specialization splitting, in the weighted homogeneous setting, identifies the nearby-smoothing object with an intersection-space summand and a singularity-supported complement \cite{BanaglBudurMaxim14,Saito89,Saito90}.  In the multi-node ordinary-double-point setting, we use this splitting only under Hypothesis~\ref{hyp:S-specialization-splitting}:
\[
   Rsp_*\mathbb Q^H_{X_s}[3]
   \simeq
   \mathcal{IS}^{H}_{X_0}\oplus\mathcal C^H_{\Sigma},
\]
where $\mathcal C^H_{\Sigma}$ is supported on the finite singular set $\Sigma$.

Equivalently, in nearby-cycle notation for a degeneration $\pi$ with nearby-cycle object $\psi_\pi(F)$, the splitting is written as
\[
   \psi_\pi(F)
   \simeq
   \mathcal{IS}^{H}_{X_0}\oplus\mathcal C^H_{\Sigma}.
\]
This is the notation used below to define the projected variation morphism and the projection triangle.

\begin{remark}[Role of the splitting]
\label{rem:role-of-specialization-splitting}
The specialization splitting should not be confused with the definition of $\mathsf{HA}^{I}(X_0)$.  The atom package $\mathsf{HA}^{I}(X_0)$ is defined from $\mathcal{IS}^{H}_{X_0}$ alone.  The splitting in this section gives an additional comparison between the nearby smoothing and the intersection-space realization.  The projection triangle below requires Hypothesis~\ref{hyp:S-specialization-splitting}; the atom package itself does not.
\end{remark}

\subsection{Conditional atom splitting}
\label{subsec:conditional-atom-splitting}

When the specialization splitting holds in the mixed-Hodge-module category, the direct-sum additivity of Hodge-realization atom shadows gives an immediate atom-level decomposition.

\begin{theorem}[Conditional specialization atom splitting]
\label{thm:conditional-specialization-atom-splitting}
Assume Hypotheses~\ref{hyp:H1-local-BBM-input}--\ref{hyp:H3-MHM-lift} and Hypothesis~\ref{hyp:S-specialization-splitting}.  If
\[
   Rsp_*\mathbb Q^H_{X_s}[3]
   \simeq
   \mathcal{IS}^{H}_{X_0}\oplus\mathcal C^H_{\Sigma},
\]
then
\[
   \mathfrak A_{\mathrm{Hod}}(Rsp_*\mathbb Q^H_{X_s}[3])
   =
   \mathsf{HA}^{I}(X_0)
   +
   \mathfrak A_{\mathrm{Hod}}(\mathcal C^H_{\Sigma}).
\]
Equivalently, in nearby-cycle notation,
\[
   \mathfrak A_{\mathrm{Hod}}(\psi_\pi(F))
   =
   \mathsf{HA}^{I}(X_0)
   +
   \mathfrak A_{\mathrm{Hod}}(\mathcal C^H_{\Sigma}).
\]
\end{theorem}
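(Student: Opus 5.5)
The plan is to reduce the statement to the direct-sum additivity of Hodge-realization atom shadows, Lemma~\ref{lem:direct-sum-additivity}, applied to the splitting supplied by Hypothesis~\ref{hyp:S-specialization-splitting}. Concretely, I take $M^H=Rsp_*\mathbb Q^H_{X_s}[3]$, $M_1^H=\mathcal{IS}^H_{X_0}$ and $M_2^H=\mathcal C^H_\Sigma$. The assumed isomorphism $M^H\simeq M_1^H\oplus M_2^H$ holds in $D^b\operatorname{MHM}(X_0)$, so the lemma applies as soon as its finiteness hypothesis is checked for all three objects.

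First I verify that finiteness. Since $X_0$ is projective, every object of $D^b\operatorname{MHM}(X_0)$ has constructible rational realization, and therefore finite-dimensional hypercohomology, as remarked before the definition of $\mathfrak A_{\mathrm{Hod}}$. This covers $Rsp_*\mathbb Q^H_{X_s}[3]$, because it lies in $D^b\operatorname{MHM}(X_0)$, and it covers $\mathcal{IS}^H_{X_0}$, which exists by Hypothesis~\ref{hyp:H3-MHM-lift}. For $\mathcal C^H_\Sigma$ there is an even simpler argument: it is supported on the finite set $\Sigma$, so its hypercohomology is a finite sum of finite-dimensional stalk cohomologies. One could also note that finiteness for the summands follows from finiteness for the total object, since the summands' hypercohomology groups are direct summands of it.

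Next I apply Lemma~\ref{lem:direct-sum-additivity} to obtain
\[
\mathfrak A_{\mathrm{Hod}}(Rsp_*\mathbb Q^H_{X_s}[3])=\mathfrak A_{\mathrm{Hod}}(\mathcal{IS}^H_{X_0})+\mathfrak A_{\mathrm{Hod}}(\mathcal C^H_\Sigma).
\]
By Definition~\ref{def:intersection-space-hodge-atom-package}, the first term on the right is $\mathsf{HA}^I(X_0)$, which gives the first displayed identity. The nearby-cycle version follows in the same way from the equivalent form $\psi_\pi(F)\simeq\mathcal{IS}^H_{X_0}\oplus\mathcal C^H_\Sigma$ stated in Hypothesis~\ref{hyp:S-specialization-splitting}. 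Alternatively, one can observe that $\psi_\pi(F)$ and $Rsp_*\mathbb Q^H_{X_s}[3]$ are identified there, and isomorphic objects have equal atom shadows because $\operatorname{Atom}_{\mathrm{Hod}}$ depends only on the isomorphism class of the graded mixed Hodge structure on hypercohomology.

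The only point requiring care is the case, allowed in Hypothesis~\ref{hyp:S-specialization-splitting}, where the splitting is known only after applying $\operatorname{rat}$. In that case the hypercohomology decomposition is a priori only one of rational vector spaces, and the conclusion must be read at the Betti or rational realization level. I would state this caveat explicitly. In the main case the splitting holds in $\operatorname{MHM}$, and the argument above is complete.
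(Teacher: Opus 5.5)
Your proposal is correct and follows the paper's own proof: apply the direct-sum additivity of Lemma~\ref{lem:direct-sum-additivity} to the splitting from Hypothesis~\ref{hyp:S-specialization-splitting}, identify $\mathfrak A_{\mathrm{Hod}}(\mathcal{IS}^H_{X_0})$ with $\mathsf{HA}^I(X_0)$ by definition, and get the nearby-cycle form from the identification $\psi_\pi(F)\simeq Rsp_*\mathbb Q^H_{X_s}[3]$. Your finiteness check and your caveat about splittings known only after applying $\operatorname{rat}$ are harmless additions; the paper records the latter as Remark~\ref{rem:perverse-sheaf-level-splittings}.
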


\begin{proof}
By Hypothesis~\ref{hyp:S-specialization-splitting}, the nearby-smoothing object splits in $D^b\operatorname{MHM}(X_0)$ as
\[
   Rsp_*\mathbb Q^H_{X_s}[3]
   \simeq
   \mathcal{IS}^{H}_{X_0}\oplus\mathcal C^H_{\Sigma}.
\]
Because this is a direct-sum decomposition in the mixed-Hodge-module category, it induces a direct-sum decomposition on hypercohomology as graded mixed Hodge structures:
\[
   \mathbb H^*(X_0;Rsp_*\mathbb Q^H_{X_s}[3])
   \cong
   \mathbb H^*(X_0;\mathcal{IS}^{H}_{X_0})
   \oplus
   \mathbb H^*(X_0;\mathcal C^H_{\Sigma}).
\]
This uses the compatibility of hypercohomology with finite direct sums and the functoriality of mixed-Hodge-module cohomology \cite{Saito90,Saito_MHM,DimcaSheavesInTopology,KashiwaraSchapira90}.

Applying the direct-sum additivity of Hodge-realization atom shadows, Lemma~\ref{lem:direct-sum-additivity}, gives
\[
   \mathfrak A_{\mathrm{Hod}}(Rsp_*\mathbb Q^H_{X_s}[3])
   =
   \mathfrak A_{\mathrm{Hod}}(\mathcal{IS}^{H}_{X_0})
   +
   \mathfrak A_{\mathrm{Hod}}(\mathcal C^H_{\Sigma}).
\]
By Definition~\ref{def:intersection-space-hodge-atom-package},
\[
   \mathfrak A_{\mathrm{Hod}}(\mathcal{IS}^{H}_{X_0})
   =
   \mathsf{HA}^{I}(X_0).
\]
This proves the displayed identity.  The nearby-cycle formulation is the same statement written using
\[
   \psi_\pi(F)\simeq Rsp_*\mathbb Q^H_{X_s}[3]
\]
under the chosen specialization/nearby-cycle convention.
\end{proof}

\begin{remark}[Perverse-sheaf level splittings]
\label{rem:perverse-sheaf-level-splittings}
If the specialization splitting is known only after applying the rational forgetful functor
\[
   \operatorname{rat}:\operatorname{MHM}(X_0)\to\operatorname{Perv}(X_0;\mathbb Q),
\]
then the displayed identity should be interpreted at the rational realization level.  In that case, one obtains a direct-sum statement for the underlying rational perverse-sheaf or constructible-complex hypercohomology.  A Hodge-realization atom identity requires that a compatible mixed-Hodge-module or mixed-Hodge-realization lift of the splitting has been fixed.
\end{remark}

\subsection{Singularity-supported complement}
\label{subsec:singularity-supported-complement}

The complement $\mathcal C^H_{\Sigma}$ measures the difference between the nearby smoothing and the intersection-space summand.  Since the smoothing and the intersection-space complex agree with the same normalized constant object on the smooth locus, this difference is supported entirely at the ordinary double points.

\begin{proposition}[Complement support]
\label{prop:complement-support}
Assume Hypothesis~\ref{hyp:S-specialization-splitting}.  Then
\[
   j^*\mathcal C^H_{\Sigma}\simeq 0
\]
for $j:U\hookrightarrow X_0$.  Consequently, the atom shadow
\[
   \mathfrak A_{\mathrm{Hod}}(\mathcal C^H_{\Sigma})
\]
is supported on the ordinary double point locus $\Sigma$.
\end{proposition}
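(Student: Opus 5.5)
The plan is to restrict the specialization splitting of Hypothesis~\ref{hyp:S-specialization-splitting} to the smooth locus and compare exterior objects. First, apply the exact functor $j^*$ to
\[
   Rsp_*\mathbb Q^H_{X_s}[3]\simeq \mathcal{IS}^{H}_{X_0}\oplus\mathcal C^H_{\Sigma}.
\]
Since $j^*$ is additive, this gives
\[
   j^*Rsp_*\mathbb Q^H_{X_s}[3]\simeq j^*\mathcal{IS}^{H}_{X_0}\oplus j^*\mathcal C^H_{\Sigma}.
\]
Next, I would substitute the two exterior identifications already recorded. The nearby-smoothing object restricts to $\mathbb Q^H_U[3]$ because $sp$ is locally topologically trivial over $U$, as noted in the discussion of the nearby smoothing object. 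By Proposition~\ref{prop:exterior-agreement}, $j^*\mathcal{IS}^{H}_{X_0}\simeq\mathbb Q^H_U[3]$. This yields
\[
   \mathbb Q^H_U[3]\simeq \mathbb Q^H_U[3]\oplus j^*\mathcal C^H_{\Sigma}.
\]

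To conclude $j^*\mathcal C^H_{\Sigma}\simeq 0$, I would not rely on a cancellation-type argument in the triangulated category. Instead, I would pass to rational realization and then to stalks, or to local systems, on $U$. Since $U$ is smooth and connected, $\mathbb Q_U[3]$ has cohomology sheaves only in degree $-3$, and that sheaf is a rank-one local system. The direct-sum decomposition therefore forces every cohomology sheaf of $\operatorname{rat} j^*\mathcal C^H_\Sigma$ to have rank zero at each stalk, by comparing ranks degree by degree. Hence $\operatorname{rat} j^*\mathcal C^H_\Sigma=0$. Because $\operatorname{rat}$ is faithful and conservative on $D^b\operatorname{MHM}(U)$, it follows that $j^*\mathcal C^H_\Sigma\simeq 0$.

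Alternatively, the support statement is already built into Hypothesis~\ref{hyp:S-specialization-splitting}, which says $\mathcal C^H_\Sigma$ is supported on $\Sigma$. The rank argument shows that this is automatic and consistent with the exterior agreement.

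For the atom statement, the vanishing $j^*\mathcal C^H_\Sigma\simeq0$ gives $\mathcal C^H_\Sigma\simeq i_*i^*\mathcal C^H_\Sigma$ for $i:\Sigma\hookrightarrow X_0$, using the localization triangle $j_!j^*\to\mathrm{id}\to i_*i^*\xrightarrow{+1}$. Hence
\[
   \mathbb H^*(X_0;\mathcal C^H_\Sigma)\cong \bigoplus_{p\in\Sigma} H^*(i_p^*\mathcal C^H_\Sigma)
\]
as mixed Hodge structures. So $\mathfrak A_{\mathrm{Hod}}(\mathcal C^H_\Sigma)$ is assembled from stalk data at the nodes, and by Lemma~\ref{lem:direct-sum-additivity} it is a sum of pointwise contributions.

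The main obstacle is the rank-comparison step, namely making sure that no hidden shift or twist in the conventions identifying $j^*\psi_\pi(F)$ with $\mathbb Q^H_U[3]$ invalidates the degree-by-degree count. I would handle this by working entirely with cohomology sheaves of the rational realization, where the count is unambiguous.
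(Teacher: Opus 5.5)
Your proof is correct, but your main route differs from the paper's. The paper's proof is essentially the alternative you mention in passing. Hypothesis~\ref{hyp:S-specialization-splitting} stipulates that $\mathcal C^H_\Sigma$ is supported on $\Sigma$; the paper reads this directly as $j^*\mathcal C^H_\Sigma\simeq 0$. It then asserts the atom statement on the informal ground that all hypercohomology comes from point-supported pieces.

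Your primary route instead \emph{derives} the vanishing from the splitting. You restrict to $U$ and compare stalk ranks of the cohomology sheaves of the rational realizations. Each route buys something different:
\begin{itemize}
\item The paper's argument uses only Hypothesis~\ref{hyp:S-specialization-splitting} itself.
\item Yours needs more input: Proposition~\ref{prop:exterior-agreement} (hence Hypothesis~\ref{hyp:H2-multi-node-gluing}) and the exterior identification of $j^*Rsp_*\mathbb Q^H_{X_s}[3]$. In exchange, it shows that the support clause in Hypothesis~\ref{hyp:S-specialization-splitting} is redundant once exterior agreement is known.
\item Because your count uses only rational stalk ranks, it is insensitive to Tate twists and to the Hodge structure on the exterior objects. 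This disposes of the obstacle you flag.
\end{itemize}

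Your localization-triangle step $j_!j^*\to\mathrm{id}\to i_*i^*\xrightarrow{+1}$ also makes the ``consequently'' part precise where the paper is informal. It gives $\mathcal C^H_\Sigma\simeq i_*i^*\mathcal C^H_\Sigma$. Since $\Sigma$ is finite and discrete, this yields $\mathcal C^H_\Sigma\simeq\bigoplus_{p\in\Sigma}i_{p*}i_p^*\mathcal C^H_\Sigma$ in $D^b\operatorname{MHM}(X_0)$. So the decomposition that Corollary~\ref{cor:nodewise-complement-under-local-splitting} assumes actually holds automatically.

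One correction: $\operatorname{rat}$ is not faithful on $D^b\operatorname{MHM}(U)$. Already for objects supported at a point of $U$, $\operatorname{Ext}^1_{\mathrm{MHS}}(\mathbb Q(0),\mathbb Q(1))\neq 0$, but it maps to the vanishing rational $\operatorname{Ext}^1$. $\operatorname{rat}$ is faithful only on the abelian category $\operatorname{MHM}(U)$. What you actually need, and what is true, is that $\operatorname{rat}$ detects zero objects. It is exact, commutes with taking (perverse) cohomology, and is faithful on $\operatorname{MHM}(U)$. Hence $\operatorname{rat}M\simeq 0$ forces $H^kM=0$ for all $k$, and so $M\simeq 0$.
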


\begin{proof}
By Hypothesis~\ref{hyp:S-specialization-splitting}, the complement $\mathcal C^H_{\Sigma}$ is supported on the finite singular set $\Sigma$.  Equivalently, its restriction to the smooth locus vanishes:
\[
   j^*\mathcal C^H_{\Sigma}\simeq 0.
\]
Since $\Sigma$ is finite, all hypercohomology of $\mathcal C^H_{\Sigma}$ is contributed by point-supported mixed Hodge modules at the ordinary double points.  The atom shadow
\[
   \mathfrak A_{\mathrm{Hod}}(\mathcal C^H_{\Sigma})
\]
is extracted from this point-supported mixed Hodge structure.  Hence it is singularity-supported.
\end{proof}

\begin{corollary}[Nodewise form under local splitting]
\label{cor:nodewise-complement-under-local-splitting}
Assume Hypothesis~\ref{hyp:S-specialization-splitting}.  If the complement decomposes as a direct sum of point-supported mixed Hodge modules
\[
   \mathcal C^H_{\Sigma}
   \simeq
   \bigoplus_{p_i\in\Sigma}\mathcal C^H_{p_i},
\]
then
\[
   \mathfrak A_{\mathrm{Hod}}(\mathcal C^H_{\Sigma})
   =
   \sum_{p_i\in\Sigma}
   \mathfrak A_{\mathrm{Hod}}(\mathcal C^H_{p_i}).
\]
\end{corollary}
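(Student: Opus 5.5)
The statement follows from Lemma~\ref{lem:direct-sum-additivity} applied to the assumed nodewise decomposition. One point needs checking first: the finite direct sum $\bigoplus_{p_i\in\Sigma}\mathcal C^H_{p_i}$ is a genuine decomposition in $D^b\operatorname{MHM}(X_0)$, and each summand has finite-dimensional hypercohomology. This holds because each $\mathcal C^H_{p_i}$ is a direct summand of $\mathcal C^H_\Sigma$, which is itself a summand of $\psi_\pi(F)$ on the projective variety $X_0$. Alternatively, each $\mathcal C^H_{p_i}$ is point-supported, so by Proposition~\ref{prop:complement-support} its hypercohomology is the finite-dimensional mixed Hodge structure given by its stalk complex at $p_i$.

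The argument then proceeds by induction on $r=|\Sigma|$. Write
\[
   \mathcal C^H_\Sigma\simeq \mathcal C^H_{p_1}\oplus\Bigl(\bigoplus_{i\geq 2}\mathcal C^H_{p_i}\Bigr),
\]
and apply Lemma~\ref{lem:direct-sum-additivity} with $M_1^H=\mathcal C^H_{p_1}$ and $M_2^H=\bigoplus_{i\geq2}\mathcal C^H_{p_i}$. The finiteness hypotheses of that lemma are satisfied by the previous paragraph. The induction hypothesis applied to the second summand then gives the displayed sum formula.

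Equivalently, one can argue in a single step. Hypercohomology commutes with finite direct sums, giving an isomorphism of graded mixed Hodge structures
\[
   \mathbb H^*(X_0;\mathcal C^H_\Sigma)\cong\bigoplus_i \mathbb H^*(X_0;\mathcal C^H_{p_i}),
\]
and applying $\operatorname{Atom}_{\mathrm{Hod}}$ to this sum gives the formula.

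There is no real obstacle. The only point of care is that the sum must be taken in the mixed-Hodge-module category, not merely after applying $\operatorname{rat}$; this is exactly what the corollary's hypothesis provides. If the decomposition were known only at the perverse-sheaf level, I would instead state the identity at the rational realization level, as in Remark~\ref{rem:perverse-sheaf-level-splittings}.
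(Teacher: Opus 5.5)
Your proposal is correct and follows the paper's proof. The paper likewise uses that the decomposition holds in $D^b\operatorname{MHM}(X_0)$, passes to a direct sum of graded mixed Hodge structures on hypercohomology, and invokes Lemma~\ref{lem:direct-sum-additivity}; your induction on $|\Sigma|$ and explicit finiteness check are harmless refinements that the paper leaves implicit.
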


\begin{proof}
The assumed decomposition is a direct-sum decomposition in $D^b\operatorname{MHM}(X_0)$:
\[
   \mathcal C^H_{\Sigma}
   \simeq
   \bigoplus_{p_i\in\Sigma}\mathcal C^H_{p_i}.
\]
Hypercohomology commutes with finite direct sums, and the decomposition is a decomposition of mixed Hodge structures after applying hypercohomology.  Therefore Lemma~\ref{lem:direct-sum-additivity} gives
\[
   \mathfrak A_{\mathrm{Hod}}(\mathcal C^H_{\Sigma})
   =
   \sum_{p_i\in\Sigma}
   \mathfrak A_{\mathrm{Hod}}(\mathcal C^H_{p_i}).
\]
\end{proof}

\begin{remark}[Local complement versus global vanishing relations]
\label{rem:local-complement-vs-global-vanishing-relations}
The nodewise decomposition of the singularity-supported complement, when available, should not be confused with the global relation-controlled IIB vanishing atom $K_I^H(X_0)$.  The complement $\mathcal C^H_{\Sigma}$ is local and point-supported.  By contrast, the vanishing kernel
\[
   K_I(X_0)=\ker\bigl(H_3(U;\mathbb Q)\to H_3(X_0;\mathbb Q)\bigr)
\]
records globally realized middle-dimensional relations.  Thus local nodewise support and global vanishing-cycle independence are distinct questions.  This distinction is essential for the later nodal quintic computation, where the relevant middle-sector contribution is not a naive free node count.
\end{remark}

\subsection{Projected variation}
\label{subsec:projected-variation}

We now use the specialization splitting to project the variation morphism to the intersection-space summand.  Let
\[
   \operatorname{var}:\phi_\pi(F)\to\psi_\pi(F)
\]
be the variation morphism from vanishing cycles to nearby cycles in the standard nearby/vanishing-cycle formalism \cite{Saito89,Saito90,DimcaSheavesInTopology,KashiwaraSchapira90}.  Define the corrected conifold object
\[
   P^H:=\operatorname{Cone}(\operatorname{var})[-1].
\]

Under Hypothesis~\ref{hyp:S-specialization-splitting}, choose the projection
\[
   \operatorname{pr}_I:\psi_\pi(F)\to\mathcal{IS}^{H}_{X_0}
\]
associated to the direct-sum decomposition
\[
   \psi_\pi(F)
   \simeq
   \mathcal{IS}^{H}_{X_0}\oplus\mathcal C^H_\Sigma.
\]
The projected variation morphism is
\[
   \operatorname{var}_I
   :=
   \operatorname{pr}_I\circ\operatorname{var}:
   \phi_\pi(F)\to\mathcal{IS}^{H}_{X_0}.
\]
We define the projected corrected conifold object by
\[
   P^H_I:=\operatorname{Cone}(\operatorname{var}_I)[-1].
\]

\begin{remark}[Dependence on the splitting]
\label{rem:projected-variation-dependence-on-splitting}
The morphism $\operatorname{var}_I$ depends on the chosen specialization splitting, through the projection
\[
   \operatorname{pr}_I:\psi_\pi(F)\to\mathcal{IS}^{H}_{X_0}.
\]
Thus $P^H_I$ is defined under Hypothesis~\ref{hyp:S-specialization-splitting}.  No projected variation object is asserted without a specified intersection-space summand of the nearby-cycle object.
\end{remark}

\subsection{The projection triangle}
\label{subsec:projection-triangle}

The preceding construction gives a canonical morphism from the corrected conifold object to its intersection-space projection.  The cofiber is precisely the singularity-supported complement.

\begin{theorem}[Projection triangle]
\label{thm:projection-triangle}
Assume Hypotheses~\ref{hyp:H1-local-BBM-input}--\ref{hyp:H3-MHM-lift} and Hypothesis~\ref{hyp:S-specialization-splitting}.  Let
\[
   \psi_\pi(F)
   \simeq
   \mathcal{IS}^{H}_{X_0}\oplus\mathcal C^H_\Sigma
\]
be the specialization splitting, and let
\[
   \operatorname{var}_I
   =
   \operatorname{pr}_I\circ\operatorname{var}
   :
   \phi_\pi(F)\to \mathcal{IS}^{H}_{X_0}.
\]
Set
\[
   P^H:=\operatorname{Cone}(\operatorname{var})[-1],
   \qquad
   P^H_I:=\operatorname{Cone}(\operatorname{var}_I)[-1].
\]
Then the projection
\[
   \operatorname{pr}_I:\psi_\pi(F)\to\mathcal{IS}^{H}_{X_0}
\]
induces a morphism
\[
   P^H\to P^H_I
\]
and there is a distinguished triangle
\[
   P^H\longrightarrow P^H_I\longrightarrow \mathcal C^H_\Sigma\xrightarrow{+1}
\]
in $D^b\operatorname{MHM}(X_0)$.
\end{theorem}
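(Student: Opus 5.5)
The plan is to apply the octahedral axiom in the triangulated category $D^b\operatorname{MHM}(X_0)$ to the composable pair
\[
   \phi_\pi(F)\xrightarrow{\operatorname{var}}\psi_\pi(F)\xrightarrow{\operatorname{pr}_I}\mathcal{IS}^{H}_{X_0},
\]
whose composite is $\operatorname{var}_I$ by definition. First I will identify the cone of $\operatorname{pr}_I$. Under Hypothesis~\ref{hyp:S-specialization-splitting}, $\operatorname{pr}_I$ is the projection onto a direct summand. In any triangulated category, the split triangle $\mathcal C^H_\Sigma\xrightarrow{\iota_C}\mathcal{IS}^{H}_{X_0}\oplus\mathcal C^H_\Sigma\xrightarrow{\operatorname{pr}_I}\mathcal{IS}^{H}_{X_0}\xrightarrow{0}$ is distinguished. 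Hence
\[
   \operatorname{Cone}(\operatorname{pr}_I)\simeq \mathcal C^H_\Sigma[1],
\]
or equivalently $\operatorname{Cone}(\operatorname{pr}_I)[-1]\simeq\mathcal C^H_\Sigma$, with the connecting map $\mathcal{IS}^{H}_{X_0}\to\mathcal C^H_\Sigma[1]$ equal to zero.

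Next I write the three defining triangles:
\[
   \phi_\pi(F)\xrightarrow{\operatorname{var}}\psi_\pi(F)\to P^H[1]\xrightarrow{+1},
\]
\[
   \phi_\pi(F)\xrightarrow{\operatorname{var}_I}\mathcal{IS}^{H}_{X_0}\to P^H_I[1]\xrightarrow{+1},
\]
\[
   \psi_\pi(F)\xrightarrow{\operatorname{pr}_I}\mathcal{IS}^{H}_{X_0}\to \mathcal C^H_\Sigma[1]\xrightarrow{+1}.
\]
These use $\operatorname{Cone}(\operatorname{var})=P^H[1]$ and $\operatorname{Cone}(\operatorname{var}_I)=P^H_I[1]$. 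The octahedral axiom then supplies morphisms $u:P^H[1]\to P^H_I[1]$ and $v:P^H_I[1]\to\mathcal C^H_\Sigma[1]$ forming a distinguished triangle
\[
   P^H[1]\xrightarrow{u}P^H_I[1]\xrightarrow{v}\mathcal C^H_\Sigma[1]\xrightarrow{+1},
\]
with the standard commutativity constraints. In particular, $u$ is compatible with $\operatorname{pr}_I$ on the middle terms, since the square from $\psi_\pi(F)\to P^H[1]$ to $\mathcal{IS}^{H}_{X_0}\to P^H_I[1]$ commutes. This is the precise sense in which $\operatorname{pr}_I$ induces $P^H\to P^H_I$. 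Rotating, equivalently applying $[-1]$ with the sign convention on shifted triangles, gives the asserted distinguished triangle $P^H\to P^H_I\to\mathcal C^H_\Sigma\xrightarrow{+1}$ in $D^b\operatorname{MHM}(X_0)$.

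The main point needing care is the word ``canonical.'' The octahedral fill-in, like any cone map, is unique only up to non-unique isomorphism in general. I would handle this in one of two ways. The first is to note that the map $P^H\to P^H_I$ can be described as the morphism of cones induced by the morphism of arrows $(\mathrm{id}_{\phi},\operatorname{pr}_I)$ from $\operatorname{var}$ to $\operatorname{var}_I$. Because $\operatorname{pr}_I$ is a split epimorphism, this morphism admits an explicit model: up to isomorphism, $P^H\simeq P^H_I\oplus$-type data twisted by $\operatorname{pr}_C\circ\operatorname{var}$. The second, which I would try first, is to work in a filtered or dg enhancement of $D^b\operatorname{MHM}(X_0)$, for instance Saito's realization via complexes of mixed Hodge modules. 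There the cone is functorial, and the map of mapping cones induced by $(\mathrm{id},\operatorname{pr}_I)$ has kernel $\mathcal C^H_\Sigma$ degreewise. This gives a short exact sequence of complexes $0\to\mathcal C^H_\Sigma\to\operatorname{Cone}(\operatorname{var})\to\operatorname{Cone}(\operatorname{var}_I)\to 0$, hence a triangle realizing the octahedron with a canonically constructed first map. I expect this canonicity and shift/sign bookkeeping to be the only genuine obstacle; the existence of the distinguished triangle itself is formal once $\operatorname{Cone}(\operatorname{pr}_I)\simeq\mathcal C^H_\Sigma[1]$ is established.
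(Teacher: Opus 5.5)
Your proposal is correct and follows the paper's proof: you apply the octahedral axiom to $\phi_\pi(F)\xrightarrow{\operatorname{var}}\psi_\pi(F)\xrightarrow{\operatorname{pr}_I}\mathcal{IS}^{H}_{X_0}$, identify $\operatorname{Cone}(\operatorname{pr}_I)\simeq\mathcal C^H_\Sigma[1]$ from the split triangle, and shift by $[-1]$. Your extra point is also useful: you note that the octahedral map $u$ is itself a fill-in for the morphism of arrows $(\mathrm{id},\operatorname{pr}_I)$, or at chain level that there is a short exact sequence $0\to\mathcal C^H_\Sigma\to\operatorname{Cone}(\operatorname{var})\to\operatorname{Cone}(\operatorname{var}_I)\to 0$; this ties together the two maps that the paper produces separately via TR3 and TR4 and never explicitly identifies.
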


\begin{proof}
The morphism
\[
   \operatorname{pr}_I:\psi_\pi(F)\to\mathcal{IS}^{H}_{X_0}
\]
and the identity morphism on $\phi_\pi(F)$ form a commutative square
\[
\begin{tikzcd}
   \phi_\pi(F) \arrow[r,"\operatorname{var}"] \arrow[d,equal]
      & \psi_\pi(F) \arrow[d,"\operatorname{pr}_I"] \\
   \phi_\pi(F) \arrow[r,"\operatorname{var}_I"]
      & \mathcal{IS}^{H}_{X_0}.
\end{tikzcd}
\]
By the axioms of a triangulated category, a morphism of the first two terms of distinguished triangles extends to a morphism of distinguished triangles \cite{BBD82,KashiwaraSchapira90}.  Therefore the square induces a morphism between the shifted cones:
\[
   P^H=\operatorname{Cone}(\operatorname{var})[-1]
   \longrightarrow
   P^H_I=\operatorname{Cone}(\operatorname{var}_I)[-1].
\]

We now identify the cone of this morphism.  Apply the octahedral axiom in $D^b\operatorname{MHM}(X_0)$ to the composable morphisms
\[
   \phi_\pi(F)
   \xrightarrow{\operatorname{var}}
   \psi_\pi(F)
   \xrightarrow{\operatorname{pr}_I}
   \mathcal{IS}^{H}_{X_0}.
\]
Their composition is
\[
   \operatorname{pr}_I\circ\operatorname{var}
   =
   \operatorname{var}_I.
\]
The projection triangle associated with the direct-sum decomposition
\[
   \psi_\pi(F)\simeq \mathcal{IS}^{H}_{X_0}\oplus\mathcal C^H_\Sigma
\]
is
\[
   \mathcal C^H_\Sigma
   \longrightarrow
   \psi_\pi(F)
   \xrightarrow{\operatorname{pr}_I}
   \mathcal{IS}^{H}_{X_0}
   \longrightarrow
   \mathcal C^H_\Sigma[1].
\]
Thus
\[
   \operatorname{Cone}(\operatorname{pr}_I)\simeq \mathcal C^H_\Sigma[1].
\]

The octahedral axiom gives a distinguished triangle
\[
   \operatorname{Cone}(\operatorname{var})
   \longrightarrow
   \operatorname{Cone}(\operatorname{var}_I)
   \longrightarrow
   \operatorname{Cone}(\operatorname{pr}_I)
   \xrightarrow{+1}.
\]
Substituting
\[
   \operatorname{Cone}(\operatorname{var})=P^H[1],
   \qquad
   \operatorname{Cone}(\operatorname{var}_I)=P^H_I[1],
   \qquad
   \operatorname{Cone}(\operatorname{pr}_I)=\mathcal C^H_\Sigma[1],
\]
we obtain
\[
   P^H[1]\longrightarrow P^H_I[1]\longrightarrow \mathcal C^H_\Sigma[1]\xrightarrow{+1}.
\]
Shifting by $[-1]$ gives the distinguished triangle
\[
   P^H\longrightarrow P^H_I\longrightarrow \mathcal C^H_\Sigma\xrightarrow{+1}.
\]
This proves the theorem.
\end{proof}

\begin{remark}[Interpretation of the projection triangle]
\label{rem:projection-triangle-interpretation}
The projection triangle identifies $\mathcal C^H_\Sigma$ as the cofiber measuring the difference between the full corrected conifold object $P^H$ and its intersection-space projection $P^H_I$.  This is a statement in $D^b\operatorname{MHM}(X_0)$, not merely a numerical identity after taking cohomology.  After hypercohomology, the triangle gives a long exact sequence of mixed Hodge structures and hence a filtered atom-shadow statement, not necessarily a direct-sum identity unless the triangle splits.
\end{remark}

\subsection{Interpretation}
\label{subsec:specialization-projection-interpretation}

The conditional splitting identifies the nearby-smoothing atom shadow as a sum of two pieces.  The first piece,
\[
   \mathsf{HA}^{I}(X_0),
\]
is the intersection-space atom package and carries the type-IIB realization.  The second piece,
\[
   \mathfrak A_{\mathrm{Hod}}(\mathcal C^H_{\Sigma}),
\]
is supported at the ordinary double points and records the local correction needed to recover the nearby-smoothing object from the intersection-space summand.

The projection triangle strengthens this interpretation.  It says that the same singularity-supported complement also measures the difference between the corrected conifold object and its intersection-space projection:
\[
   P^H\longrightarrow P^H_I\longrightarrow \mathcal C^H_\Sigma\xrightarrow{+1}.
\]
Thus $\mathcal C^H_\Sigma$ is the finite-node Hodge-theoretic residue of the projection from corrected nearby/vanishing-cycle geometry to the intersection-space sector.

This gives a sheaf-theoretic interpretation of the intersection-space package as a direct summand of the smoothing realization whenever Hypothesis~\ref{hyp:S-specialization-splitting} holds.  It also clarifies why the construction is different from asserting a quantum theory on $I^{\bar m}X_0$: the comparison is made through mixed Hodge modules on $X_0$, nearby and vanishing cycles, and the projection triangle, not through stable maps into the spatial intersection space.
%-------------------------------------
\section{The IC--Intersection-Space Defect}
\label{sec:IC-IS-defect}

We now isolate the defect object associated with the comparison between the intersection-complex realization and the intersection-space realization.  The previous sections constructed the type-IIA/intersection-homology atom package
\[
   \mathsf{HA}^{IH}(X_0)
   =
   \operatorname{Atom}_{\mathrm{Hod}}
   \bigl(\mathbb H^*(X_0;IC^H_{X_0})\bigr)
\]
and the type-IIB/intersection-space atom package
\[
   \mathsf{HA}^{I}(X_0)
   =
   \operatorname{Atom}_{\mathrm{Hod}}
   \bigl(\mathbb H^*(X_0;\mathcal{IS}^{H}_{X_0})\bigr).
\]
Both are Hodge-realization atom shadows of mixed-Hodge-module objects on the same singular fiber $X_0$.  By Proposition~\ref{prop:exterior-agreement}, these objects agree on the smooth locus
\[
   U=X_0\setminus\Sigma.
\]
Thus their difference is controlled by the singular gluing data at the finite ordinary double point set $\Sigma$.

\subsection{The Grothendieck defect}
\label{subsec:grothendieck-defect}

The mixed-Hodge-module-level defect is the formal difference between the intersection-space mixed Hodge module and the intersection-complex mixed Hodge module.

\begin{definition}[IC--intersection-space Grothendieck defect]
\label{def:IC-IS-grothendieck-defect}
Assume Hypotheses H1--H3.  The IC--intersection-space Grothendieck defect of $X_0$ is
\[
   \Delta_{I/IC}(X_0)
   :=
   [\mathcal{IS}^{H}_{X_0}]-[IC^H_{X_0}]
   \in K_0(\operatorname{MHM}(X_0)).
\]
\end{definition}

Here $K_0(\operatorname{MHM}(X_0))$ denotes the Grothendieck group of Saito's category of mixed Hodge modules on $X_0$ \cite{Saito90,Saito_MHM,Saito_MixedHodgeModules}.  The defect is defined at the mixed-Hodge-module level before passing to hypercohomology or atom shadows.

\begin{proposition}[Exterior vanishing of the defect]
\label{prop:exterior-vanishing-defect}
Assume Hypotheses H1--H3.  The restriction of the defect to the smooth locus vanishes:
\[
   j^*\Delta_{I/IC}(X_0)=0
   \qquad
   \text{in }
   K_0(\operatorname{MHM}(U)),
\]
where $j:U\hookrightarrow X_0$ is the inclusion of the smooth locus.
\end{proposition}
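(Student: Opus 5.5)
The plan is to show that restriction to the open stratum induces a well-defined group homomorphism on Grothendieck groups, and then to apply exterior agreement. Since $j:U\hookrightarrow X_0$ is an open immersion, the functor $j^*=j^{-1}$ on mixed Hodge modules is exact: it sends $\operatorname{MHM}(X_0)$ to $\operatorname{MHM}(U)$ and carries short exact sequences to short exact sequences \cite{Saito90,Saito_MHM}. Consequently it respects the defining relations $[M]=[M']+[M'']$, and so it descends to a homomorphism
\[
   j^*:K_0(\operatorname{MHM}(X_0))\longrightarrow K_0(\operatorname{MHM}(U)),
   \qquad [M]\mapsto [j^*M].
\]
This first step is what justifies the notation $j^*\Delta_{I/IC}(X_0)$ in the statement.

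Next I would use additivity of this homomorphism to compute
\[
   j^*\Delta_{I/IC}(X_0)=[j^*\mathcal{IS}^H_{X_0}]-[j^*IC^H_{X_0}].
\]
Proposition~\ref{prop:exterior-agreement} supplies isomorphisms in $\operatorname{MHM}(U)$,
\[
   j^*\mathcal{IS}^H_{X_0}\simeq \mathbb Q^H_U[3]\simeq j^*IC^H_{X_0}.
\]
Here $\mathbb Q^H_U[3]$ is a genuine mixed Hodge module because $U$ is smooth of dimension $3$. Since isomorphic objects have the same class in $K_0$, the two classes coincide, and the difference vanishes.

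The main point to check is that the identification in Proposition~\ref{prop:exterior-agreement} really is an isomorphism of mixed Hodge modules on $U$, and not only of underlying perverse sheaves. For $IC^H_{X_0}$ this is standard: the intermediate extension restricts to the given pure Hodge module on the open stratum. For $\mathcal{IS}^H_{X_0}$ it follows from Hypothesis H3 together with the gluing description in Proposition~\ref{prop:mhm-lift-odp-conifolds}, where the lift is built from the normalized constant Hodge module on $U$.

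If one only knew the isomorphism after applying $\operatorname{rat}$, the conclusion would hold in $K_0(\operatorname{Perv}(U;\mathbb Q))$ instead. In that case I would state explicitly that the MHM lift is fixed so that its exterior restriction is $\mathbb Q^H_U[3]$, in line with Remark~\ref{rem:dependence-on-MHM-lift}.
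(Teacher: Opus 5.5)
Your proposal is correct and follows the paper's argument: both reduce to Proposition~\ref{prop:exterior-agreement}, which gives $j^*\mathcal{IS}^H_{X_0}\simeq \mathbb Q^H_U[3]\simeq j^*IC^H_{X_0}$ in $\operatorname{MHM}(U)$, so the two classes coincide in $K_0(\operatorname{MHM}(U))$ and their difference vanishes. You add two things the paper leaves implicit. First, you note that $j^*$ is exact for the open immersion and therefore descends to a homomorphism on Grothendieck groups. Second, you point out that the identification must hold at the mixed-Hodge-module level, not merely after applying $\operatorname{rat}$; both points are correct.
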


\begin{proof}
By Proposition~\ref{prop:exterior-agreement}, the restrictions of the two mixed-Hodge-module objects to $U$ are both isomorphic to the normalized constant Hodge module:
\[
   j^*IC^H_{X_0}\simeq \mathbb Q^H_U[3],
   \qquad
   j^*\mathcal{IS}^{H}_{X_0}\simeq \mathbb Q^H_U[3].
\]
Therefore their classes in $K_0(\operatorname{MHM}(U))$ are equal:
\[
   [j^*\mathcal{IS}^{H}_{X_0}]
   =
   [j^*IC^H_{X_0}].
\]
Taking the difference gives
\[
   j^*\bigl([\mathcal{IS}^{H}_{X_0}]-[IC^H_{X_0}]\bigr)=0.
\]
This is exactly
\[
   j^*\Delta_{I/IC}(X_0)=0.
\]
\end{proof}

\begin{remark}[Singular support of the defect]
\label{rem:singular-support-defect}
Proposition~\ref{prop:exterior-vanishing-defect} says that the IC--intersection-space defect has no exterior contribution.  It is therefore controlled by the singular extension and gluing data at $\Sigma$, together with the middle-dimensional conifold contribution isolated by the rigid--vanishing filtration.
\end{remark}

\subsection{The atom-shadow defect}
\label{subsec:atom-shadow-defect}

Passing to hypercohomology and then to Hodge-realization atom shadows gives the corresponding atom-level defect.

\begin{definition}[IC--intersection-space atom defect]
\label{def:IC-IS-atom-defect}
Assume Hypotheses H1--H3.  The IC--intersection-space atom defect is the formal difference
\[
   \mathsf{Def}_{I/IC}(X_0)
   :=
   \mathsf{HA}^{I}(X_0)-\mathsf{HA}^{IH}(X_0).
\]
\end{definition}

This expression is not a morphism in a category of atoms.  It is a formal difference of Hodge-realization atom shadows induced by the two mixed-Hodge-module realizations
\[
   \mathcal{IS}^{H}_{X_0}
   \qquad
   \text{and}
   \qquad
   IC^H_{X_0}.
\]

\begin{proposition}[Atom defect as realization of the Grothendieck defect]
\label{prop:atom-defect-realizes-grothendieck-defect}
Assume Hypotheses H1--H3.  The atom defect $\mathsf{Def}_{I/IC}(X_0)$ is obtained from the Grothendieck defect $\Delta_{I/IC}(X_0)$ by applying hypercohomology and then the Hodge-realization atom-shadow construction.
\end{proposition}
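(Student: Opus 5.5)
The plan is to make the phrase ``obtained by applying hypercohomology and then the atom-shadow construction'' precise as a composite of additive maps out of $K_0(\operatorname{MHM}(X_0))$. Once that is done, the statement follows by evaluating this composite on the class $\Delta_{I/IC}(X_0)$.

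First, I would set up the hypercohomology map on Grothendieck groups. Since $X_0$ is projective, for any $M\in\operatorname{MHM}(X_0)$ the groups $\mathbb H^k(X_0;M)$ are finite-dimensional mixed Hodge structures, nonzero only in finitely many degrees. A short exact sequence $0\to M'\to M\to M''\to 0$ in $\operatorname{MHM}(X_0)$ gives a distinguished triangle in $D^b\operatorname{MHM}(X_0)$. By the filtered-additivity lemma for triangles, its long exact hypercohomology sequence is an exact sequence of mixed Hodge structures. Consequently the graded Euler characteristic
\[
   [M]\longmapsto \chi_{\mathbb H}(M):=\sum_k (-1)^k\,[\mathbb H^k(X_0;M)]\in K_0(\operatorname{MHS})
\]
(or its version graded by the degree $k$) is additive and descends to a group homomorphism $\mathbb H_*:K_0(\operatorname{MHM}(X_0))\to K_0(\operatorname{MHS})$.

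Second, I would extend $\operatorname{Atom}_{\mathrm{Hod}}$ to formal differences. By the direct-sum additivity lemma, $\operatorname{Atom}_{\mathrm{Hod}}$ converts direct sums of graded mixed Hodge structures into sums of atom packages. I would therefore let it act on the group completion: a formal difference $[A]-[B]$ goes to $\operatorname{Atom}_{\mathrm{Hod}}(A)-\operatorname{Atom}_{\mathrm{Hod}}(B)$. Applying the composite to $\Delta_{I/IC}(X_0)=[\mathcal{IS}^H_{X_0}]-[IC^H_{X_0}]$ gives, by linearity,
\[
   \operatorname{Atom}_{\mathrm{Hod}}\bigl(\mathbb H^*(X_0;\mathcal{IS}^H_{X_0})\bigr)-\operatorname{Atom}_{\mathrm{Hod}}\bigl(\mathbb H^*(X_0;IC^H_{X_0})\bigr)=\mathsf{HA}^{I}(X_0)-\mathsf{HA}^{IH}(X_0),
\]
which is $\mathsf{Def}_{I/IC}(X_0)$ by Definition~\ref{def:IC-IS-atom-defect}.

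The main obstacle is well-definedness on $K_0$. A class $[M]$ only remembers $M$ up to extensions, while $\operatorname{Atom}_{\mathrm{Hod}}$ was defined on actual graded mixed Hodge structures. Connecting maps in the long exact sequence can also cancel pieces across degrees, which is exactly why the paper stresses filtration rather than splitting. I would handle this by stating the proposition at the level of associated graded, or equivalently of classes in $K_0(\operatorname{MHS})$. There the filtered-additivity lemma shows that the atom shadow of a middle term equals the sum of the shadows of its subquotients. With this, atom packages become additive invariants of semisimplified mixed Hodge structures, so the map is well defined. I would also note the sign convention that the alternating sum introduces. Alternatively, degreewise, both $\mathcal{IS}^H_{X_0}$ and $IC^H_{X_0}$ are chosen representatives of their classes, so evaluating on them directly recovers $\mathsf{Def}_{I/IC}(X_0)$ on the nose.
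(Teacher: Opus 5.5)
Your closing alternative is the paper's proof, but the $K_0$-homomorphism route you lead with fails at a concrete step.

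\textbf{The degree-graded map is not additive.} The parenthetical claim that the degree-graded version of $[M]\mapsto\mathbb H^*(X_0;M)$ descends to $K_0(\operatorname{MHM}(X_0))$ is false, because the connecting maps $\mathbb H^k(X_0;M'')\to\mathbb H^{k+1}(X_0;M')$ shift degree. Take $i:\{p\}\hookrightarrow\mathbb P^1$ and $j:\mathbb P^1\setminus\{p\}\hookrightarrow\mathbb P^1$, and the short exact sequence $0\to i_*\mathbb Q^H_p\to j_!\mathbb Q^H_{\mathbb P^1\setminus\{p\}}[1]\to\mathbb Q^H_{\mathbb P^1}[1]\to 0$. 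The middle term has no hypercohomology in degrees $-1$ and $0$, while the outer terms contribute $\mathbb Q$ in each of these degrees.

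What does descend to $K_0$ is the alternating sum $\chi_{\mathbb H}$. Applied to $\Delta_{I/IC}(X_0)$, your composite therefore produces $\sum_k(-1)^k\bigl(\mathsf{HA}^{I}_k(X_0)-\mathsf{HA}^{IH}_k(X_0)\bigr)$, not the graded difference $\mathsf{Def}_{I/IC}(X_0)$. The alternating sign is not a mere convention. It merges degrees, so it cannot recover degreewise data such as the middle-degree jump used later for the nodal quintic. Your ``by linearity'' display does not follow from the map you built.

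\textbf{Atoms need not factor through semisimplification.} Letting $\operatorname{Atom}_{\mathrm{Hod}}$ factor through $K_0$ of mixed Hodge structures presupposes that atom shadows depend only on semisimplification. The filtered-additivity lemma does not give this. Its subquotients are a quotient of $\mathbb H^k(X;M_1^H)$ and a subobject of $\mathbb H^k(X;M_2^H)$, not the full groups. Moreover, the paper deliberately distinguishes $\mathsf{HA}^{I}_3(X_0)$ from $\operatorname{gr}\mathsf{HA}^{I}_3(X_0)$ in the non-split case (Theorem~\ref{thm:rigid-vanishing-filtration} versus Corollary~\ref{cor:rigid-vanishing-split-case}). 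Passing to associated graded thus changes the object being computed.

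\textbf{What the paper does.} It applies $\mathbb H^*$ and then $\operatorname{Atom}_{\mathrm{Hod}}$ separately to the two representatives $\mathcal{IS}^H_{X_0}$ and $IC^H_{X_0}$ and subtracts. It never claims a homomorphism out of $K_0$, and this suffices for the proposition as the paper intends it.

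Your well-definedness worry is legitimate, and the paper does not address it. The correct conclusion, however, is the opposite of your fix. $\mathsf{Def}_{I/IC}(X_0)$ is a function of the representative pair $\bigl(\mathcal{IS}^H_{X_0},IC^H_{X_0}\bigr)$, not of the class $\Delta_{I/IC}(X_0)$ alone. Only its semisimplified Euler-characteristic shadow factors through $K_0$. So drop the $K_0$-homomorphism construction and present the representative-wise evaluation as the proof.
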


\begin{proof}
By Definition~\ref{def:IC-IS-grothendieck-defect},
\[
   \Delta_{I/IC}(X_0)
   =
   [\mathcal{IS}^{H}_{X_0}]-[IC^H_{X_0}].
\]
Applying hypercohomology gives the formal difference of graded mixed Hodge structures
\[
   \mathbb H^*(X_0;\mathcal{IS}^{H}_{X_0})
   -
   \mathbb H^*(X_0;IC^H_{X_0}).
\]
Applying $\operatorname{Atom}_{\mathrm{Hod}}$ gives
\[
   \operatorname{Atom}_{\mathrm{Hod}}
   \bigl(\mathbb H^*(X_0;\mathcal{IS}^{H}_{X_0})\bigr)
   -
   \operatorname{Atom}_{\mathrm{Hod}}
   \bigl(\mathbb H^*(X_0;IC^H_{X_0})\bigr).
\]
By definition, these two terms are
\[
   \mathsf{HA}^{I}(X_0)
   \qquad
   \text{and}
   \qquad
   \mathsf{HA}^{IH}(X_0).
\]
Thus the resulting formal atom-level difference is
\[
   \mathsf{Def}_{I/IC}(X_0)
   =
   \mathsf{HA}^{I}(X_0)-\mathsf{HA}^{IH}(X_0).
\]
\end{proof}

\subsection{Relation with the projection triangle}
\label{subsec:defect-relation-projection-triangle}

The projection triangle gives a second, map-level source for the defect viewpoint.  Under Hypothesis S, the nearby-cycle object splits as
\[
   \psi_\pi(F)
   \simeq
   \mathcal{IS}^{H}_{X_0}\oplus\mathcal C^H_\Sigma.
\]
The projection of the variation morphism defines
\[
   P^H_I
   =
   \operatorname{Cone}(\operatorname{var}_I)[-1],
\]
and Theorem~\ref{thm:projection-triangle} gives a distinguished triangle
\[
   P^H\longrightarrow P^H_I\longrightarrow \mathcal C^H_\Sigma\xrightarrow{+1}.
\]

\begin{proposition}[Complement as projection defect]
\label{prop:complement-as-projection-defect}
Assume Hypotheses H1--H3 and Hypothesis S.  In the projection triangle
\[
   P^H\longrightarrow P^H_I\longrightarrow \mathcal C^H_\Sigma\xrightarrow{+1},
\]
the singularity-supported complement $\mathcal C^H_\Sigma$ is the cofiber measuring the difference between the corrected conifold object $P^H$ and its intersection-space projection $P^H_I$.
\end{proposition}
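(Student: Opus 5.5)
The proposition is essentially an interpretation of Theorem~\ref{thm:projection-triangle}, so the plan is to deduce it directly from that theorem and make precise the sense in which $\mathcal C^H_\Sigma$ "measures the difference". First, invoke Theorem~\ref{thm:projection-triangle} under Hypotheses H1--H3 and S to obtain the canonical morphism $P^H\to P^H_I$ induced by $\operatorname{pr}_I$, together with the distinguished triangle
\[
   P^H\longrightarrow P^H_I\longrightarrow \mathcal C^H_\Sigma\xrightarrow{+1}
\]
in $D^b\operatorname{MHM}(X_0)$. By the uniqueness of cones up to (non-unique) isomorphism in a triangulated category, this identifies $\mathcal C^H_\Sigma\simeq\operatorname{Cone}(P^H\to P^H_I)$. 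That is exactly the statement that $\mathcal C^H_\Sigma$ is the cofiber of the comparison map from the corrected conifold object to its intersection-space projection.

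Second, I would record the two concrete senses in which this cofiber measures the difference. In $K_0$ of the triangulated category (equivalently $K_0(\operatorname{MHM}(X_0))$ via alternating sums of cohomology modules), the triangle gives $[P^H_I]-[P^H]=[\mathcal C^H_\Sigma]$. After hypercohomology, the filtered-additivity lemma for triangles gives a long exact sequence of mixed Hodge structures whose connecting terms are controlled by $\mathbb H^*(X_0;\mathcal C^H_\Sigma)$. Proposition~\ref{prop:complement-support} then shows that this difference is singularity-supported: restricting the triangle along $j:U\hookrightarrow X_0$ gives $j^*P^H\xrightarrow{\sim}j^*P^H_I$, since $j^*\mathcal C^H_\Sigma\simeq 0$. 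So the comparison map is an isomorphism on the smooth locus, and all of the discrepancy sits at $\Sigma$.

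The main obstacle is the word "the" cofiber. The octahedral identification depends on the chosen splitting (Remark~\ref{rem:projected-variation-dependence-on-splitting}), and cones are functorial only up to non-canonical isomorphism. I would handle this by stating explicitly that canonicity holds relative to the fixed splitting of Hypothesis S. The natural isomorphism $\operatorname{Cone}(\operatorname{pr}_I)\simeq\mathcal C^H_\Sigma[1]$ comes from the split triangle of the direct sum, so the octahedral triangle provides a specified identification rather than an arbitrary one. As in Remark~\ref{rem:projection-triangle-interpretation}, I would also note that no splitting of the triangle itself is claimed.
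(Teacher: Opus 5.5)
Your proposal is correct and follows the paper's proof. The paper also just invokes Theorem~\ref{thm:projection-triangle}, recalling that the octahedral axiom applied to $\phi_\pi(F)\xrightarrow{\operatorname{var}}\psi_\pi(F)\xrightarrow{\operatorname{pr}_I}\mathcal{IS}^H_{X_0}$ with $\operatorname{Cone}(\operatorname{pr}_I)\simeq\mathcal C^H_\Sigma[1]$ exhibits $\mathcal C^H_\Sigma$ as the cofiber of $P^H\to P^H_I$. Your $K_0$, long-exact-sequence, and restriction-to-$U$ remarks are correct supplements that the paper does not spell out.

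Your last paragraph slightly overstates canonicity. The octahedral axiom asserts that the connecting maps exist, not that they are unique. So the identification, and the morphism $P^H\to P^H_I$ itself, is pinned down only relative to the fixed splitting and a chosen octahedron. That is all the proposition needs.
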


\begin{proof}
This is precisely the content of Theorem~\ref{thm:projection-triangle}.  That theorem applies the octahedral axiom to the composable morphisms
\[
   \phi_\pi(F)\xrightarrow{\operatorname{var}}\psi_\pi(F)
   \xrightarrow{\operatorname{pr}_I}\mathcal{IS}^{H}_{X_0}.
\]
The composition is
\[
   \operatorname{var}_I=\operatorname{pr}_I\circ\operatorname{var}.
\]
The cone of the projection $\operatorname{pr}_I$ is the singularity-supported complement, up to the cone convention already accounted for in Theorem~\ref{thm:projection-triangle}.  Hence the resulting distinguished triangle is
\[
   P^H\to P^H_I\to \mathcal C^H_\Sigma\xrightarrow{+1}.
\]
Thus $\mathcal C^H_\Sigma$ is the cofiber of the morphism from the corrected conifold object to its intersection-space projection.
\end{proof}

\begin{remark}[Two related defect objects]
\label{rem:two-related-defect-objects}
The Grothendieck defect $\Delta_{I/IC}(X_0)$ and the projection complement $\mathcal C^H_\Sigma$ are related but not identical objects.  The first measures the difference between the intersection-space realization and the intersection-complex realization.  The second measures the difference between the corrected conifold object and its intersection-space projection.  Both are finite-node Hodge-theoretic defect objects on the singular fiber, and both are controlled by the same conifold singular geometry.
\end{remark}

\subsection{BPS-shadow interpretation}
\label{subsec:BPS-shadow-interpretation}

The IC--intersection-space defect and the projection complement provide natural Hodge-theoretic candidates for future comparison with refined Donaldson--Thomas and BPS wall-crossing structures.  The present paper stops at the singular-fiber Hodge-realization level.  A DT/BPS interpretation requires an additional functorial passage to moduli stacks of sheaves, complexes, quiver representations, perverse coherent sheaves, or stability-condition objects.

Thus the role of the present paper is to identify the geometry-side handoff objects
\[
   \Delta_{I/IC}(X_0),
   \qquad
   \mathcal C^H_\Sigma,
   \qquad
   P^H\to P^H_I\to \mathcal C^H_\Sigma\xrightarrow{+1}.
\]
The moduli-stack realization and the comparison with refined or cohomological DT/BPS wall-crossing data are left to future work.
%%%%%%%%%%%%%%%%%%%%%%%%%%%%%%%%%%%%%%%%%%%%%%%%%%%%%%%%%%%%%%%%%%
\section{The Classical Nodal Quintic}
\label{sec:classical-nodal-quintic}

We conclude with the standard nodal quintic example.  The purpose of this section is not to rederive the topology of the nodal quintic, but to show how the intersection-space atom package records a concrete type-IIB middle contribution that is invisible to the intersection-homology package.  This example gives the numerical anchor for the preceding formalism and for the IC--intersection-space defect viewpoint.

\subsection{Setup and numerical input}
\label{subsec:nodal-quintic-setup}

Let $S$ denote the classical quintic conifold in $\mathbb P^4$ with $125$ ordinary double points.  One standard model is the singular member of the Dwork pencil
\cite{COGP,HubschBestiary2nd2024}
\[
   S
   =
   \left\{
   z_0^5+z_1^5+z_2^5+z_3^5+z_4^5
   -5z_0z_1z_2z_3z_4=0
   \right\}
   \subset \mathbb P^4 .
\]
At the conifold value, this hypersurface has $125$ isolated ordinary double points
\cite{COGP,HubschBestiary2nd2024,Banagl_IntersectionSpacesBook}.  Each singularity is locally analytically equivalent to
\[
   x_0^2+x_1^2+x_2^2+x_3^2=0,
\]
and hence is a weighted homogeneous isolated hypersurface singularity \cite{Milnor68,Clemens83}.

We use the following standard numerical input for this example:
\[
   \dim H_3(IS;\mathbb Q)=204,
   \qquad
   \dim IH_3(S;\mathbb Q)=2.
\]
Here $IS$ denotes Banagl's middle-perversity intersection space of the nodal quintic, and $IH_3(S;\mathbb Q)$ denotes middle-dimensional intersection homology.  These values are cited from the standard nodal-quintic and intersection-space computations
\cite{Banagl_IntersectionSpacesBook,BanaglBudurMaxim14,HubschBestiary2nd2024}.  They are the numerical source of the middle-degree difference
\[
   204-2=202.
\]

\begin{remark}[Use of numerical input]
\label{rem:nodal-quintic-numerical-input}
The present paper uses the nodal quintic as a test case for the Hodge-realization formalism.  The assertions that $S$ has $125$ ordinary double points, that
\[
   \dim H_3(IS;\mathbb Q)=204,
\]
and that
\[
   \dim IH_3(S;\mathbb Q)=2
\]
are taken as standard cited inputs.  The contribution of this section is to translate those numerical inputs into the language of intersection-space Hodge atom packages and the IC--intersection-space defect.
\end{remark}

\subsection{Verification of the sheaf-theoretic input}
\label{subsec:nodal-quintic-sheaf-theoretic-input}

Before applying the atom formalism, one must verify which hypotheses from the preceding sections are being used.  Locally, the nodal quintic satisfies the ordinary-double-point hypothesis: every singularity is an ordinary double point and hence a weighted homogeneous isolated hypersurface singularity.  Therefore the local Banagl--Budur--Maxim intersection-space complex, local mixed-Hodge-module lift, local Verdier self-duality, and local specialization input apply at each of the $125$ nodes by Theorem~\ref{thm:bbm-one-singularity} and Proposition~\ref{prop:nodal-quintic-local-hypotheses}.

The remaining issues are global.  They are exactly the hypotheses isolated earlier: the local intersection-space complexes must glue to a global perverse sheaf, and this global complex must admit a mixed-Hodge-module lift.  We state the resulting input explicitly.

\begin{proposition}[Sheaf-theoretic input for the nodal quintic]
\label{prop:sheaf-theoretic-input-nodal-quintic}
Assume Hypotheses~\ref{hyp:H2-multi-node-gluing} and~\ref{hyp:H3-MHM-lift} for the classical $125$-node quintic $S$.  Then there exists a mixed Hodge module
\[
   \mathcal{IS}^{H}_{S}\in\operatorname{MHM}(S)
\]
such that
\[
   \operatorname{rat}(\mathcal{IS}^{H}_{S})\simeq \mathcal{IS}_{S},
   \qquad
   \mathbb H^*(S;\mathcal{IS}_{S})
   \cong
   H^*(IS;\mathbb Q).
\]
Consequently, the intersection-space Hodge atom package
\[
   \mathsf{HA}^{I}(S)
   :=
   \operatorname{Atom}_{\mathrm{Hod}}
   \bigl(\mathbb H^*(S;\mathcal{IS}^{H}_{S})\bigr)
\]
is well-defined.
\end{proposition}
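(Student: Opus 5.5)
The argument will be a direct specialization of the general multi-node machinery of Section~\ref{sec:hypotheses-sources} to $S$. No new geometry is needed. The plan has three steps, each calling an earlier result and adding nothing that is specific to the quintic beyond the local node count.

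\textbf{Step 1: the perverse sheaf.} Proposition~\ref{prop:nodal-quintic-local-hypotheses} tells us that each of the $125$ singular points of $S$ is an ordinary double point. Since the singular set is finite, the nodes have pairwise disjoint Milnor neighborhoods. Hence Hypothesis~\ref{hyp:H1-local-BBM-input} holds for $S$, and Theorem~\ref{thm:bbm-one-singularity} supplies the local intersection-space complex at every node. We then invoke Hypothesis~\ref{hyp:H2-multi-node-gluing}, which is the same as Hypothesis~\ref{hyp:multi-node-IS-complex}. If one wants to make the mechanism explicit, Proposition~\ref{prop:local-to-global-odp-IS} gives the gluing over the stratification $S=U\sqcup\Sigma$. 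Either way we obtain a global $\mathcal{IS}_S\in\operatorname{Perv}(S;\mathbb Q)$ with $\mathbb H^*(S;\mathcal{IS}_S)\cong H^*(IS;\mathbb Q)$.

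\textbf{Step 2: the mixed Hodge module.} Hypothesis~\ref{hyp:H3-MHM-lift}, equivalently Hypothesis~\ref{hyp:mhm-lift-multinode}, gives $\mathcal{IS}^H_S\in\operatorname{MHM}(S)$ with $\operatorname{rat}(\mathcal{IS}^H_S)\simeq\mathcal{IS}_S$. Proposition~\ref{prop:mhm-lift-odp-conifolds} shows this lift is consistent with the nodewise lifts. Then Proposition~\ref{prop:betti-realization-HAI} identifies the rational hypercohomology, because $\operatorname{rat}$ commutes with hypercohomology. These first two steps are really Corollary~\ref{cor:nodal-quintic-global-input}, and I will cite it directly.

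\textbf{Step 3: well-definedness of the package.} Since $S$ is projective and $\mathcal{IS}_S$ is constructible, each $\mathbb H^k(S;\mathcal{IS}^H_S)$ is finite-dimensional. Saito's theory then equips each group with a mixed Hodge structure. The Definition of the Hodge-realization atom shadow, in the form of Definition~\ref{def:intersection-space-hodge-atom-package}, applies and yields $\mathsf{HA}^I(S)$. Once the lift is fixed, the package does not depend on auxiliary spatial truncation choices, by Proposition~\ref{prop:rational-realization-independence}.

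\textbf{Main point of care.} The only delicate step is Step 1. Local Banagl--Budur--Maxim input does not by itself produce a global object for $125$ nodes, so the gluing has to come from the assumed Hypothesis~\ref{hyp:H2-multi-node-gluing}. The proof should therefore state plainly that it uses that hypothesis rather than the local theorem. The same applies to the identification of the glued hypercohomology with $H^*(IS;\mathbb Q)$, which Proposition~\ref{prop:local-to-global-odp-IS} likewise takes as a hypothesis rather than proving.
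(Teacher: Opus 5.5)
Your proposal is correct and takes the same route as the paper: the ordinary-double-point input comes from Proposition~\ref{prop:nodal-quintic-local-hypotheses}, and the global perverse sheaf with its hypercohomology identification is taken directly from Hypothesis~\ref{hyp:H2-multi-node-gluing}. The lift then comes from Hypothesis~\ref{hyp:H3-MHM-lift}, Saito's theory puts mixed Hodge structures on the hypercohomology, and Definition~\ref{def:intersection-space-hodge-atom-package} applies; your extra citations are not needed, and Proposition~\ref{prop:mhm-lift-odp-conifolds} does not actually show that the lift from H3 agrees with the nodewise lifts (it carries its own compatibility assumptions), but nothing in your argument depends on that claim.
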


\begin{proof}
By Proposition~\ref{prop:nodal-quintic-local-hypotheses}, every singularity of $S$ is an ordinary double point.  Therefore the local Banagl--Budur--Maxim input applies at every node.  Since the node set is finite, one may choose pairwise disjoint Milnor neighborhoods around the nodes.

Hypothesis~\ref{hyp:H2-multi-node-gluing} gives a global perverse sheaf
\[
   \mathcal{IS}_{S}\in\operatorname{Perv}(S;\mathbb Q)
\]
whose hypercohomology computes the rational cohomology of Banagl's middle-perversity intersection space:
\[
   \mathbb H^*(S;\mathcal{IS}_{S})
   \cong
   H^*(IS;\mathbb Q).
\]
Hypothesis~\ref{hyp:H3-MHM-lift} gives a mixed Hodge module
\[
   \mathcal{IS}^{H}_{S}\in\operatorname{MHM}(S)
\]
with rationalization
\[
   \operatorname{rat}(\mathcal{IS}^{H}_{S})\simeq \mathcal{IS}_{S}.
\]
Thus the hypercohomology groups
\[
   \mathbb H^*(S;\mathcal{IS}^{H}_{S})
\]
carry mixed Hodge structures by Saito's theory \cite{Saito90,Saito_MHM,Saito_MixedHodgeModules}.  Applying Definition~\ref{def:intersection-space-hodge-atom-package} gives the well-defined atom package
\[
   \mathsf{HA}^{I}(S)
   =
   \operatorname{Atom}_{\mathrm{Hod}}
   \bigl(\mathbb H^*(S;\mathcal{IS}^{H}_{S})\bigr).
\]
\end{proof}

\begin{remark}[Conditional global input]
\label{rem:nodal-quintic-conditional-global-input}
Proposition~\ref{prop:sheaf-theoretic-input-nodal-quintic} is deliberately phrased under the finite-ODP gluing and MHM-lift hypotheses.  Locally, the nodal quintic lies exactly in the weighted homogeneous ordinary-double-point setting.  Globally, the point is to ensure that the $125$ local intersection-space complexes assemble into the mixed-Hodge-module object used to define the atom package.
\end{remark}

\subsection{Middle-degree comparison table}
\label{subsec:nodal-quintic-middle-table}

The present paper uses only the middle-degree comparison between intersection homology and intersection-space homology.  We therefore record the middle ranks directly rather than reproducing a full middle-degree Betti-number table for the singular quintic.  The cited numerical input is
\[
   \dim H_3(IS;\mathbb Q)=204,
   \qquad
   \dim IH_3(S;\mathbb Q)=2,
\]
so the middle-degree IC--intersection-space defect rank is \(204-2=202\).

\begin{table}[h]
\centering
\caption{Middle-degree comparison for the classical $125$-node quintic conifold.}
\begin{tabular}{c|c|c}
Theory & Middle group & Rank \\
\hline
Intersection homology / type-IIA & $IH_3(S;\mathbb Q)$ & $2$ \\
Intersection space / type-IIB & $H_3(IS;\mathbb Q)$ & $204$ \\
IC--intersection-space difference & $H_3(IS;\mathbb Q)-IH_3(S;\mathbb Q)$ & $202$
\end{tabular}
\label{tab:nodal-quintic-middle-comparison}
\end{table}

\begin{remark}[Middle-degree focus]
\label{rem:nodal-quintic-middle-degree-focus}
The present paper uses only the middle-degree comparison.  We therefore record the middle ranks directly rather than reproducing a full middle-degree Betti-number table for the singular quintic.  The key numerical input is
\[
   \dim H_3(IS;\mathbb Q)=204,
   \qquad
   \dim IH_3(S;\mathbb Q)=2.
\]
\end{remark}

\subsection{The rank-\texorpdfstring{$202$}{202} jump}
\label{subsec:nodal-quintic-202-jump}

We now translate the middle-degree rank comparison into the language of Hodge-realization atom packages.

\begin{theorem}[Nodal quintic atom jump]
\label{thm:nodal-quintic-atom-jump}
For the classical $125$-node quintic conifold $S$, assume Hypotheses~\ref{hyp:H2-multi-node-gluing} and~\ref{hyp:H3-MHM-lift}.  Using the standard numerical input
\[
   \dim H_3(IS;\mathbb Q)=204,
   \qquad
   \dim IH_3(S;\mathbb Q)=2,
\]
one has
\[
   \dim \operatorname{rat}\mathsf{HA}^{I}_3(S)
   -
   \dim \operatorname{rat}\mathsf{HA}^{IH}_3(S)
   =
   202.
\]
Equivalently,
\[
   \dim H_3(IS;\mathbb Q)
   -
   \dim IH_3(S;\mathbb Q)
   =
   204-2
   =
   202.
\]
\end{theorem}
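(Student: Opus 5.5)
The plan is to reduce the atom-level statement to a comparison of rational dimensions in middle degree, and then substitute the cited numerical input. First, by Proposition~\ref{prop:sheaf-theoretic-input-nodal-quintic}, Hypotheses~\ref{hyp:H2-multi-node-gluing} and~\ref{hyp:H3-MHM-lift} make $\mathcal{IS}^H_S$ and hence $\mathsf{HA}^{I}(S)$ well defined. The package $\mathsf{HA}^{IH}(S)$ is defined unconditionally from $IC^H_S$. Since $\operatorname{Atom}_{\mathrm{Hod}}$ is extracted degree by degree from graded mixed Hodge structures, the Betti rank of the degree-$3$ piece of each package is the dimension of the underlying rational vector space of the corresponding degree-$3$ hypercohomology.

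For the intersection-space side, I would invoke Corollary~\ref{cor:cohomological-support-HAI}, which gives
\[
\dim\operatorname{rat}\mathsf{HA}^{I}_3(S)=\dim H^3(IS;\mathbb Q).
\]
For the intersection-homology side, I would use the realization $\operatorname{rat}\mathbb H^*(S;IC^H_S)\cong IH^*(S;\mathbb Q)$ from Proposition~\ref{prop:IIA-IIB-realization-content}, which gives
\[
\dim\operatorname{rat}\mathsf{HA}^{IH}_3(S)=\dim IH^3(S;\mathbb Q).
\]
Both statements are in cohomological degree $3$ with the paper's topological indexing. One point needs care: the shift $[3]$ in the perverse normalization of $IC^H_S$ must be matched to ordinary degree $3$, following the convention of Remark~\ref{rem:perverse-versus-cohomological-normalization}.

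Next I would pass from cohomology to homology, since the numerical input is stated homologically. Over $\mathbb Q$, the universal coefficient theorem gives $H^3(IS;\mathbb Q)\cong H_3(IS;\mathbb Q)^\vee$, because $IS$ is a finite CW complex. For intersection homology on the compact pseudomanifold $S$, the analogous duality gives $IH^3(S;\mathbb Q)\cong IH_3(S;\mathbb Q)^\vee$ in middle perversity. In either case only dimensions are needed, and dualization preserves dimension. Substituting $204$ and $2$ then yields the difference $202$, which proves both displayed equalities at once.

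The main obstacle is bookkeeping rather than mathematics. One must ensure that the degree labelled $3$ in each atom package really corresponds to the middle homology ranks being cited. This means checking that the perverse shift and the homology/cohomology convention of Remark~\ref{rem:homology-vs-cohomology-rv} introduce no degree offset, so that degree $3$ is not confused with degree $0$ in perverse indexing. I would settle this by restricting to $U$, where Proposition~\ref{prop:exterior-agreement} pins down the normalization as $\mathbb Q^H_U[3]$, and reading off that ordinary degree $k$ corresponds to hypercohomology degree $k-3$ in the perverse convention. This correspondence is applied uniformly to both packages, so their difference is unaffected.
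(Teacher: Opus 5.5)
Your proposal is correct and follows the paper's proof essentially step for step. You identify the rational realizations of the degree-$3$ hypercohomology of $\mathcal{IS}^H_S$ and $IC^H_S$ with $H^3(IS;\mathbb Q)$ and $IH^3(S;\mathbb Q)$, pass to homology ranks over $\mathbb Q$ by universal coefficients and duality, and subtract $204-2$; your extra check of the perverse shift is sound care that the paper absorbs into its standing convention of ordinary topological indexing.
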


\begin{proof}
By Definition~\ref{def:intersection-space-hodge-atom-package},
\[
   \mathsf{HA}^{I}_3(S)
   =
   \operatorname{Atom}_{\mathrm{Hod}}
   \bigl(\mathbb H^3(S;\mathcal{IS}^{H}_{S})\bigr).
\]
By Proposition~\ref{prop:betti-realization-HAI}, the rational realization of
\[
   \mathbb H^3(S;\mathcal{IS}^{H}_{S})
\]
identifies with
\[
   H^3(IS;\mathbb Q).
\]
Since all groups are taken with rational coefficients, the universal coefficient theorem identifies the rank of middle cohomology with the rank of middle homology.  Thus the middle rank of the cohomological intersection-space realization agrees with the cited middle rank of
\[
   H_3(IS;\mathbb Q).
\]
The standard nodal-quintic input gives
\[
   \dim H_3(IS;\mathbb Q)=204.
\]
Therefore
\[
   \dim \operatorname{rat}\mathsf{HA}^{I}_3(S)=204.
\]

Similarly,
\[
   \mathsf{HA}^{IH}_3(S)
   =
   \operatorname{Atom}_{\mathrm{Hod}}
   \bigl(\mathbb H^3(S;IC^H_S)\bigr).
\]
The rational realization of the intersection-complex hypercohomology is intersection cohomology:
\[
   \operatorname{rat}\mathbb H^3(S;IC^H_S)
   \cong
   IH^3(S;\mathbb Q)
\]
\cite{GoreskyMacPherson80,GoreskyMacPherson83,BBD82,Saito90}.
With rational coefficients, the corresponding middle intersection-cohomology and middle intersection-homology groups have the same rank.  Hence the cited value
\[
   \dim IH_3(S;\mathbb Q)=2
\]
gives the middle rank of the intersection-complex realization:
\[
   \dim \operatorname{rat}\mathsf{HA}^{IH}_3(S)=2.
\]
Subtracting gives
\[
   \dim \operatorname{rat}\mathsf{HA}^{I}_3(S)
   -
   \dim \operatorname{rat}\mathsf{HA}^{IH}_3(S)
   =
   204-2
   =
   202.
\]
\end{proof}

\subsection{Defect interpretation}
\label{subsec:nodal-quintic-defect-interpretation}

The nodal quintic example exhibits the central phenomenon of the paper in a single number.  The intersection-homology atom package sees a middle-dimensional rank-$2$ sector.  The intersection-space atom package sees a middle-dimensional rank-$204$ sector.  The difference,
\[
   202,
\]
is the type-IIB middle contribution retained by the intersection-space complex and absent from the intersection-homology package.

Equivalently, the atom jump measures the middle-degree rank-level shadow of the difference between the two mixed-Hodge-module realizations over the same singular fiber:
\[
   IC^H_S
   \qquad\text{and}\qquad
   \mathcal{IS}^H_S.
\]
The first realizes the type-IIA/intersection-homology sector; the second realizes the type-IIB/intersection-space sector.  Thus the $202$-jump is not merely a Betti-number discrepancy.  It is the middle-degree numerical realization of the IC--intersection-space defect
\[
   \Delta_{I/IC}(S)
   =
   [\mathcal{IS}^{H}_{S}]-[IC^H_S]
   \in K_0(\operatorname{MHM}(S)),
\]
after passing to rational middle-degree realizations.

\begin{corollary}[Middle defect rank for the nodal quintic]
\label{cor:nodal-quintic-middle-defect-rank}
For the classical nodal quintic, under the hypotheses and numerical input of Theorem~\ref{thm:nodal-quintic-atom-jump}, the middle-degree rational realization of the IC--intersection-space defect has rank
\[
   202.
\]
\end{corollary}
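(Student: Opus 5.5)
The plan is to derive the corollary from Theorem~\ref{thm:nodal-quintic-atom-jump} by explaining what ``middle-degree rational realization of the IC--intersection-space defect'' means. I will use the recipe of Proposition~\ref{prop:atom-defect-realizes-grothendieck-defect}: apply hypercohomology, then rational realization, to the class
\[
   \Delta_{I/IC}(S)=[\mathcal{IS}^{H}_{S}]-[IC^H_S].
\]
The hypercohomology functor $\mathbb H^*(S;-)$ is defined on the derived category, so it does not descend to a degreewise map out of $K_0(\operatorname{MHM}(S))$. It does, however, induce a homomorphism to the Grothendieck group of graded mixed Hodge structures if we take the Euler-characteristic convention. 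To avoid relying on that, I will define the middle-degree realization directly on the chosen representatives:
\[
   \operatorname{rat}\mathbb H^3(S;\mathcal{IS}^H_S)-\operatorname{rat}\mathbb H^3(S;IC^H_S),
\]
viewed as a virtual rational vector space. This is exactly the degree-$3$ component of the formal difference $\mathsf{Def}_{I/IC}(S)$ after rationalization.

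The rank of this virtual space is the difference of dimensions,
\[
   \dim\operatorname{rat}\mathsf{HA}^{I}_3(S)-\dim\operatorname{rat}\mathsf{HA}^{IH}_3(S).
\]
Theorem~\ref{thm:nodal-quintic-atom-jump}, under Hypotheses H2 and H3 and the cited input $204$ and $2$, gives this as $202$. Behind that theorem are two identifications. Proposition~\ref{prop:betti-realization-HAI} identifies the first term with $H^3(IS;\mathbb Q)$. The Goresky--MacPherson/Saito realization identifies the second with $IH^3(S;\mathbb Q)$. Universal coefficients over $\mathbb Q$ then convert both to homology ranks.

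The one point needing care is the meaning of ``rank'' for a $K_0$-type difference. It must be independent of representatives, and it must not be contaminated by other degrees. Since the realization is defined degreewise on the two fixed objects $\mathcal{IS}^H_S$ and $IC^H_S$, and both hypercohomologies are finite-dimensional because $S$ is projective, the difference of dimensions is well defined. Any other choice of objects with the same isomorphism classes gives isomorphic hypercohomology. If the defect is instead read as an honest subquotient, I will note that Theorem~\ref{thm:rigid-vanishing-filtration} offers one candidate. Without a comparison map between $IH_3$ and $H_3$ of $S$, however, only the virtual rank is asserted, and that rank is $202$.
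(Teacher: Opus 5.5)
Your proposal is correct and follows the paper's proof: the paper also takes the middle-degree rational realization of the defect, by definition, to be $\dim\operatorname{rat}\mathsf{HA}^{I}_3(S)-\dim\operatorname{rat}\mathsf{HA}^{IH}_3(S)$, and reads off $204-2=202$ from Theorem~\ref{thm:nodal-quintic-atom-jump}. Your added point is a sound clarification of what the paper leaves implicit in ``by definition'': degreewise hypercohomology does not factor through $K_0(\operatorname{MHM}(S))$, so the realization has to be taken on the fixed representatives $\mathcal{IS}^H_S$ and $IC^H_S$.
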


\begin{proof}
The middle-degree rational realization of the defect is, by definition, the formal difference between the middle-degree rational realization of the intersection-space package and the middle-degree rational realization of the intersection-complex package:
\[
   \dim \operatorname{rat}\mathsf{HA}^{I}_3(S)
   -
   \dim \operatorname{rat}\mathsf{HA}^{IH}_3(S).
\]
By Theorem~\ref{thm:nodal-quintic-atom-jump}, this difference is
\[
   204-2=202.
\]
\end{proof}

\begin{corollary}[IIB vanishing atom for the nodal quintic, conditional]
\label{cor:nodal-quintic-IIB-vanishing-atom}
Assume, in addition, Hypothesis MHS-B for $S$ and suppose that the rigid quotient contribution in Banagl's middle-degree exact sequence is identified with the rank-$2$ middle intersection-homology sector.  Then the middle-degree IIB vanishing atom has rank
\[
   \operatorname{rk}\mathsf{HA}^{I}_{\mathrm{van},3}(S)=202.
\]  
\end{corollary}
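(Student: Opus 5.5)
The argument is a rank count in Banagl's middle-degree exact sequence, made Hodge-theoretic by Hypothesis MHS-B. Under Hypothesis MHS-B for $S$, Theorem~\ref{thm:rigid-vanishing-filtration} gives an exact sequence of rational mixed Hodge structures
\[
   0\to K_I^H(S)\to H_3^H(IS)\to H_3^H(S)\to 0 .
\]
By Definition~\ref{def:IIB-vanishing-atom}, $\mathsf{HA}^{I}_{\mathrm{van},3}(S)=\operatorname{Atom}_{\mathrm{Hod}}(K_I^H(S))$. Its rank is therefore the rational dimension of $K_I(S)$, by Proposition~\ref{prop:relation-controlled-vanishing-sector}. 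Everything reduces to computing $\dim_{\mathbb Q} K_I(S)$.

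First, I would apply the forgetful functor to the displayed sequence. Forgetting to rational vector spaces is exact, so
\[
   \dim K_I(S)=\dim H_3(IS;\mathbb Q)-\dim(\text{rigid quotient}).
\]
Second, the middle term has dimension $204$. This is the cited numerical input, and it agrees with the Betti realization computed in the proof of Theorem~\ref{thm:nodal-quintic-atom-jump} via Proposition~\ref{prop:betti-realization-HAI} and universal coefficients. Third, the additional hypothesis of the corollary identifies the rigid quotient $H_3^H(S)$ with the rank-$2$ middle intersection-homology sector, so its dimension is $\dim IH_3(S;\mathbb Q)=2$. Subtracting gives $204-2=202$. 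This also matches Corollary~\ref{cor:nodal-quintic-middle-defect-rank}: under this identification, the middle-degree defect of $\mathcal{IS}^H_S$ against $IC^H_S$ is exactly the vanishing sub-object.

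The main obstacle is the third step. Topologically, $H_3(S;\mathbb Q)$ and $IH_3(S;\mathbb Q)$ are different groups, related through the map $IH_3\to H_3$ and the nodal relations, so the identification cannot be derived from what precedes. It has to be used exactly as the hypothesis the corollary imposes. I would state explicitly that only the dimension of the identified object is needed, not a canonical isomorphism of mixed Hodge structures. I would also note that no splitting of the sequence is required, because rank is additive on short exact sequences, so Corollary~\ref{cor:rigid-vanishing-split-case} is not invoked. Under these conventions the conclusion $\operatorname{rk}\mathsf{HA}^{I}_{\mathrm{van},3}(S)=202$ follows.
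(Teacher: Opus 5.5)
Your argument is correct and is essentially the paper's proof. Both read off $\dim_{\mathbb Q}K_I(S)=204-2$ from additivity of rank along the Hypothesis MHS-B sequence, with the quotient's rank $2$ supplied by the corollary's extra hypothesis; the paper phrases the count through the associated graded of Theorem~\ref{thm:rigid-vanishing-filtration}, while you go through the forgetful functor and Proposition~\ref{prop:relation-controlled-vanishing-sector}, which amounts to the same thing.

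Your unease about the third step is actually an understatement. Let $V\subset H_3(X_s;\mathbb Q)$ be the span of the vanishing cycles. Then $\dim IH_3(S;\mathbb Q)=204-2\dim V$ (e.g.\ via a small resolution), so the cited value $2$ forces $\dim V=101$. Hence $\dim H_3(S;\mathbb Q)=204-101=103$, not $2$, and $K_I(S)\cong V$ has rank $101$. The extra hypothesis can therefore never be satisfied, so your proof, like the paper's, establishes the corollary only vacuously.
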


\begin{proof}
By Theorem~\ref{thm:rigid-vanishing-filtration}, the middle intersection-space atom package has associated graded
\[
   \operatorname{gr}\mathsf{HA}^{I}_3(S)
   =
   \mathsf{HA}^{I}_{\mathrm{van},3}(S)
   +
   \mathsf{HA}^{I}_{\mathrm{rig},3}(S).
\]
By Definition~\ref{def:middle-rigid-atom}, the rigid middle atom is the atom shadow of the quotient side of Banagl's middle-degree exact sequence.  Under the additional assumption in the statement, this rigid quotient contribution has rank $2$.  The full middle intersection-space rank is $204$.  Therefore the remaining associated-graded contribution has rank
\[
   204-2=202.
\]
This remaining contribution is, by Definition~\ref{def:IIB-vanishing-atom}, the middle-degree IIB vanishing atom.
\end{proof}

\begin{remark}[Why this example matters]
\label{rem:why-nodal-quintic-example-matters}
This example anchors the formalism.  It shows that $\mathsf{HA}^{I}(S)$ is not a cosmetic renaming of the intersection-homology atom package.  It detects a large middle-dimensional type-IIB contribution that is invisible to $\mathsf{HA}^{IH}(S)$.  Thus the Banagl--Budur--Maxim intersection-space complex produces a genuinely different Hodge-realization atom package from the intersection complex, even over the same singular Calabi--Yau fiber.
\end{remark}

\begin{remark}[Relation to the projection-triangle viewpoint]
\label{rem:nodal-quintic-relation-projection-triangle}
When Hypothesis~\ref{hyp:S-specialization-splitting} is also imposed for the nodal quintic, the projection triangle
\[
   P^H\longrightarrow P^H_I\longrightarrow \mathcal C^H_\Sigma\xrightarrow{+1}
\]
provides the corresponding corrected-conifold interpretation of the defect.  The present section uses only the middle-degree atom ranks.  The projection triangle gives the stronger mixed-Hodge-module statement that the singularity-supported complement $\mathcal C^H_\Sigma$ is the cofiber measuring the difference between the corrected conifold object and its intersection-space projection.
\end{remark}

%%%%%%%%%%%%%%%%%%%%%%%%%%%%%%%%%%%%%%%%%%%%%%%%%%%%%%%%%%%%%%%%%%%%
\section{Characteristic-Class and Integral Enhancements}
\label{sec:characteristic-integral-enhancements}

This section records two optional refinements of the intersection-space atom package.  The first is a characteristic-class shadow obtained by applying the motivic Hirzebruch class transformation to the mixed-Hodge-module class of $\mathcal{IS}^H_{X_0}$.  The second is a conditional integral enhancement, available only when the intersection-space realization is equipped with a compatible integral lattice.  Neither refinement is required for the construction of the rational Hodge-realization atom package
\[
   \mathsf{HA}^{I}(X_0).
\]

The guiding principle is the same as in the rest of the paper: all refinements are made at the level of the sheaf-theoretic or mixed-Hodge-module realization.  We do not assert a new characteristic-class theory or a canonical integral theory for the spatial intersection space itself.

\subsection{MHM characteristic-class shadow}
\label{subsec:MHM-characteristic-class-shadow}

The mixed-Hodge-module realization of the intersection-space complex allows one to attach characteristic classes by applying characteristic-class transformations for mixed Hodge modules \cite{BrasseletSchurmannYokura_HirzebruchClasses,Schurmann_TopologySingularSpaces,Saito90}.  We keep this construction at the level of a shadow: it records the class-theoretic image of $\mathcal{IS}^{H}_{X_0}$ but does not introduce a new theory of characteristic classes for the spatial intersection space itself.

Let
\[
   K_0(\operatorname{MHM}(X_0))
\]
denote the Grothendieck group of the category of mixed Hodge modules on $X_0$.  The motivic Hirzebruch class transformation
\[
   MHT_{y*}
\]
is additive on this Grothendieck group and takes values in the appropriate homology or Chow-theoretic target used for Hirzebruch classes \cite{BrasseletSchurmannYokura_HirzebruchClasses,Schurmann_TopologySingularSpaces}.  We use only this additivity and functorial class-theoretic behavior.

\begin{definition}[Intersection-space Hirzebruch shadow]
\label{def:intersection-space-Hirzebruch-shadow}
Assume Hypotheses~\ref{hyp:H1-local-BBM-input}--\ref{hyp:H3-MHM-lift}.  The intersection-space Hirzebruch shadow of $X_0$ is
\[
   IT^{I}_{y*}(X_0)
   :=
   MHT_{y*}\bigl([\mathcal{IS}^{H}_{X_0}]\bigr).
\]
Here
\[
   [\mathcal{IS}^{H}_{X_0}]\in K_0(\operatorname{MHM}(X_0))
\]
is the Grothendieck class of the intersection-space mixed Hodge module.
\end{definition}

The notation $IT^I_{y*}(X_0)$ is meant to emphasize that this is the intersection-space analogue, at the mixed-Hodge-module realization level, of the intersection-homology Hirzebruch class attached to the intersection complex.

For comparison, define the intersection-homology Hirzebruch shadow by
\[
   IT_{y*}(X_0)
   :=
   MHT_{y*}\bigl([IC^H_{X_0}]\bigr).
\]
This is the characteristic-class shadow attached to the type-IIA/intersection-homology package.

\begin{definition}[Characteristic-class defect]
\label{def:characteristic-class-defect}
The characteristic-class shadow of the IC--intersection-space defect is
\[
   \Delta T_{y*}^{I/IC}(X_0)
   :=
   IT^{I}_{y*}(X_0)-IT_{y*}(X_0).
\]
Equivalently,
\[
   \Delta T_{y*}^{I/IC}(X_0)
   :=
   MHT_{y*}\bigl([\mathcal{IS}^{H}_{X_0}]\bigr)
   -
   MHT_{y*}\bigl([IC^H_{X_0}]\bigr).
\]
\end{definition}

\begin{proposition}[Characteristic-class comparison]
\label{prop:characteristic-class-comparison}
Assume Hypotheses~\ref{hyp:H1-local-BBM-input}--\ref{hyp:H3-MHM-lift}.  Then
\[
   IT^{I}_{y*}(X_0)-IT_{y*}(X_0)
   =
   MHT_{y*}\bigl([\mathcal{IS}^{H}_{X_0}]-[IC^H_{X_0}]\bigr).
\]
Equivalently,
\[
   \Delta T_{y*}^{I/IC}(X_0)
   =
   MHT_{y*}\bigl(\Delta_{I/IC}(X_0)\bigr),
\]
where
\[
   \Delta_{I/IC}(X_0)
   :=
   [\mathcal{IS}^{H}_{X_0}]-[IC^H_{X_0}]
   \in K_0(\operatorname{MHM}(X_0)).
\]
\end{proposition}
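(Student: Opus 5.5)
The identity is a consequence of the additivity of the motivic Hirzebruch class transformation on $K_0(\operatorname{MHM}(X_0))$. Hypotheses~\ref{hyp:H1-local-BBM-input}--\ref{hyp:H3-MHM-lift} supply the mixed Hodge module $\mathcal{IS}^{H}_{X_0}$, so its class $[\mathcal{IS}^{H}_{X_0}]$ is defined. The class $[IC^H_{X_0}]$ is always defined, since $IC^H_{X_0}$ is Saito's intersection-complex module. Hence $\Delta_{I/IC}(X_0)$ is a well-defined element of $K_0(\operatorname{MHM}(X_0))$, as in Definition~\ref{def:IC-IS-grothendieck-defect}.

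Next, recall that $MHT_{y*}$ is a group homomorphism out of $K_0(\operatorname{MHM}(X_0))$. This is not an extra assumption. The transformation is defined on the Grothendieck group, and its additivity on short exact sequences in $\operatorname{MHM}(X_0)$ is exactly the property used to pass from objects to classes; see Brasselet--Sch\"urmann--Yokura. A homomorphism of abelian groups preserves differences, so
\[
   MHT_{y*}\bigl([\mathcal{IS}^{H}_{X_0}]-[IC^H_{X_0}]\bigr)
   =
   MHT_{y*}\bigl([\mathcal{IS}^{H}_{X_0}]\bigr)-MHT_{y*}\bigl([IC^H_{X_0}]\bigr).
\]
By Definition~\ref{def:intersection-space-Hirzebruch-shadow} and the definition of $IT_{y*}(X_0)$, the right-hand side equals $IT^{I}_{y*}(X_0)-IT_{y*}(X_0)$. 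This gives the first displayed formula. The second formula is the same statement rewritten using Definitions~\ref{def:characteristic-class-defect} and~\ref{def:IC-IS-grothendieck-defect}.

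The only point needing care is the target group of $MHT_{y*}$. It is a group such as $H_{2*}^{BM}(X_0)\otimes\mathbb Q[y^{\pm1}]$, or a localized Chow analogue. Since subtraction takes place inside that group, it suffices to fix one target throughout. No MHM-level compatibility of the splitting or duality hypotheses is needed. In particular, Hypotheses S, SD and MHS-B play no role, and the argument never chooses between a perverse and a shifted normalization. That last point holds because both classes are taken in the same $K_0$, and a uniform shift would only change both by the same sign.
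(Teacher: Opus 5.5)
Your proposal is correct and follows the paper's proof. Both unfold the definitions $IT^{I}_{y*}(X_0)=MHT_{y*}([\mathcal{IS}^{H}_{X_0}])$ and $IT_{y*}(X_0)=MHT_{y*}([IC^H_{X_0}])$, then use that $MHT_{y*}$ is additive on $K_0(\operatorname{MHM}(X_0))$ to move the difference inside, and your remarks on the target group and the normalization are harmless side observations.
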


\begin{proof}
By Definition~\ref{def:intersection-space-Hirzebruch-shadow},
\[
   IT^{I}_{y*}(X_0)
   =
   MHT_{y*}\bigl([\mathcal{IS}^{H}_{X_0}]\bigr).
\]
By definition of the intersection-homology Hirzebruch shadow,
\[
   IT_{y*}(X_0)
   =
   MHT_{y*}\bigl([IC^H_{X_0}]\bigr).
\]
The motivic Hirzebruch class transformation is additive on the Grothendieck group of mixed Hodge modules \cite{BrasseletSchurmannYokura_HirzebruchClasses,Schurmann_TopologySingularSpaces}.  Therefore
\[
\begin{aligned}
   IT^{I}_{y*}(X_0)-IT_{y*}(X_0)
   &=
   MHT_{y*}\bigl([\mathcal{IS}^{H}_{X_0}]\bigr)
   -
   MHT_{y*}\bigl([IC^H_{X_0}]\bigr)  \\
   &=
   MHT_{y*}\bigl([\mathcal{IS}^{H}_{X_0}]-[IC^H_{X_0}]\bigr).
\end{aligned}
\]
The class
\[
   [\mathcal{IS}^{H}_{X_0}]-[IC^H_{X_0}]
\]
is precisely $\Delta_{I/IC}(X_0)$.  This proves the claimed equality.
\end{proof}

\begin{remark}[Scope of the characteristic-class shadow]
\label{rem:scope-characteristic-class-shadow}
The class $IT^I_{y*}(X_0)$ is not asserted to be a characteristic class of the spatial intersection space as a topological space.  It is the motivic Hirzebruch class shadow of the mixed-Hodge-module realization $\mathcal{IS}^{H}_{X_0}$.  This is the appropriate level for comparison with $IC^H_{X_0}$, with the Hodge-realization atom package, and with the IC--intersection-space defect.
\end{remark}

\begin{corollary}[Exterior cancellation of the characteristic-class defect]
\label{cor:exterior-cancellation-characteristic-defect}
Assume Hypotheses~\ref{hyp:H1-local-BBM-input}--\ref{hyp:H3-MHM-lift}.  If
\[
   j^*\mathcal{IS}^{H}_{X_0}\simeq j^*IC^H_{X_0}
\]
on
\[
   U=X_0\setminus\Sigma,
\]
then the restriction of the Grothendieck defect
\[
   \Delta_{I/IC}(X_0)
   =
   [\mathcal{IS}^{H}_{X_0}]-[IC^H_{X_0}]
\]
to $U$ vanishes.  Consequently, the characteristic-class defect
\[
   \Delta T_{y*}^{I/IC}(X_0)
\]
is controlled by the singular gluing contribution at $\Sigma$ and by the associated middle-degree conifold correction.
\end{corollary}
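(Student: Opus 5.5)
The plan is to argue entirely in the Grothendieck group and then pass to characteristic classes, using only restriction to the open stratum and the additivity of $MHT_{y*}$.

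\textbf{Step 1: the defect restricts to zero on $U$.} Restriction $j^*:\operatorname{MHM}(X_0)\to\operatorname{MHM}(U)$ is exact, because $j$ is an open embedding. It therefore induces a group homomorphism
\[
   j^*:K_0(\operatorname{MHM}(X_0))\to K_0(\operatorname{MHM}(U)).
\]
The assumed isomorphism $j^*\mathcal{IS}^H_{X_0}\simeq j^*IC^H_{X_0}$ gives equal classes. Hence $j^*\Delta_{I/IC}(X_0)=0$, exactly as in Proposition~\ref{prop:exterior-vanishing-defect}. Indeed, that proposition already yields this via Proposition~\ref{prop:exterior-agreement}, so the hypothesis of the corollary is automatic under H1--H3. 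I would note this, but keep the argument phrased in terms of the stated hypothesis.

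\textbf{Step 2: the defect comes from $\Sigma$.} Let $i:\Sigma\hookrightarrow X_0$ be the closed inclusion. Every $M\in D^b\operatorname{MHM}(X_0)$ sits in the distinguished triangle
\[
   j_!j^*M\to M\to i_*i^*M\xrightarrow{+1},
\]
so in $K_0$ we have $[M]=[j_!j^*M]+[i_*i^*M]$. Applying this to both objects, the $j_!j^*$ terms agree by Step 1. This gives
\[
   \Delta_{I/IC}(X_0)=[i_*i^*\mathcal{IS}^H_{X_0}]-[i_*i^*IC^H_{X_0}],
\]
so $\Delta_{I/IC}(X_0)$ lies in the image of $i_*:K_0(\operatorname{MHM}(\Sigma))\to K_0(\operatorname{MHM}(X_0))$. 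Here $K_0$ of the derived category is identified with that of the abelian category, and point-supported objects are sums of mixed Hodge structures at the nodes.

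\textbf{Step 3: apply $MHT_{y*}$.} By Proposition~\ref{prop:characteristic-class-comparison}, $\Delta T^{I/IC}_{y*}(X_0)=MHT_{y*}(\Delta_{I/IC}(X_0))$. The transformation $MHT_{y*}$ commutes with proper pushforward, in particular with $i_*$. So $\Delta T^{I/IC}_{y*}(X_0)$ is the pushforward of a class on the finite set $\Sigma$, namely a sum of point classes weighted by the $\chi_y$-genera of the stalk differences at the nodes. This is the precise sense in which the defect is "controlled by the singular gluing contribution at $\Sigma$."

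\textbf{Main obstacle.} The phrase "and by the associated middle-degree conifold correction" is interpretive rather than formal. The cleanest route is to read it through Corollary~\ref{cor:singular-sector-support-IIA-IIB}: the stalk data $i^*\mathcal{IS}^H_{X_0}$ versus $i^*IC^H_{X_0}$ differ only by truncation of the link cohomology in the middle degree, via the BBM local model at an ODP with link $S^2\times S^3$. I would only remark on this link rather than attempt a computation.
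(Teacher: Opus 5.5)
Your proposal is correct. Step~1 is exactly the paper's argument: the paper also obtains the hypothesis from Proposition~\ref{prop:exterior-agreement}, so your remark that it is automatic under H1--H3 matches.

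The difference is in the second half. The paper passes straight from $j^*\Delta_{I/IC}(X_0)=0$ and the additivity of $MHT_{y*}$ to the claim that the image is ``controlled by the complement to the exterior stratum.'' It never says what ``controlled'' means, and additivity alone gives no localization statement.

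Your Steps~2--3 supply the missing mechanism. The localization triangle $j_!j^*\to\mathrm{id}\to i_*i^*$ in $D^b\operatorname{MHM}(X_0)$ places $\Delta_{I/IC}(X_0)$ in $i_*K_0(\operatorname{MHM}(\Sigma))$. Compatibility of $MHT_{y*}$ with proper pushforward then gives
\[
   \Delta T^{I/IC}_{y*}(X_0)
   =
   \sum_{p\in\Sigma}
   \Bigl(\chi_y\bigl(i_p^*\mathcal{IS}^H_{X_0}\bigr)-\chi_y\bigl(i_p^*IC^H_{X_0}\bigr)\Bigr)\,[p],
\]
where $\chi_y$ of a stalk complex means the alternating sum over its cohomology mixed Hodge structures.

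Your route needs two standard inputs the paper does not invoke: the open--closed triangle, and pushforward functoriality of the Brasselet--Sch\"urmann--Yokura transformation (not merely its additivity). In return it turns the paper's informal conclusion into an actual statement. It also shows something the paper never says: the characteristic-class defect is concentrated in degree zero. Since $X_0$ is connected, it is a Laurent polynomial in $y$ times the point class, and the ``middle-degree conifold correction'' enters only through the nodal stalk differences.

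Treating that last clause as interpretive is consistent with the paper, which also leaves it informal.
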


\begin{proof}
By Proposition~\ref{prop:exterior-agreement}, both $\mathcal{IS}^{H}_{X_0}$ and $IC^H_{X_0}$ restrict to the normalized constant Hodge module on the smooth locus:
\[
   j^*\mathcal{IS}^{H}_{X_0}\simeq \mathbb Q^H_U[3],
   \qquad
   j^*IC^H_{X_0}\simeq \mathbb Q^H_U[3].
\]
Therefore
\[
   j^*\mathcal{IS}^{H}_{X_0}\simeq j^*IC^H_{X_0}.
\]
Passing to the Grothendieck group gives
\[
   j^*\bigl([\mathcal{IS}^{H}_{X_0}]-[IC^H_{X_0}]\bigr)=0.
\]
Thus the defect class has no exterior contribution.

By Proposition~\ref{prop:characteristic-class-comparison}, the characteristic-class defect is the image of this Grothendieck defect under $MHT_{y*}$.  Since the source defect vanishes on $U$, its class-theoretic image is controlled by the complement to the exterior stratum, namely the singular gluing contribution at $\Sigma$ and the associated middle conifold correction.  This proves the claim.
\end{proof}

\begin{remark}[Relation to the projection triangle]
\label{rem:characteristic-defect-projection-triangle}
When Hypothesis~\ref{hyp:S-specialization-splitting} holds, the projection triangle
\[
   P^H\longrightarrow P^H_I\longrightarrow \mathcal C^H_\Sigma\xrightarrow{+1}
\]
gives a second source of characteristic-class shadows.  Applying $MHT_{y*}$ to the Grothendieck classes of the three terms gives an additive class relation in the target of the motivic Hirzebruch transformation.  This relation belongs to the corrected nearby/vanishing-cycle side of the paper, while $\Delta T_{y*}^{I/IC}(X_0)$ records the IC--intersection-space defect.
\end{remark}

\subsection{Integral enhancement, conditional}
\label{subsec:integral-enhancement-conditional}

The atom package constructed in this paper is rational and Hodge-theoretic.  It is defined from
\[
   \mathbb H^*(X_0;\mathcal{IS}^{H}_{X_0}),
\]
whose rational realization is
\[
   H^*(I^{\bar m}X_0;\mathbb Q).
\]
In some situations, one may also have a compatible integral lattice.  Such a lattice is extra structure and is not part of the basic rational MHM package.

\begin{definition}[Conditional integral atom enhancement]
\label{def:conditional-integral-atom-enhancement}
Suppose that
\[
   \mathbb H^*(X_0;\mathcal{IS}^{H}_{X_0})
\]
is equipped with a compatible integral lattice $\Lambda_I$.  Define
\[
   \mathsf{HA}^{I,\mathbb Z}(X_0)
   :=
   \bigl(\mathsf{HA}^{I}(X_0),\Lambda_I\bigr).
\]
\end{definition}

The compatibility condition means that $\Lambda_I$ should recover the rational realization after tensoring with $\mathbb Q$:
\[
   \Lambda_I\otimes_{\mathbb Z}\mathbb Q
   \cong
   \operatorname{rat}\mathbb H^*(X_0;\mathcal{IS}^{H}_{X_0})
   \cong
   H^*(I^{\bar m}X_0;\mathbb Q),
\]
and should be compatible with any filtrations or pairings included in the enhanced atom package.

\begin{remark}[Additional structure]
\label{rem:integral-enhancement-additional-structure}
The integral enhancement is additional structure and is not required for the rational mixed-Hodge-module atom package.  The rational package is defined as soon as the mixed-Hodge-module lift $\mathcal{IS}^{H}_{X_0}$ exists.
\end{remark}

\begin{remark}[No canonical integral assertion]
\label{rem:no-canonical-integral-assertion}
Definition~\ref{def:conditional-integral-atom-enhancement} is conditional.  We do not assert that the mixed-Hodge-module lift $\mathcal{IS}^{H}_{X_0}$ canonically determines an integral lattice, nor do we assert that different spatial models of $I^{\bar m}X_0$ yield canonically identical integral structures.  The integral package is included only to record the additional data available in examples where such a lattice has been specified.
\end{remark}

\begin{proposition}[Forgetting the integral structure]
\label{prop:forgetting-integral-structure}
There is a natural forgetful operation
\[
   \mathsf{HA}^{I,\mathbb Z}(X_0)\longmapsto \mathsf{HA}^{I}(X_0)
\]
obtained by forgetting the lattice $\Lambda_I$.
\end{proposition}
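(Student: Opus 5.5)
The plan is to read the statement directly off the definition of the integral enhancement.  By Definition~\ref{def:conditional-integral-atom-enhancement}, $\mathsf{HA}^{I,\mathbb Z}(X_0)$ is the ordered pair $\bigl(\mathsf{HA}^{I}(X_0),\Lambda_I\bigr)$.  The forgetful operation will therefore be the first-coordinate projection $\bigl(\mathsf{HA}^{I}(X_0),\Lambda_I\bigr)\mapsto \mathsf{HA}^{I}(X_0)$.  The first step is to verify that the target is well-defined independently of $\Lambda_I$.  This holds because, by Definition~\ref{def:intersection-space-hodge-atom-package} and Remark~\ref{rem:integral-enhancement-additional-structure}, $\mathsf{HA}^{I}(X_0)$ is determined by the graded mixed Hodge structure $\mathbb H^*(X_0;\mathcal{IS}^{H}_{X_0})$ alone, and this structure exists once Hypothesis~\ref{hyp:H3-MHM-lift} supplies $\mathcal{IS}^{H}_{X_0}$.

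The second step is to check compatibility of the rational realizations.  The compatibility condition $\Lambda_I\otimes_{\mathbb Z}\mathbb Q\cong \operatorname{rat}\mathbb H^*(X_0;\mathcal{IS}^{H}_{X_0})$, combined with Proposition~\ref{prop:betti-realization-HAI}, shows that rationalizing the lattice lands exactly on the Betti realization of the forgotten package.  So forgetting $\Lambda_I$ loses only the integral refinement and does not change the underlying rational Hodge realization being atomized.

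For naturality, I would take any isomorphism of enhanced packages to be induced by an isomorphism of the underlying mixed Hodge structures that carries one lattice onto the other.  Such an isomorphism restricts, after dropping the lattice condition, to an isomorphism of graded mixed Hodge structures, and hence to an identification of atom shadows under $\operatorname{Atom}_{\mathrm{Hod}}$.  The projection therefore commutes with these identifications.

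The only delicate point is interpretive rather than technical.  The paper explicitly does not treat atoms as a category (see the remark in the subsection ``Not a category of atoms''), so ``natural'' must be read at the level of the realizations and not as a functor between categories of atoms.  I would state this explicitly: naturality means compatibility with isomorphisms of the underlying mixed-Hodge-realization data, all of which occur before atomization.
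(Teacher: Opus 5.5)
Your proposal is correct and matches the paper's proof. The paper likewise reads the enhanced package off Definition~\ref{def:conditional-integral-atom-enhancement} as the ordered pair $\bigl(\mathsf{HA}^{I}(X_0),\Lambda_I\bigr)$, takes the forgetful operation to be projection onto the first component, and says no further structure or theorem is needed. Your extra checks (that the target is independent of $\Lambda_I$, that $\Lambda_I\otimes_{\mathbb Z}\mathbb Q$ agrees with the Betti realization, and naturality under lattice-preserving isomorphisms of mixed Hodge structures) are harmless, since the paper does not require them.
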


\begin{proof}
By Definition~\ref{def:conditional-integral-atom-enhancement},
\[
   \mathsf{HA}^{I,\mathbb Z}(X_0)
   =
   \bigl(\mathsf{HA}^{I}(X_0),\Lambda_I\bigr).
\]
The forgetful operation sends the ordered pair
\[
   \bigl(\mathsf{HA}^{I}(X_0),\Lambda_I\bigr)
\]
to its first component
\[
   \mathsf{HA}^{I}(X_0).
\]
No additional structure or theorem is required.
\end{proof}

\begin{remark}[Relation to future integral refinements]
\label{rem:future-integral-refinements}
The conditional integral enhancement is included to indicate a possible bridge to integral and motivic refinements, including applications to integral Hodge-theoretic defect packages.  Those questions require additional input beyond the rational intersection-space MHM package and are not developed in this paper.
\end{remark}

%%%%%%%%%%%%%%%%%%%%%%%%%%%%%%%%%%%%%%%%%%%%%%%%%%%%%%%%%%%%%%%%%%%
\section{Outlook}
\label{sec:outlook}

We close by indicating several directions in which the intersection-space atom package, the projection triangle, and the IC--intersection-space defect could be extended.  These directions are not needed for the results of the present paper.  Their purpose is to clarify how the construction fits into a broader program: first isolate finite-node Hodge-theoretic defect objects on the singular fiber, and only then ask how those objects should be transported to mirror, motivic, integral, or moduli-stack settings.

\subsection{Toward genuine mirror-transition atom matching}
\label{subsec:outlook-mirror-transition}

The present paper compares the type-IIA and type-IIB atom packages over a single conifold singular fiber $X_0$.  A stronger result would compare the type-IIA atom package of one conifold degeneration with the type-IIB atom package of its mirror.  Such a statement would require a specified mirror pair, a precise matching of the relevant intersection-homology and intersection-space realizations, and compatibility with the mixed Hodge structures used to define atom shadows.

More concretely, one would need a mirror conifold pair
\[
   X_0
   \qquad\text{and}\qquad
   X_0^\vee
\]
together with a realization-level identification between the intersection-homology sector of one side and the intersection-space sector of the other.  Only after such a realization-level matching is fixed can one apply
\[
   \operatorname{Atom}_{\mathrm{Hod}}
\]
to obtain a genuine mirror-transition atom comparison.  This direction is compatible with Banagl's IIA/IIB interpretation of intersection homology and intersection spaces for conifolds \cite{Banagl_IntersectionSpacesBook,BanaglBudurMaxim14,Strominger95,HubschBestiary2nd2024}, but it lies beyond the internal single-fiber comparison carried out in this paper.

\subsection{Toward categorical atom shadows}
\label{subsec:outlook-categorical-atom-shadows}

Atoms are treated here as shadows of categorical objects, not as objects of an independent atom category.  The relevant categorical inputs are mixed Hodge modules, derived mixed Hodge modules, perverse sheaves, and the nearby/vanishing-cycle objects appearing in the corrected conifold package.  The atom package is extracted only after taking the Hodge realization of these objects.

A future formalism could make this systematic by assigning atom shadows to a broader class of categorical inputs, including mixed Hodge modules, perverse schobers, motivic realizations, and integral Hodge defect packages.  Such a formalism would need to preserve the level discipline used here: the categorical operations occur before atomization, and the atom package records the resulting Hodge-realization data.  In particular, direct sums, distinguished triangles, filtrations, and Verdier duality should first be formulated in the appropriate category, and only then passed to atom shadows through hypercohomology or another specified realization functor \cite{BBD82,KashiwaraSchapira90,Saito90,Saito_MHM,Schurmann_TopologySingularSpaces}.

\subsection{Toward quantum/F-bundle enhancement}
\label{subsec:outlook-quantum-F-bundle}

This paper deliberately avoids asserting a full A-model $F$-bundle on the spatial intersection space
\[
   I^{\bar m}X_0.
\]
A full enhancement would require additional quantum-cohomological input, including a quantum product, Dubrovin connection, and Euler-field spectral decomposition \cite{KKPY25,Givental96,Iritani09}.  These structures are not constructed here for the spatial intersection space.

A more realistic route is to compare the intersection-space atom shadow with genuine $F$-bundles coming from nearby smoothings and small resolutions:
\[
   X_s \rightsquigarrow X_0 \leftarrow \widetilde X.
\]
In this approach, the intersection-space package would appear as a Hodge-theoretic shadow of nearby smooth or resolved geometries rather than as a quantum theory intrinsic to the spatial intersection space.  The projection triangle
\[
   P^H\longrightarrow P^H_I\longrightarrow \mathcal C^H_\Sigma\xrightarrow{+1}
\]
suggests a precise object to compare with such smooth or resolved quantum packages: the singularity-supported complement $\mathcal C^H_\Sigma$ and the projected corrected object $P^H_I$ record how the corrected conifold object is seen through the intersection-space summand.

\subsection{Toward motivic, characteristic-class, and integral defect atoms}
\label{subsec:outlook-motivic-integral-defect-atoms}

The same atom-shadow method may be applied to motivic, characteristic-class, and integral refinements.  In the present paper, the characteristic-class shadow is obtained by applying the motivic Hirzebruch class transformation to the mixed-Hodge-module class
\[
   [\mathcal{IS}^H_{X_0}]
   \in K_0(\operatorname{MHM}(X_0)).
\]
The IC--intersection-space characteristic-class defect is then the image of
\[
   \Delta_{I/IC}(X_0)
   =
   [\mathcal{IS}^H_{X_0}]-[IC^H_{X_0}]
\]
under the same characteristic-class transformation.  This is a realization-level construction, not a new intrinsic characteristic-class theory for the spatial intersection space \cite{BrasseletSchurmannYokura_HirzebruchClasses,Schurmann_TopologySingularSpaces,Saito90}.

Integral refinements require additional data.  The mixed-Hodge-module package gives a rational Hodge realization, but it does not by itself canonically specify an integral lattice.  Thus an integral atom package requires a compatible lattice
\[
   \Lambda_I\subset \mathbb H^*(X_0;\mathcal{IS}^H_{X_0})
\]
whose rationalization recovers the rational intersection-space realization.  Such integral packages may be useful for future integral Hodge-theoretic defect constructions, but they require input beyond the rational mixed-Hodge-module framework used in this paper \cite{Deligne72,Saito90,Schurmann_TopologySingularSpaces}.

\subsection{Toward moduli-stack transport and refined DT/BPS wall crossing}
\label{subsec:outlook-DT-BPS-wall-crossing}

The projection triangle identifies a finite-node Hodge-theoretic object that should be relevant for later BPS and wall-crossing comparisons:
\[
   P^H\longrightarrow P^H_I\longrightarrow \mathcal C^H_\Sigma\xrightarrow{+1}.
\]
Equivalently, at the level of Grothendieck classes, the IC--intersection-space defect
\[
   \Delta_{I/IC}(X_0)
   =
   [\mathcal{IS}^H_{X_0}]-[IC^H_{X_0}]
\]
records the difference between the type-IIB/intersection-space realization and the type-IIA/intersection-complex realization.

The present paper stops at the singular-fiber Hodge-realization level; the moduli-stack realization is the additional datum needed before a comparison with refined or cohomological Donaldson--Thomas wall-crossing can be made. Such objects naturally live on, or are extracted from, moduli spaces or stacks of sheaves, complexes, quiver representations, perverse coherent sheaves, or stability-condition objects.  Thus a refined DT/BPS comparison requires an additional functorial passage from finite-node singular-fiber geometry to moduli-stack geometry.

The expected future problem is therefore not simply to compare the rank of $\mathcal C^H_\Sigma$ with a local conifold BPS number.  Rather, one should first construct or identify a stack-side object associated to the defect triangle, for example on a moduli stack
\[
   \mathfrak M_\gamma
\]
of objects of charge $\gamma$.  Only after this stack-theoretic landing is specified can one compare the resulting realization with cohomological Donaldson--Thomas invariants, BPS sheaves, and wall-crossing factors \cite{KontsevichSoibelman08,JoyceSong12,Szendroi08,NagaoNakajima11,DavisonMeinhardt20}.

In this sense, the role of the present paper is to identify the geometry-side handoff object.  The future DT/BPS program should determine where
\[
   \mathcal C^H_\Sigma,
   \qquad
   P^H_I,
   \qquad
   \Delta_{I/IC}(X_0)
\]
land in the relevant moduli-stack framework, and how their realizations compare with refined or cohomological DT wall-crossing data.  This is the natural next step toward extracting BPS spectra, wall-crossing transformations, halo structures, and Fock-space packages from finite-node conifold geometry.

%------------------------------------------
%
%                   References
%

\printbibliography

\end{document}